\title[Dimer models and cluster categories]
{Dimer models and cluster categories of Grassmannians}
\date{10 June 2016}
\author{Karin Baur, Alastair King and Robert J. Marsh}
\tikzset{->-/.style={decoration={
  markings,
  mark=at position .5 with {\arrow{>}}},postaction={decorate}}}
\newlength{\qthickness}
\newtheorem{lemma}{Lemma}[section]
\newtheorem{proposition}[lemma]{Proposition}
\newtheorem{theorem}[lemma]{Theorem}
\newtheorem{corollary}[lemma]{Corollary}
\newtheorem{conjecture}[lemma]{Conjecture}
\newcommand{\directedangle}{\measuredangle}
\newcommand{\inn}{\text{in}}
\newcommand{\out}{\text{out}}
\newcommand{\id}{\text{id}}
\newcommand{\opp}{\text{opp}}
\newcommand{\coker}{\operatorname{coker}}
\newcommand{\End}{\operatorname{End}}
\newcommand{\Hom}{\operatorname{Hom}}
\newcommand{\module}{\mathbb{M}}
\newcommand{\I}{\mathcal I}
\newcommand{\LL}{\mathcal L}
\newcommand{\C}{\mathcal C}
\newcommand{\F}{\mathbb C}
\newcommand{\W}{\mathcal W}
\newcommand{\R}{\mathcal R}
\newcommand{\B}{\mathcal B}
\newcommand{\bdry}{\partial}
\newcommand{\weight}{w}
\newcommand{\newt}{f}
\newcommand{\embed}{g}
\newcommand{\newL}{E}
\newcommand{\Qcyc}{Q_{\text{cyc}}}
\newcommand{\Gcyc}{\Gamma_{\text{cyc}}}
\newcommand{\qrelation}{{\sim}}
\newcommand{\rr}{r}
\begin{document}
\maketitle

\begin{abstract}
We associate a dimer algebra $A$ to a Postnikov diagram $D$ (in a disk)
corresponding to a cluster of minors in the cluster structure of the Grassmannian $Gr(k,n)$.
We show that $A$ is isomorphic to the endomorphism algebra of a corresponding
Cohen-Macaulay module $T$ over the algebra $B$ used to categorify the cluster structure of $Gr(k,n)$
by Jensen-King-Su.
It follows that $B$ can be realised as the boundary algebra of $A$, that is, the subalgebra $eAe$
for an idempotent $e$ corresponding to the boundary of the disk.
The construction and proof uses an interpretation of the diagram $D$,
with its associated plabic graph and dual quiver (with faces),
as a dimer model with boundary.
We also discuss the general surface case, in particular computing boundary algebras associated to the annulus.
\end{abstract}

\section*{Introduction}
\label{s:introduction}

Postnikov diagrams
(also known as alternating strand diagrams)
are collections of curves in a disk satisfying certain axioms. They were introduced by Postnikov
in his study of total positivity of the Grassmannian $Gr(k,n)$ of $k$-planes
in $\mathbb{C}^n$~\cite{postnikov}.
A class of Postnikov diagrams was used by Scott~\cite{scott06} 
to show that the homogeneous coordinate ring of $Gr(k,n)$ is a cluster algebra,
in which each such diagram corresponds to a seed whose (extended) cluster consists of minors (i.e., Pl\"{u}cker coordinates).
The combinatorics of the diagram gives both the quiver of the cluster and the minors: 
the $k$-subsets of $\{1,2,\ldots ,n\}$ corresponding to the minors appear as labels of alternating regions 
and the quiver can be read off geometrically.
By a more recent result of Oh-Postnikov-Speyer~\cite{ops},
every cluster consisting of minors arises in this way (see also related results in~\cite{dkk}).

An additive categorification of this cluster algebra structure has been given
by 
Geiss-Leclerc-Schr\"{o}er~\cite{gls08} in terms of a subcategory of the category of
finite dimensional modules over the preprojective algebra of type $A_{n-1}$. 
However, there is a single cluster coefficient, the
minor corresponding to the $k$-subset $\{1,2,\ldots ,k\}$ of
$\{1,2,\ldots ,n\}$, which is not realised in the category.
Thus the categorification in \cite{gls08} is strictly only of the coordinate ring of the affine open cell
in the Grassmannian given by the non-vanishing of this minor. 
The cluster structure is then lifted to the homogeneous coordinate ring in an explicit and natural way.

Recently Jensen-King-Su~\cite{jks} have given a full and direct categorification of the cluster structure on the homogeneous
coordinate ring,
using the category of (maximal) Cohen-Macaulay modules over the completion of an algebra $B$, 
which is a quotient of the preprojective algebra of type $\tilde{A}_{n-1}$. 
In particular, a rank one Cohen-Macaulay $B$-module $\module_I$ is associated to every $k$-subset $I$ of $\{1,2, \ldots ,n\}$. 

Given a Postnikov diagram $D$, let 
\[ T_D = \bigoplus \module_I, \]
where the direct sum is over the $I$ labelling the alternating regions of $D$.
As noted in~\cite{jks}, the completion of $T_D$ is a cluster-tilting
module and
the work of Buan-Iyama-Reiten-Smith~\cite{BIRS11} would lead one to ask whether the completion of the
endomorphism algebra $\End_{B}(T_D)$ is a frozen Jacobian algebra (\cite[Def. 1.1]{BIRS11}).
We will see that this is indeed the case, in a very particular way.

We associate to $D$ a quiver with faces $Q(D)$. The subgraph containing the arrows
incident with internal vertices of $Q(D)$ corresponds to the skew-symmetric matrix associated to $D$ in Scott~\cite[\S5]{scott06}, 
but there are additional arrows between the boundary vertices. The faces of $Q(D)$ correspond to the oriented regions of $D$.
From the construction, $Q(D)$ may be embedded in the disk in which $D$ is drawn and it is natural to interpret it as a dimer model
with boundary, as a generalisation of dimer models on a torus or a more general closed surface
(see \cite{bocklandt12}, \cite{broomhead12}, \cite{davison11}, \cite{fhkvw}). 
Such a generalisation has also been introduced recently by Franco~\cite{francopre12}.

To formalise this, we give an abstract definition of a
dimer model with boundary as a quiver with faces satisfying certain axioms; in particular, the arrows are divided
into internal arrows and boundary arrows.
Such a dimer model has a natural embedding into a compact surface with boundary 
in which each component of the boundary is identified with an unoriented cycle of boundary arrows in $Q$.
In the case without boundary, this corresponds closely to the definition of a dimer model given by Bocklandt~\cite[2.2]{bocklandt12}.
We associate to any dimer model $Q$ with boundary a \emph{dimer algebra} $A_Q$
which coincides with the usual dimer algebra, defined by a superpotential or commutation relations,
in the case where the boundary is empty.
In that case, the completion with respect to the arrow ideal coincides with the 
usual Jacobian algebra of the quiver with potential $(Q,W)$ (as in \cite[\S3]{dwz08}). 
If the boundary is nonempty, the dimer algebra can still be defined via a potential, 
but the relations do not include the derivatives of the potential with respect to boundary arrows.
This is a slightly different convention to \cite[Def. 1.1]{BIRS11}, because it is convenient to allow dimer model quivers to have 2-cycles.

Our main result is that $\End_{B}(T_D)$
is isomorphic to the dimer algebra $A_D=A_{Q(D)}$ associated to the dimer model $Q(D)$.
There is a natural grading on $A_D$ by subsets of $\{1,2,\ldots ,n\}$,
which has a simple definition in terms of~$D$,
and there is also a similar grading on $\End_B(T_D)$.
We use these
to show that there is a well-defined (graded) homomorphism 
\[ g\colon A_D\rightarrow \End_B(T_D) \]
taking each arrow $I\to J$ in $Q(D)$ to the homomorphism $\module_I\to \module_J$
which generates $\Hom_B(\module_I,\module_J)$ freely as a $\mathbb{C}[t]$-module,
where the polynomial ring $\mathbb{C}[t]$ is the centre of~$B$.
We also note that the centre of $A_D$ is $\mathbb{C}[u]$, where $u$ is the sum of all
minimal loops in $Q(D)$, i.e.\ the loops around the faces, and that $g(u)=t$.

To see that $g$ is surjective, we show that between any two vertices $I,J$ of $Q(D)$ there is a path of minimal degree, 
which (as an element of $A_D$) must then map to the generator of $\Hom_B(\module_I,\module_J)$.
Such a path is constructed inductively, with the
induction step depending on a careful analysis of the local behaviour of
strands in $D$ near a vertex or face of $Q(D)$.
As an aside, we note that this local analysis implies that $Q(D)$ can be isoradially
embedded into a planar disk and we relate this to the embedding
of $Q(D)$ constructed as a `plabic tiling' in~\cite[\S9]{ops}.

To see that $g$ is injective, we observe that, since the dimer model $Q(D)$ is `consistent' in an
appropriate sense (cf.~\cite[Rk.~6.4]{ops}), we can adapt arguments from \cite[\S5]{bocklandt12} to
show that any path between two vertices in $Q(D)$ is equal (in $A_D$)
to a path of minimal degree multiplied by a power of $u$. 
Thus $g$ is an isomorphism and we also prove that $g$ induces an isomorphism between the corresponding
completed algebras.

Let $e\in A_D$ be the sum of the primitive idempotents corresponding to the
boundary vertices. 
We call the algebra
$eA_De$ the \emph{boundary algebra} of $Q(D)$
and it is a notable fact that this algebra is 
independent of the choice of Postnikov diagram $D$, 
once $k$ and $n$ are fixed.
This follows immediately from the isomorphism $A_D \cong \End_B(T_D)$,
because the $B$-modules corresponding to
the idempotents in $e$ are the indecomposable projectives
and so 
\[ eA_De \cong \End_B(B)\cong B^{\opp}. \]
However, we also give a direct proof of the independence
by showing that the boundary algebra is invariant under the
untwisting, twisting and geometric exchange
moves~\cite[\S14]{postnikov} (see also~\cite[\S3]{scott06}) for Postnikov diagrams.

Finally, for any integer $k\geq 1$, we consider a notion of
Postnikov diagram of degree $k$ on a marked surface with boundary in which all of the marked points lie on the boundary, generalizing the usual notion of an
Postnikov diagram which can be regarded as the disk case.
We say that such a diagram is a \emph{weak Postnikov diagram}
if it is not required to satisfy the global `consistency'
axioms (conditions (b1) and (b2) in
Definition~\ref{d:asd}). Adapting a construction
of~\cite[\S3]{scott06}, we associate a weak Postnikov diagram
of degree $2$ to a triangulation of any such marked surface.
We compute the corresponding boundary algebra in the case of an annulus
with at least one marked point on each of its boundary components and show that it
is independent of the choice of triangulation.

The structure of the paper is as follows. In Section~\ref{s:setup} we set up some of the
notation. In Section~\ref{s:diagrams}, we recall the definition of a
Postnikov diagram~\cite[\S14]{postnikov} and its corresponding plabic graph, as well as the corresponding quiver~\cite[\S5]{scott06}.
In Section~\ref{s:dimermodels}, we give the definition of a dimer model with boundary and its corresponding dimer algebra, noting that the quiver associated to a
Postnikov diagram can be given such a structure.

In Section~\ref{s:weights} we define a weighting on the arrows in $Q(D)$, computing
the weight of the boundary of a face of $Q(D)$ and the sum of the weights of
the arrows incident with a vertex of $Q(D)$. In Section~\ref{s:angles} we show how these
results can be used to embed $Q(D)$ isoradially into a disk.

In Section~\ref{s:legalarrow}, we show how the results in Section~\ref{s:weights} can be used
to construct the first arrow in the minimal path mentioned above.
In Section~\ref{s:rankone} we recall the
algebra $B$ (and the completed version $\widehat{B}$)
from~\cite{jks} and define the $B$-module $T_D$.
In Section~\ref{s:minimalpath}, we construct a minimal path.
In Section~\ref{s:paths}, we show that there is
a unique element of $A_D$ which can be written as
such a path, and that any path in $Q(D)$ is
equal in $A_D$ to this element multiplied by
a power of a minimal loop.
In Section~\ref{s:isomorphism},
we prove that $A_D$ is isomorphic to $\End_B(T_D)$ and show
that $eA_De$ is isomorphic to $B$. In Section~\ref{s:completion}, we give the
completed version of these results.
In Section~\ref{s:exchange}, we give a proof in terms of Postnikov diagrams that the algebra $eA_De$
is independent of the choice of Postnikov diagram,
and in Section~\ref{s:surfaces} we consider the surface case.

\begin{acknowledgements}
We would like to thank A. Hubery and A. Craw for helpful conversations
relating to Remark~\ref{r:commutation}.
We are grateful for the hospitality and pleasant working environment provided
by ETH Zurich (Spring 2011, Summer 2013)
and MSRI Berkeley (Autumn 2012).
\end{acknowledgements}

\section{Set-up and notation} 
\label{s:setup}

Fix a positive integer $n$ and an integer $k$ with $1\leq k\leq n-1$.
We will write
$\mathbb{Z}_n=\{1,2,\ldots ,n\}$.
We consider a circular graph $C$ with vertices $C_0=\mathbb{Z}_n$
clockwise around a circle and edges $C_1$ also labelled by
$\mathbb{Z}_n$, with edge $i$ joining vertices $i-1$ and $i$;
see Figure~\ref{f:complex} for the case $n=7$.
For integers $a,b\in \{1,2,\ldots ,n\}$, we denote by $[a,b]$ the
closed cyclic interval consisting of the elements of the set
$\{a,a+1,\ldots ,b\}$ reduced mod $n$.
We similarly have the open interval $(a,b)$.

For a subset $S$ of $C_1$, define $S_0$ to be the set of vertices incident
with an edge in $S$. So, in particular, $(a,b)_0=[a,b-1]$,
the set of vertices incident with an edge in the set $(a,b)$.

In general for sets $S,S'$ we write $S-S'$ for the set of elements in $S$
but not in $S'$. For $s\in S$, we use the shorthand $S-s$ for $S-\{s\}$
and for any $z$ we use the shorthand $S+z$ for $S\cup \{z\}$.

\begin{figure}
\[
\begin{tikzpicture}[scale=1,baseline=(bb.base)]
\newcommand{\seventh}{51.4} 
\newcommand{\circradius}{1.5cm}
\newcommand{\inradius}{1.2cm}
\newcommand{\outradius}{1.8cm}
\path (0,0) node (bb) {}; 

\draw[black,thick] (0,0) circle(\circradius);
\foreach \j in {1,...,7}
{\draw (90-\seventh*\j:\circradius) node[black] {$\bullet$};
 \draw (90-\seventh*\j:\outradius) node[black] {$\j$};
 \draw (90+\seventh/2-\seventh*\j:\inradius) node[black] {$\j$}; }
\end{tikzpicture}
\]
\caption{The graph $C$}
\label{f:complex}
\end{figure}
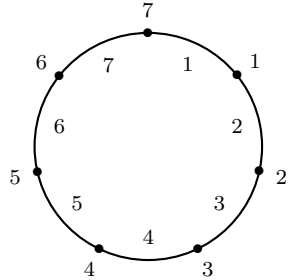

\section{Postnikov diagrams} 
\label{s:diagrams}

We recall a special case of the definition of a \emph{Postnikov diagram} (alternating strand diagram)~\cite[\S14]{postnikov}.

\begin{definition} \label{d:asd}
A $(k,n)$-\emph{Postnikov diagram} $D$
consists of $n$ directed curves, called \emph{strands}, in a disk with $n$ 
marked vertices
on its boundary, labelled by the elements of $C_1$ (in clockwise order). 
The strands are also labelled by the elements of $C_1$,
with strand $i$ starting at vertex $i$ and ending at vertex $i+k$. The following
axioms must be satisfied.
\goodbreak
\emph{Local axioms:}
\begin{enumerate}[({a}1)]
\item Only two strands can cross at a given point and all crossings are
transverse.
\item There are finitely many crossing points.
\item
Proceeding along a given strand, the other strands crossing it alternate between crossing it left to right and right to left.
\end{enumerate}
\goodbreak
\emph{Global axioms:}
\begin{enumerate}[({b}1)]
\item A strand cannot intersect itself.
\item If two strands intersect at distinct points $U$ and $V$, then one strand is oriented from $U$ to $V$ and the other is oriented from $V$ to $U$.
\end{enumerate}

\noindent Note: for axiom~(a3), strands $i-k$ and $i$ are regarded as crossing at the boundary vertex $i$ in the
obvious way. Note also that because the disk is compact, condition (a2) is effectively local.
\end{definition}

We shall often refer to $(k,n)$-Postnikov diagrams as simply Postnikov diagrams
when $k$ and $n$ are clear from the context.
A Postnikov diagram is defined up to isotopies fixing the boundary.
Two Postnikov diagrams are said to be \emph{equivalent}
if one can be obtained from the other using 
the untwisting and twisting moves
illustrated in Figure~\ref{f:twisting} (or the opposite versions, obtained from these diagrams by reflection in a horizontal
line). Note that these moves are local: there must be a disk containing
the initial configuration, and no other
strands are involved in the move.
We call an untwisting or twisting move at the boundary a \emph{boundary untwisting} or \emph{twisting} move (the lower diagram in the figure).

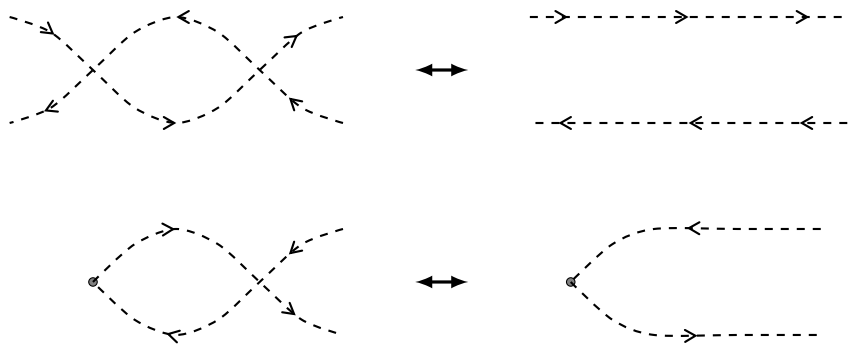
\begin{figure}
\[
\begin{tikzpicture}[scale=0.7,baseline=(bb.base),
  strand/.style={black,dashed,thick},
  doublearrow/.style={black, latex-latex, very thick}]
\newcommand{\strarrow}{\arrow{angle 60}}
\newcommand{\dotrad}{0.1cm} 
\newcommand{\bdrydotrad}{{0.8*\dotrad}} 
\path (0,0) node (bb) {}; 


\draw [strand] plot[smooth]
coordinates {(-3.14,1.0) (-2.36,0.71) (-1.57,0) (-0.79,-0.71) (0,-1) (0.79,-0.71) (1.57,0) (2.36, 0.71) (3.14,1)}
[ postaction=decorate, decoration={markings,
  mark= at position 0.12 with \strarrow, mark= at position 0.5 with \strarrow, mark= at position 0.88 with \strarrow}];


\draw [strand] plot[smooth]
coordinates {(3.14,-1.0) (2.36,-0.71) (1.57,0) (0.79,0.71) (0,1) (-0.79,0.71)
(-1.57,0) (-2.36, -0.71) (-3.14,-1)}
[ postaction=decorate, decoration={markings,
  mark= at position 0.15 with \strarrow, mark= at position 0.5 with \strarrow,
mark= at position 0.91 with \strarrow}];


\draw [strand] plot
coordinates {(6.66,1.0) (12.64,1.0)}
[ postaction=decorate, decoration={markings,
  mark= at position 0.12 with \strarrow, mark= at position 0.5 with \strarrow, mark= at position 0.88 with \strarrow}];


\draw [strand] plot
coordinates {(12.64,-1.0) (6.66,-1.0)}
[ postaction=decorate, decoration={markings,
  mark= at position 0.15 with \strarrow, mark= at position 0.5 with \strarrow, mark= at position 0.91 with \strarrow}];


\draw [doublearrow] (4.5,0) -- (5.5,0);


\begin{scope}[shift={(0,-4)}]


\draw (-1.57,0) circle(\bdrydotrad) [fill=gray];
\draw (7.43,0) circle(\bdrydotrad) [fill=gray];


\draw [strand] plot[smooth]
coordinates {(-1.57,0) (-0.79,0.71) (0,1) (0.79,0.71) (1.57,0) (2.36, -0.71) (3.14,-1)}
[ postaction=decorate, decoration={markings,
  mark= at position 0.33 with \strarrow, mark= at position 0.83 with \strarrow}];


\draw [strand] plot[smooth]
coordinates {(3.14,1) (2.36,0.71) (1.57,0) (0.79,-0.71) (0,-1) (-0.79,-0.71) (-1.57,0)}
[ postaction=decorate, decoration={markings,
  mark= at position 0.2 with \strarrow, mark= at position 0.7 with \strarrow}];


\draw [strand] plot[smooth]
coordinates {(7.43,0) (8.21,-0.71) (9,-1) (10.57,-1) (12.14,-1)}
[ postaction=decorate, decoration={markings,
  mark= at position 0.54 with \strarrow}];


\draw [strand] plot[smooth]
coordinates {(12.14,1) (10.57,1) (9,1) (8.21,0.71) (7.43,0)}
[ postaction=decorate, decoration={markings,
  mark= at position 0.5 with \strarrow}];


\draw [doublearrow] (4.5,0) -- (5.5,0);

\end{scope}

\end{tikzpicture}
\]
\caption{Untwisting and twisting moves in a Postnikov diagram. The moves obtained by reflecting these diagrams in a horizontal line are also allowed}
\label{f:twisting}
\end{figure}

\begin{definition} \label{d:reduced}
We shall say that a
Postnikov diagram is of 
\emph{reduced type} if no untwisting
move or boundary untwisting move
(i.e.\ going from left to right in 
Figure~\ref{f:twisting}) can be applied to 
it.
\end{definition}

Note that in a Postnikov diagram of reduced type, while the first crossing of strand $i$ is with strand $i-k$, as is required, the second crossing must be with a different strand.
Figure~\ref{f:postfree37} shows an example of a $(3,7)$-Postnikov diagram
which is of reduced type.

\begin{figure}
\[
\begin{tikzpicture}[scale=1.1,baseline=(bb.base),
 strand/.style={black, dashed}]

\newcommand{\strarrow}{\arrow{angle 60}}
\newcommand{\bstart}{125} 
\newcommand{\seventh}{51.4} 
\newcommand{\qstart}{150.7} 
\newcommand{\bigrad}{4cm} 
\newcommand{\eps}{11pt} 
\newcommand{\dotrad}{0.1cm} 
\newcommand{\bdrydotrad}{{0.8*\dotrad}} 

\path (0,0) node (bb) {};


\draw (0,0) circle(\bigrad) [thick,gray, densely dotted];

\foreach \n in {1,...,7}
{ \coordinate (b\n) at (\bstart-\seventh*\n:\bigrad);
  \draw (\bstart-\seventh*\n:\bigrad+\eps) node {$\n$}; }


\foreach \n/\a/\r in {8/77/0.79, 10/130/0.5, 12/60/0.2, 14/350/0.5, 16/220/0.3, 18/225/0.75, 20/280/0.75,
    9/117/0.77, 11/92/0.38, 13/30/0.7, 15/290/0.05, 17/185/0.7, 19/250/0.55,  21/330/0.75}
{ \coordinate (b\n)  at (\a:\r*\bigrad); }


\foreach \e/\f/\t in {8/9/0.4, 9/10/0.5, 8/11/0.4, 10/11/0.5, 11/12/0.5, 8/13/0.5, 12/13/0.65, 13/14/0.4, 12/15/0.5,
 14/15/0.5, 15/16/0.5, 16/17/0.65, 10/17/0.6, 17/18/0.45, 18/19/0.5, 19/20/0.5, 20/21/0.5, 16/19/0.5, 14/21/0.5}
{\coordinate (a\e\f) at (${\t}*(b\e) + {1-\t}*(b\f)$); }


\draw [strand] plot[smooth]
coordinates {(b1) (a89) (a910) (a1017) (a1617) (a1619) (a1920) (b4)}
[ postaction=decorate, decoration={markings,
  mark= at position 0.11 with \strarrow, mark= at position 0.255 with \strarrow,
  mark= at position 0.37 with \strarrow, mark= at position 0.52 with \strarrow,
  mark= at position 0.655 with \strarrow, mark= at position 0.775 with \strarrow,
  mark= at position 0.92 with \strarrow }];
\draw [strand] plot[smooth] coordinates {(b2) (a1314) (a1415) (a1516) (a1617) (a1718) (b5)}
[ postaction=decorate, decoration={markings,
  mark= at position 0.13 with \strarrow, mark= at position 0.29 with \strarrow,
  mark= at position 0.47 with \strarrow, mark= at position 0.61 with \strarrow,
  mark= at position 0.75 with \strarrow, mark= at position 0.9 with \strarrow }];
\draw [strand] plot[smooth] coordinates {(b3) (a2021) (a1920) (a1819) (a1718) (b6)}
[ postaction=decorate, decoration={markings,
  mark= at position 0.125 with \strarrow, mark= at position 0.34 with \strarrow,
  mark= at position 0.52 with \strarrow, mark= at position 0.68 with \strarrow,
  mark= at position 0.86 with \strarrow }];
\draw [strand] plot[smooth] coordinates {(b4) (a2021) (a1421) (a1314) (a1213) (a1112) (a1011) (a910) (b7)}
[ postaction=decorate, decoration={markings,
  mark= at position 0.11 with \strarrow, mark= at position 0.27 with \strarrow,
  mark= at position 0.4 with \strarrow, mark= at position 0.53 with \strarrow,
  mark= at position 0.63 with \strarrow, mark= at position 0.725 with \strarrow,
  mark= at position 0.81 with \strarrow, mark= at position 0.92 with \strarrow, }];
\draw [strand] plot[smooth] coordinates {(b5) (a1819) (a1619) (a1516) (a1215) (a1213) (a813) (b1)}
[ postaction=decorate, decoration={markings,
  mark= at position 0.1 with \strarrow, mark= at position 0.24 with \strarrow,
  mark= at position 0.35 with \strarrow, mark= at position 0.46 with \strarrow,
  mark= at position 0.58 with \strarrow, mark= at position 0.735 with \strarrow,
  mark= at position 0.91 with \strarrow }];
  \draw [strand] plot[smooth] coordinates {(b6) (a1017) (a1011) (a811) (a813) (b2)}
[ postaction=decorate, decoration={markings,
  mark= at position 0.14 with \strarrow, mark= at position 0.355 with \strarrow,
  mark= at position 0.5 with \strarrow, mark= at position 0.65 with \strarrow,
  mark= at position 0.86 with \strarrow }];
\draw [strand] plot[smooth] coordinates {(b7) (a89) (a811) (a1112) (a1215) (a1415) (a1421) (b3)}
[ postaction=decorate, decoration={markings,
  mark= at position 0.11 with \strarrow, mark= at position 0.27 with \strarrow,
  mark= at position 0.4 with \strarrow, mark= at position 0.505 with \strarrow,
  mark= at position 0.605 with \strarrow, mark= at position 0.74 with \strarrow,
  mark= at position 0.915 with \strarrow }];


\foreach \n in {1,2,3,4,5,6,7} {\draw (b\n) circle(\bdrydotrad) [fill=gray];}


\foreach \n/\m/\r in {1/567/0.88, 2/671/0.87, 3/712/0.8, 4/123/0.83, 5/234/0.8, 6/345/0.85, 7/456/0.79}
{ \draw (\qstart-\seventh*\n:\r*\bigrad) node (q\m) {$\m$}; }

\foreach \m/\a/\r in {156/104/0.58 , 157/63/0.47, 145/160/0.25, 147/15/0.3, 245/220/0.52, 124/295/0.4 }
{ \draw (\a:\r*\bigrad) node (q\m) {$\m$}; }

 \end{tikzpicture}
\]
\caption{A $(3,7)$-Postnikov diagram}
\label{f:postfree37}
\end{figure}
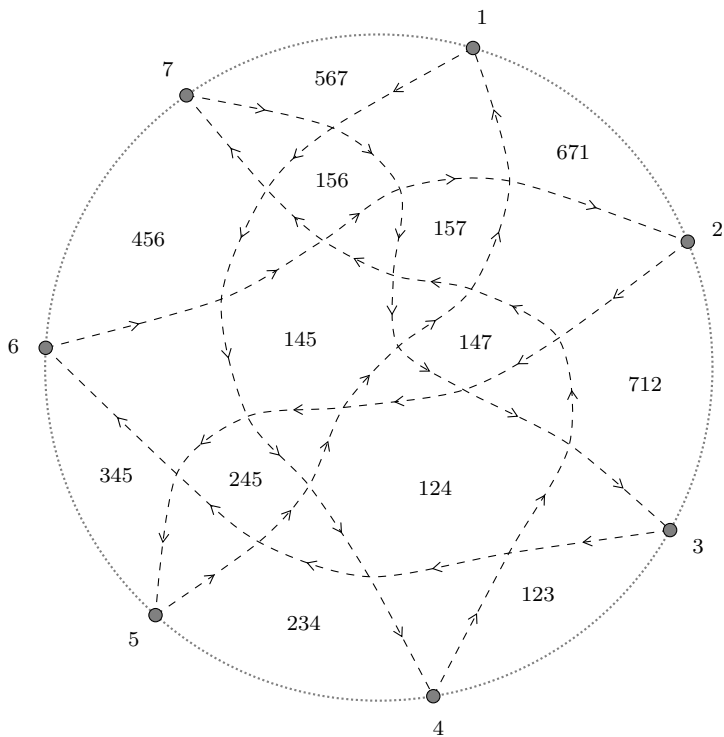

A Postnikov diagram divides the interior of the disk into bounded regions,
the connected components of the complement of the strands in the diagram.
A region not adjacent to the boundary of the disk is called \emph{internal}
and the other regions are referred to as a \emph{boundary} region.
A region is said to be \emph{alternating} if the
strands incident with it alternate in orientation going around the
boundary (ignoring the boundary of the disk). It is said to be
\emph{oriented} if the strands around its boundary are all oriented
clockwise (or all anticlockwise). It easy to check that every region of a Postnikov
diagram must be alternating or oriented.

Each strand divides the disk into two parts, consisting of those regions
on the left hand side of the strand (when flowing along it) and those on the
right hand side. Each alternating region is labelled with the $k$-subset $I$ of
$C_1$ consisting of the numbers of those strands which have the region on
their left hand side. The labels of the alternating regions are all distinct. We denote the set of labels of $D$ by $\C(D)$.
The alternating regions have been labelled in
Figure~\ref{f:postfree37}, using the convention that a subset $\{i_1,i_2,\ldots ,i_s\}$
of $C_1$ is displayed as $i_1i_2\cdots i_s$.

\begin{remark} \label{r:boundarylabels}
For $i\in C_0$, let $\newL_i=[i-k+1,i]\subset C_1$,
i.e.\ the set of labels of the vertices between edges $i-k$ and $i$.
Then the labels of the boundary alternating regions are precisely the
$k$-subsets $\newL_1,\newL_2,\ldots ,\newL_n$ (see~\cite[\S3]{scott06}).
\end{remark}

Recall that a quiver $Q$ is a directed graph encoded by a tuple $Q=(Q_0,Q_1,h,t)$, where $Q_0$ is the
set of vertices, $Q_1$ is the set of arrows and $h,t\colon Q_1\to Q_0$,
so that each $\alpha\in Q_1$ is an arrow $t\alpha\to h\alpha$.
We will write $Q=(Q_0,Q_1)$, with the remaining data implicit, and we will also regarded
it as an oriented 1-dimensional CW-complex.

\begin{definition} \label{d:quiver}
The \emph{quiver} $Q(D)$ of a Postnikov diagram $D$ has vertices $Q_0(D)=\C(D)$ given by the labels of the alternating regions of $D$. The arrows
$Q_1(D)$ correspond to intersection points of two alternating regions,
with orientation as in Figure~\ref{f:arrowconvention1}.
The diagram on the right indicates the
boundary case, which can also occur in the opposite sense.
We refer to the arrows between boundary vertices as \emph{boundary arrows}.
\end{definition}

\begin{remark} \label{r:quiverembedding}
We can embed $Q(D)$ into the disk, with each vertex plotted at some point
in the interior of the alternating region it corresponds to, except for
boundary regions, in which case we plot the point on the boundary of the disk.
Each arrow is drawn within the two regions corresponding to its end-points
and passing through the corresponding crossing in $D$.
Boundary arrows are drawn along the boundary.
\end{remark}

For example, the quiver of the Postnikov diagram in
Figure~\ref{f:postfree37} is shown in Figure~\ref{f:postfreequiver37},
embedded as in Remark~\ref{r:quiverembedding}.

\begin{figure}
\[
\begin{tikzpicture}[scale=1.3,baseline=(bb.base),
  strand/.style={black,dashed,thick},
  quivarrow/.style={black, -latex, very thick}]
\newcommand{\strarrow}{\arrow{angle 60}}
\path (0,0) node (bb) {}; 

\draw [strand] plot[smooth]
coordinates {(0.6,0.8) (0.4,0.4) (0.2,0.15) (0,0) (-0.2,-0.15) (-0.4,-0.4) (-0.6,-0.8)}
[ postaction=decorate, decoration={markings,
  mark= at position 0.25 with \strarrow, mark= at position 0.8 with \strarrow}];
\draw [strand] plot[smooth]
coordinates {(0.6,-0.8) (0.4,-0.4) (0.2,-0.15) (0,0) (-0.2,0.15) (-0.4,0.4) (-0.6,0.8)}
[ postaction=decorate, decoration={markings,
  mark= at position 0.25 with \strarrow, mark= at position 0.8 with \strarrow}];

\draw (-1,0) node (h) {$\bullet$};
\draw (1,0) node (t) {$\bullet$};
\draw [quivarrow] (t)--(h);

\end{tikzpicture}
\qquad\qquad
\begin{tikzpicture}[scale=1.3,baseline=(bb.base),
  strand/.style={black,dashed,thick},
  quivarrow/.style={black, -latex, very thick}]
\newcommand{\strarrow}{\arrow{angle 60}}
\newcommand{\bdrydotrad}{0.07cm} 
\path (0,0) node (bb) {}; 

\draw [strand] plot[smooth]
coordinates {(0.6,0.8) (0.4,0.4) (0.2,0.15) (0,0)}
[ postaction=decorate, decoration={markings,
  mark= at position 0.5 with \strarrow}];
\draw [strand] plot[smooth]
coordinates {(0,0) (-0.2,0.15) (-0.4,0.4) (-0.6,0.8)}
[ postaction=decorate, decoration={markings,
  mark= at position 0.6 with \strarrow}];

\draw (0,0) circle(\bdrydotrad) [fill=gray];
\draw (-1,0) node (h) {$\bullet$};
\draw (1,0) node (t) {$\bullet$};
\draw [quivarrow] (t)--(h);
\end{tikzpicture}
\]
\caption{Orientation convention for the quiver $Q(D)$}
\label{f:arrowconvention1}
\end{figure}
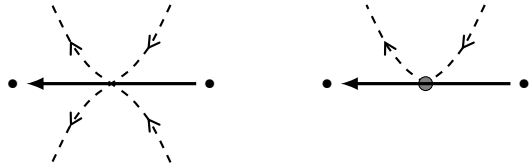

\begin{figure}
\[
\begin{tikzpicture}[scale=1.1,baseline=(bb.base),
 strand/.style={black, dashed},
 quivarrow/.style={black, -latex, thick}]

\newcommand{\strarrow}{\arrow{angle 60}}
\newcommand{\bstart}{125} 
\newcommand{\seventh}{51.4} 
\newcommand{\qstart}{150.7} 
\newcommand{\bigrad}{4cm} 
\newcommand{\eps}{11pt} 
\newcommand{\dotrad}{0.1cm} 
\newcommand{\bdrydotrad}{{0.8*\dotrad}} 

\path (0,0) node (bb) {};

\foreach \n in {1,...,7}
{ \coordinate (b\n) at (\bstart-\seventh*\n:\bigrad);
  \draw (\bstart-\seventh*\n:\bigrad+\eps) node {$\n$}; }


\foreach \n/\a/\r in {8/77/0.79, 10/130/0.5, 12/60/0.2, 14/350/0.5, 16/220/0.3, 18/225/0.75, 20/280/0.75,
    9/117/0.77, 11/92/0.38, 13/30/0.7, 15/290/0.05, 17/185/0.7, 19/250/0.55,  21/330/0.75}
{ \coordinate (b\n)  at (\a:\r*\bigrad); }


\foreach \e/\f/\t in {8/9/0.4, 9/10/0.5, 8/11/0.4, 10/11/0.5, 11/12/0.5, 8/13/0.5, 12/13/0.65, 13/14/0.4, 12/15/0.5,
 14/15/0.5, 15/16/0.5, 16/17/0.65, 10/17/0.6, 17/18/0.45, 18/19/0.5, 19/20/0.5, 20/21/0.5, 16/19/0.5, 14/21/0.5}
{\coordinate (a\e\f) at (${\t}*(b\e) + {1-\t}*(b\f)$); }


\draw [strand] plot[smooth]
coordinates {(b1) (a89) (a910) (a1017) (a1617) (a1619) (a1920) (b4)}
[ postaction=decorate, decoration={markings,
  mark= at position 0.11 with \strarrow, mark= at position 0.255 with \strarrow,
  mark= at position 0.37 with \strarrow, mark= at position 0.52 with \strarrow,
  mark= at position 0.655 with \strarrow, mark= at position 0.775 with \strarrow,
  mark= at position 0.92 with \strarrow }];
\draw [strand] plot[smooth] coordinates {(b2) (a1314) (a1415) (a1516) (a1617) (a1718) (b5)}
[ postaction=decorate, decoration={markings,
  mark= at position 0.13 with \strarrow, mark= at position 0.29 with \strarrow,
  mark= at position 0.47 with \strarrow, mark= at position 0.61 with \strarrow,
  mark= at position 0.75 with \strarrow, mark= at position 0.9 with \strarrow }];
\draw [strand] plot[smooth] coordinates {(b3) (a2021) (a1920) (a1819) (a1718) (b6)}
[ postaction=decorate, decoration={markings,
  mark= at position 0.125 with \strarrow, mark= at position 0.34 with \strarrow,
  mark= at position 0.52 with \strarrow, mark= at position 0.68 with \strarrow,
  mark= at position 0.86 with \strarrow }];
\draw [strand] plot[smooth] coordinates {(b4) (a2021) (a1421) (a1314) (a1213) (a1112) (a1011) (a910) (b7)}
[ postaction=decorate, decoration={markings,
  mark= at position 0.11 with \strarrow, mark= at position 0.27 with \strarrow,
  mark= at position 0.4 with \strarrow, mark= at position 0.53 with \strarrow,
  mark= at position 0.63 with \strarrow, mark= at position 0.725 with \strarrow,
  mark= at position 0.81 with \strarrow, mark= at position 0.92 with \strarrow, }];
\draw [strand] plot[smooth] coordinates {(b5) (a1819) (a1619) (a1516) (a1215) (a1213) (a813) (b1)}
[ postaction=decorate, decoration={markings,
  mark= at position 0.1 with \strarrow, mark= at position 0.24 with \strarrow,
  mark= at position 0.35 with \strarrow, mark= at position 0.46 with \strarrow,
  mark= at position 0.58 with \strarrow, mark= at position 0.735 with \strarrow,
  mark= at position 0.91 with \strarrow }];
  \draw [strand] plot[smooth] coordinates {(b6) (a1017) (a1011) (a811) (a813) (b2)}
[ postaction=decorate, decoration={markings,
  mark= at position 0.14 with \strarrow, mark= at position 0.355 with \strarrow,
  mark= at position 0.5 with \strarrow, mark= at position 0.65 with \strarrow,
  mark= at position 0.86 with \strarrow }];
\draw [strand] plot[smooth] coordinates {(b7) (a89) (a811) (a1112) (a1215) (a1415) (a1421) (b3)}
[ postaction=decorate, decoration={markings,
  mark= at position 0.11 with \strarrow, mark= at position 0.27 with \strarrow,
  mark= at position 0.4 with \strarrow, mark= at position 0.505 with \strarrow,
  mark= at position 0.605 with \strarrow, mark= at position 0.74 with \strarrow,
  mark= at position 0.915 with \strarrow }];


\foreach \n in {1,2,3,4,5,6,7} {\draw (b\n) circle(\bdrydotrad) [fill=gray];}


\foreach \n/\m in {1/567,2/671,3/712,4/123,5/234,6/345,7/456}
{ \draw (\qstart-\seventh*\n:\bigrad) node (q\m) {$\m$}; }

\foreach \m/\a/\r in {156/104/0.58 , 157/63/0.47, 145/160/0.25, 147/15/0.3, 245/220/0.52, 124/295/0.4 }
{ \draw (\a:\r*\bigrad) node (q\m) {$\m$}; }


\foreach \t/\h/\a in {671/712/20, 712/123/20, 345/456/20, 456/567/20, 671/567/-20, 234/123/-20,
  345/234/-20, 567/156/5,156/456/2, 156/157/18, 157/671/3, 147/157/-8, 712/147/-15, 157/145/3,
  145/156/-3, 145/147/-4, 145/245/-25, 456/145/-7, 245/345/-10, 245/124/-2, 124/145/12, 147/124/2,
  124/234/0, 234/245/0, 123/124/-8, 124/712/-3}
{ \draw [quivarrow]  (q\t) edge [bend left=\a] (q\h); }

 \end{tikzpicture}
\]
\caption{The quiver of the Postnikov diagram in Figure~\ref{f:postfree37}
(see Remark~\ref{r:plabicdual})}
\label{f:postfreequiver37}
\end{figure}
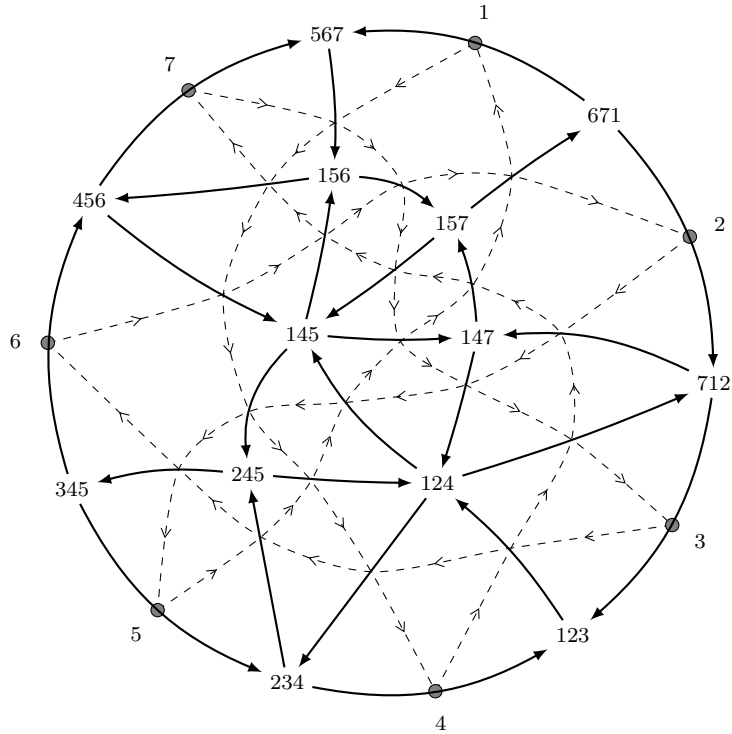

\begin{definition} \label{d:plabic}
A \emph{plabic graph}~\cite[\S11]{postnikov} is a planar
graph embedded into a disk with $n$ vertices on the boundary, each of degree $1$,
and a colouring of the internal vertices with two colours (which we take to be black and white).
In this paper, we will additionally assume that the graph is bipartite,
i.e.\ the end points of internal edges have different colours,
and that no internal vertex has degree $1$.
Note that the boundary vertices may best be considered as the mid-points of half-edges,
which we also call boundary edges.
\end{definition}

Postnikov~\cite[\S14]{postnikov} makes the following definition (see also~\cite[2.1]{GK}).

\begin{definition} \label{d:plabicgraph}
To any Postnikov diagram $D$, there is an associated plabic graph $G(D)$, defined as follows.
The boundary vertices are those of $D$, while the internal vertices correspond to the oriented regions of $D$ and are coloured black or white when the boundary of the region is oriented anticlockwise or clockwise, respectively.
The internal edges of $G(D)$ correspond to the points of intersection of pairs of oriented regions.
For each oriented boundary region of $D$, there is a boundary edge between the vertex corresponding to that region
and the boundary point that it touches.
\end{definition}

The graph $G(D)$ can be embedded in the disk, with each internal vertex mapped to a point inside its corresponding oriented region and internal edges drawn as arcs passing through the two oriented regions and their point of intersection. A boundary edge corresponding to
a boundary oriented region is drawn as an arc inside this region joining the corresponding
internal vertex to the boundary vertex.
Thus $G(D)$ is indeed a plabic graph;
Figure~\ref{f:plabic37} shows an embedded graph
for the Postnikov diagram in Figure~\ref{f:postfree37}.

\begin{figure}
\[
\begin{tikzpicture}[scale=1.1,baseline=(bb.base),
 strand/.style={black,dashed},
 bipedge/.style={black, thick}]

\newcommand{\strarrow}{\arrow{angle 60}}
\newcommand{\bstart}{125} 
\newcommand{\seventh}{51.4} 
\newcommand{\qstart}{150.7} 
\newcommand{\bigrad}{4cm} 
\newcommand{\eps}{11pt} 
\newcommand{\dotrad}{0.1cm} 
\newcommand{\bdrydotrad}{{0.8*\dotrad}} 
\newcommand{\bcolor}{black} 

\path (0,0) node (bb) {};


\draw (0,0) circle(\bigrad) [thick,gray, densely dotted];

\foreach \n in {1,...,7}
{ \coordinate (b\n) at (\bstart-\seventh*\n:\bigrad);
  \draw (\bstart-\seventh*\n:\bigrad+\eps) node {$\n$}; }


\foreach \n/\a/\r in {8/77/0.79, 10/130/0.5, 12/60/0.2, 14/350/0.5, 16/220/0.3, 18/225/0.75, 20/280/0.75,
    9/117/0.77, 11/92/0.38, 13/30/0.7, 15/290/0.05, 17/185/0.7, 19/250/0.55,  21/330/0.75}
{ \coordinate (b\n)  at (\a:\r*\bigrad); }


\foreach \h/\t in {1/8, 2/13, 3/21, 4/20, 5/18, 6/17, 7/9, 8/9, 8/11, 8/13, 9/10, 10/11, 10/17, 11/12,
 12/13, 12/15, 13/14, 14/15, 14/21, 15/16, 16/17, 17/18, 16/19, 18/19, 19/20, 20/21}
{ \draw [bipedge] (b\h)--(b\t); }


\foreach \n in {8,10,12,14,16,18,20} {\draw [\bcolor] (b\n) circle(\dotrad) [fill=\bcolor];}

\foreach \n in {9,11,13,15,17,19,21} {\draw [\bcolor] (b\n) circle(\dotrad) [fill=white];}


\foreach \e/\f/\t in {8/9/0.4, 9/10/0.5, 8/11/0.4, 10/11/0.5, 11/12/0.5, 8/13/0.5, 12/13/0.65, 13/14/0.4, 12/15/0.5,
 14/15/0.5, 15/16/0.5, 16/17/0.65, 10/17/0.6, 17/18/0.45, 18/19/0.5, 19/20/0.5, 20/21/0.5, 16/19/0.5, 14/21/0.5}
{\coordinate (a\e\f) at (${\t}*(b\e) + {1-\t}*(b\f)$); }


\draw [strand] plot[smooth]
coordinates {(b1) (a89) (a910) (a1017) (a1617) (a1619) (a1920) (b4)}
[ postaction=decorate, decoration={markings,
  mark= at position 0.11 with \strarrow, mark= at position 0.255 with \strarrow,
  mark= at position 0.37 with \strarrow, mark= at position 0.52 with \strarrow,
  mark= at position 0.655 with \strarrow, mark= at position 0.775 with \strarrow,
  mark= at position 0.92 with \strarrow }];
\draw [strand] plot[smooth] coordinates {(b2) (a1314) (a1415) (a1516) (a1617) (a1718) (b5)}
[ postaction=decorate, decoration={markings,
  mark= at position 0.13 with \strarrow, mark= at position 0.29 with \strarrow,
  mark= at position 0.47 with \strarrow, mark= at position 0.61 with \strarrow,
  mark= at position 0.75 with \strarrow, mark= at position 0.9 with \strarrow }];
\draw [strand] plot[smooth] coordinates {(b3) (a2021) (a1920) (a1819) (a1718) (b6)}
[ postaction=decorate, decoration={markings,
  mark= at position 0.125 with \strarrow, mark= at position 0.34 with \strarrow,
  mark= at position 0.52 with \strarrow, mark= at position 0.68 with \strarrow,
  mark= at position 0.86 with \strarrow }];
\draw [strand] plot[smooth] coordinates {(b4) (a2021) (a1421) (a1314) (a1213) (a1112) (a1011) (a910) (b7)}
[ postaction=decorate, decoration={markings,
  mark= at position 0.11 with \strarrow, mark= at position 0.27 with \strarrow,
  mark= at position 0.4 with \strarrow, mark= at position 0.53 with \strarrow,
  mark= at position 0.63 with \strarrow, mark= at position 0.725 with \strarrow,
  mark= at position 0.81 with \strarrow, mark= at position 0.92 with \strarrow, }];
\draw [strand] plot[smooth] coordinates {(b5) (a1819) (a1619) (a1516) (a1215) (a1213) (a813) (b1)}
[ postaction=decorate, decoration={markings,
  mark= at position 0.1 with \strarrow, mark= at position 0.24 with \strarrow,
  mark= at position 0.35 with \strarrow, mark= at position 0.46 with \strarrow,
  mark= at position 0.58 with \strarrow, mark= at position 0.735 with \strarrow,
  mark= at position 0.91 with \strarrow }];
  \draw [strand] plot[smooth] coordinates {(b6) (a1017) (a1011) (a811) (a813) (b2)}
[ postaction=decorate, decoration={markings,
  mark= at position 0.14 with \strarrow, mark= at position 0.355 with \strarrow,
  mark= at position 0.5 with \strarrow, mark= at position 0.65 with \strarrow,
  mark= at position 0.86 with \strarrow }];
\draw [strand] plot[smooth] coordinates {(b7) (a89) (a811) (a1112) (a1215) (a1415) (a1421) (b3)}
[ postaction=decorate, decoration={markings,
  mark= at position 0.11 with \strarrow, mark= at position 0.27 with \strarrow,
  mark= at position 0.4 with \strarrow, mark= at position 0.505 with \strarrow,
  mark= at position 0.605 with \strarrow, mark= at position 0.74 with \strarrow,
  mark= at position 0.915 with \strarrow }];


\foreach \n in {1,2,3,4,5,6,7} {\draw (b\n) circle(\bdrydotrad) [fill=gray];}


\foreach \n/\m/\r in {1/567/0.88, 2/671/0.87, 3/712/0.8, 4/123/0.83, 5/234/0.8, 6/345/0.85, 7/456/0.79}
{ \draw (\qstart-\seventh*\n:\r*\bigrad) node (q\m) {$\m$}; }

\foreach \m/\a/\r in {156/104/0.58 , 157/63/0.47, 145/160/0.25, 147/15/0.3, 245/220/0.52, 124/295/0.4 }
{ \draw (\a:\r*\bigrad) node (q\m) {$\m$}; }

 \end{tikzpicture}
\]
\caption{The plabic graph corresponding to the Postnikov diagram in Figure~\ref{f:postfree37}}
\label{f:plabic37}
\end{figure}
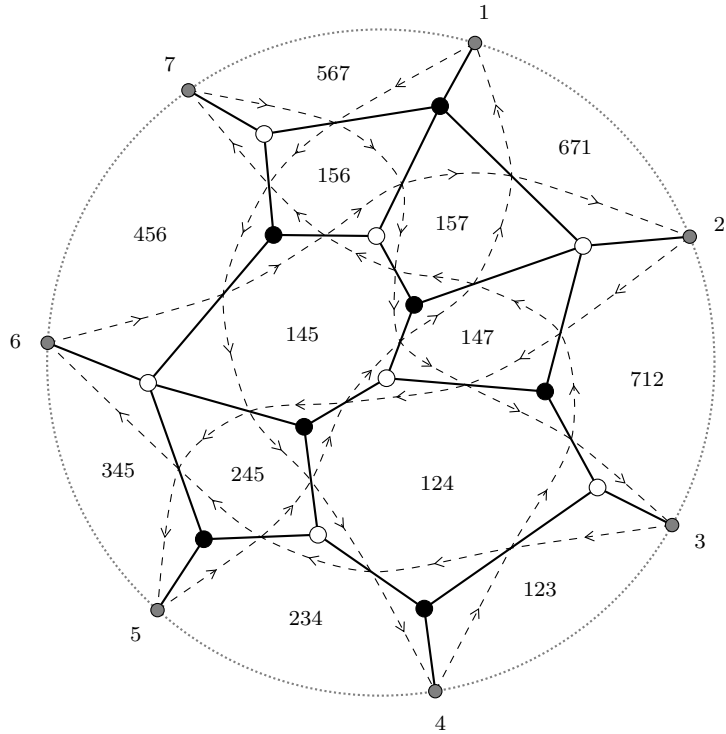

\begin{remark}
Figures~\ref{f:postfreequiver37} and~\ref{f:plabic37} may be considered as pictures of a dimer model,
or bipartite field theory (in the sense of Franco \cite{francopre12}), in a disk.
We will make this more precise in the next section, adapting the
more quiver focussed formalism of
Bocklandt \cite{bocklandt,bocklandt12}, Davison~\cite{davison11} and Broomhead~\cite{broomhead12} to the boundary case.
\end{remark}

\section{Dimer models with boundary} 
\label{s:dimermodels}

In this section, we formalise the notion of a dimer model with boundary
and show how the quiver of a Postnikov diagram can be
interpreted as a dimer model in a disk.
Given a quiver $Q$, we write $\Qcyc$ for the set of oriented cycles in $Q$
(up to cyclic equivalence). We start with a more general definition.

\begin{definition}
A \emph{quiver with faces} is a quiver $Q=(Q_0,Q_1)$, together with a set $Q_2$ of faces
and a map $\bdry\colon Q_2\to \Qcyc$,
which assigns to each $F\in Q_2$ its \emph{boundary} $\bdry F\in \Qcyc$.
\end{definition}

We shall often denote a quiver with faces by the same letter $Q$, regarded now as the
triple $(Q_0,Q_1,Q_2)$. We say that $Q$ is \emph{finite} if $Q_0,Q_1$
and $Q_2$ are all finite sets.
The number of times an arrow $\alpha\in Q_1$ appears in the boundaries of the faces in
$Q_2$ will be called the \emph{face multiplicity} of $\alpha$.
The (unoriented) \emph{incidence graph} of $Q$, at a vertex $i\in Q_0$,
has vertices given by the arrows incident with $i$. The edges between two
arrows $\alpha,\beta$ correspond to the paths of the form
$$
\xymatrix{
\ar[r]^{\alpha} & i \ar[r]^{\beta} &
}
$$
occurring in a cycle bounding a face.

\begin{definition} \label{d:dimermodel}
A (finite, oriented) \emph{dimer model with boundary} is given by a finite
quiver with faces $Q=(Q_0,Q_1,Q_2)$, where $Q_2$ is written as disjoint union $Q_2=Q_2^+\cup Q_2^-$,
satisfying the following properties:
\begin{enumerate}[(a)]
\item the quiver $Q$ has no loops, i.e.\ no 1-cycles, but 2-cycles are allowed,
\item all arrows in $Q_1$ have face multiplicity $1$ (\emph{boundary} arrows) or $2$
(\emph{internal} arrows),
\item each internal arrow lies in a cycle bounding a face
in $Q_2^+$ and in a cycle bounding a face in $Q_2^-$,
\item the incidence graph of $Q$ at each vertex is connected.
\end{enumerate}
Note that, by (b), each incidence graph in (d) must be either a line
(at a \emph{boundary} vertex) or an unoriented cycle (at an \emph{internal} vertex).
\end{definition}

\begin{remark} \label{r:dimermod}
We will only encounter oriented dimer models in this paper, but it is possible to consider
unoriented ones by not writing $Q_2$ as a disjoint union and dropping condition~(c).
We will also only encounter finite dimer models,
but infinite dimer models can also be considered,
e.g.\ the universal cover of any finite dimer model on a torus.
One should then add to (d) the condition that each incidence graph is finite,
so $Q$ is `locally finite'.
We choose not to
require that the quiver $Q$ is connected. However, note that, if it is,
then it is actually strongly connected, because every arrow is contained in a face,
whose boundary also includes a path going in the opposite direction
(cf. \cite[Def.~6.1]{bocklandt}).
Condition~(a) is included to avoid unpleasant degeneracies in Definition~\ref{d:dimeralgebra}.
\end{remark}

If we realise each face $F$ of a dimer model $Q$ as a polygon,
whose edges are labelled (cyclically) by the arrows in $\bdry F$,
then we may, in the usual way, form a topological space $|Q|$ by gluing together
the edges of the polygons labelled by the same arrows, in the manner indicated by the directions of the arrows.
Then, arguing as in~\cite[Lemma 6.4]{bocklandt}, we see that conditions (b) and (d)
ensure that $|Q|$ is a surface with boundary, while (c) means that it can be oriented
by declaring the boundary cycles of faces in $Q_2^+$ to be oriented positive (or anticlockwise)
and those of faces in $Q_2^-$ to be negative (or clockwise).
Note also that each component of the boundary of $|Q|$ is (identified with) an
unoriented cycle of boundary arrows in $Q$.
If $Q$ is a dimer model with boundary, for which $|Q|$ is homeomorphic to a disk,
then we will call $Q$ a \emph{dimer model in a disk}.

On the other hand, suppose that we are given an embedding of a finite quiver $Q=(Q_0,Q_1)$
into a compact (oriented) surface $\Sigma$ with
boundary, such that the complement of $Q$ in $\Sigma$ is a disjoint union of disks, each of which
is bounded by a cycle in $Q$. Then we may make $Q$ into an (oriented) dimer model in the above
sense, for which $|Q|\cong \Sigma$, by setting $Q_2$ to be the set of connected components of the
complement of $Q$ in $\Sigma$, which can be separated into $Q_2^+$ and $Q_2^-$
when $\Sigma$ is oriented.

\begin{remark} \label{r:plabicdual}
By Remark~\ref{r:quiverembedding}, we have precisely such an embedding
of the quiver $Q(D)$, associated to a Postnikov diagram $D$ in a disk (see Figure~\ref{f:postfreequiver37}).
Thus $Q(D)$ can be considered to be not just a quiver, but actually
a dimer model in a disk in the above sense.
As is well-known, the Postnikov diagram can be reconstructed by drawing strand segments inside each face of $Q(D)$ (in an embedding into a disk) from the mid-point of each arrow to the mid-point of the next arrow in the cycle which is
the boundary of the face, oriented in the same direction.
(In fact, the strands correspond to the zig-zag paths in the disk; see~\cite[\S5]{bocklandt12},~\cite[\S\S1.7, 2.1]{GK}).
Note that carrying out this procedure for an arbitrary dimer model in a disk will give a diagram satisfying the local axioms
(a1) -- (a3) of Definition~\ref{d:asd}, but not necessarily satisfying the
global axioms (b1) and (b2).

We may also describe $Q(D)$, as a quiver with faces, directly and more combinatorially
as the dual of the plabic graph $G(D)$,
as in \cite[\S 2.1]{francopre12} for a general bipartite field theory.
In other words, $Q_0(D)$ is in bijection with the set of faces of $G(D)$ and $Q_1(D)$ with the set of edges,
with boundary arrows corresponding to boundary edges.
An arrow joins the
two faces in $G(D)$ that share the corresponding edge and is oriented so that the black vertex is
on the left and/or the white vertex is on the right.
The faces (plaquettes in \cite{francopre12}) $F\in Q^+_2(D)$ correspond to the internal black vertices, while
those in $Q^-_2(D)$ correspond to the white vertices.
The boundary $\bdry F$ is given by the arrows corresponding
to the edges incident with the internal vertex of $G(D)$ corresponding to $F$,
ordered anticlockwise round black vertices and clockwise round white ones.
This duality is illustrated in Figure~\ref{f:plabicdual}, for $Q(D)$ as in Figure~\ref{f:postfreequiver37} and
$G(D)$ as in Figure~\ref{f:plabic37}.
\end{remark}

\begin{figure}
\[
\begin{tikzpicture}[scale=1.1,baseline=(bb.base),
 quivarrow/.style={black, -latex, thick},
 bipedge/.style={black, thick}]

\newcommand{\bstart}{125} 
\newcommand{\seventh}{51.4} 
\newcommand{\qstart}{150.7} 
\newcommand{\bigrad}{4cm} 
\newcommand{\eps}{11pt} 
\newcommand{\dotrad}{0.1cm} 
\newcommand{\bdrydotrad}{{0.8*\dotrad}} 
\newcommand{\bcolor}{black} 

\path (0,0) node (bb) {};

\foreach \n in {1,...,7}
{ \coordinate (b\n) at (\bstart-\seventh*\n:\bigrad);
  \draw (\bstart-\seventh*\n:\bigrad+\eps) node {$\n$}; }


\foreach \n/\a/\r in {8/77/0.79, 10/130/0.5, 12/60/0.2, 14/350/0.5, 16/220/0.3, 18/225/0.75, 20/280/0.75,
    9/117/0.77, 11/92/0.38, 13/30/0.7, 15/290/0.05, 17/185/0.7, 19/250/0.55,  21/330/0.75}
{ \coordinate (b\n)  at (\a:\r*\bigrad); }


\foreach \h/\t in {1/8, 2/13, 3/21, 4/20, 5/18, 6/17, 7/9, 8/9, 8/11, 8/13, 9/10, 10/11, 10/17, 11/12,
 12/13, 12/15, 13/14, 14/15, 14/21, 15/16, 16/17, 17/18, 16/19, 18/19, 19/20, 20/21}
{ \draw [bipedge] (b\h)--(b\t); }


\foreach \n in {8,10,12,14,16,18,20} {\draw [\bcolor] (b\n) circle(\dotrad) [fill=\bcolor];}

\foreach \n in {9,11,13,15,17,19,21} {\draw [\bcolor] (b\n) circle(\dotrad) [fill=white];}


\foreach \e/\f/\t in {8/9/0.4, 9/10/0.5, 8/11/0.4, 10/11/0.5, 11/12/0.5, 8/13/0.5, 12/13/0.65, 13/14/0.4, 12/15/0.5,
 14/15/0.5, 15/16/0.5, 16/17/0.65, 10/17/0.6, 17/18/0.45, 18/19/0.5, 19/20/0.5, 20/21/0.5, 16/19/0.5, 14/21/0.5}
{\coordinate (a\e\f) at (${\t}*(b\e) + {1-\t}*(b\f)$); }


\foreach \n in {1,2,3,4,5,6,7} {\draw (b\n) circle(\bdrydotrad) [fill=gray];}


\foreach \n/\m in {1/567,2/671,3/712,4/123,5/234,6/345,7/456}
{ \draw (\qstart-\seventh*\n:\bigrad) node (q\m) {$\m$}; }

\foreach \m/\a/\r in {156/104/0.58 , 157/63/0.47, 145/160/0.25, 147/15/0.3, 245/220/0.52, 124/295/0.4 }
{ \draw (\a:\r*\bigrad) node (q\m) {$\m$}; }


\foreach \t/\h/\a in {671/712/20, 712/123/20, 345/456/20, 456/567/20, 671/567/-20, 234/123/-20,
  345/234/-20, 567/156/5,156/456/2, 156/157/18, 157/671/3, 147/157/-8, 712/147/-15, 157/145/3,
  145/156/-3, 145/147/-4, 145/245/-25, 456/145/-7, 245/345/-10, 245/124/-2, 124/145/12, 147/124/2,
  124/234/0, 234/245/0, 123/124/-8, 124/712/-3}
{ \draw [quivarrow]  (q\t) edge [bend left=\a] (q\h); }

 \end{tikzpicture}
\]
\caption{The quiver and plabic graph associated to the Postnikov diagram in Figure~\ref{f:postfree37}}
\label{f:plabicdual}
\end{figure}
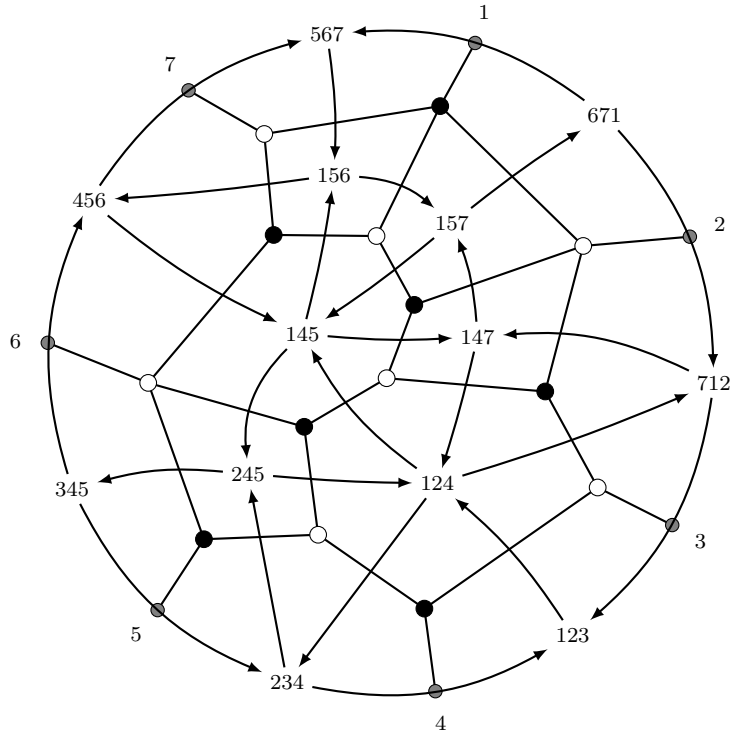

\begin{definition} \label{d:dimeralgebra}
Given a dimer model with boundary $Q$,
we define the \emph{dimer algebra} $A_Q$
as follows.
For each internal arrow $\alpha\in Q_1$,
there are (unique) faces $F^+\in Q_2^+$ and $F^-\in Q_2^-$
such that $\bdry F^{\pm}=\alpha p^{\pm}_{\alpha}$,
for paths $p^+_{\alpha}$ and $p^-_{\alpha}$ from $h\alpha$ to $t\alpha$.
Then the dimer algebra $A_Q$ is the quotient of the path algebra
$\F Q$ by the relations
\begin{equation} \label{e:definingrelations}
p^+_{\alpha}=p^-_{\alpha},
\end{equation}
for every internal arrow $\alpha\in Q_1$.
\end{definition}

\begin{remark} \label{r:potential}
Note that the orientation is not strictly necessary to define $A_Q$;
we only need to know that $F^{\pm}$ are the two faces that contain the internal arrow
$\alpha$ in their boundaries, but not which is which.
On the other hand, given the orientation, we may also define a (super)potential $W_Q$ by the usual formula
(e.g.\ \cite[\S 2]{fhkvw})
\[
  W_Q=\sum_{F\in Q_2^+}\bdry F-\sum_{F\in Q_2^-}\bdry F,
\]
defined up to cyclic equivalence.
Then $A_Q$ may also be described as the quotient of
the path algebra $\F Q$ by the so-called `F-term' relations
$$\partial_{\alpha}(W_Q)=0,$$
for each \emph{internal} arrow $\alpha$ in $Q$, where $\partial_{\alpha}$ is the usual cyclic derivative
(e.g.~\cite[\S1.3]{ginz} or \cite[\S3]{bocklandt}).
Thus, in the absence of boundary arrows in $Q$, the algebra $A_Q$
is the usual Jacobi (or superpotential) algebra
(e.g.~\cite[\S3]{bocklandt}, \cite[\S2.1.3]{broomhead12}).

In the boundary case, the idea of only considering F-term relations for internal arrows has arisen independently in work of Franco~\cite[\S 6.1]{francopre12}
and Buan-Iyama-Reiten-Smith~\cite[Defn.~1.1]{BIRS11}.
In the latter case, a slightly different approach is used, whereby any arrow joining two boundary (or frozen) vertices
is considered to be frozen and hence does not contribute an F-term relation, while
in our case, we may have internal arrows with both end-points being boundary vertices (see Figures~\ref{f:twistquiver} and~\ref{f:annulus}).
Buan-Iyama-Reiten-Smith~\cite{BIRS11} give a description~\cite[Thm.~6.6]{BIRS11} of the endomorphism algebras of some cluster-tilting objects over preprojective algebras as frozen
Jacobian algebras in the sense of~\cite[Defn~1.1]{BIRS11}.
Demonet-Luo~\cite{demonetluo} give a $2$-Calabi-Yau categorification $\mathcal{C}$ of the Grassmannian $Gr(2,n)$ using frozen Jacobian algebras in the sense of~\cite[Defn.~1.1]{BIRS11}; these algebras are the endomorphism algebras of cluster-tilting objects in $\C$ (see \cite[Thm.~1.3]{demonetluo}).
\end{remark}

\begin{definition} \label{d:centralelement}
We write $A_D$ for the dimer algebra $A_{Q(D)}$ associated to the dimer model $Q(D)$.
It follows from the defining relations that, for any vertex $I\in Q_0(D)$,
the product in $A_D$ of the arrows in any cycle that starts at $I$ and bounds
a face is the same. We denote this element by $u_I$, and write
\begin{equation} \label{e:defu}
  u=\sum_{I\in Q_0(D)} u_I.
\end{equation}
It similarly follows from the relations that $u$ commutes with every arrow
and hence is in the centre of $A_D$.
\end{definition}

\begin{remark} \label{r:commutation}
A dimer algebra is a special case of an algebra defined by a quiver $Q$ with commutation relations,
that is, it is a quotient $\F Q/I$, where the ideal $I$ is generated by $\{p_i-q_i : i\in \I\}$ for
paths $p_i$ and $q_i$ in $Q$ with the same start and end points. 
Any such algebra has a couple of elementary properties, which we note down for future reference
and provide proofs for the convenience of the reader, although these properties seem well-known and `obvious'.
Firstly, 
\begin{enumerate}
\item[(a)]  every path in $Q$ gives a non-zero element of $\F Q/I$. 
\end{enumerate}
This is an immediate corollary of a stronger property, that requires one to first observe that
commutation relations define a natural equivalence relation $\sim$ on the set of paths
in $Q$, generated by requiring that $p\sim q$ if $p$ has a subpath $p_i$ and $q$ is obtained 
from $p$ by replacing $p_i$ with $q_i$, for some $i\in\I$. 
Then, secondly,
\begin{enumerate}
\item[(b)]  the equivalence classes of $\sim$ form a basis of $\F Q/I$.
\end{enumerate}
Note that any equivalence class $\overline{p}$ of paths does determine a well-defined element $p+I$ of $\F Q/I$
and these elements evidently span. To see that they are independent, observe that there is a well-defined
algebra $\F(Q/\qrelation)$ with basis given by the set of equivalence classes of $\sim$, 
with multiplication given by concatenation, where possible, and zero otherwise, extended linearly.
The natural map 
\[ \pi\colon \F Q \to \F(Q/\qrelation) \colon p\mapsto \overline{p}
\]
has each $p_i-q_i$, for $i\in \I$, in its kernel and so induces a map
$\overline{\pi}\colon \F Q/I \to \F(Q/\qrelation)$, which is the inverse of the map $\overline{p}\mapsto p+I$.

Alternatively, (a) has a direct proof as follows.
Let $M$ be the algebra of square matrices over $\F$ of size
$|Q_0|$.
Then (numbering of the vertices $Q_0$)
there is a morphism of algebras $\theta\colon \F Q\to M$ 
taking the idempotent $e_i$ to the elementary matrix $E_{ii}$ and any arrow from $i$ to $j$ to the matrix $E_{ji}$.
The kernel of $\theta$ contains every
possible commutation relation, and hence
contains $I$, so $\theta$ induces an algebra
morphism $\overline{\theta}\colon \F Q/I \to M$. However,
$\overline{\theta}$ sends every path in $Q$ to a
non-zero element of $M$.
\end{remark}

\section{Weights} 
\label{s:weights}

In this section, we introduce a weighting on the arrows in the quiver
$Q=Q(D)$ of a Postnikov diagram $D$, via elements of $\mathbb{N}C_0$.
Our main aim is to compute the weight of the boundary of a face of $Q(D)$ and
the sum of the weights of the arrows incident with a vertex of $Q(D)$.

These results will be then used in Section~\ref{s:legalarrow} in order to find the
first step in a path between any pair of vertices in $Q_0$ whose weight does
not include every element of $C_0$; we call such a path
a \emph{minimal path}. Such a minimal path itself will be constructed in
Section~\ref{s:minimalpath}, and is a key component in the proof of
surjectivity of the morphism we shall construct in Section~\ref{s:isomorphism}
from the total algebra to the endomorphism algebra.

\begin{definition}
\label{d:weight}
For any arrow $\alpha\colon I\to J$ in $Q_1(D)$,
let $c\in C_1$ be the number of the strand crossing $\alpha$ from right to left
and $d\in C_1$ the number of the strand crossing $\alpha$ from left to right.
In other words, $J=I-c+d$. Identifying a subset of $C_0$ with the sum of
its elements in $\mathbb{N}C_0$, we give $\alpha$ the \emph{weight}
$$\weight_{\alpha}=(c,d)_0;$$
see Figure~\ref{f:arrowconvention2}. The weight of a path in $Q(D)$ is then
defined to be the sum of the weights of the arrows in the path.
\end{definition}

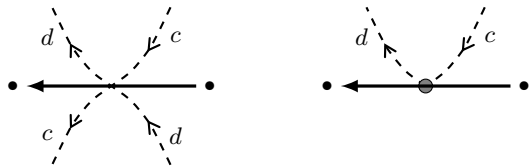
\begin{figure}[b]
\[
\begin{tikzpicture}[scale=1.3,baseline=(bb.base),
  strand/.style={black,dashed,thick},
  quivarrow/.style={black, -latex, very thick}]
\newcommand{\strarrow}{\arrow{angle 60}}
\path (0,0) node (bb) {}; 

\draw [strand] plot[smooth]
coordinates {(0.6,0.8) (0.4,0.4) (0.2,0.15) (0,0) (-0.2,-0.15) (-0.4,-0.4) (-0.6,-0.8)}
[ postaction=decorate, decoration={markings,
  mark= at position 0.25 with \strarrow, mark= at position 0.8 with \strarrow}];
\draw [strand] plot[smooth]
coordinates {(0.6,-0.8) (0.4,-0.4) (0.2,-0.15) (0,0) (-0.2,0.15) (-0.4,0.4) (-0.6,0.8)}
[ postaction=decorate, decoration={markings,
  mark= at position 0.25 with \strarrow, mark= at position 0.8 with \strarrow}];

\draw (0.65,0.5) node {\small $c$};
\draw (0.65,-0.5) node {\small $d$};
\draw (-0.65,0.5) node {\small $d$};
\draw (-0.65,-0.5) node {\small $c$};
\draw (-1,0) node (h) {$\bullet$};
\draw (1,0) node (t) {$\bullet$};
\draw [quivarrow] (t)--(h);

\end{tikzpicture}
\qquad\qquad
\begin{tikzpicture}[scale=1.3,baseline=(bb.base),
  strand/.style={black,dashed,thick},
  quivarrow/.style={black, -latex, very thick}]
\newcommand{\strarrow}{\arrow{angle 60}}
\newcommand{\bdrydotrad}{0.07cm} 
\path (0,0) node (bb) {}; 

\draw [strand] plot[smooth]
coordinates {(0.6,0.8) (0.4,0.4) (0.2,0.15) (0,0)}
[ postaction=decorate, decoration={markings,
  mark= at position 0.5 with \strarrow}];
\draw [strand] plot[smooth]
coordinates {(0,0) (-0.2,0.15) (-0.4,0.4) (-0.6,0.8)}
[ postaction=decorate, decoration={markings,
  mark= at position 0.6 with \strarrow}];

\draw (0.65,0.5) node {\small $c$};
\draw (-0.65,0.5) node {\small $d$};
\draw (0,0) circle(\bdrydotrad) [fill=gray];
\draw (-1,0) node (h) {$\bullet$};
\draw (1,0) node (t) {$\bullet$};
\draw [quivarrow] (t)--(h);
\end{tikzpicture}
\]
\caption{Internal and boundary arrows of weight $(c,d)_0$}
\label{f:arrowconvention2}
\end{figure}

\begin{remark}
Note that by definition the weight of an arrow cannot
be the whole of $C_0$. It also cannot be zero (i.e.\ the empty set) because crossing strands are distinct by Definition~\ref{d:asd}(b1).
The weights of the arrows in the quiver in
Figure~\ref{f:postfreequiver37} are shown in Figure~\ref{f:postfreelabels37}.
\end{remark}

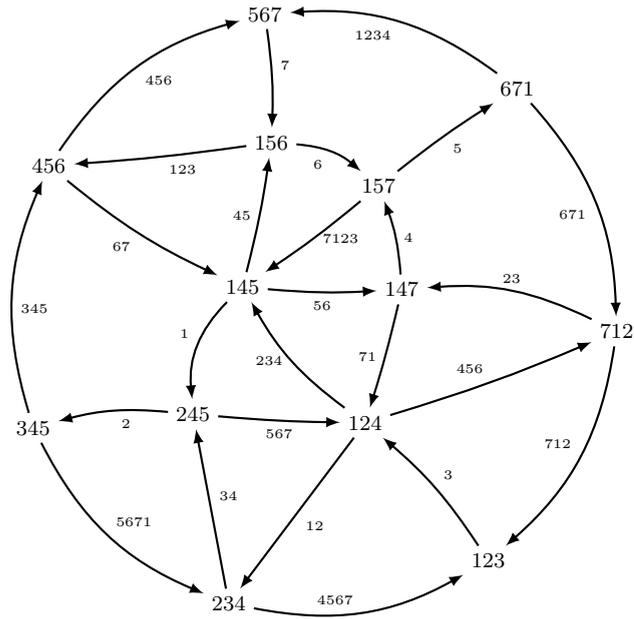
\begin{figure}
\[
\begin{tikzpicture}[scale=1.0,baseline=(bb.base), auto,
 quivarrow/.style={black, -latex, thick}]

\newcommand{\bstart}{125} 
\newcommand{\seventh}{51.4} 
\newcommand{\qstart}{150.7} 
\newcommand{\bigrad}{4cm} 
\newcommand{\eps}{11pt} 

\path (0,0) node (bb) {};


\foreach \n/\a/\r in {8/77/0.79, 10/130/0.5, 12/60/0.2, 14/350/0.5, 16/220/0.3, 18/225/0.75, 20/280/0.75,
    9/117/0.77, 11/92/0.38, 13/30/0.7, 15/290/0.05, 17/185/0.7, 19/250/0.55,  21/330/0.75}
{ \coordinate (b\n)  at (\a:\r*\bigrad); }


\foreach \n/\m in {1/567,2/671,3/712,4/123,5/234,6/345,7/456}
{ \draw (\qstart-\seventh*\n:\bigrad) node (q\m) {$\m$}; }

\foreach \m/\a/\r in {156/104/0.58 , 157/63/0.47, 145/160/0.25, 147/15/0.3, 245/220/0.52, 124/295/0.4 }
{ \draw (\a:\r*\bigrad) node (q\m) {$\m$}; }


\foreach \t/\h/\a/\lab in {671/567/-20/1234, 234/123/-20/4567, 345/234/-20/5671,
  567/156/5/7, 156/456/2/123, 245/345/-10/2, 124/234/0/12, 124/712/-3/456}
{ \draw [quivarrow]  (q\t) edge [bend left=\a] node {\tiny\lab} (q\h) ; }


\foreach \t/\h/\a/\lab in {671/712/20/671, 712/123/20/712, 345/456/20/345, 456/567/20/456,
  456/145/-7/67, 234/245/0/34, 123/124/-8/3, 157/671/3/5,
  145/245/-25/1, 245/124/-2/567, 145/147/-4/56, 156/157/18/6}
{ \draw [quivarrow]  (q\t) edge [bend left=\a] node [swap] {\tiny\lab} (q\h) ; }


\foreach \t/\h/\a/\lab/\posit in {157/145/3/7123/right, 124/145/12/234/left, 145/156/-3/45/left,
 147/124/2/71/left, 147/157/-8/4/right, 712/147/-15/23/above}
{ \draw [quivarrow]  (q\t) edge [bend left=\a] node [\posit] {\tiny\lab} (q\h) ; }

 \end{tikzpicture}
\]
\caption{Weights on the quiver of the Postnikov diagram in Figure~\ref{f:postfreequiver37}}
\label{f:postfreelabels37}
\end{figure}

\begin{lemma} \label{l:faceordering}
Let $F$ be a face in $Q_2(D)$.
The ordering of the arrows in $\bdry F$ induces a cyclic
ordering on the strands in $D$ entering $F$ on its boundary.
Then the starting points of these strands appear on the boundary
of the disk in the same order. The same result holds for the end points
of the strands exiting $F$.
\end{lemma}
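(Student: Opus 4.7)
The plan is to reduce the statement to a disjointness claim about certain arcs in the annular region $A$ obtained by removing the interior of $F$ from the disk. Labelling the arrows of $\partial F$ cyclically as $\alpha_1,\ldots,\alpha_m$, at each crossing $\alpha_i$ exactly one strand begins its segment on $\partial F$ there (the \emph{entering} strand $e_i$) and exactly one finishes its segment there (the \emph{exiting} strand $o_i$), with $o_{i+1}=e_i$. In particular, consecutive entering strands $e_i$ and $e_{i+1}$ already cross at $\alpha_{i+1}$. Write $\sigma_i$ for the initial \emph{tail} of $e_i$ from its starting point $a_i$ on the boundary of the disk to $\alpha_i$; each $\sigma_i$ lies in $A$. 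My goal is to show that the $\sigma_i$ are pairwise disjoint simple arcs in $A$, after which a standard planar topology argument (pairwise disjoint arcs between the two boundary circles of an annulus induce matching cyclic orders on the endpoints) yields the lemma for entering strands.

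For a consecutive pair, if $\sigma_i$ and $\sigma_{i+1}$ met at a point $Q$, then $e_i$ and $e_{i+1}$ would cross twice, at $Q$ and at $\alpha_{i+1}$. Tracing orientations, $Q$ lies on $e_i$ before $\alpha_i$ and hence before $\alpha_{i+1}$, while $Q$ lies on $e_{i+1}$ before $\alpha_{i+1}$, so both strands would be directed from $Q$ to $\alpha_{i+1}$, contradicting axiom (b2). For a non-consecutive pair I would argue by minimal counterexample: among crossing pairs $(\sigma_i,\sigma_j)$, pick one minimising the cyclic distance on $\partial F$, and choose a point $Q\in\sigma_i\cap\sigma_j$ so that the concatenation $\gamma$ of the portion of $\sigma_i$ from $\alpha_i$ to $Q$ with the portion of $\sigma_j$ from $Q$ to $\alpha_j$ is a simple arc in $A$. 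Together with the shorter arc $\beta\subset\partial F$ from $\alpha_i$ to $\alpha_j$, this gives a simple closed curve $\gamma\cup\beta$ in the disk which, by minimality, is crossed by no tail $\sigma_k$ with $k$ strictly between $i$ and $j$ on $\partial F$. A Jordan-curve analysis then forces each intermediate $a_k$ onto a particular side of $\gamma\cup\beta$, producing a contradiction with the boundary positions of $a_i$ and $a_j$.

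The statement for exiting strands follows by the symmetric argument applied to the final segments of $o_i$, or equivalently by applying the entering-strand case to the Postnikov diagram with all strand orientations reversed (which only interchanges the roles of $Q_2^+$ and $Q_2^-$). The principal obstacle will be the non-consecutive case: ensuring that $\gamma$ can be taken simple by a judicious choice of $Q$ (for example, by first replacing $\sigma_i$ by its portion up to its first intersection with $\sigma_j$ as traversed from $\alpha_i$), and then carrying out the precise case analysis showing that the resulting Jordan curve really forces the desired contradiction. Only planar topology together with axioms (b1) and (b2), combined with the consecutive case, are needed; the difficulty is purely topological.
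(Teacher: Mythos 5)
Your proposal follows essentially the same route as the paper: your tails $\sigma_i$ are exactly the paper's backward rays $\mathcal{R}_{\alpha_i}$, your consecutive case is the paper's base case (a contradiction to axiom (b2) via the second crossing of $e_i$ and $e_{i+1}$ at $\alpha_{i+1}$), and your minimal-counterexample Jordan-curve step is the contrapositive form of the paper's induction on the length of the connecting path $\pi(\alpha,\beta)$ along $\partial F$, which likewise argues that an intermediate backward ray, starting on the disk boundary and ending on the enclosed arc, would have to cross one of the two given rays. The conclusion from pairwise disjointness of arcs in the annulus, and the treatment of exiting strands by reversing orientations, are also as in the paper.
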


\begin{proof}
We argue as in the proof of necessity in~\cite[Thm. 6.6]{bocklandt12}.
For an arrow $\alpha$ in $\bdry F$, let $\R_{\alpha}$ denote the part
of the strand entering $F$ at $\alpha$ from its starting point until its crossing
point with $\alpha$ (we refer to this as a \emph{backward ray}).
Note that if $\alpha$ is a boundary arrow then $R_{\alpha}$ is a single point.

Suppose that $\alpha$ and $\beta$ are arrows in $\bdry F$
and that $R_{\alpha}$ and $R_{\beta}$ cross outside $F$.
Let $\pi(\alpha,\beta)$ be the path in $\bdry F$ strictly between $\alpha$ and $\beta$
which is homotopic to the composition of the path from the intersection of $\alpha$
with $R_{\alpha}$, backwards along $R_{\alpha}$ to its last crossing point with
$R_{\beta}$, and the path from this crossing point to the intersection of $\beta$ and
$R_{\beta}$.

We show by induction on $l$ that there are no crossings outside $F$ between
$R_{\alpha}$ and $R_{\beta}$ for which the length of $\pi(\alpha,\beta)$ is $l$.
If $R_{\alpha}$ and $R_{\beta}$ cross outside $F$ and the length of $\pi(\alpha,\beta)$
is zero, then we have a contradiction to Definition~\ref{d:asd}(b2).
So fix $l\geq 1$ and suppose the result is true for smaller $l$.
If $R_{\alpha}$ and $R_{\beta}$ cross and $\pi(\alpha,\beta)$ has length $l$,
then, for any arrow $\gamma$ in $\pi(\alpha,\beta)$, $R_{\gamma}$ must cross
$R_{\alpha}$ or $R_{\beta}$. Since $\pi(\alpha,\beta)=\pi(\gamma,\beta)\circ\gamma\circ\pi(\alpha,\gamma)$, we see that both $\pi(\alpha,\gamma)$ and $\pi(\gamma,\beta)$
have length less than $l$, so we have a contradiction and $R_{\alpha}$ and $R_{\beta}$
cannot cross.

By induction, none of the backward rays $R_{\alpha}$ cross, and the result follows.
\end{proof}

\begin{figure}
\[
\begin{tikzpicture}[scale=0.8,baseline=(bb.base),
 strand/.style={black, dashed},
 quivarrow/.style={black, -latex, thick}]

\newcommand{\strarrow}{\arrow{angle 60}}
\newcommand{\bstart}{0} 
\newcommand{\pstart}{54} 
\newcommand{\estart}{100} 
\newcommand{\fifth}{72} 
\newcommand{\arrowlabeldisp}{-17} 
\newcommand{\raylabeldisp}{-62} 
\newcommand{\rayrad}{3.5cm} 
\newcommand{\bigrad}{4.5cm} 
\newcommand{\littlerad}{2.5cm} 
\newcommand{\middlerad}{3.5cm} 
\newcommand{\eps}{11pt} 
\newcommand{\dotrad}{0.1cm} 
\newcommand{\bdrydotrad}{{0.8*\dotrad}} 
\newcommand{\outercircle} 

\path (0,0) node (bb) {};


\draw (0,0) circle(\bigrad) [thick,gray,dotted];

\foreach \n in {1,...,5}
{ \coordinate (b\n) at (\bstart+\fifth*\n:\bigrad);
  \draw (\bstart+\fifth*\n:\bigrad+\eps) node {$c_{\n}$}; }


\foreach \n in {1,2,3,4,5} {\draw (b\n) circle(\bdrydotrad) [fill=gray];}

\foreach \n in {1,...,5}
{ \coordinate (p\n) at (\pstart+\fifth*\n:\littlerad);}

\foreach \n in {1,...,5}
{ \coordinate (t\n) at (\estart+\fifth*\n:\middlerad);}

%
\foreach \n/\m in {1/2,2/3,3/4,4/5,5/1} {\draw [quivarrow,shorten >=5pt,shorten <=5pt] (p\n) -- (p\m); }


\foreach \n in {1,2,3,4,5}
\draw (p\n) node {$\bullet$};

%
\foreach \e/\f/\t in {1/2/0.5, 2/3/0.5, 3/4/0.4, 4/5/0.5, 5/1/0.5}
{\coordinate (m\e\f) at (${\t}*(p\e) + {1-\t}*(p\f)$); }

%
\foreach \i/\j/\k in {3/2/1,4/3/2,5/4/3,1/5/4,2/1/5}
{\draw [strand] plot[smooth] coordinates {(b\j) (m\k\j) (m\j\i) (t\j)}
[ postaction=decorate, decoration={markings,
 mark= at position 0.25 with \strarrow, mark= at position 0.6 with \strarrow,
 mark= at position 0.9 with \strarrow}];}

\foreach \n in {1,2,3,4,5}
{\draw (\pstart+\fifth*\n+\arrowlabeldisp:\littlerad) node {$\alpha_{\n}$}; }

\foreach \n in {1,2,3,4,5}
{\draw (\pstart+\fifth*\n+\raylabeldisp:\rayrad) node {$\mathcal{R}_{\alpha_{\n}}$}; }

\end{tikzpicture}
\]
\caption{The ordering of strands around a face}
\label{f:faceordering}
\end{figure}
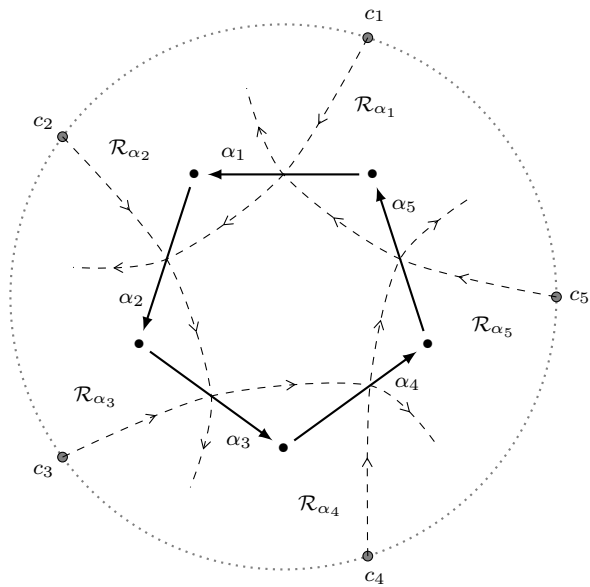

\begin{corollary} \label{c:cycleweight}
For every $F\in Q_2(D)$, the weight of $\bdry F$ is $C_0$.
\end{corollary}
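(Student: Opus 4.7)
The plan is to deduce this corollary from Lemma~\ref{l:faceordering} by analysing locally, at each arrow of $\bdry F$, which two strands of $D$ cross there and with what labels. Write $\bdry F = \alpha_1\cdots\alpha_m$ in cyclic order, and let $s_i$ denote the strand entering $F$ at $\alpha_i$, with starting point $\ell_i \in C_1$. By Lemma~\ref{l:faceordering}, the $\ell_i$ appear on the boundary of the disk in the same cyclic order as the $\alpha_i$. The key geometric observation is that between consecutive arrows $\alpha_i$ and $\alpha_{i+1}$ of $\bdry F$, the strand $s_i$ forms an edge of the boundary of the oriented region of $D$ corresponding to $F$, so $s_i$ also crosses $\alpha_{i+1}$. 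It follows that the two strands meeting at $\alpha_i$ are precisely $s_{i-1}$ (the strand that entered $F$ at $\alpha_{i-1}$ and is now exiting at $\alpha_i$) and $s_i$, with starting points $\ell_{i-1}$ and $\ell_i$ respectively.

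The next step is to identify, using Definition~\ref{d:weight} together with the orientation of $F$, which of $\ell_{i-1}$ and $\ell_i$ plays the role of $c$ (the right-to-left strand) and which plays $d$ (the left-to-right strand) at $\alpha_i$. When $F \in Q_2^+$, the boundary arrows run anticlockwise around $F$, so $F$ lies on the left of each $\alpha_i$; the entering strand $s_i$ then crosses $\alpha_i$ from right to left, giving $c = \ell_i$ and $d = \ell_{i-1}$, and hence $\weight_{\alpha_i} = (\ell_i, \ell_{i-1})_0$. The case $F \in Q_2^-$ is analogous with the roles of $c$ and $d$ swapped, yielding $\weight_{\alpha_i} = (\ell_{i-1}, \ell_i)_0$.

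In either case the $m$ resulting cyclic intervals are precisely the short open arcs between consecutive $\ell_j$'s in the cyclic order on $C_1$, and because these $\ell_j$ are distinct and cyclically ordered, these arcs form a partition of $C_0$. Summing over the arrows of $\bdry F$ therefore yields $\weight_{\bdry F} = C_0$. I expect the main obstacle to be the orientation bookkeeping in the second step, namely carefully matching the colour convention of Remark~\ref{r:plabicdual} (which defines $Q_2^{\pm}$) with the local right/left convention of Definition~\ref{d:weight}, so as to confirm that each weight is the short arc containing no other $\ell_j$ rather than its complementary long arc; once this is settled, the partition conclusion follows immediately from the cyclic order statement in Lemma~\ref{l:faceordering}.
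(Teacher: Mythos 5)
Your proposal is correct and follows essentially the same route as the paper: the paper's proof also labels the strand entering $F$ at each arrow $\alpha_i$ by $c_i$, observes that $\weight_{\alpha_i}=(c_i,c_{i-1})_0$ (the entering and exiting strands at $\alpha_i$ being those entering at $\alpha_i$ and $\alpha_{i-1}$), and concludes from Lemma~\ref{l:faceordering} that these intervals tile $C_0$ exactly once. Your extra care over the $Q_2^{\pm}$ orientation bookkeeping is exactly the content the paper compresses into ``a similar argument applies to faces in $Q_2^-(D)$.''
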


\begin{proof}
Suppose first that $F\in Q_2^+(D)$. Let $\alpha_1,\ldots ,\alpha_r$ be the arrows in
$\bdry(F)$, taken in anticlockwise order.
Let $c_i$ be the number of strand $\R_{\alpha_i}$, for $i=1,\ldots ,r$.
Then the weight of $\alpha_i$ is $(c_i,c_{i-1})_0$ (with subscripts modulo $r$),
and the result follows from Lemma~\ref{l:faceordering}. A similar argument applies
to faces in $Q_2^-(D)$.
\end{proof}

\begin{remark} \label{r:ADgraded}
It follows from Corollary~\ref{c:cycleweight} that the weighting of arrows
$\alpha\mapsto w_{\alpha}$ given in Definition~\ref{d:weight} induces an
$\mathbb{N}C_0$-grading on $A_D$ (see Definition~\ref{d:dimeralgebra}).
\end{remark}

\begin{remark} \label{r:perfectmatchings}
For $i\in C_0$, let
$\mathcal{P}_i=\{\alpha\in Q_1(D) : i\in \weight_{\alpha}\}$.
By Corollary~\ref{c:cycleweight}, the cycle bounding each face
in $Q_2(D)$ contains exactly one arrow from $\mathcal{P}_i$.
Thus each such $\mathcal{P}_i$ may be considered as a perfect matching
on $Q(D)$ or, equivalently, on the dual $G(D)$
(see \cite[\S 2.2]{francopre12}).
\end{remark}

Next, we consider the strands around a vertex in a similar way. We fix
an internal vertex $I$ in $Q_0(D)$, and suppose that there are $2r$ arrows incident with
$I$ in $Q(D)$. We label them $\alpha_i$, $i=1,2,\ldots ,2r$, in order anticlockwise
around $I$, where the $\alpha_i$ for $i$ odd are outgoing arrows and the others are incoming,
and treat the subscripts modulo $2r$.
Let $I_i$ be the end-point of $\alpha_i$ not equal to $I$.
For $i$ odd, we suppose that strand $\newt_i$ crosses $\alpha_i$ from right to
left (looking along the arrow), and $\newt_{i+1}$ crosses $\alpha_i$ from left
to right, so that $\alpha_i$ has weight $(\newt_i,\newt_{i+1})_0$ (again treating
 subscripts modulo $2r$). Note that if $i$ is
even, then $\alpha_i$ also has weight $(\newt_i,\newt_{i+1})_0$.
See Figure~\ref{f:internalincidence} for the case $r=3$. Note that we have:

\begin{equation}
\label{e:Iidescription}
  I_i=\begin{cases} I-\newt_i+\newt_{i+1}, & \text{$i$ odd;} \\
  I-\newt_{i+1}+\newt_{i}, & \text{$i$ even.}
  \end{cases}
\end{equation}

\begin{figure}
\[
\begin{tikzpicture}[scale=0.8,baseline=(bb.base),
 strand/.style={black, dashed},
 quivarrow/.style={black, -latex, thick}]

\newcommand{\strarrow}{\arrow{angle 60}}
\newcommand{\bstart}{0} 
\newcommand{\sstart}{-10} 
\newcommand{\tstart}{15} 
\newcommand{\csstart}{20} 
\newcommand{\asstart}{-76.5} 
\newcommand{\sixth}{60} 
\newcommand{\asrad}{4.2cm} 
\newcommand{\arrowlabeldisp}{-23} 
\newcommand{\raylabeldisp}{-62} 
\newcommand{\rayrad}{3.5cm} 
\newcommand{\bigrad}{4.5cm} 
\newcommand{\littlerad}{1cm} 
\newcommand{\middlerad}{3.5cm} 
\newcommand{\eps}{6pt} 
\newcommand{\dotrad}{0.1cm} 
\newcommand{\bdrydotrad}{{0.8*\dotrad}} 
\newcommand{\arlength}{4.3cm} 

\path (0,0) node (bb) {};

\foreach \n in {1,...,6}
{ \coordinate (b\n) at (\bstart+\sixth*\n:\bigrad);
  \draw (\bstart+\sixth*\n:\bigrad+\eps) node {$I_{\n}$}; }


\draw (0,0) node {$I$};

\draw [quivarrow,shorten >=5pt,shorten <=9pt] (b5) -- ++(\arlength,0);

\foreach \n in {1,...,6}
{ \coordinate (s\n) at (\sstart+\sixth*\n:\bigrad+\eps);}

\foreach \n in {1,...,6}
{ \coordinate (t\n) at (\tstart+\sixth*\n:\bigrad+\eps);}

%
\foreach \n in {1,2,3,4,5,6} {\draw [quivarrow,shorten >=8pt,shorten <=8pt] (0,0) -- (b\n); }

%
\foreach \i/\j in {1/0.5, 2/0.5, 3/0.5, 4/0.5, 5/0.5, 6/0.5}
{ \coordinate (m\i) at (${\j}*(b\i)$); }


\foreach \i/\j in {2/3,6/1}
{\draw [strand] plot[smooth] coordinates {(s\i) (m\i) (m\j) (t\j)}
[ postaction=decorate, decoration={markings,
 mark= at position 0.2 with \strarrow, mark= at position 0.5 with
\strarrow,
 mark= at position 0.8 with \strarrow}];}

\coordinate (mm) at ($(b5)+(0.5*\arlength,0)$);
\coordinate (f5e) at ($(mm)+(1.2,-1.2)$);
\draw [strand] plot[smooth] coordinates {(s4) (m4) (m5) (mm) (f5e)}
[ postaction=decorate,decoration={markings,
mark= at position 0.15 with \strarrow,
mark= at position 0.36 with \strarrow,
mark= at position 0.65 with \strarrow,
mark= at position 0.93 with \strarrow}];

\draw ($(m6)+(0,-0.6)$) node {$X_5$};
\draw ($(m1)+(0.6,-0.2)$) node {$X'_5$};
\draw ($(m5)+(-0.5,-0.35)$) node {$Y'_5$};
\draw ($(mm)+(-0.1,-0.5)$) node {$Y_5$};
\draw ($(b5)+(1.2,-0.4)$) node {$\beta_5$};
\draw ($(m1)+(0.6,-0.2)$) node {$X'_5$};
\draw ($(mm)+(0,\arlength*0.35)$) node {$F_5$};

\foreach \i/\j in {2/1,4/3,6/5}
{\draw [strand] plot[smooth] coordinates {(t\i) (m\i) (m\j) (s\j)}
[ postaction=decorate, decoration={markings,
 mark= at position 0.2 with \strarrow, mark= at position 0.5 with
\strarrow,
 mark= at position 0.8 with \strarrow}];}

\foreach \n in {1,2,3,4,5,6}
{\draw (\bstart+\sixth*\n+\arrowlabeldisp:\littlerad) node
{$\alpha_{\n}$}; }

\foreach \n in {1,3,5}
{\draw (\asstart+\sixth*\n:\asrad) node {$f_{\n}$}; }

\foreach \n in {2,4,6}
{\draw (\csstart+\sixth*\n:\asrad) node {$f_{\n}$}; }

\end{tikzpicture}
\]
\caption{The arrows incident with an internal vertex $I$}
\label{f:internalincidence}
\end{figure}
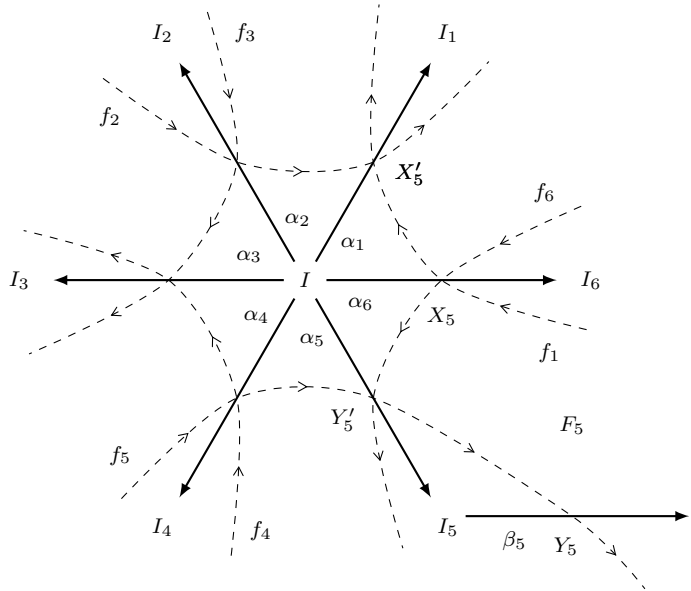

\begin{lemma} \label{l:internalordering}
Let $I$ be an internal vertex of $Q(D)$, with notation as defined above.
Then
\begin{itemize}
\item[(a)] For each $i$, the strand labels $f_i,f_{i+1},f_{i+2}$ occur in clockwise order
in $C$.
\item[(b)]
The strand labels $f_1,f_3,\ldots ,f_{2r-1}$ appear in anticlockwise order in $C$, and
\item[(c)]
The strand labels $f_2,f_4,\ldots ,f_{2r}$ appear in anticlockwise order in $C$.
\end{itemize}
\end{lemma}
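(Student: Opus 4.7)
The plan is to adapt the backward-ray argument used in the proof of Lemma~\ref{l:faceordering} to the case of an internal vertex $I$. The first task will be to determine, for each strand $f_i$, which of its two $I$-adjacent crossings $\alpha_{i-1}, \alpha_i$ comes first along its direction. The crossing conventions preceding the lemma give that $f_i$ crosses $\alpha_{i-1}$ from left to right and $\alpha_i$ from right to left, consistent with axiom~(a3). Combining this with the fact that odd-indexed arrows point outward from $I$ and even-indexed ones inward, a brief local analysis will show that strands with odd index traverse the boundary of $I$ anticlockwise (around $I$), while strands with even index traverse it clockwise. Hence the first $I$-adjacent crossing along $f_i$ will be $\alpha_{i-1}$ when $i$ is odd and $\alpha_i$ when $i$ is even. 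The corresponding backward ray $\R_i$ (from the starting vertex of $f_i$ to this first crossing) will exit the vicinity of $I$ on the clockwise side of $\alpha_{i-1}$, respectively on the anticlockwise side of $\alpha_i$, in the two cases.

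The second task will be to prove that distinct backward rays $\R_i$ and $\R_j$ cannot cross outside a neighbourhood of $I$. This will follow by the same inductive argument used in the proof of Lemma~\ref{l:faceordering}: a crossing of $\R_i$ and $\R_j$ would, together with the structure of the arrows between $\alpha_i$ and $\alpha_j$ around $I$, give rise via axiom~(b2) to a similar crossing for an intermediate index, contradicting the base case of the induction.

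The conclusion will then be immediate. The non-crossing of the backward rays forces the starting vertices of the $f_i$ on the boundary of the disk to appear in the cyclic order in which the $\R_i$ exit the vicinity of $I$. A direct count from the first step places the odd-indexed starting vertices at angular positions just clockwise of the angular position of $\alpha_{i-1}$ around $I$, which yields the anticlockwise cyclic order $f_1, f_3, \ldots, f_{2r-1}$; this is (b). The even-indexed starting vertices appear just anticlockwise of the angular position of $\alpha_i$, giving (c). Part (a) will follow by inspecting the resulting full cyclic order: near each angular position of $\alpha_j$ with $j$ even, the starting vertices of $f_{j+1}$ (odd, on the clockwise side) and $f_j$ (even, on the anticlockwise side) occur adjacently, which combined across all such $j$ forces $f_i, f_{i+1}, f_{i+2}$ into clockwise order for every $i$.

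I expect the main obstacle to be the first step, specifically the careful case analysis of the direction of $f_i$ (whether it arrives at $\alpha_{i-1}$ or $\alpha_i$ first) and the resulting determination of where $\R_i$ exits the vicinity of $I$. The non-crossing induction itself should be essentially formal given the template of Lemma~\ref{l:faceordering}.
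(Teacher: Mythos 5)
Your first step is sound: the odd-indexed strands do travel anticlockwise around $I$ and meet $\alpha_{i-1}$ first, while the even-indexed ones travel clockwise and meet $\alpha_i$ first. The gap is in the second and third steps: the backward rays are \emph{not} pairwise non-crossing, and the conclusion you draw from non-crossing --- that all $2r$ starting points occur on the boundary circle in the single cyclic order in which the rays leave the vicinity of $I$ --- is strictly stronger than the lemma and is false. Concretely, for each even $j$ the rays $\R_j$ and $\R_{j+1}$ terminate at the \emph{same} point, namely the crossing of $f_j$ with $f_{j+1}$, which is the crossing point of the arrow $\alpha_j$; since both rays are oriented towards this common point, Definition~\ref{d:asd}(b2) forbids any further crossing between them, so $\R_j\cup\R_{j+1}$ is a simple arc joining the boundary points $f_j$ and $f_{j+1}$. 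If no two backward rays crossed, the $r$ arcs $\R_2\cup\R_3,\ \R_4\cup\R_5,\ \dots,\ \R_{2r}\cup\R_1$ would be pairwise disjoint, forcing the pairs $\{f_2,f_3\},\{f_4,f_5\},\dots,\{f_{2r},f_1\}$ (i.e.\ the pairs of strands crossing the incoming arrows) to be pairwise non-interleaved on $C$. This already fails at the internal vertex $145$ of Figure~\ref{f:postfree37}: reading off Figure~\ref{f:postfreelabels37}, the strands crossing the three incoming arrows are $\{7,4\}$, $\{6,1\}$ and $\{2,5\}$, and the chords $\{4,7\}$ and $\{1,6\}$ interleave, so some ray in $\{\R_2,\R_3\}$ must cross some ray in $\{\R_4,\R_5\}$. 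The underlying obstruction is Proposition~\ref{p:internalformula}: the weights $w_{\alpha_i}=(f_i,f_{i+1})_0$ concatenate into a path winding $r-1$ times around $C$, so for $r\geq 3$ the full sequence $f_1,\dots,f_{2r}$ cannot sit in any single cyclic order, and no non-crossing picture of all $2r$ backward rays can exist. Note also that the inductive template of Lemma~\ref{l:faceordering} does not transfer formally: its base case uses that strands entering a face at adjacent arrows of $\bdry F$ actually cross each other with orientations contradicting (b2), and for rays such as $\R_1$ and $\R_3$, which end on the distinct arrows $\alpha_{2r}$ and $\alpha_2$, no such crossing is available.

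The paper avoids all of this by never comparing all $2r$ strands at once. Part (a) is obtained by applying Lemma~\ref{l:faceordering} directly to the face $F_i$ whose boundary contains $\alpha_i\alpha_{i+1}$: the strands $f_i,f_{i+1},f_{i+2}$ are precisely the three strands leaving $F_i$ at $\alpha_i$, $\beta_i$, $\alpha_{i+1}$. Parts (b) and (c) are proved by comparing only \emph{consecutive odd} (resp.\ even) strands, and using the \emph{forward} portions of $f_i$ and $f_{i+2}$ after they leave $F_i$; the non-crossing of these forward portions is deduced from Lemma~\ref{l:faceordering} applied inside $F_i$, and only then is the inductive argument run on the odd (resp.\ even) subfamily alone. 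If you wish to rescue a ray-based argument, you must at minimum separate the odd and even families and supply a new base case for the induction; as written, the global non-crossing claim at the heart of your proof is false.
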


\begin{proof}
We argue as in the proof of necessity in~\cite[Thm. 6.6]{bocklandt12}.
Fix $i\in [1,2r]$, odd, and let $F_i$ be the face whose boundary includes
$\alpha_{i}\alpha_{i+1}$. Let $\beta_i$ be the arrow following $\alpha_i$ in $\bdry F_i$.
The strands leaving $F_i$ at $\alpha_{i}$, $\beta_i$, and $\alpha_{i+1}$ (ordered
clockwise around $\bdry F_i$) are $f_{i},f_{i+1}$ and
$f_{i+2}$ respectively. Part (a) for $i$ odd then follows from an application of Lemma~\ref{l:faceordering}.

Let $X_i$ be the point on $\alpha_{i+1}$ where strand
$f_{i+2}$ leaves $F_i$ and let $Y_i$ be the point on
$\beta_i$ where strand $f_i$ leaves $F_i$ (note that $Y_i$ could coincide with $X_i$). By Lemma~\ref{l:faceordering},
strands $f_{i+2}$ and $f_i$ do not cross after they leave $F_i$.

Let $X'_i$ be the next crossing point of strand $f_{i+2}$
with an arrow after $X_i$ (i.e.\ on arrow $\alpha_{i+2}$), and let $Y'_i$ be the previous crossing point of strand
$f_i$ with an arrow before $Y_i$ (i.e.\ on arrow $\alpha_i$).
Since the part of strand $f_i$ between $Y'_i$ and $Y_i$ lies inside $F_i$
and the part of strand $f_{i+2}$ between $X_i$ and $X'_i$ lies outside $F_i$,
it follows that strand $f_{i+2}$, after $X'_i$, and strand $f_i$, after $Y'_i$,
do not cross.
See Figure~\ref{f:internalincidence} for an illustration in the case $i=5$.
Part (b) then follows, via an argument similar to that used in Lemma~\ref{l:faceordering}.

The proofs of part (a) for $i$ even and part (c)
are similar to the above.
\end{proof}

Properties (b) and (c) of Lemma~\ref{l:internalordering}
are similar to the notion of `proper ordering' introduced 
by Gullota~\cite[\S3]{gulotta08} for dimer models on a torus $T^2$. 
In this case, each strand (or zig-zag path) 
has a two component winding number, i.e.\ 
its homology class in $H_1(T^2)\cong \mathbb{Z}^2$.
The dimer model is said to be properly
ordered if the circular order of the strands around any vertex
of the bipartite graph is the same as the circular order of 
the directions of their winding numbers.

\begin{definition} \label{d:xiI}
Let $I$ be an internal vertex in $Q_0(D)$ and let the $\newt_i$ be defined as above.
Then consecutive intervals
$(\newt_i,\newt_{i+1})_0$ of vertices follow on from each other,
and do not overlap. Thus gluing these intervals together creates a path on $C$ which we denote $\xi(I)$.
\end{definition}

We have:

\begin{proposition} \label{p:internalformula}
Fix an internal vertex $I$ of $Q(D)$ incident with $2r$
arrows. Then the path $\xi(I)$ wraps around $C$ exactly
$r-1$ times.
\end{proposition}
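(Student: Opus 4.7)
The path $\xi(I)$ is, by Definition~\ref{d:xiI}, the concatenation (in cyclic order) of the clockwise arcs on $C$ of lengths $d_i := |(f_i, f_{i+1})_0|$, for $i = 1, \ldots, 2r$. Since each constituent arc is oriented clockwise and each $d_i$ counts the number of clockwise edge-steps from $f_i$ to $f_{i+1}$, the wrapping number of $\xi(I)$ around $C$ equals $\bigl(\sum_{i=1}^{2r} d_i\bigr) / n$. Thus it suffices to prove the identity $\sum_{i=1}^{2r} d_i = (r-1) n$.

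The natural way to evaluate this sum is to group consecutive pairs. By Lemma~\ref{l:internalordering}(a), the labels $f_{2j-1}, f_{2j}, f_{2j+1}$ occur in clockwise cyclic order on $C$, so the clockwise arc from $f_{2j-1}$ to $f_{2j+1}$ passing through $f_{2j}$ does not complete a full turn of $C$. Hence $d_{2j-1} + d_{2j} = b_j$, where $b_j \in \{1, \ldots, n-1\}$ denotes the (minimal) clockwise distance on $C$ from $f_{2j-1}$ to $f_{2j+1}$.

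Now apply Lemma~\ref{l:internalordering}(b): the odd-indexed labels $f_1, f_3, \ldots, f_{2r-1}$ are $r$ distinct points of $C$ in anticlockwise cyclic order. Consequently, the anticlockwise distances $a_j$ from $f_{2j-1}$ to $f_{2j+1}$ partition the edges of $C$, so $\sum_{j=1}^{r} a_j = n$, and $b_j = n - a_j$. Combining,
\[
\sum_{i=1}^{2r} d_i \;=\; \sum_{j=1}^{r}(d_{2j-1} + d_{2j}) \;=\; \sum_{j=1}^{r} b_j \;=\; rn - \sum_{j=1}^{r} a_j \;=\; (r-1)n,
\]
which yields the claimed wrapping number $r-1$.

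The one delicate point is the identification $d_{2j-1} + d_{2j} = b_j$ rather than $b_j + n$: a pair of clockwise steps could in principle already wind once around $C$, but the clockwise cyclic order of $f_{2j-1}, f_{2j}, f_{2j+1}$ from Lemma~\ref{l:internalordering}(a), together with the distinctness of $f_{2j-1}$ and $f_{2j+1}$ afforded by Lemma~\ref{l:internalordering}(b) (for $r \geq 2$; the case $r = 1$ is degenerate), rules this out. If preferred, Lemma~\ref{l:internalordering}(c) furnishes an entirely parallel argument grouping the even-indexed labels instead.
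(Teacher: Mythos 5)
Your proof is correct and takes essentially the same route as the paper's: the paper sums the weights $(f_{2i-1},f_{2i})_0+(f_{2i},f_{2i+1})_0$ in $\mathbb{N}C_0$, using Lemma~\ref{l:internalordering}(a) to merge each consecutive pair into the clockwise interval from $f_{2i-1}$ to $f_{2i+1}$ and Lemma~\ref{l:internalordering}(b) to see that the complementary intervals tile $C_0$ exactly once — which is precisely your computation $\sum_j a_j = n$, $b_j = n - a_j$ phrased with cardinalities instead of multisets.
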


\begin{proof}
By Lemma~\ref{l:internalordering}(b), we have:
\begin{equation}
\sum_{i=1}^r (f_{2i+1},f_{2i-1})_0=C_0,
\label{e:around}
\end{equation}
By Lemma~\ref{l:internalordering}(a), we have
$$(f_{2i-1},f_{2i+1})_0=(f_{2i-1},f_{2i})_0 +(f_{2i},f_{2i+1}),$$
So
$$(f_{2i+1},f_{2i-1})_0=C_0-(f_{2i-1},f_{2i})_0-(f_{2i},f_{2i+1})_0.$$
Hence, by~\eqref{e:around},
$$\sum_{i=1}^r \left( C_0-(f_{2i-1},f_{2i})_0-(f_{2i},f_{2i+1})_0 \right)=C_0.$$
So
\begin{equation}
\label{e:sumoflabels}
\sum_{i=1}^r \left( (f_{2i-1},f_{2i})_0+(f_{2i},f_{2i+1})\right)=(r-1)C_0.
\end{equation}
As mentioned above, consecutive intervals $(f_i,f_{i+1})_0$ of
vertices follow on from each other and the result follows.
\end{proof}

We next consider the case of a boundary vertex $\newL_j$ in $Q(D)$.
If there is an arrow from $\newL_j$ to $\newL_{j+1}$ (respectively, $\newL_{j-1}$)
we take $\alpha_{\out}^+$ (respectively, $\alpha_{\out}^-$) to be this arrow;
otherwise we take it to be the arrow outgoing from $\newL_j$ immediately clockwise
(respectively, anticlockwise) of this arrow. Let $W_{\out}(\newL_j)$ denote the set
of all arrows (whether outgoing or ingoing) incident with $\newL_j$ in the wedge
clockwise of $\alpha_{\out}^+$ and anticlockwise of $\alpha_{\out}^-$ (including both $\alpha_{\out}^+$ and $\alpha_{\out}^-$).
Let $r_{\out}$ be the number of arrows in $W_{\out}(\newL_j)$ starting at $\newL_j$,
so we have that $|W_{\out}|=2r_{\out}-1$. See Figure~\ref{f:externalincidence} for an
illustration.

If there is an arrow from $\newL_{j+1}$ (respectively, $\newL_{j-1}$) to
$\newL_{j}$ we take $\alpha_{\inn}^+$ (respectively,
$\alpha_{\inn}^-$) to be the this arrow; otherwise we take it to be the arrow incoming
to $\newL_j$ immediately clockwise (respectively, anticlockwise) of this arrow.
Let $W_{\inn}(\newL_j)$ denote the set of all arrows (whether outgoing or ingoing)
incident with $\newL_j$ in the wedge clockwise of $\alpha_{\inn}^+$ and anticlockwise of $\alpha_{\inn}^-$ (including both $\alpha_{\inn}^+$ and $\alpha_{\inn}^-$).
Let $r_{\inn}$ be the number of arrows in $W_{\inn}(\newL_j)$ ending at $\newL_j$,
so we have that $|W_{\inn}|=2r_{\inn}-1$.

We write
$$W_{\out}(\newL_j)=\{\alpha_1,\alpha_2,\ldots ,\alpha_{2r_{\out}-1}\},$$
numbering the arrows anticlockwise around $\newL_j$.
Note that the $\alpha_i$ for $i$ odd are outgoing from
$\newL_j$ and the others are incoming to $\newL_j$.

There are two possibilities for arrows incident with $\newL_j$ not lying in $W_{\out}(\newL_j)$.
If the arrow between $\newL_j$ and $\newL_{j-1}$ (respectively, $\newL_{j+1}$)
points towards $\newL_j$ (and so does not lie in $W_{\out}(\newL_j)$), label it $\alpha_0$
(respectively, $\alpha_{2r}$), where $r=r_{\out}$. Thus the arrows incident with $I$ are
$\alpha_0,\alpha_1, \ldots ,\alpha_{2r-1},\alpha_{2r}$, in anticlockwise order
around $i$, with the first and last arrows in the list appearing only when they are defined.

As before, we set $I_i$ be the end-point of $\alpha_i$ not equal to $I$.
And for $i$ odd, we suppose that strand $\newt_i$ crosses $\alpha_i$ from right to
left (looking along the arrow), and $\newt_{i+1}$ crosses $\alpha_i$ from left
to right, so that $\alpha_i$ has weight $(\newt_i,\newt_{i+1})_0$. Note that if $i$ is
even, then $\alpha_i$ also has weight $(\newt_i,\newt_{i+1})_0$.
Note that statement~\eqref{e:Iidescription} holds in this case also. See Figure~\ref{f:externalincidence} for an illustration of the possible cases.

Since $E_j=E_{j-1}\cup \{j\}\setminus \{j-k\}$ and strand $f_1$ crosses
the arrow between $E_{j-1}$ and $E_j$ (whichever direction it is in), with
$E_j$ on its left, we always have that $f_1=j$. Similarly, we always have that
$f_{2r}=j+1$.

We make similar definitions for the wedge $W_{\inn}(\newL_j)$.

\begin{figure}
\[
\begin{tikzpicture}[scale=0.85,baseline=(bb.base),
 strand/.style={black, dashed},
 quivarrow/.style={black, -latex, thick}]

\newcommand{\strarrow}{\arrow{angle 60}}
\newcommand{\bstart}{90} 
\newcommand{\twelfth}{30} 
\newcommand{\strandth}{10} 
\newcommand{\strandrad}{5cm} 
\newcommand{\strandlabelrad}{5.4cm} 
\newcommand{\bigrad}{4.5cm} 
\newcommand{\vertexlabelrad}{4.75cm} 
\newcommand{\dotrad}{0.1cm} 
\newcommand{\bdrydotrad}{{0.8*\dotrad}} 
\newcommand{\arrowlabelrad}{0.3*\bigrad} 
\newcommand{\alstart}{79} 
\newcommand{\shaderad}{4.275cm} 

\path (0,0) node (bb) {};

\foreach \n in {1,...,12}
{ \coordinate (b\n) at (\bstart+\twelfth*\n:\bigrad);
  \draw (\bstart+\twelfth*\n:\bigrad) node {};
  }


\coordinate (bb1) at (\bstart+\twelfth:\shaderad);

\foreach \n in {1,...,12}
{ \coordinate (v\n) at (\bstart+\twelfth*\n:\vertexlabelrad);}

\foreach \n in {1,...,36}
{ \coordinate (s\n) at (\bstart+\strandth*\n:\strandrad);}

\foreach \n in {1,...,36}
{ \coordinate (sl\n) at (\bstart+\strandth*\n:\strandlabelrad);}

\foreach \n in {1,...,12}
{ \coordinate (al\n) at (\alstart+\twelfth*\n:\arrowlabelrad);}

\foreach \n in {1,3,5} {\draw [quivarrow,shorten >=5pt,shorten <=12pt] (0,0) -- (b\n); }

\foreach \n in {2,4} {\draw [quivarrow,shorten >=12pt,shorten <=5pt] (b\n) -- (0,0); }

%
\foreach \i/\j in {1/0.5, 2/0.5, 3/0.5, 4/0.5, 5/0.5, 6/0.5, 7/0.5, 8/0.5, 9/0.5, 10/0.5, 11/0.5, 12/0.5}
{ \coordinate (m\i) at (${\j}*(b\i)$); }


\draw [strand] plot coordinates
{(m1) (s4)}[postaction=decorate, decoration={markings, mark= at position 0.5 with
\strarrow}];

\draw [strand] plot[smooth] coordinates
{(s5) (m2) (m3) (s10)}[postaction=decorate, decoration={markings,
mark= at position 0.22 with \strarrow,
mark= at position 0.5 with \strarrow,
mark= at position 0.78 with \strarrow
}];

\draw [strand] plot[smooth] coordinates
{(s7) (m2) (m1)}[postaction=decorate, decoration={markings,
mark= at position 0.4 with \strarrow,
mark= at position 0.88 with \strarrow
}];

\draw [strand] plot[smooth] coordinates
{(s13) (m4) (m3) (s8)}[postaction=decorate, decoration={markings,
mark= at position 0.22 with \strarrow,
mark= at position 0.5 with \strarrow,
mark= at position 0.78 with \strarrow
}];

\draw [strand] plot[smooth] coordinates
{(s11) (m4) (m5)}[postaction=decorate, decoration={markings,
mark= at position 0.4 with \strarrow,
mark= at position 0.88 with \strarrow
}];

\draw [strand] plot coordinates
{(m5) (s14)}[postaction=decorate, decoration={markings, mark= at position 0.5 with
\strarrow}];

\draw (sl5) node {\small $f_3$};
\draw (sl7) node[left=0pt,below=5pt] {\small $f_2=j-k$};
\draw (sl11) node[left=0pt,below=-5pt] {\small $f_5=j-k+1$};
\draw (sl13) node {\small $f_4$};
\draw ($(m1)+(0.73,0)$) node {\small $f_1=j$};
\draw ($(m5)+(0.95,-0.2)$) node {\small $f_6=j+1$};

\foreach \n in {1,2,3,4,5}
{\draw (\alstart+\twelfth*\n:
\arrowlabelrad) node
{$\alpha_{\n}$}; }

\draw (0,0) node {$E_j$};

\draw (v1) node {$E_{j-1}$};
\draw (v5) node {$E_{j+1}$};


\draw (m1) circle(\bdrydotrad) [fill=gray];
\draw (m5) circle(\bdrydotrad) [fill=gray];


\draw[fill=gray, opacity=0.1] (0,0)--(bb1)
arc (120:240:\shaderad) -- cycle;

\end{tikzpicture}
\hspace*{-3.2cm}
\begin{tikzpicture}[scale=0.85,baseline=(bb.base),
 strand/.style={black, dashed},
 quivarrow/.style={black, -latex, thick}]

\newcommand{\strarrow}{\arrow{angle 60}}
\newcommand{\bstart}{90} 
\newcommand{\twelfth}{30} 
\newcommand{\strandth}{10} 
\newcommand{\strandrad}{5cm} 
\newcommand{\strandlabelrad}{5.4cm} 
\newcommand{\bigrad}{4.5cm} 
\newcommand{\vertexlabelrad}{4.75cm} 
\newcommand{\dotrad}{0.1cm} 
\newcommand{\bdrydotrad}{{0.8*\dotrad}} 
\newcommand{\arrowlabelrad}{0.3*\bigrad} 
\newcommand{\alstart}{79} 
\newcommand{\shaderad}{4.275cm} 

\path (0,0) node (bb) {};

\foreach \n in {1,...,12}
{ \coordinate (b\n) at (\bstart+\twelfth*\n:\bigrad);
  }


\coordinate (bb1) at (\bstart+\twelfth:\shaderad);

\foreach \n in {1,...,12}
{ \coordinate (v\n) at (\bstart+\twelfth*\n:\vertexlabelrad);}

\foreach \n in {1,...,36}
{ \coordinate (s\n) at (\bstart+\strandth*\n:\strandrad);
}

\foreach \n in {1,...,36}
{ \coordinate (sl\n) at (\bstart+\strandth*\n:\strandlabelrad);}

\foreach \n in {1,...,12}
{ \coordinate (al\n) at (\alstart+\twelfth*\n:\arrowlabelrad);}

\foreach \n in {1,3,5} {\draw [quivarrow,shorten >=5pt,shorten <=12pt] (0,0) -- (b\n); }

\foreach \n in {2,4,12} {\draw [quivarrow,shorten >=12pt,shorten <=5pt] (b\n) -- (0,0); }

%
\foreach \i/\j in {1/0.5, 2/0.5, 3/0.5, 4/0.5, 5/0.5, 6/0.5, 7/0.5, 8/0.5, 9/0.5, 10/0.5, 11/0.5, 12/0.5}
{ \coordinate (m\i) at (${\j}*(b\i)$); }


\draw [strand] plot coordinates
{(s1) (m12)}[postaction=decorate, decoration={markings,
mark= at position 0.5 with \strarrow}];

\draw [strand] plot[smooth] coordinates
{(m12) (m1) (s4)}[postaction=decorate, decoration={markings,
mark= at position 0.12 with \strarrow,
mark= at position 0.6 with \strarrow}];

\draw [strand] plot[smooth] coordinates
{(s5) (m2) (m3) (s10)}[postaction=decorate, decoration={markings,
mark= at position 0.22 with \strarrow,
mark= at position 0.5 with \strarrow,
mark= at position 0.78 with \strarrow
}];

\draw [strand] plot[smooth] coordinates
{(s7) (m2) (m1) (s2)}[postaction=decorate, decoration={markings,
mark= at position 0.22 with \strarrow,
mark= at position 0.5 with \strarrow,
mark= at position 0.78 with \strarrow
}];

\draw [strand] plot[smooth] coordinates
{(s13) (m4) (m3) (s8)}[postaction=decorate, decoration={markings,
mark= at position 0.22 with \strarrow,
mark= at position 0.5 with \strarrow,
mark= at position 0.78 with \strarrow
}];

\draw [strand] plot[smooth] coordinates
{(s11) (m4) (m5)}[postaction=decorate, decoration={markings,
mark= at position 0.4 with \strarrow,
mark= at position 0.88 with \strarrow
}];

\draw [strand] plot coordinates
{(m5) (s14)}[postaction=decorate, decoration={markings, mark= at position 0.5 with
\strarrow}];

\draw (sl5) node {\small $f_3$};
\draw (sl7) node {\small $f_2$};
\draw (sl11) node[left=0pt,below=-5pt] {\small $f_5=j-k+1$};
\draw (sl13) node {\small $f_4$};
\draw ($(m12)+(0.73,0)$) node {\small $f_1=j$};
\draw ($(m5)+(0.95,-0.2)$) node {\small $f_6=j+1$};

\foreach \n in {0,1,2,3,4,5}
{\draw (\alstart+\twelfth*\n:
\arrowlabelrad) node
{$\alpha_{\n}$}; }

\draw (0,0) node {$E_j$};

\draw (v12) node {$E_{j-1}$};
\draw (v5) node {$E_{j+1}$};


\draw (m5) circle(\bdrydotrad) [fill=gray];
\draw (m12) circle(\bdrydotrad) [fill=gray];


\draw[fill=gray, opacity=0.1] (0,0)--(bb1)
arc (120:240:\shaderad) -- cycle;

\end{tikzpicture}
\]
\[
\begin{tikzpicture}[scale=0.85,baseline=(bb.base),
 strand/.style={black, dashed},
 quivarrow/.style={black, -latex, thick}]

\newcommand{\strarrow}{\arrow{angle 60}}
\newcommand{\bstart}{90} 
\newcommand{\twelfth}{30} 
\newcommand{\strandth}{10} 
\newcommand{\strandrad}{5cm} 
\newcommand{\strandlabelrad}{5.4cm} 
\newcommand{\bigrad}{4.5cm} 
\newcommand{\vertexlabelrad}{4.75cm} 
\newcommand{\dotrad}{0.1cm} 
\newcommand{\bdrydotrad}{{0.8*\dotrad}} 
\newcommand{\arrowlabelrad}{0.3*\bigrad} 
\newcommand{\alstart}{79} 
\newcommand{\shaderad}{4.275cm} 

\path (0,0) node (bb) {};

\foreach \n in {1,...,12}
{ \coordinate (b\n) at (\bstart+\twelfth*\n:\bigrad);
  }


\coordinate (bb1) at (\bstart+\twelfth:\shaderad);

\foreach \n in {1,...,12}
{ \coordinate (v\n) at (\bstart+\twelfth*\n:\vertexlabelrad);
}

\foreach \n in {1,...,36}
{ \coordinate (s\n) at (\bstart+\strandth*\n:\strandrad);
}

\foreach \n in {1,...,36}
{ \coordinate (sl\n) at (\bstart+\strandth*\n:\strandlabelrad);}

\foreach \n in {1,...,12}
{ \coordinate (al\n) at (\alstart+\twelfth*\n:\arrowlabelrad);}

\foreach \n in {1,3,5} {\draw [quivarrow,shorten >=5pt,shorten <=12pt] (0,0) -- (b\n); }

\foreach \n in {2,4,6} {\draw [quivarrow,shorten >=12pt,shorten <=5pt] (b\n) -- (0,0); }

%
\foreach \i/\j in {1/0.5, 2/0.5, 3/0.5, 4/0.5, 5/0.5, 6/0.5, 7/0.5, 8/0.5, 9/0.5, 10/0.5, 11/0.5, 12/0.5}
{ \coordinate (m\i) at (${\j}*(b\i)$); }


\draw [strand] plot coordinates
{(m1) (s4)}[postaction=decorate, decoration={markings, mark= at position 0.5 with
\strarrow}];

\draw [strand] plot[smooth] coordinates
{(s5) (m2) (m3) (s10)}[postaction=decorate, decoration={markings,
mark= at position 0.22 with \strarrow,
mark= at position 0.5 with \strarrow,
mark= at position 0.78 with \strarrow
}];

\draw [strand] plot[smooth] coordinates
{(s7) (m2) (m1)}[postaction=decorate, decoration={markings,
mark= at position 0.4 with \strarrow,
mark= at position 0.88 with \strarrow
}];

\draw [strand] plot[smooth] coordinates
{(s13) (m4) (m3) (s8)}[postaction=decorate, decoration={markings,
mark= at position 0.22 with \strarrow,
mark= at position 0.5 with \strarrow,
mark= at position 0.78 with \strarrow
}];

\draw [strand] plot[smooth] coordinates
{(s11) (m4) (m5) (s16)}[postaction=decorate, decoration={markings,
mark= at position 0.22 with \strarrow,
mark= at position 0.5 with \strarrow,
mark= at position 0.78 with \strarrow
}];

\draw [strand] plot[smooth] coordinates
{(m6) (m5) (s14)}[postaction=decorate, decoration={markings, mark= at position 0.12 with
\strarrow,
mark= at position 0.6 with \strarrow
}];

\draw [strand] plot coordinates
{(s17) (m6)}[postaction=decorate, decoration={markings, mark= at position 0.5 with
\strarrow}];

\draw (sl5) node {\small $f_3$};
\draw (sl7) node[left=0pt,below=5pt] {\small $f_2=j-k$};
\draw (sl11) node[left=0pt,below=-5pt] {\small $f_5$};
\draw (sl13) node {\small $f_4$};
\draw (sl17) node[left=-15pt,below=-10pt] {\small $f_7=j-k+1$};
\draw ($(m1)+(0.73,0)$) node {\small $f_1=j$};
\draw ($(m6)+(1,-0.2)$) node {\small $f_6=j+1$};

\foreach \n in {1,2,3,4,5,6}
{\draw (\alstart+\twelfth*\n:
\arrowlabelrad) node
{$\alpha_{\n}$}; }

\draw (0,0) node {$E_j$};

\draw (v1) node {$E_{j-1}$};
\draw (v6) node {$E_{j+1}$};


\draw (m1) circle(\bdrydotrad) [fill=gray];
\draw (m6) circle(\bdrydotrad) [fill=gray];

\draw[fill=gray, opacity=0.1] (0,0)--(bb1)
arc (120:240:\shaderad) -- cycle;

\end{tikzpicture}
\begin{tikzpicture}[scale=0.85,baseline=(bb.base),
 strand/.style={black, dashed},
 quivarrow/.style={black, -latex, thick}]

\newcommand{\strarrow}{\arrow{angle 60}}
\newcommand{\bstart}{90} 
\newcommand{\twelfth}{30} 
\newcommand{\strandth}{10} 
\newcommand{\strandrad}{5cm} 
\newcommand{\strandlabelrad}{5.4cm} 
\newcommand{\bigrad}{4.5cm} 
\newcommand{\vertexlabelrad}{4.75cm} 
\newcommand{\dotrad}{0.1cm} 
\newcommand{\bdrydotrad}{{0.8*\dotrad}} 
\newcommand{\arrowlabelrad}{0.3*\bigrad} 
\newcommand{\alstart}{79} 
\newcommand{\shaderad}{4.275cm} 

\path (0,0) node (bb) {};

\foreach \n in {1,...,12}
{ \coordinate (b\n) at (\bstart+\twelfth*\n:\bigrad);
  }


\coordinate (bb1) at (\bstart+\twelfth:\shaderad);

\foreach \n in {1,...,12}
{ \coordinate (v\n) at (\bstart+\twelfth*\n:\vertexlabelrad);}

\foreach \n in {1,...,36}
{ \coordinate (s\n) at (\bstart+\strandth*\n:\strandrad);
}

\foreach \n in {1,...,36}
{ \coordinate (sl\n) at (\bstart+\strandth*\n:\strandlabelrad);}

\foreach \n in {1,...,12}
{ \coordinate (al\n) at (\alstart+\twelfth*\n:\arrowlabelrad);}

\foreach \n in {1,3,5} {\draw [quivarrow,shorten >=5pt,shorten <=12pt] (0,0) -- (b\n); }

\foreach \n in {2,4,6,12} {\draw [quivarrow,shorten >=12pt,shorten <=5pt] (b\n) -- (0,0); }

%
\foreach \i/\j in {1/0.5, 2/0.5, 3/0.5, 4/0.5, 5/0.5, 6/0.5, 7/0.5, 8/0.5, 9/0.5, 10/0.5, 11/0.5, 12/0.5}
{ \coordinate (m\i) at (${\j}*(b\i)$); }


\draw [strand] plot coordinates
{(s1) (m12)}[postaction=decorate, decoration={markings,
mark= at position 0.5 with \strarrow}];

\draw [strand] plot[smooth] coordinates
{(m12) (m1) (s4)}[postaction=decorate, decoration={markings,
mark= at position 0.12 with \strarrow,
mark= at position 0.6 with \strarrow}];

\draw [strand] plot[smooth] coordinates
{(s5) (m2) (m3) (s10)}[postaction=decorate, decoration={markings,
mark= at position 0.22 with \strarrow,
mark= at position 0.5 with \strarrow,
mark= at position 0.78 with \strarrow
}];

\draw [strand] plot[smooth] coordinates
{(s7) (m2) (m1) (s2)}[postaction=decorate, decoration={markings,
mark= at position 0.22 with \strarrow,
mark= at position 0.5 with \strarrow,
mark= at position 0.78 with \strarrow
}];

\draw [strand] plot[smooth] coordinates
{(s13) (m4) (m3) (s8)}[postaction=decorate, decoration={markings,
mark= at position 0.22 with \strarrow,
mark= at position 0.5 with \strarrow,
mark= at position 0.78 with \strarrow
}];

\draw [strand] plot[smooth] coordinates
{(s11) (m4) (m5) (s16)}[postaction=decorate, decoration={markings,
mark= at position 0.22 with \strarrow,
mark= at position 0.5 with \strarrow,
mark= at position 0.78 with \strarrow
}];

\draw [strand] plot[smooth] coordinates
{(m6) (m5) (s14)}[postaction=decorate, decoration={markings, mark= at position 0.12 with
\strarrow,
mark= at position 0.6 with \strarrow
}];

\draw [strand] plot coordinates
{(s17) (m6)}[postaction=decorate, decoration={markings, mark= at position 0.5 with
\strarrow}];

\draw (sl5) node {\small $f_3$};
\draw (sl7) node[left=0pt,below=5pt] {\small $f_2=j-k$};
\draw (sl11) node[left=0pt,below=-5pt] {\small $f_5=j-k+1$};
\draw (sl13) node {\small $f_4$};
\draw ($(m12)+(0.73,0)$) node {\small $f_1=j$};
\draw ($(m6)+(1,-0.2)$) node {\small $f_6=j+1$};

\foreach \n in {0,1,2,3,4,5,6}
{\draw (\alstart+\twelfth*\n:
\arrowlabelrad) node
{$\alpha_{\n}$}; }

\draw (0,0) node {$E_j$};

\draw (v12) node {$E_{j-1}$};
\draw (v6) node {$E_{j+1}$};


\draw (m6) circle(\bdrydotrad) [fill=gray];
\draw (m12) circle(\bdrydotrad) [fill=gray];


\draw[fill=gray, opacity=0.1] (0,0)--(bb1)
arc (120:240:\shaderad) -- cycle;

\end{tikzpicture}
\]
\caption{The arrows incident with a boundary vertex $\newL_j$. The shaded
region indicates the wedge $W_{\out}$}
\label{f:externalincidence}
\end{figure}
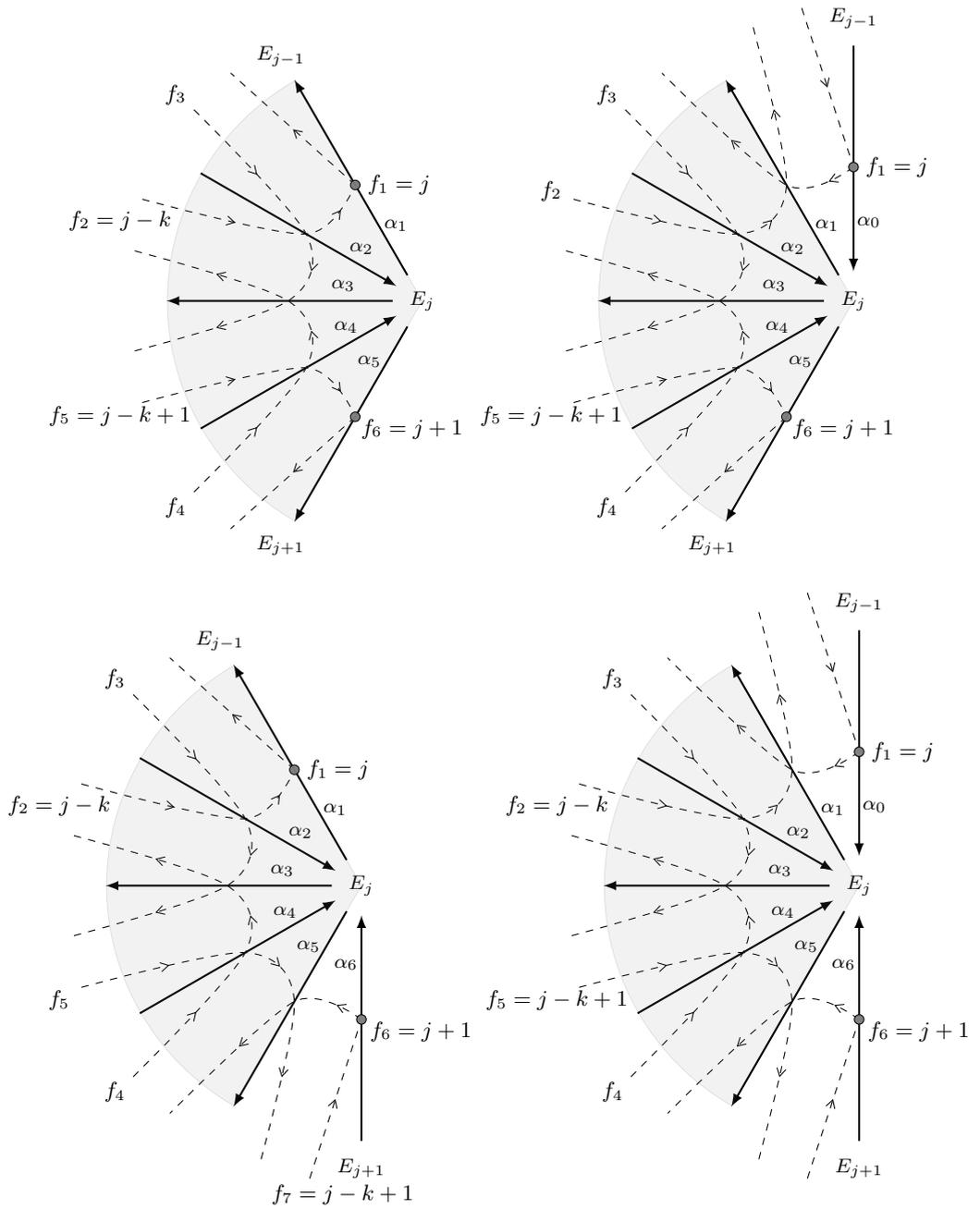

\begin{definition} \label{d:xiIboundary}Let $\newL_j$ be an external vertex in $Q_0(D)$ and let the $\newt_i$ be defined as above.
Then consider the intervals $(\newt_i,\newt_{i+1})_0$ of vertices, for $i=1,2,\ldots ,2r_{\out}-1$.
Consecutive intervals in this set follow on from
each other exactly and do not overlap. We denote the path
obtained by gluing these intervals together by $\xi_{\out}
(\newL_j)$. Similarly, we have a path $\xi_{\inn}(\newL_j)$.
\end{definition}

\begin{proposition} \label{p:externalformula}
Let $\newL_j$ be a boundary vertex of $Q$.
Then we have the following.
\begin{itemize}
\item[(a)] The path $\xi_{\out}(E_j)$ starts at vertex $j$,
wraps $r_{\out}-1$ times around $C_0$ times, then ends by revisiting the vertex $j$.
\item[(b)] The path $\xi_{\inn}(E_j)$ starts at vertex $j-k$,
wraps $r_{\inn}-1$ times around $C_0$ times, then ends by revisiting the vertex $j-k$.
\end{itemize}
\end{proposition}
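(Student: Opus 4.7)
The plan is to mirror the proof of Proposition~\ref{p:internalformula}, adapting it to the non-cyclic geometry at a boundary vertex. Throughout part (a), let $r = r_{\out}$.

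First, I would establish a boundary analog of Lemma~\ref{l:internalordering} restricted to $W_{\out}(E_j)$. The backward-ray argument used in Lemmas~\ref{l:faceordering} and~\ref{l:internalordering} carries over to the arrows $\alpha_1, \ldots, \alpha_{2r-1}$, yielding: (i) $f_i, f_{i+1}, f_{i+2}$ appear in clockwise order on $C$ for $1 \le i \le 2r-2$; (ii) $f_1, f_3, \ldots, f_{2r-1}$ appear in anticlockwise order on $C$; (iii) $f_2, f_4, \ldots, f_{2r}$ appear in anticlockwise order on $C$. Combined with the boundary identities $f_1 = j$ and $f_{2r} = j+1$, already recorded in the setup preceding the proposition, these are the inputs needed for the weight calculation.

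Next, pairing adjacent intervals via (i) gives
\begin{equation*}
\sum_{i=1}^{2r-1} (f_i, f_{i+1})_0 \;=\; \sum_{i=1}^{r-1} (f_{2i-1}, f_{2i+1})_0 \,+\, (f_{2r-1}, f_{2r})_0.
\end{equation*}
The complementation identity $(a,b)_0 + (b,a)_0 = C_0$ rewrites each of the $r-1$ paired terms, and property (ii) shows that the intervals $(f_{2i+1}, f_{2i-1})_0$ for $1 \le i \le r-1$ are pairwise disjoint with union equal to the single clockwise arc $(f_{2r-1}, f_1)_0 = (f_{2r-1}, j)_0$; here, unlike in the internal case, there is no cyclic wrap because $f_1 = j$ is the endpoint of the anticlockwise sequence of odd strands. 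Combining these observations and using $f_{2r} = j+1$,
\begin{equation*}
\sum_{i=1}^{2r-1} (f_i, f_{i+1})_0 \;=\; (r-1)C_0 - (f_{2r-1}, j)_0 + (f_{2r-1}, j+1)_0 \;=\; (r-1)C_0 + \{j\}.
\end{equation*}
Since the first interval $(f_1, f_2)_0$ begins at $j$ and the last interval $(f_{2r-1}, f_{2r})_0$ ends at $j$, this weight is exactly that of a path from $j$ back to $j$ wrapping $r-1$ times around $C$.

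Part (b) follows by a completely symmetric argument on $W_{\inn}(E_j)$, where the identities $f_1 = j-k$ and $f_{2r_{\inn}} = j-k+1$ play the roles taken by $f_1 = j$ and $f_{2r} = j+1$ above. The main obstacle is establishing the boundary analog of Lemma~\ref{l:internalordering}, since the wedge $W_{\out}$ is not closed cyclically: the backward rays of the extreme arrows $\alpha^{\pm}_{\out}$, together with the possibly present arrows $\alpha_0$ and $\alpha_{2r}$, interact with the disk boundary rather than with each other. A careful case analysis across the four local configurations in Figure~\ref{f:externalincidence} is required to confirm that the non-crossing argument of Lemma~\ref{l:faceordering} still prohibits crossings among the backward rays lying inside the wedge; once this is in hand, the computation above is essentially forced.
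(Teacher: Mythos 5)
Your proposal is correct and follows the same strategy as the paper's proof: establish the boundary analogue of Lemma~\ref{l:internalordering} for the wedge $W_{\out}(\newL_j)$, pair adjacent weights via $(f_{2i-1},f_{2i})_0+(f_{2i},f_{2i+1})_0=(f_{2i-1},f_{2i+1})_0$, and use complementation of intervals together with $f_1=j$ and $f_{2r}=j+1$; your identification $\sum_{i=1}^{r-1}(f_{2i+1},f_{2i-1})_0=(f_{2r-1},f_1)_0$ is exactly the paper's $C_0-(f_1,f_{2r-1})_0$. The one point where you genuinely diverge is the endgame: the paper splits into two cases according to whether the arrow between $\newL_j$ and $\newL_{j+1}$ points towards $\newL_{j+1}$ (so $\alpha_{2r-1}$ is that boundary arrow and $f_{2r-1}=j-k+1$) or towards $\newL_j$ (so the sum is first extended to include $\alpha_{2r}$ and then corrected), whereas your telescoping identity $(f_{2r-1},j+1)_0-(f_{2r-1},j)_0=\{j\}$ never needs the value of $f_{2r-1}$ and handles both configurations uniformly --- a small but genuine simplification.
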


\begin{proof}
We prove part (a) only; the proof of (b) is similar. We set $r=r_{\out}$.
The result will follow if we can show that:
\begin{equation}
\label{e:desiredresult}
\sum_{\alpha\in W_{\out}(\newL_j)} w_{\alpha}=(r-1)C_0+j,
\end{equation}
since the weights of the $\alpha_i$ are intervals forming the path $\xi_{\out}(\newL_j)$.
Arguing as in Lemma~\ref{l:internalordering}, we see that:
\begin{itemize}
\item[(a)] $f_1,f_3,\ldots ,f_{2r-1}$ appear in anticlockwise order in $C_0$, and
\item[(b)] for $1\leq i\leq 2r-3$, $f_i,f_{i+1},f_{i+2}$ occur in clockwise order in $C_0$.
\end{itemize}
Hence
$$\sum_{i=1}^{r-1} (f_{2i+1},f_{2i-1})_0=C_0-(f_1,f_{2r-1})_0,$$
and, for $1\leq i\leq r-1$,
$$(f_{2i-1},f_{2i+1})_0=(f_{2i-1},f_{2i})_0+(f_{2i},f_{2i+1})_0=w_{\alpha_{2i-1}}+w_{\alpha_{2i}}.$$
So, for $1\leq i\leq r-1$,
$$(f_{2i+1},f_{2i-1})_0=C_0-w_{\alpha_{2i-1}}-w_{\alpha_{2i}},$$
and we have:
\begin{equation}
\label{e:sumequation}
\sum_{i=1}^{r-1} \left( C_0-w_{\alpha_{2i-1}}-w_{\alpha_{2i}} \right)=C_0-(f_1,f_{2r-1})_0.
\end{equation}
If the arrow between $E_j$ and $E_{j+1}$ points towards $E_{j+1}$
then it is labelled $\alpha_{2r-1}$.
Since $E_j=E_{j+1}\cup \{j-k+1\}\setminus \{j+1\}$ and strand
$f_{2r-1}$ cuts this arrow with $E_j$ on its left, we have that
$f_{2r-1}=j-k+1$. The other strand crossing this arrow is $f_{2r}=j+1$.
Recall also that $f_1=j$.
Hence, in this case,~\eqref{e:sumequation} can be rewritten as
$$\sum_{i=1}^{2r-2} w_{\alpha_i}=(r-2)C_0+(j,j-k+1)_0$$
We then have, using the fact that $w_{\alpha_{2r-1}}=(f_{2r-1},f_{2r})_0=(j-k+1,j+1)_0$, that
$$\sum_{i=1}^{2r-1} w_{\alpha_i}=(r-2)C_0+(j,j-k+1)_0+(j-k+1,j+1)_0,$$
implying~\eqref{e:desiredresult} as required.

If the arrow between $E_j$ and $E_{j+1}$ points towards $E_j$ then it is labelled
$\alpha_{2r}$. Then strand $f_{2r}=j+1$ crosses this arrow from right to left
and $f_{2r+1}=j-k+1$ crosses it from left to right.
Arguing as above, we obtain the following variation of~\eqref{e:sumequation}:
\begin{equation*}
\sum_{i=1}^{r} \left( C_0-w_{\alpha_{2i-1}}-w_{\alpha_{2i}} \right)=C_0-(f_1,f_{2r+1})_0,
\end{equation*}
which can be rewritten as:
$$\sum_{i=1}^{2r} w_{\alpha_i}=(r-1)C_0+(j,j-k+1)_0.$$
We then have, using the fact that $w_{\alpha_{2r}}=(j+1,j-k+1)_0$, that
$$\sum_{i=1}^{2r-1} w_{\alpha_i}=(r-1)C_0+(j,j-k+1)_0-(j+1,j-k+1)_0,$$
implying~\eqref{e:desiredresult} as required.
\end{proof}

\section{Angles and isoradial embedding} 
\label{s:angles}

Fix a Postnikov diagram $D$.
In this section we investigate the implications of the
results in the previous section for embedding $Q(D)$
isoradially into a planar polygon.
This leads us to a notion of consistent boundary $R$-charge.
We compare this embedding with results of~\cite{ops}. The results in this section are not needed for the main result of the paper, but seem to be interesting from the point of view of understanding $Q(D)$ as a dimer model.

For each $i\in C_0$, let $\theta_i\in (0,2\pi)$ be an angle, with the property
that $\sum_{i\in C_0} \theta_i = 2\pi$.
For each arrow $\alpha$ in $Q_1(D)$, let
$$\theta_{\alpha}=\sum_{i\in w_{\alpha}} \theta_i.$$
We then have the following result. It has a
geometric interpretation which will be explained in
Corollary~\ref{c:embedding}.

\begin{lemma} \label{l:angles}
The $\theta_{\alpha}$ satisfy the following conditions:
\begin{enumerate}[(a)]
\item For all $F\in Q_2(D)$,
$$\sum_{\alpha\in \bdry F} \theta_{\alpha}=2\pi.$$
\item Let $I\in Q_0(D)$ be an internal vertex. Then we have:
\begin{equation} \label{e:internalangle1}
\sum_{\alpha,\beta} \pi-\tfrac{1}{2}(\theta_{\alpha}+\theta_{\beta})=2\pi,
\end{equation}
where the sum is over all pairs of arrows $\alpha,\beta$ incident with $I$ such that
$\alpha,\beta$ are consecutive arrows in a face of $Q(D)$.
Equivalently,
\begin{equation} \label{e:internalangle2}
\sum_{\alpha}(\pi-\theta_{\alpha})=2\pi,
\end{equation}
where the sum is over all arrows $\alpha$ incident with
$I$.
\item Let $I=\newL_j\in Q_0(D)$ be an external vertex. Then we have:
\begin{equation} \label{e:boundaryangle1}
\sum_{\alpha,\beta} \pi-\tfrac{1}{2}(\theta_{\alpha}+\theta_{\beta})=
\pi-\tfrac{1}{2}(\theta_j+\theta_{j-k}).
\end{equation}
where the sum is over all pairs of arrows $\alpha,\beta$ incident with $I$ such that
$\alpha,\beta$ are consecutive arrows in a face of $Q(D)$.
Equivalently,
\begin{equation} \label{e:boundaryangle2}
\sum_{\alpha\in W_{\out}(\newL_j)} \pi-\theta_{\alpha}=\pi-\theta_j.
\end{equation}

\end{enumerate}
\end{lemma}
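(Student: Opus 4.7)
The plan is to deduce all three parts from the weight-summation results of Section~\ref{s:weights}, using that the map $\alpha \mapsto \theta_\alpha$ is by construction additive in the weight $w_\alpha \in \mathbb{N}C_0$ and that $\sum_{i \in C_0}\theta_i = 2\pi$. Part~(a) is then immediate from Corollary~\ref{c:cycleweight}, which gives $\sum_{\alpha \in \bdry F} w_\alpha = C_0$ and hence $\sum_{\alpha \in \bdry F}\theta_\alpha = 2\pi$.

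For part~(b), I would first observe that \eqref{e:internalangle1} and \eqref{e:internalangle2} are equivalent by elementary counting: at an internal vertex $I$ with $2r$ incident arrows, the incidence graph is an unoriented cycle of length $2r$ by Definition~\ref{d:dimermodel}(d), so there are $2r$ face-corners at $I$ and each arrow appears in exactly two of them. Both sides therefore equal $2r\pi - \sum_\alpha\theta_\alpha$. Proposition~\ref{p:internalformula} now gives $\sum_\alpha w_\alpha = (r-1)C_0$, hence $\sum_\alpha\theta_\alpha = 2(r-1)\pi$ and $\sum_\alpha(\pi - \theta_\alpha) = 2\pi$. Part~(c) follows the same template using Proposition~\ref{p:externalformula}: its proof yields the identity $\sum_{\alpha \in W_{\out}(\newL_j)} w_\alpha = (r_{\out}-1)C_0 + j$, so $\sum_{\alpha \in W_{\out}}\theta_\alpha = 2(r_{\out}-1)\pi + \theta_j$, and since $|W_{\out}| = 2r_{\out}-1$, equation~\eqref{e:boundaryangle2} drops out by subtraction.

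The equivalence of \eqref{e:boundaryangle1} and \eqref{e:boundaryangle2} is the more delicate step and constitutes the main obstacle: at a boundary vertex the incidence graph is a line rather than a cycle, and the precise set of face-corners depends on the four configurations of boundary arrows at $\newL_j$ shown in Figure~\ref{f:externalincidence}. In each configuration I would expand the left-hand side of~\eqref{e:boundaryangle1} as $(\#\text{corners})\,\pi - \sum\theta_\alpha + \tfrac12(\theta_{\text{end}_1} + \theta_{\text{end}_2})$, where the endpoints are boundary arrows whose weights are explicit: the arrow between $\newL_j$ and $\newL_{j+1}$ has weight $(j{-}k{+}1, j{+}1)_0$ and the one between $\newL_j$ and $\newL_{j-1}$ has weight $(j, j{-}k)_0$, giving the key identity $\theta_{\alpha_{\out}^+} + \theta_{\alpha_{\out}^-} = 2\pi + \theta_j - \theta_{j-k}$. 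Combining this with~\eqref{e:boundaryangle2} and its $W_{\inn}$ analogue (which follows identically from Proposition~\ref{p:externalformula}(b) and reads $\sum_{\alpha \in W_{\inn}(\newL_j)}(\pi - \theta_\alpha) = \pi - \theta_{j-k}$), used in the configurations involving the incoming boundary arrows $\alpha_0$ or $\alpha_{2r}$, one obtains the right-hand side $\pi - \tfrac12(\theta_j + \theta_{j-k})$ by a short direct computation in each case.
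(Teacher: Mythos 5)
Your proof is correct and follows essentially the same route as the paper's: part (a) is a restatement of Corollary~\ref{c:cycleweight}, part (b) follows from the weight sum \eqref{e:sumoflabels} at an internal vertex together with the corner count for a cyclic incidence graph, and part (c) combines \eqref{e:desiredresult} with the explicit weights of the two boundary arrows at $\newL_j$ (the paper's equation~\eqref{e:gamma}) in a case-by-case computation over the configurations of Figure~\ref{f:externalincidence}. One small caution: the weights $(j{-}k{+}1,j{+}1)_0$ and $(j,j{-}k)_0$ you quote are the weights of the boundary arrows only when they point \emph{away} from $\newL_j$; when one points towards $\newL_j$ its weight is the complementary interval, so your ``key identity'' $\theta_{\gamma^+}+\theta_{\gamma^-}=2\pi+\theta_j-\theta_{j-k}$ holds verbatim only in the both-outgoing configuration and must be adjusted in the other three --- the direct computation still closes in each case (and the $W_{\inn}$ analogue is in fact not needed), exactly as in the paper.
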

\begin{proof}
Part (a) is equivalent to Corollary~\ref{c:cycleweight}.
In parts (b) and (c), we use the same notation as above for the arrows incident with $I$ in $Q(D)$.
For part (b), note that $\sum_{i=1}^{2r} \theta_{\alpha_i}=2(r-1)\pi$, by equation~\eqref{e:sumoflabels} in the proof of Proposition~\ref{p:externalformula}.
This is easily seen to be equivalent to the two formulas in (b).

We now consider part (c).
Let $\gamma^-$ be the arrow between $\newL_j$ and $\newL_{j-1}$, and let
$\gamma^+$ be the arrow between $\newL_j$ and $\newL_{j+1}$.
Then we have
\begin{equation}
\label{e:gamma}
\begin{aligned}
w_{\gamma^-}&=\begin{cases}
[j,j-k-1] & \text{if $\gamma^-$ points away from $\newL_j$}; \\
C_0-[j,j-k-1] & \text{if $\gamma^-$ points towards $\newL_j$};
\end{cases} \\
w_{\gamma^+}&=
\begin{cases}
[j-k+1,j] & \text{if $\gamma^+$ points away from $\newL_j$}; \\
C_0-[j-k+1,j] & \text{if $\gamma^+$ points towards $\newL_j$}.
\end{cases}
\end{aligned}
\end{equation}
The sum in part (c) can be rewritten as follows:
\begin{align*}
\sum_{\alpha,\beta} \pi&-\tfrac{1}{2}(\theta_{\alpha}+\theta_{\beta})
=
(\pi-\tfrac{1}{2}(\theta_{\alpha_0}+\theta_{\alpha_1}))+
(\pi-\tfrac{1}{2}(\theta_{\alpha_{2r-1}}+\theta_{\alpha_{2r}}))+
\sum_{i=1}^{2r-2}\pi-\tfrac{1}{2}(\theta_{\alpha_i}+\theta_{\alpha_{i+1}})
\\
&=
(\pi-\tfrac{1}{2}(\theta_{\alpha_0}+\theta_{\alpha_1}))+
(\pi-\tfrac{1}{2}(\theta_{\alpha_{2r-1}}+\theta_{\alpha_{2r}}))+
\tfrac{1}{2}(\theta_{\alpha_1}+\theta_{\alpha_{2r-1}})-\pi+
\sum_{i=1}^{2r-1} (\pi-\theta_{\alpha_i}).
\end{align*}
where the first (respectively, second) term on the right hand side appears
if the arrow between $\newL_j$  and $\newL_{j-1}$ (respectively, $\newL_{j+1}$)
points towards $\newL_j$.
Hence,
\begin{equation} \label{e:relationeqn}
\sum_{\alpha,\beta} \pi-\tfrac{1}{2}(\theta_{\alpha}+\theta_{\beta})
-
\sum_{i=1}^{2r-1} (\pi-\theta_{\alpha_i})
+\pi-\theta_j=\tfrac{1}{2}(\psi_1+\psi_2-2\theta_j),
\end{equation}
where
$$
\psi_1=\begin{cases}
\theta_{\alpha_1} & \text{if $\gamma^-$ points away from $E_j$;} \\
2\pi-\theta_{\alpha_0} & \text{if $\gamma^-$ points towards $E_j$;}
\end{cases}
$$
and
$$
\psi_2=\begin{cases}
\theta_{\alpha_{2r-1}} & \text{if $\gamma^+$ points away from $E_j$;} \\
2\pi-\theta_{\alpha_{2r}} & \text{if $\gamma^+$ points towards $E_j$.}
\end{cases}$$
Note that $\gamma^-=\alpha_1$ if $\gamma^-$ points away from $E_j$ and $\gamma^-=\alpha_0$ otherwise;
see Figure~\ref{f:internalincidence}.
Similarly, $\gamma^+=\alpha_{2r-1}$ if $\gamma^+$ points
away from $E_j$ and $\gamma^+=\alpha_{2r}$ otherwise.
Applying~\eqref{e:gamma}, we see that the right hand side of~\eqref{e:relationeqn} simplifies to
$$\frac{1}{2}\left( -2\theta_j+\sum_{i\in [j,j-k-1]} \theta_i+\sum_{i\in [j-k+1,j]} \theta_i \right)=\pi-\tfrac{1}{2}(\theta_j+\theta_{j-k}).$$
Therefore, recalling that
$W_{\out}(\newL_j)=\{\alpha_1,\ldots ,\alpha_{2r-1}\}$,
equation~\eqref{e:boundaryangle2} is equivalent
to equation~\eqref{e:boundaryangle1}, which holds
by equation~\eqref{e:desiredresult}
in the proof of Proposition~\ref{p:externalformula}.
\end{proof}

\begin{remark}
For each arrow $\alpha\in Q_1(D)$, let $R_{\alpha}=\theta_{\alpha}/\pi$. Then Lemma~\ref{l:angles}(a) states
that for all $F\in Q_2(D)$,
\begin{equation} \label{e:Rcharge1}
\sum_{\alpha\in \bdry F} R_{\alpha}=2.
\end{equation}
Equation~\eqref{e:internalangle2} becomes:
\begin{equation} \label{e:Rcharge2}
\sum_{\alpha}(1- R_{\alpha})=2,
\end{equation}
for every internal vertex $I\in Q_0(D)$, where the sum
is over all arrows $\alpha$ incident with $I$.
Equation~\eqref{e:boundaryangle2} becomes:
\begin{equation} \label{e:Rcharge3}
\sum_{\alpha\in W_{\out}(\newL_j)} 1-R_{\alpha}=
1-\theta_j/\pi.
\end{equation}
We can regard equations~\eqref{e:Rcharge1}--\eqref{e:Rcharge3} as the definition of a \emph{consistent boundary $R$-charge}.
\end{remark}

Following~\cite[\S6]{bocklandt12}, we make the following definition (noting that Bocklandt uses the term `embedding with isoradial cycles').

\begin{definition} \label{d:isoradialembedding}
Let $Q$ be a quiver with faces. We will say that a map
$v\colon Q_0\to \mathbb{R}^2$ is an \emph{isoradial embedding} of $Q$
if the following hold:
\begin{enumerate}[(i)]
\item The map $v$ induces an embedding of $(Q_0,Q_1)$ into $\mathbb{R}^2$, taking an arrow to the line segment between the images of its endpoints.
\item For each $F\in Q_2$ with $$\bdry F =I_1\to  I_2\to \cdots \to I_r\to I_1,$$
the points $v(I_1),v(I_2),\ldots ,v(I_r)$, taken in order, form a polygon $\mathbb{T}_F$
inscribed on a unit circle.
\item Two polygons of form $\mathbb{T}_F$ which have non-empty intersection can intersect only in a single common edge or point.
\end{enumerate}
\end{definition}

Note that in an isoradial embedding, two polygons $\mathbb{T}_F$ intersect in an edge (respectively, a point, the empty set) if and only if the corresponding faces have
boundaries containing a unique common arrow (respectively, a unique common vertex, no common vertex) in $Q$.
The strictly convex polygons $\mathbb{T}_F$ tile a subset of $\mathbb{R}^2$, which we shall refer to as the \emph{image} of $Q$ under $v$.

\begin{corollary} \label{c:embedding}
If $D$ is of reduced type (see Definition~
\ref{d:reduced}),
then the quiver with faces $Q(D)$ can be
isoradially embedded into an $n$-sided polygon with
vertices $\newL_j$, $j=1,\ldots ,n$, and internal
angle $\pi-\frac{1}{2}(\theta_j+\theta_{j-k})$ at
vertex $\newL_j$.
\end{corollary}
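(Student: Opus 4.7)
The plan is to construct the embedding face by face, realising each face $F\in Q_2(D)$ as a polygon $\mathbb{T}_F$ inscribed in a unit circle, and then to glue these polygons using the cyclic order of arrows around each vertex inherited from the quiver-with-faces structure of $Q(D)$, with Lemma~\ref{l:angles} supplying the angle compatibilities.

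For the face-by-face construction, I would, for each $F\in Q_2(D)$, mark $|\bdry F|$ points consecutively on a unit circle (anticlockwise if $F\in Q_2^+(D)$, clockwise if $F\in Q_2^-(D)$) so that the arc corresponding to an arrow $\alpha\in\bdry F$ has central angle $\theta_\alpha$, and take $\mathbb{T}_F$ to be the resulting inscribed polygon. This is well-defined because $\sum_{\alpha\in\bdry F}\theta_\alpha=2\pi$ by Lemma~\ref{l:angles}(a), and the reduced-type hypothesis on $D$ is used precisely to rule out faces bounded by only two arrows, so that every $\mathbb{T}_F$ really is a non-degenerate polygon. Since the edge of $\mathbb{T}_F$ corresponding to $\alpha$ has length $2\sin(\theta_\alpha/2)$, depending only on $\alpha$, if two faces $F, F'$ share the arrow $\alpha$ then the corresponding edges of $\mathbb{T}_F$ and $\mathbb{T}_{F'}$ have matching length and can be identified.

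Next, I would assemble these polygons into a planar figure by starting with one $\mathbb{T}_F$ and iteratively attaching the remaining polygons across shared edges (reflecting as required so that neighbouring $Q_2^+$- and $Q_2^-$-polygons sit on opposite sides of the shared edge). The interior angle of $\mathbb{T}_F$ at the vertex between consecutive arrows $\alpha,\beta\in\bdry F$ is the standard inscribed-angle expression $\pi-\tfrac12(\theta_\alpha+\theta_\beta)$, so Lemma~\ref{l:angles}(b) supplies the total $2\pi$ needed to tile a neighbourhood of each internal vertex, while Lemma~\ref{l:angles}(c) supplies the residual angle $\pi-\tfrac12(\theta_j+\theta_{j-k})$ at each boundary vertex $\newL_j$. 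The boundary of the assembled region is then the $n$-cycle of boundary arrows of $Q(D)$, meeting at $\newL_j$ at the prescribed angle, and as a sanity check these angles sum to $(n-2)\pi$ after applying $\sum_i\theta_i=2\pi$, so the outline is an $n$-gon as claimed.

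The main obstacle I expect is establishing clause~(iii) of Definition~\ref{d:isoradialembedding}, namely that the iterative attachment really yields a planar embedding rather than merely a formal identification of edges. This is essentially a developing-map argument: the angle equalities from Lemma~\ref{l:angles} mean that the assembly is locally flat at every vertex, so the developing map from the disk carrying $Q(D)$ to $\mathbb{R}^2$ is independent of the chosen chain of attachments, and simple connectivity of the disk (Remark~\ref{r:quiverembedding}) promotes this to a single-valued map. One must still rule out faraway polygons meeting in more than an edge or vertex, but any two such polygons are separated by a chain of intermediate polygons tiling a simply connected region, whose strictly positive inscribed angles at each internal vertex force genuine planar separation; this is the step where care is needed, as it is the only non-local part of the argument.
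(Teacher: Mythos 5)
Your proposal is correct and follows essentially the same route as the paper: inscribe each face in a unit circle with arc $\theta_\alpha$ over each arrow $\alpha$ (using Lemma~\ref{l:angles}(a) and the reduced-type hypothesis to get genuine polygons), note that the inscribed angle between successive arrows is $\pi-\tfrac12(\theta_\alpha+\theta_\beta)$, and invoke Lemma~\ref{l:angles}(b) and (c) to tile around internal and boundary vertices. You are in fact more explicit than the paper about the global step (clause~(iii) of Definition~\ref{d:isoradialembedding}), which the paper passes over with the phrase ``these polygons tile the polygon mentioned above''; your developing-map remark is a reasonable way to fill that in.
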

\begin{proof}
We modify the discussion after~\cite[Rk~6.2]{bocklandt12} for the boundary case
(see also~\cite[\S3]{hv} for the dual case).
Part (a) of Lemma~\ref{l:angles} means that each cycle in $Q_2(D)$ can be embedded into
the plane as a polygon inscribed on a unit circle such that each arrow $\alpha$ stands on
an arc of angle $\theta_{\alpha}$. The angle between successive arrows $\alpha,\beta$ in a
cycle in $Q_2(D)$ is $\pi-\frac{1}{2}(\theta_{\alpha}+\theta_{\beta})$ (see Figure~\ref{f:successivearrows}),
so it follows from Lemma~\ref{l:angles}(b) and (c) that these polygons (which all have at least three sides since $D$ is of reduced type) tile the polygon mentioned above and hence give an isoradial embedding.
\end{proof}

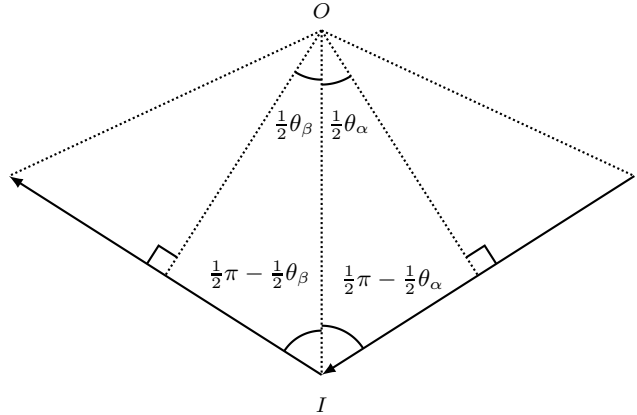
\begin{figure}
\[
\begin{tikzpicture}[scale=1.3,
 quivarrow/.style={black, -latex, thick},
 perpline/.style={thick,densely dotted,black}]

\newcommand{\radius}{3.5} 
\newcommand{\thetaalpha}{65} 
\newcommand{\thetabeta}{65} 
\newcommand{\rt}{0.22} 


\draw [quivarrow] (-90+\thetaalpha:\radius) -- (-90:\radius);
\draw [quivarrow] (-90:\radius) -- (-90-\thetabeta:\radius);


\coordinate (ma) at ($0.5*(-90+\thetaalpha:\radius)+0.5*(-90:\radius)$);
\coordinate (mb) at ($0.5*(-90:\radius)+0.5*(-90-\thetabeta:\radius)$);


\draw [perpline] (0,0) -- (ma);
\draw [perpline] (0,0) -- (mb);
\draw [perpline] (0,0) -- (0,-\radius);
\draw [perpline] (0,0) -- (-90+\thetaalpha:\radius);
\draw [perpline] (0,0) -- (-90-\thetabeta:\radius);


\draw (0,0.2) node {$O$};
\draw ($(0,-\radius)+(0,-0.3)$) node {$I$};


\draw [black,thick,domain=-90+0.5*\thetaalpha:-90] plot
({0.55*cos(\x)}, {0.55*sin(\x)});
\draw (-90+0.25*\thetaalpha:1) node {\small $\frac{1}{2}\theta_{\alpha}$};

\draw [black,thick,domain=-90-0.5*\thetabeta:-90] plot
({0.50*cos(\x)}, {0.50*sin(\x)});
\draw (-90-0.25*\thetabeta:1) node {\small $\frac{1}{2}\theta_{\beta}$};

\draw [black,thick,domain=90:0.5*\thetaalpha] plot
({0.50*cos(\x)}, {-\radius+0.50*sin(\x)});
\draw ($(0,-\radius)+(37+0.25*\thetaalpha:1.2)$) node
{\small $\frac{1}{2}\pi-\frac{1}{2}\theta_{\alpha}$};

\draw [black,thick,domain=90:180-0.5*\thetabeta] plot
({0.45*cos(\x)}, {-\radius+0.45*sin(\x)});
\draw ($(0,-\radius)+(138-0.25*\thetabeta:1.2)$) node
{\small $\frac{1}{2}\pi-\frac{1}{2}\theta_{\beta}$};


\draw [black,thick] (ma)++(0.5*\thetaalpha:\rt)
-- ++(90+0.5*\thetaalpha:\rt) -- ++(180+0.5*\thetaalpha:\rt);

\draw [black,thick] (mb)++(180-0.5*\thetabeta:\rt)
-- ++(90-0.5*\thetabeta:\rt) -- ++(-0.5*\thetabeta:\rt);

\end{tikzpicture}
\]
\caption{The angle between successive arrows incident with
a vertex is $\pi-\frac{1}{2}(\theta_{\alpha}+\theta_{\beta})$}
\label{f:successivearrows}
\end{figure}

Note that if $\theta_i=2\pi/n$, the polygon in Corollary~\ref{c:embedding} is regular.
If $D$ is not of reduced type, we still obtain a tiling of the same polygon using the above procedure, if we allow degenerate 2-sided tiles. But is not an isoradial embedding since the arrows in a two-cycle map onto the same line in the plane.

We remark that an isoradial embedding of $Q(D)$ in the above sense has been constructed explicitly in~\cite{ops}
in the case where $D$ is of reduced
type.
We recall this construction here in order to compare with the above.

Recall that a pair $I,J$ of $k$-subsets of $C_1$ is said to
be \emph{noncrossing}, or \emph{weakly separated}~
\cite{lz98} (see also~\cite[Defn.~3]{scott06}) if there are
no elements $a,b,c,d$, cyclically ordered around $C_1$,
such that $a,c\in I-J$ and $b,d\in J-I$.
Note that a pair $I,I$ is always noncrossing.
A collection of $k$-subsets is said to be noncrossing if it
is pairwise noncrossing.

By~\cite[Cor.~1]{scott06}, the set $\C=\C(D)$ of $k$-subsets of $C_1$
labelling a Postnikov diagram $D$ is a maximal noncrossing collection. By~\cite[Thm.~7.1]{ops}, every such
collection arises in this way.

\begin{definition} \label{d:GammaC}
Let $\C$ be any maximal noncrossing collection of $k$-subsets of $C_1$. We can define a quiver with faces $\Gamma=\Gamma(\C)$, with vertex set $\Gamma_0=\C$.
The arrows $\Gamma_1$ and faces $\Gamma_2$ are determined as follows, mimicking the definition of the CW-complex $\Sigma(\C)$ in~\cite[\S9]{ops}.

If $K$ is any $(k-1)$-subset of $C_1$, then the \emph{white clique} $\W(K)$ of $K$ is the set
$$\{I\in \C\,:\,K\subseteq I\},$$
which is given a cyclic order
$$K+a_1,K+a_2,\ldots ,K+a_r,K+a_1,$$
where $a_1,a_2,\ldots ,a_r,a_1$ are cyclically ordered clockwise in $C_1$.

If $L$ is any $(k+1)$-subset of $C_1$, then the \emph{black clique} $\B(L)$ of $L$ is the set
$$\{I\in \C\,:\,I\subseteq L\},$$
which is given a cyclic order
$$L-b_s,L-b_{s-1},\ldots ,L-b_1,L-b_{s},$$
where $b_1,b_2,\ldots ,b_s,b_1$ are cyclically ordered clockwise in $C_1$.

A clique is said to be \emph{nontrivial} when it has at least $3$ elements.
We let $\Gamma_2$ be the set of nontrivial cliques, or more precisely, we set
\[
  \Gamma_2^- =\{K : |\W(K)|\geq 3\}, \qquad
  \Gamma_2^+ =\{L \,:\, |\B(L)|\geq 3\}.
\]
For $I,J\in\C$, there is an arrow $\alpha\colon I\to J$ in $\Gamma_1$ if $J$ follows $I$ in the cyclic ordering of some nontrivial clique. Note: even if this occurs in more than one clique, there is only one arrow.
Then we may also define $\bdry\colon \Gamma_2\to \Gcyc$ in the obvious way.
\end{definition}

Note that an arrow $\alpha\colon I\to J$ in $\Gamma(\C)$ can occur in the boundary of at most 2 cliques, namely $\W(I\cap J)$ and $\B(I\cup J)$, and that it
will occur with the same orientation in both.
Indeed, if both these cliques are nontrivial, then $\alpha\colon I\to J$
must be an arrow in both boundaries, by \cite[Lemma~9.2]{ops}.

In~\cite[\S 9]{ops}, an isoradial embedding of $\Gamma(\C)$ is constructed as follows.
For all $i\in C_1$, choose unit vectors $v_i$ in $\mathbb{R}^2$, in the same clockwise order around the unit circle as in $C_1$ (see Figure~\ref{f:vicircle} for an example the case $n=7$).
Note that these points form a strictly convex polygon, which is the condition required by~\cite{ops} (see~\cite[Lemma 9.3]{ops}).
Here we assume the stronger property that they form a polygon inscribed in a unit circle.

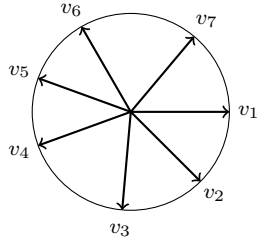
\begin{figure}
\[
\begin{tikzpicture}[scale=1.3]
 \draw (0,0) circle(1);
\foreach \j/\ang in {1/0,2/-45,3/-95,4/-160,5/-200,6/-240,7/-310} 
{ \draw [->,thick] (0,0)--(\ang:1);
 \path (\ang:1.2) node (v\j) {$v_\j$}; }
\end{tikzpicture}
\]
\caption{Typical points $v_1,\ldots ,v_n$ in the case $n=7$}
\label{f:vicircle}
\end{figure}

This is extended to a map $v$ on all subsets $J$ of $C_1$ by setting
$$v(J)=\sum_{i\in J}v_i.$$
Denote the point with position vector $v(J)$ by $J$.
Any $i\in C_0$ is common to the two adjacent edges
$i,i+1\in C_1$. Thus it makes sense to set $\theta_i$ to be the angle between $v_i$ and $v_{i+1}$ for $i\in C_0$.

\begin{lemma} \label{l:arrowangle}
Let $\alpha{\colon}I\to J$ be an arrow in $Q$. Set $K=I\cap J$ and $L=I\cup J$. Then
$$
 \directedangle JKI = \theta_{\alpha}= \directedangle ILJ,
$$
\end{lemma}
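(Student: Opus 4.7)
The plan is to unpack the vertex labels in terms of the differing strand numbers, which reduces both claimed equalities to a single statement about the angle between two of the unit vectors $v_i$.

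First I would recall from Definition~\ref{d:weight} that if $\alpha\colon I\to J$ is an arrow, with $c,d\in C_1$ the strands crossing $\alpha$ right-to-left and left-to-right respectively, then $J=I-c+d$ and $w_\alpha=(c,d)_0$. Setting $K=I\cap J$ and $L=I\cup J$ therefore gives
\[
  I=K+c,\quad J=K+d,\quad I=L-d,\quad J=L-c.
\]
Applying $v$, which is additive on disjoint unions, this yields
\[
  v(I)-v(K)=v_c,\quad v(J)-v(K)=v_d,\quad v(I)-v(L)=-v_d,\quad v(J)-v(L)=-v_c.
\]
Hence the rays $KI,KJ$ point along $v_c,v_d$ respectively, while the rays $LI,LJ$ point along $-v_d,-v_c$. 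Since reversing both rays preserves the (unsigned) angle, both $\measuredangle JKI$ and $\measuredangle ILJ$ equal the angle between $v_c$ and $v_d$.

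Next I would identify this angle as $\theta_\alpha$. By the definition of $\theta_i$ just before the statement, $\theta_i$ is the angle between the consecutive unit vectors $v_i$ and $v_{i+1}$. Chaining these along the arc from $v_c$ to $v_d$ that passes through $v_{c+1},v_{c+2},\ldots,v_{d-1}$, the total angular span is
\[
  \sum_{i\in\{c,c+1,\ldots,d-1\}}\theta_i \;=\; \sum_{i\in(c,d)_0}\theta_i \;=\; \theta_\alpha.
\]
This gives $\measuredangle JKI=\theta_\alpha=\measuredangle ILJ$ as required.

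The only subtle point is verifying that the relevant angle really is the clockwise arc from $v_c$ to $v_d$ passing through $v_{c+1},\ldots,v_{d-1}$ rather than its reflex complement (which would equal $2\pi-\theta_\alpha$). This is not really an obstacle: since the $v_i$ are arranged clockwise around the unit circle matching the cyclic order on $C_1$, the short arc from $v_c$ to $v_d$ is precisely the one subtending the interval $(c,d)_0\subset C_0$, and the hypothesis $\sum_{i\in C_0}\theta_i=2\pi$ ensures there is no ambiguity in reading off $\theta_\alpha$ from this arc. The convention that $\measuredangle$ denotes the (unsigned) angle in $[0,\pi]$ between rays, together with the fact that $w_\alpha$ is a proper nonempty subset of $C_0$ as noted after Definition~\ref{d:weight}, makes the identification unambiguous.
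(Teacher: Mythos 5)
Your core computation is exactly the paper's: from $J=I-c+d$ you get $\overrightarrow{KI}=v_{I-J}=v_c$ and $\overrightarrow{KJ}=v_{J-I}=v_d$, while at $L$ the two rays are $-v_d$ and $-v_c$, so both angles reduce to the angle between $v_c$ and $v_d$, which the definition of the $\theta_i$ identifies with $\sum_{i\in(c,d)_0}\theta_i=\theta_\alpha$. That part is correct and is precisely how the paper argues (the paper also notes first that $\theta_\alpha\neq 0,2\pi$, which you cover by observing $w_\alpha$ is a nonempty proper subset of $C_0$).

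The final paragraph, however, contains a genuine error. You assert that $\measuredangle$ is the unsigned angle in $[0,\pi]$ and that the clockwise arc from $v_c$ to $v_d$ through $v_{c+1},\dots,v_{d-1}$ is the \emph{short} arc. Neither is true in general: $w_\alpha$ is only constrained to be a nonempty proper subset of $C_0$, so $\theta_\alpha$ can exceed $\pi$. For instance, in Figure~\ref{f:postfreelabels37} the arrow $671\to 567$ has weight $\{1,2,3,4\}$, giving $\theta_\alpha=8\pi/7$ when all $\theta_i=2\pi/7$; under your convention the lemma would then equate an angle in $[0,\pi]$ with a quantity greater than $\pi$. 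The statement must be read with $\directedangle$ denoting a directed angle valued in $(0,2\pi)$, measured consistently with the clockwise arrangement of the $v_i$ (this is also what makes Lemma~\ref{l:angles}(a) and the isoradial picture work: an edge of a tile may subtend a central angle greater than $\pi$, with the circumcentre outside the tile). With that convention your argument goes through unchanged --- in particular your ``reverse both rays'' step is still valid, since negation is rotation by $\pi$ and preserves directed angles --- and coincides with the paper's proof.
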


\begin{proof}
Firstly, note that $\theta_{\alpha}$ cannot be equal to $0$ or $2\pi$, by the remarks immediately following Definition~\ref{d:weight}. Now we know that
$$
 \overrightarrow{KI}=v_I-v_K=v_{I-J},
 \qquad  \overrightarrow{KJ}=v_J-v_K=v_{J-I}.
$$
The formula for the angle $\directedangle JKI$ then follows from the
definition of the $\theta_i$ and the fact that
$\weight_{\alpha}=(c,d)_0$, where $I-J=\{c\}$ and $J-I=\{d\}$.
The formula for the angle $\directedangle ILJ$ is proved similarly.
\end{proof}

Suppose $Q$ is a quiver with faces such that the boundary of every face is a cycle
of length at least $3$ and that $Q$ has an isoradial embedding $v$ whose image
is a subset of the plane bounded by a polygon, so the map $v$ induces an embedding of
the quiver $(Q_0,Q_1)$ into a disk. Then the given structure of quiver with faces on $(Q_0,Q_1)$ coincides with the structure inherited from the embedding and it follows that $Q$ is a dimer model in a disk.

\begin{theorem} \label{t:ops}
Let $D$ be a $(k,n)$-Postnikov diagram of reduced type and $
\C=\C(D)$ the set of labels of the
alternating regions. Let $\Gamma=\Gamma(\C)$ be the
associated quiver with faces,
as in Definition~\ref{d:GammaC},
and let $v_{\C}:\Gamma_0\to \mathbb{R}^2$ be the
restriction of $v$ to $\Gamma_0=\C$.
\begin{enumerate}[(a)]
\item The map $v_{\C}$ is an isoradial embedding of $\Gamma$. The image of
$\Gamma$ under $v_{\C}$ is the subset of the plane bounded by a convex polygon
with vertices $v(E_1),v(E_2),\ldots ,v(E_n)$ arranged clockwise around the boundary.
\item As quivers with faces, we have $\Gamma(\C)\cong Q(D)$.
\end{enumerate}
\end{theorem}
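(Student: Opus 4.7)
The plan is to establish part~(b) first, identifying $\Gamma(\C)$ with $Q(D)$ as quivers with faces, and then deduce the geometric statements in~(a) using Lemmas~\ref{l:arrowangle} and~\ref{l:angles}.

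For~(b), I would construct a bijection between the faces of $Q(D)$ and the nontrivial cliques in $\C(D)$. Given an anticlockwise oriented region $R$ of $D$, the alternating regions around $R$ have labels sharing a common $(k-1)$-subset $K_R$: crossing any boundary strand of $R$ swaps that strand with another in the label, so the intersection of all neighbouring labels is a $(k-1)$-subset $K_R$. The cyclic ordering of these labels around $R$, i.e., of the vertices along $\bdry F$ for the corresponding face $F\in Q_2^+(D)$, matches the clockwise-in-$C_1$ ordering of the boundary strands by Lemma~\ref{l:faceordering}, identifying $\bdry F$ with the cyclic ordering of the white clique $\W(K_R)$. Dually, clockwise oriented regions produce black cliques. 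By Scott's maximality result recalled above, every nontrivial clique arises in this way, so the face bijection is complete; the arrows in $\Gamma(\C)$ and $Q(D)$ then correspond automatically, being determined by consecutive pairs in face/clique boundaries.

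For~(a), I would first verify that $v_\C$ sends each face to a polygon inscribed in a unit circle. For a white clique $\W(K)=\{K+a_1,\ldots,K+a_r\}$ with $a_i$ clockwise in $C_1$, the image vertices $v(K)+v_{a_i}$ lie on the unit circle centred at $v(K)$, and since the $v_{a_i}$ are distinct points in clockwise order on the unit circle the polygon $\mathbb{T}_{\W(K)}$ is strictly convex; the analogous argument for $\B(L)$ uses the circle centred at $v(L)$ with vertices $v(L)-v_{b_i}$. Next, Lemma~\ref{l:arrowangle} shows that the arc subtended at the centre by each arrow $\alpha\colon I\to J$ of $\bdry F$ has angular measure $\theta_\alpha$, so by the inscribed-angle theorem the interior angle of $\mathbb{T}_F$ at a vertex $I$, between two consecutive arrows $\alpha,\beta\in\bdry F$ meeting at $I$, is exactly $\pi-\tfrac{1}{2}(\theta_\alpha+\theta_\beta)$. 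Summing these interior angles over all faces incident at $I$, Lemma~\ref{l:angles}(b) yields $2\pi$ at each internal vertex, so the polygons $\mathbb{T}_F$ meeting at $I$ fill a full neighbourhood; similarly Lemma~\ref{l:angles}(c) yields $\pi-\tfrac{1}{2}(\theta_j+\theta_{j-k})$ at each boundary vertex $E_j$.

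To identify the outer boundary of the tiling with the polygon with vertices $v(E_1),\ldots,v(E_n)$, observe that $v(E_{j+1})-v(E_j)=v_{j+1}-v_{j-k+1}$; as $j$ increases, both $v_{j+1}$ and $v_{j-k+1}$ move clockwise around the unit circle by positive angular amounts, so these edge vectors rotate monotonically clockwise, yielding a convex polygon traversed clockwise. The interior angle of this polygon at $v(E_j)$ must then equal the contribution $\pi-\tfrac{1}{2}(\theta_j+\theta_{j-k})$ computed above, since the face polygons meeting at $v(E_j)$ together with the outer polygon fill a full neighbourhood. The reduced-type assumption ensures every nontrivial clique has at least three elements, so no polygon $\mathbb{T}_F$ degenerates to an edge, and condition~(iii) of Definition~\ref{d:isoradialembedding} follows from the global angle bookkeeping combined with convexity. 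The main obstacle will be the combinatorial matching in part~(b): carefully verifying that the cyclic order induced on each clique by the boundary walk around its oriented region in $D$ matches the clockwise-in-$C$ order from Definition~\ref{d:GammaC}, and that arrow orientations in $Q(D)$ align with the direction of traversal in the associated clique.
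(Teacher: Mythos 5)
Your route is genuinely different from the paper's. The paper proves Theorem~\ref{t:ops} almost entirely by citation: conditions (i) and (iii) of Definition~\ref{d:isoradialembedding} are imported from \cite[Prop.~9.4]{ops}, the description of the image from \cite[Prop.~9.8, Rk.~9.9, Thm.~9.12]{ops}, and part (b) is obtained by composing \cite[Thm.~9.12]{ops} ($\Gamma(\C)$ is dual to $G(D)$) with Remark~\ref{r:plabicdual} ($Q(D)$ is dual to $G(D)$). You instead attempt a self-contained argument, and the local ingredients you supply are correct and worthwhile: the inscribed circles centred at $v(K)$ and $v(L)$, the interior angle $\pi-\tfrac{1}{2}(\theta_\alpha+\theta_\beta)$ via Lemma~\ref{l:arrowangle} and the inscribed-angle theorem, the angle sums from Lemma~\ref{l:angles}, and the monotone clockwise rotation of the edge vectors $v(E_{j+1})-v(E_j)=v_{j+1}-v_{j-k+1}$ giving convexity of the outer polygon. (One convention slip: with the paper's orientations an anticlockwise region, i.e.\ a black vertex and a face in $Q_2^+$, yields a \emph{black} clique $\B(L)$ for a common $(k+1)$-superset $L$, not a white clique; following $\bdry F$ one has $I_{i+1}=I_i-c_i+c_{i-1}$, so $I_{i+1}+c_i=I_i+c_{i-1}$ and $L=I_i+c_{i-1}$ is independent of $i$.)

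There are, however, two genuine gaps, and they are precisely the two points the paper outsources to \cite{ops}. First, for (b) you must show that \emph{every} nontrivial clique of $\C$ is realised by an oriented region of $D$, with the cyclic order of Definition~\ref{d:GammaC}. Scott's maximality of $\C(D)$ does not give this: a priori a $(k+1)$-subset $L$ with $|\B(L)|\ge 3$ could have its members scattered across the diagram rather than surrounding one oriented region. This surjectivity is the content of \cite[Lemma~9.2, Thm.~9.12]{ops}; without it your construction is only an injection from $Q_2(D)$ into the set of nontrivial cliques, so $\Gamma(\C)$ could have strictly more faces and arrows than $Q(D)$. Second, for (a) the step from the local angle identities to conditions (i) and (iii) of Definition~\ref{d:isoradialembedding} does not follow from ``angle bookkeeping combined with convexity'': angle sum $2\pi$ at interior vertices only makes the induced map a local immersion, and a locally injective map of a disk into the plane need not be globally injective, nor need distant tiles be disjoint. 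One needs either a global degree/winding-number argument (the boundary maps with winding number one onto a convex polygon, forcing the developing map to have degree one over every interior point) or the explicit embedding result \cite[Prop.~9.4]{ops}. As written, these two steps are asserted rather than proved.
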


\begin{proof}
We first prove part (a).
To check part (ii) of Definition~\ref{d:isoradialembedding},
notice that, if $K\in \Gamma_2^-$, then $v(K+a)-v(K)=v_{a}$, for any $a\in C_1$.
Hence, the points
\[
  v(K+a_1),v(K+a_2),\ldots ,v(K+a_r)
\]
lie in order clockwise around a unit circle
centred at $v(K)$.
A similar statement holds for $L\in \Gamma_2^+$.
For parts (i) and (iii) of Definition~\ref{d:isoradialembedding}
we use~\cite[Prop.~9.4]{ops}, noting that by~\cite[Thm.~9.12]{ops}
there cannot be a pair of vertices $I,J\in \C$ with $\W(I\cap J)=\B(I\cup J)=\{I,J\}$,
so $\Gamma(\C)$ corresponds to $\Sigma(\C)$ in~\cite[\S9]{ops}.
The claim concerning the image follows
from~\cite[Prop.~9.8, Rk.~9.9 and Thm.~9.12]{ops}.

For part (b), we note that
by~\cite[Thm.~9.12]{ops}, $\Gamma(\C)$ is the dual of $G(D)$.
By Remark~\ref{r:plabicdual},
the dual of $G(D)$ is $Q(D)$ and the result follows.
\end{proof}

In particular, we see, using Lemma~\ref{l:arrowangle} and
Theorem~\ref{t:ops}, that $v_{\C}$ has the property
mentioned in the proof of Corollary~\ref{c:embedding}, i.e.\ that each arrow $\alpha$ stands on an arc of angle $\theta_{\alpha}$.

We colour a tile $\mathbb{T}_F$ in an isoradial embedding black
(respectively, white) if the anticlockwise (respectively, clockwise) ordering of its vertices corresponds to the ordering of the cycle $\bdry F$. Note that the tiling is bipartite in the sense that two tiles sharing an edge must be of opposite colour.
The bipartite tiling corresponding to the isoradial embedding in Theorem~\ref{t:ops}(a)
is referred to as the \emph{plabic tiling} corresponding to~$\C$ in~\cite{ops}.
For the Postnikov diagram $D$ in Figure~\ref{f:postfree37} and the vectors in Figure~\ref{f:vicircle}, the plabic tiling and the image of $\Gamma(\C(D))$ under the isoradial embedding in Theorem~\ref{t:ops}(a) are shown in Figure~\ref{f:postembedding}.
Note that the quiver is the same as the one in Figure~\ref{f:postfreequiver37}, but a little staightened.

\begin{figure}
\[
\begin{tikzpicture}[scale=1.2]
\newcommand{\tile}{[fill=gray]}
\foreach \j/\ang in {1/0,2/-45,3/-95,4/-160,5/-200,6/-240,7/-310} 
{ \path (\ang:1) coordinate (v\j); }
\foreach \x/\y/\z in
{1/2/3,2/3/4,3/4/5,4/5/6,5/6/7,1/6/7,1/2/7,1/2/4,2/4/5,1/4/7,1/4/5,1/5/7,1/5/6}
{\path (v\x)++(v\y)++(v\z) coordinate (q\x\y\z); }
\foreach \t/\h in {123/234,234/345,456/345,567/456,567/167,127/167,123/127,
234/124,124/123,124/245,245/234,345/245,245/145,145/456,456/156,156/145,156/567,
145/157,157/156,157/147,147/127,167/157,147/145,145/124,124/147,127/124}
 {\draw (q\t)--(q\h);}
\draw\tile (q124)--(q145)--(q245)--cycle;
\draw\tile (q234)--(q345)--(q245)--cycle;
\draw\tile (q123)--(q234)--(q124)--cycle;
\draw\tile (q124)--(q127)--(q147)--cycle;
\draw\tile (q145)--(q157)--(q147)--cycle;
\draw\tile (q145)--(q456)--(q156)--cycle;
\draw\tile (q167)--(q157)--(q156)--(q567)--cycle;
\end{tikzpicture}
\qquad
\begin{tikzpicture}[scale=1.2]
\foreach \j/\ang in {1/0,2/-45,3/-95,4/-160,5/-200,6/-240,7/-310} 
{ \node (v\j) at (\ang:1) {}; }
\foreach \x/\y/\z in
{1/2/4,2/4/5,1/4/7,1/4/5,1/5/7,1/5/6}
{\path {(v\x)++(v\y)++(v\z)} node (q\x\y\z) {$\scriptstyle \x\y\z$}; }
\foreach \x/\y/\z in
{1/2/3,2/3/4,3/4/5,4/5/6,5/6/7,1/6/7,1/2/7}
{\path {(v\x)++(v\y)++(v\z)} node (q\x\y\z) {$\scriptstyle \x\y\z$}; }
\foreach \h/\t in {123/234,234/345,456/345,567/456,567/167,127/167,123/127,
234/124,124/123,124/245,245/234,345/245,245/145,145/456,456/156,156/145,156/567,
145/157,157/156,157/147,147/127,167/157,147/145,145/124,124/147,127/124}
 {\draw [->] (q\t)--(q\h);}
\end{tikzpicture}
\]
\caption{The plabic tiling and the image of $Q(D)$ under $v_{\C(D)}$
 for the Postnikov diagram $D$ in Figure~\ref{f:postfree37} and $v_i$ in Figure~\ref{f:vicircle}}
\label{f:postembedding}
\end{figure}
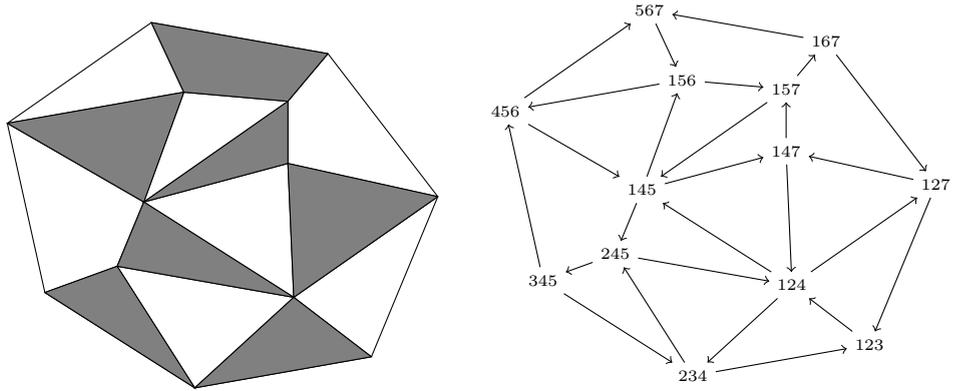

\section{Construction of a minimal path: first arrow} 
\label{s:legalarrow}

Our goal in this section is to show that, given any distinct pair of vertices
$I,J\in Q_0(D)$, there is an arrow, starting at $I$,
whose weight is constrained
in a way that makes it a candidate for the first arrow in a minimal path from $I$ to $J$.
We will then construct this path in Section~\ref{s:minimalpath}.

\begin{definition}
\label{d:aIJbIJ}
Suppose that $I,J$ are distinct noncrossing $k$-subsets of $C_1$.
Then $I-J$ and $J-I$ are nonempty and contained in non-overlapping
cyclic intervals in $C_1$.
Let $[i_1,i_2]$ and $[j_1,j_2]$ be the smallest
cyclic intervals containing $I-J$ and $J-I$, respectively.
We define two elements of $C_1$ associated to the ordered pair $(I,J)$ by setting
\begin{equation*}
  a=a(I,J)=j_2 \in J-I ,\qquad b=b(I,J)=i_1 \in I-J .
\end{equation*}
Observe that, since $a\not=b$, the interval $(a,b)_0$ in $C_0$ is nonempty.
See Figure~\ref{f:abdiagram} for a picture;
the vertices in $(a,b)_0$ are indicated by black dots.
\end{definition}

\begin{figure}
\[
\begin{tikzpicture}[scale=0.8,baseline=(bb.base)]
\newcommand{\fourteenth}{25.714} 
\newcommand{\polyradius}{2cm} 
\newcommand{\dotrad}{0.1cm} 
\newcommand{\bdrydotrad}{{0.8*\dotrad}} 
\path (0,0) node (bb) {}; 

\foreach \i in {1,...,14}
{\coordinate (b\i) at (90-\fourteenth*\i:\polyradius);}


\foreach \i/\j in
{1/2,2/3,3/4,4/5,5/6,6/7,7/8,8/9,9/10,10/11,11/12,
12/13,13/14,14/1}
\coordinate (m\i) at (${0.5}*(b\i)+{0.5}*(b\j)$);

\foreach \i/\j in
{1/2,2/3,3/4,4/5,5/6,6/7,7/8,8/9,9/10,10/11,11/12,
12/13,13/14,14/1}
\draw (b\i) -- (b\j);

\draw ($(m1)+(0.2,0.25)$) node {$j_1$};
\draw ($(m12)+(-0.2,0.25)$) node {$i_2$};
\draw ($(m5)+(0.5,-0.2)$) node {$j_2=a$};
\draw ($(m9)+(-0.65,-0.18)$) node {$i_1=b$};


\draw ($(m7)+(0,-1)$) node {$(a,b)_0$};
\draw ($(m11)+(-1.2,0)$) node {$I-J$};
\draw ($(m2)+(1.2,0.2)$) node {$J-I$};


\draw (m1) -- (m5);
\draw (m9) -- (m12);
\draw (b6) circle(\bdrydotrad) [fill=black];
\draw (b7) circle(\bdrydotrad) [fill=black];
\draw (b8) circle(\bdrydotrad) [fill=black];
\draw (b9) circle(\bdrydotrad) [fill=black];

\end{tikzpicture}
\]
\caption{A noncrossing pair}
\label{f:abdiagram}
\end{figure}
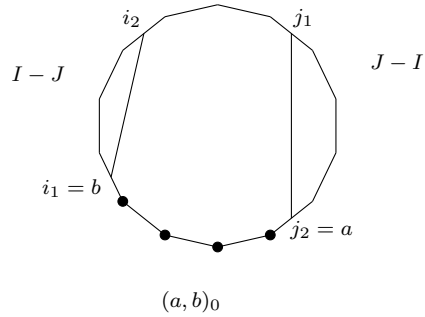

Note that, if $I,J$ are distinct $k$-subsets labelling regions of the same
Postnikov diagram $D$, then they are noncrossing, by \cite[Cor.~1]{scott06}.
Our goal may now be stated more precisely as follows.

\begin{proposition} \label{p:legalarrow}
Let $D$ be a Postnikov diagram
and let $I,J$ be distinct
vertices in $Q_0(D)$.
Let $a=a(I,J)$ and $b=b(I,J)$.
Then there is an arrow $\alpha$, starting at $I$,
whose weight $\weight_{\alpha}$ (as a subset of $C_0$) is disjoint from $(a,b)_0$.
\end{proposition}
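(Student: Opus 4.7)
The plan is to produce an outgoing arrow $\alpha$ at $I$ in $Q(D)$ whose weight $\weight_\alpha$ is contained in the cyclic arc $[b,a-1]$, the complement in $C_0$ of $(a,b)_0$, so that automatically $\weight_\alpha\cap(a,b)_0=\emptyset$. I would argue the internal-vertex case in detail; the boundary case is entirely analogous, using Proposition~\ref{p:externalformula} and the wedge $W_{\out}(\newL_j)$ in place of Proposition~\ref{p:internalformula}. In the notation of Section~\ref{s:weights}, $I$ has $2r$ incident arrows, the outgoing ones being $\alpha_{2i-1}$ with weight $(f_{2i-1},f_{2i})_0=[f_{2i-1},f_{2i}-1]$; the goal is an odd index $i$ such that $f_{2i-1}\in[b,a-1]$, $f_{2i}\in[b+1,a]$, and the clockwise arc from $f_{2i-1}$ to $f_{2i}-1$ stays inside $[b,a-1]$.

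The plan is to pinpoint this index by studying the positions of the outgoing-strand labels $f_{2i-1}$ with respect to the arc $[b,a]$. First I would show that some $f_{2i-1}$ lies in $[b,a-1]$; granting this, I would select $i_0$ so that $f_{2i_0-1}$ is the clockwise-most element of $[b,a-1]$ among $\{f_1,f_3,\ldots,f_{2r-1}\}$. I would then apply Lemma~\ref{l:internalordering}(a) to the triple $f_{2i_0-1},f_{2i_0},f_{2i_0+1}$, together with the anticlockwise ordering of the $f_{2j-1}$ from Lemma~\ref{l:internalordering}(b), to squeeze $f_{2i_0}$ into $[b+1,a]$: the clockwise-maximality of $f_{2i_0-1}$ ensures that the anticlockwise-next odd-indexed label $f_{2i_0+1}$ is either inside $[b,a-1]$ clockwise-before $f_{2i_0-1}$ or else outside in $(a,b)_0\cap I$, and in either subcase the cw-between value $f_{2i_0}$ is pinned into $[f_{2i_0-1}+1,a]\subseteq[b+1,a]$. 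The remaining condition that the clockwise arc from $f_{2i_0-1}$ to $f_{2i_0}-1$ stay inside $[b,a-1]$ then follows immediately from the choice of $i_0$, and so $\alpha_{2i_0-1}$ is an outgoing arrow of the desired form.

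The hardest step, and the heart of the argument, will be establishing the first claim that some $f_{2i-1}$ lies in $[b,a-1]$. This does not follow from the purely ordered-combinatorial content of Lemma~\ref{l:internalordering} alone, since $b\in I$ does not automatically mean that strand $b$ appears on $\bdry I$: a strand can pass far from the region for which it appears in the label. I expect to exploit the precise combinatorial role of $b=i_1$ as the anticlockwise-most element of $I-J$, combined with the noncrossing condition on $(I,J)$ and the perfect-matching interpretation of the weight sets from Remark~\ref{r:perfectmatchings}, via a local analysis of the strands along $\bdry I$ in $D$. This should rule out the pathological configuration in which every $f_{2i-1}$ sits in $(a,b)_0\cap I=I\cap J\cap(a,b)_0$, where no outgoing strand on $\bdry I$ straddles the transition region between $I$ and $J$.
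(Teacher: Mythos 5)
Your target is right---one wants an outgoing arrow whose weight lies in the complementary arc $[b,a-1]$---but the proposal has a genuine gap at exactly the point you flag as the ``heart of the argument''. The claim that some outgoing strand label $f_{2i-1}$ lies in $[b,a-1]$ is never proved; you only list ingredients you ``expect to exploit''. This is not a deferrable technicality: it is essentially as hard as the proposition itself, and the paper never establishes it as a separate local statement. Instead the paper runs a global counting argument: by Proposition~\ref{p:internalformula} the concatenated weights of the $2r$ arrows at an internal vertex form a path $\xi(I)$ winding exactly $r-1$ times around $C$, so one lifts to an $(r-1)$-fold cover, where $(a,b)_0$ has $r-1$ disjoint lifts; Lemma~\ref{l:tilemma} (no even interval $[f_{2i},f_{2i+1}]$ is contained in $(a,b)$, proved from the noncrossing of the neighbouring label $I_{2i}$ with $J$) shows each lift can meet at most one of the $r$ odd intervals, and the pigeonhole principle then produces an odd interval disjoint from $(a,b)_0$. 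Existence of a good $f_{2i-1}$ and identification of the good arrow drop out simultaneously from this count; there is no separate selection step to justify.

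Even granting your first claim, the ``squeeze'' does not follow from Lemma~\ref{l:internalordering} alone, which only constrains cyclic orderings. If $f_{2i_0+1}\in(a,b)$, the clockwise arc from $f_{2i_0-1}$ to $f_{2i_0+1}$ passes through $a$ into $(a,b)$, and nothing in the ordering data stops $f_{2i_0}$ from landing there as well; if $f_{2i_0+1}\in[b,a-1]$ ahead of $f_{2i_0-1}$, the arc wraps all the way around and $f_{2i_0}$ could sit anywhere on it, so the weight $[f_{2i_0-1},f_{2i_0}-1]$ could sweep through $(a,b)_0$. Excluding these configurations requires the noncrossing of the neighbours $I_i$ against $J$ (i.e.\ Lemma~\ref{l:tilemma}), which you never invoke, and even then the correctness of the ``clockwise-most'' selection rule would need a separate argument. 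Finally, the boundary case is not ``entirely analogous'': the path $\xi_{\out}(E_j)$ revisits the vertex $j$ at its start and end, and one needs the extra fact that $j\notin(a,b)_0$ (Lemma~\ref{l:joinedintervals}) before the same count of $r$ intervals against $r-1$ lifts goes through.
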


\begin{proof}
We divide the proof into two cases, when $I$ is internal or external.
The required statements are then proved as
Lemmas~\ref{l:internallegalarrow} and~\ref{l:externallegalarrow} below.
\end{proof}

Thus, we start by assuming that $I$ is an internal vertex of $Q(D)$ and $J$ is an
arbitrary vertex. We use the notation introduced after Remark~\ref{r:perfectmatchings}.

\begin{lemma} \label{l:tilemma}
Fix $1\leq i\leq r$. Then $[\newt_{2i},\newt_{2i+1}]\not\subseteq (a,b)$.
\end{lemma}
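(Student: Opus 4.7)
The plan is to argue by contradiction: assume $[\newt_{2i}, \newt_{2i+1}] \subseteq (a, b)$ and exhibit a noncrossing violation for the pair $(I_{2i}, J)$, where $I_{2i} = I - \newt_{2i+1} + \newt_{2i}$ is the endpoint of $\alpha_{2i}$ opposite to $I$ in $Q(D)$. Since both $I_{2i}$ and $J$ are labels of alternating regions of $D$, they are forced to be noncrossing by \cite[Cor.~1]{scott06}, so such a violation will establish the lemma.

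Under the hypothesis, $\newt_{2i}$ and $\newt_{2i+1}$ both lie in $(a, b)$, appearing in clockwise cyclic order $a, \newt_{2i}, \newt_{2i+1}, b$ around $C_1$ (since $[\newt_{2i}, \newt_{2i+1}]$ denotes the clockwise closed interval, which must fit inside the clockwise open interval $(a, b)$). Because $[j_1, j_2]$ terminates at $a$ and $[i_1, i_2]$ begins at $b$, the gap $(a, b)$ is disjoint from $(I - J) \cup (J - I)$. Combined with the identities $\newt_{2i+1} \in I$ and $\newt_{2i} \notin I$ read off from $I = I_{2i} - \newt_{2i} + \newt_{2i+1}$, this forces $\newt_{2i+1} \in I \cap J$ and $\newt_{2i} \notin I \cup J$.

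A short computation with symmetric differences then gives
\[
 I_{2i} - J = (I - J) \cup \{\newt_{2i}\},
 \qquad
 J - I_{2i} = (J - I) \cup \{\newt_{2i+1}\},
\]
so the four distinct elements $a, \newt_{2i}, \newt_{2i+1}, b$, occurring in that clockwise cyclic order, alternate between $J - I_{2i}$ and $I_{2i} - J$. This is precisely the configuration forbidden by the definition of noncrossing, and $I_{2i} \neq J$ since $\newt_{2i+1} \in J - I_{2i}$, so the noncrossing requirement genuinely applies and we obtain the desired contradiction. The only real obstacle is noticing that the right vertex to which to apply noncrossingness is the neighbour $I_{2i}$ rather than $I$ itself; once that is seen, the rest is routine bookkeeping with symmetric differences.
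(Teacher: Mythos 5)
Your proof is correct and follows essentially the same route as the paper: pass to the neighbouring vertex $I_{2i}=I-\newt_{2i+1}+\newt_{2i}$, use that $(a,b)$ is disjoint from $(I-J)\cup(J-I)$ to place $\newt_{2i}\notin I\cup J$ and $\newt_{2i+1}\in I\cap J$, and read off that $a,\newt_{2i},\newt_{2i+1},b$ alternate between $J-I_{2i}$ and $I_{2i}-J$, contradicting the noncrossing property of labels of $D$. The only cosmetic difference is that you compute the full sets $I_{2i}-J$ and $J-I_{2i}$, whereas the paper checks membership of the four relevant elements directly.
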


\begin{proof}
Suppose, for a contradiction, that $[\newt_{2i},\newt_{2i+1}]\subseteq (a,b)$.
Then $a,\newt_{2i},\newt_{2i+1},b$ appear in cyclic order clockwise in $C_1$.
Since $\newt_{2i}\not\in I$ and $\newt_{2i}\in (a,b)$, we must have
$\newt_{2i}\not\in J$, so $\newt_{2i}\in I_{2i}-J$ by~\eqref{e:Iidescription}.
Similarly, $\newt_{2i+1}\in J-I_{2i}$.
We also have that $a\not\in I_{2i}$ since $a\not\in I$ and $a\not=\newt_{2i}$,
so $a\in J-I_{2i}$. Similarly, $b\in I_{2i}-J$.
This implies that $I_{2i}$ and $J$ are crossing,
which is the required contradiction, as $I_{2i},J\in Q_0(D)$.
\end{proof}

Let $\xi(I)$ be the path associated to $I$ in Definition~
\ref{d:xiI}. By Proposition~\ref{p:internalformula}, it
wraps around $C$ exactly $r-1$ times.
Let $C(r-1)$ be a connected $(r-1)$-fold cover of the graph $C$.
By choosing appropriate consecutive lifts
$(\tilde{\newt}_{i},\tilde{\newt}_{i+1})_0$, we can lift $\xi$ to
$C(r-1)$, obtaining a path $\tilde{\xi}$ which encircles $C(r-1)$ exactly once.

\begin{lemma} \label{l:internallegalarrow}
There is an $i\in \{1,2,\ldots ,r\}$
such that the weight $(\newt_{2i-1},\newt_{2i})_0$ of the
arrow $\alpha_{2i-1}{\colon}I\to I_{2i-1}$
is disjoint from $(a,b)_0$.
\end{lemma}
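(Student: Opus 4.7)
The plan is to lift everything to the $(r-1)$-fold cover $C(r-1)$ introduced just before the lemma, where $\tilde\xi$ becomes a simple closed loop. The lifts $\tilde f_1,\ldots,\tilde f_{2r}$ appear in clockwise order and partition $C(r-1)$ into $2r$ closed intervals $J_i = [\tilde f_i, \tilde f_{i+1}]$, which alternate in parity: odd $i$ gives an outgoing arrow weight and even $i$ an incoming one. Since $a$ and $b$ alternate in $C_1$, the $r-1$ lifts of each also alternate clockwise around $C(r-1)$, dividing it into $r-1$ \emph{good arcs} (lifts of the edge interval $[b,a]$) and $r-1$ \emph{bad arcs} (lifts of $(a,b)$). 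The conclusion of the lemma translates to the statement that some odd $i$ has $J_i$ entirely contained in a single good arc.

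Let $c_i$ be the number of markers in the interior of $J_i$. Proposition~\ref{p:internalformula} gives $\sum_i c_i = 2(r-1)$, so the set $Z = \{i : c_i = 0\}$ satisfies $|Z|\geq 2$ by pigeonhole. Define $\tau_i = +1$ if $\tilde f_i$ lies in a good arc and $\tau_i = -1$ if in a bad arc; then $\tau_{i+1} = \tau_i\,(-1)^{c_i}$ since $\tau$ flips at each marker crossing. Lemma~\ref{l:tilemma} asserts precisely that for even $i \in Z$ one has $\tau_i = +1$. Suppose for contradiction that no odd $i \in Z$ has $\tau_i = +1$; combining with Lemma~\ref{l:tilemma} forces $\tau_i = (-1)^i$ for every $i \in Z$.

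Since $c_i=0$ is even, $i\in Z$ forces $\tau_{i+1}=\tau_i=(-1)^i\neq(-1)^{i+1}$, so $i+1\notin Z$; in particular, $Z$ has no two cyclically consecutive indices. Enumerate $Z$ cyclically as $i_1,\ldots,i_{|Z|}$ and set $N_k = \sum_{j=i_k}^{i_{k+1}-1} c_j$. The recurrence telescopes to $\tau_{i_{k+1}} = \tau_{i_k}\,(-1)^{N_k}$, and the hypothesis on $\tau$ forces the parity congruence $N_k \equiv i_{k+1}-i_k \pmod{2}$. On the other hand, $c_{i_k}=0$ while $c_j\geq 1$ for every $j \in (i_k,i_{k+1})$, so $N_k \geq i_{k+1}-i_k-1$. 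Combining the parity with this lower bound yields $N_k \geq i_{k+1}-i_k$, and summing cyclically gives $2(r-1) = \sum_k N_k \geq \sum_k (i_{k+1}-i_k) = 2r$, which is absurd.

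The main subtlety is recognising the symmetric invariant $\tau_i=(-1)^i$ on $Z$ that pairs Lemma~\ref{l:tilemma} with the negation of the goal; once this is in place, the parity condition $N_k\equiv i_{k+1}-i_k\pmod 2$ sharpens the naive count $N_k\geq i_{k+1}-i_k-1$ by exactly one unit, and the cyclic sum of these gains is $|Z|\geq 2$, which is precisely enough to contradict the discrepancy $2r - 2(r-1) = 2$.
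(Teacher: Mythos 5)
Your parity count is a genuinely different route from the paper's argument, which is a direct pigeonhole: each of the $r-1$ lifts of $(a,b)_0$ can meet at most one of the $r$ odd intervals $(\tilde{f}_{2i-1},\tilde{f}_{2i})_0$, because a connected bad lift meeting two of them would have to contain a whole closed edge interval $[\tilde{f}_{2i},\tilde{f}_{2i+1}]$, contradicting Lemma~\ref{l:tilemma}. Your version, however, has a genuine gap: it silently assumes that no marker (lift of the edge $a$ or $b$) coincides with one of the division points $\tilde{f}_i$, i.e.\ that no $f_i$ equals $a$ or $b$. This assumption fails in practice. Since $b\in I-J\subseteq I$ and $f_i\in I$ exactly for $i$ odd, one can have $f_{2i-1}=b$; dually $f_{2i}=a$ is possible. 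Concretely, in the diagram of Figure~\ref{f:postfree37} take $I=\{1,4,5\}$ (an internal vertex with $r=3$) and $J=\{2,3,4\}$: then $a=3$, $b=5$, and $5$ is one of the odd-indexed strand labels at $I$ (strand $5$ crosses the arrow $145\to 147$ of weight $\{5,6\}$). So one lift of the marker $b$ sits exactly at some $\tilde{f}_{2i-1}$; with your conventions $\tau_{2i-1}$ is then undefined, that marker lies in the interior of no $J_j$, and $\sum_j c_j=3\neq 2(r-1)=4$.

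More importantly, the degeneracy cannot be repaired by a perturbation or a half-open convention alone, because it also undermines the step ``Lemma~\ref{l:tilemma} asserts precisely that for even $i\in Z$ one has $\tau_i=+1$''. Under any consistent convention, knowing $c_{2i}=0$ and $\tau_{2i}=-1$ only tells you that the vertices of $(\tilde{f}_{2i},\tilde{f}_{2i+1})_0$ (possibly together with the single vertex $\tilde{f}_{2i+1}$) lie in one bad lift $[\tilde{a},\tilde{b}-1]$; this gives $[\tilde{f}_{2i}+1,\tilde{f}_{2i+1}]\subseteq(\tilde{a},\tilde{b})$ but \emph{not} $[\tilde{f}_{2i},\tilde{f}_{2i+1}]\subseteq(\tilde{a},\tilde{b})$ in the case $f_{2i}=a$, so Lemma~\ref{l:tilemma} is not contradicted. (Its proof explicitly uses $a\neq f_{2i}$ and $b\neq f_{2i+1}$, and the noncrossing argument does not obviously extend to the closed interval.) The paper's pigeonhole is robust precisely here: a bad lift forced to reach into the interiors of two consecutive odd intervals must contain the two extra vertices $\tilde{f}_{2i}-1$ and $\tilde{f}_{2i+1}$, which is what pushes the closed edge interval strictly inside $(\tilde{a},\tilde{b})$. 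To salvage your argument you would need either to strengthen Lemma~\ref{l:tilemma} to cover these endpoint cases or to rule them out; as written the proof is incomplete.
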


\begin{proof}
The preimage of $(a,b)_0$ in $C(r-1)$ consists of a disjoint union of
$r-1$ lifts of $(a,b)_0$.
If one of these, say $(\tilde{a},\tilde{b})_0$,
had a nonempty intersection with more than one of the lifts
$(\tilde{\newt}_{2i-1},\tilde{\newt}_{2i})_0$ in $\tilde{\xi}$
it would have non empty intersection with two consecutive odd intervals
$(\tilde{\newt}_{2i-1},\tilde{\newt}_{2i})_0$ and
$(\tilde{\newt}_{2i+1},\tilde{\newt}_{2i+2})_0$.
Then we'd have
$[\tilde{\newt}_{2i},\tilde{\newt}_{2i+1}]\subseteq (\tilde{a},\tilde{b})$,
and thus that $[\newt_{2i},\newt_{2i+1}]\subseteq (a,b)$,
a contradiction to Lemma~\ref{l:tilemma}.

Hence each of the $r-1$ lifts of $(a,b)_0$ can have nonempty intersection
with at most one of the $r$ intervals $(\tilde{\newt}_{2i-1},\tilde{\newt}_{2i})_0$.
Thus there must be one outgoing arrow $\alpha_{2i-1}$
whose weight $(\newt_{2i-1},\newt_{2i})_0$
does not intersect $(a,b)_0$, as required.
\end{proof}

We next consider the case where $I=\newL_j$ is a boundary vertex.
We use the notation for the incident arrows etc., introduced just before
Definition~\ref{d:xiIboundary}.
The statement of Lemma~\ref{l:tilemma} also still holds in this case,
but we need an extra lemma too.

\begin{lemma} \label{l:joinedintervals}
We have that $j\not\in (a,b)_0$.
\end{lemma}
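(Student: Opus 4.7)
The plan is to exploit the very explicit form of the boundary vertex, $I = E_j = [j-k+1, j] \subset C_1$, which forces the positions of $a$ and $b$ relative to $j$. Since $b = i_1 \in I - J \subseteq I$, we have $b \in [j-k+1, j]$. On the other hand, $a = j_2 \in J - I$ cannot lie in $I$, so $a \in [j+1, j-k]$, the complementary cyclic interval in $C_1$.

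With these two positions in hand, I would argue that the cyclic arc in $C_0$ going clockwise from $b$ to $a-1$, namely $[b, a-1]$, contains the vertex $j$. The reason is that $b$ lies in the cyclic interval $[j-k+1, j]$ and $a-1$ lies in the cyclic interval $[j, j-k-1]$, so travelling clockwise in $C_0$ starting at $b$, one traverses the vertices $b, b+1, \ldots, j$ and only afterwards continues through $j+1, j+2, \ldots$ until reaching $a-1$. In particular $j \in [b, a-1]$.

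Finally, by the convention recalled at the start of Section~\ref{s:setup}, we have $(a, b)_0 = [a, b-1]$, which together with $[b, a-1]$ partitions $C_0$ into two complementary arcs. The containment $j \in [b, a-1]$ therefore yields $j \notin (a, b)_0$, as required. There is no real obstacle to the argument; it rests essentially on the geometric fact that $j$ is the right-hand endpoint of the cyclic interval $E_j$, so any clockwise arc in $C_0$ joining a point of $I$ to a point of $C_1 \setminus I$ must cross $j$. The only item needing care is to track the cyclic arithmetic so that the two possible wrap-around configurations (depending on whether $a$ is close to $j+1$ or close to $j-k$) are handled uniformly, but the inclusions $b \in [j-k+1,j]$ and $a \in [j+1,j-k]$ make this transparent.
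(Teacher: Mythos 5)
Your proof is correct and follows essentially the same route as the paper's: both rest on the observations that $b\in I-J\subseteq[j-k+1,j]$ and $a\in J-I\subseteq[j+1,j-k]$, and then read off from the cyclic order that $j$ lies in the arc complementary to $(a,b)_0=[a,b-1]$. The paper phrases the conclusion as ``$j,a,b,j$ appear in clockwise order (possibly with $b=j$)'' rather than via the complementary arc $[b,a-1]$, but this is only a cosmetic difference.
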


\begin{proof}
Since $\newL_j=[j-k+1,j]$, we have $\newL_j-J\subseteq [j-k+1,j]$, while
$J-\newL_j\subseteq [j+1,j-k]$. It follows from the definition of $a$ and $b$
that $j,a,b,j$ appear in clockwise order around $C$, with $j\not=a$,
$a\not=b$ (but possibly $b=j$). The result follows.
\end{proof}

We consider the path $\xi_{\out}(\newL_j)$ associated to $\newL_j$ in Definition~\ref{d:xiIboundary}.
By Proposition~\ref{p:externalformula}(a), $\xi_{\out}(\newL_j)$ wraps around $C$ exactly
$r-1$ times but the vertex $j$ appears at the start and the end of $\xi_{\out}(\newL_j)$ (so that there are two edges in the overlap of $\xi$ at its start
and end): we have $\newt_1=j$ and $\newt_{2r}=j+1$.

By choosing appropriate consecutive lifts $(\tilde{\newt}_{i},\tilde{\newt}_{i+1})_0$),
we can lift $\xi_{\out}(\newL_j)$ to $C(r-1)$, obtaining a path $\tilde{\xi}$ which
encircles $C(r-1)$ exactly once plus the single vertex (a lift of $j$) in the
overlap at the start and the end.

\begin{lemma} \label{l:externallegalarrow}
There is an $i\in \{1,2,\ldots ,r\}$ such that
the weight $(\newt_{2i-1},\newt_{2i})_0$ of the outgoing arrow
$\alpha_{2i-1}\colon \newL_j\to I_{2i-1}$
is disjoint from $(a,b)_0$.
\end{lemma}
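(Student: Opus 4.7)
The plan is to mimic the proof of Lemma~\ref{l:internallegalarrow}, substituting Proposition~\ref{p:externalformula}(a) for Proposition~\ref{p:internalformula} and using Lemma~\ref{l:joinedintervals} to handle the extra feature of the boundary case, namely that $\xi_{\out}(\newL_j)$ starts and ends at the same vertex $j$, with two edges of overlap rather than closing up exactly.

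First I would lift $\xi_{\out}(\newL_j)$ to a path $\tilde\xi$ in the $(r{-}1)$-fold cover $C(r-1)$ as described in the paragraph preceding the statement: by Proposition~\ref{p:externalformula}(a), $\tilde\xi$ traverses $C(r-1)$ once with a single extra vertex of overlap at the start and end, both of which are lifts of $j$. The preimage of $(a,b)_0$ in $C(r-1)$ then consists of exactly $r-1$ disjoint lifts $(\tilde a,\tilde b)_0$.

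Next I would show, exactly as in Lemma~\ref{l:internallegalarrow}, that no single lift $(\tilde a,\tilde b)_0$ can meet two of the odd-indexed intervals $(\tilde\newt_{2i-1},\tilde\newt_{2i})_0$ appearing in $\tilde\xi$. Otherwise it would meet two consecutive ones, forcing an inclusion $[\newt_{2i},\newt_{2i+1}]\subseteq (a,b)$ downstairs, contradicting Lemma~\ref{l:tilemma} (which the paper notes applies in the boundary case as well). The only way this argument could fail in the boundary setting is if some lift of $(a,b)_0$ straddled the overlap vertex at the start/end of $\tilde\xi$; but that overlap vertex is a lift of $j$, and Lemma~\ref{l:joinedintervals} says $j\notin(a,b)_0$, so this cannot happen and the overlap vertex contributes nothing to the preimage.

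Finally, I would conclude by pigeonhole: the $r-1$ lifts of $(a,b)_0$ collectively meet at most $r-1$ of the $r$ odd intervals $(\tilde\newt_{2i-1},\tilde\newt_{2i})_0$, so at least one such interval is disjoint from the preimage of $(a,b)_0$. Projecting back to $C$, the corresponding outgoing arrow $\alpha_{2i-1}\colon \newL_j\to I_{2i-1}$ has weight $(\newt_{2i-1},\newt_{2i})_0$ disjoint from $(a,b)_0$, as required. The one subtle point, and the reason Lemma~\ref{l:joinedintervals} was proved separately, is precisely the step that rules out an interval of the form $(\tilde a,\tilde b)_0$ sitting at the seam of the lift; everything else is a direct transcription of the internal argument.
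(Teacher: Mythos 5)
Your argument is correct and is essentially the paper's own proof: lift $\xi_{\out}(\newL_j)$ to $C(r-1)$ via Proposition~\ref{p:externalformula}(a), rule out a lift of $(a,b)_0$ meeting two odd intervals by combining Lemma~\ref{l:tilemma} (away from the seam) with Lemma~\ref{l:joinedintervals} (at the seam, since the overlap vertex is a lift of $j$), and finish by pigeonhole with $r-1$ lifts against $r$ intervals. No gaps.
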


\begin{proof}
The interval $(a,b)_0$ has $r-1$ distinct lifts in $C(r-1)$.
By Lemma~\ref{l:joinedintervals}, no lift can have nonempty intersection with
both intervals $(\tilde{\newt}_1,\tilde{\newt}_2)_0$ and
$(\tilde{\newt}_{2r-1},\tilde{\newt}_{2r})_0$, since then it would contain a lift of $j$.
Arguing as in the proof of Lemma~\ref{l:internallegalarrow}, we obtain that
a lift of $(a,b)_0$ can only have nonempty intersection with at most one of
the intervals $(\tilde{\newt}_{2i-1},\tilde{\newt}_{2i})_0$.
The $r-1$ copies in total have nonempty intersection with at most $r-1$
such intervals.
But there are $r$ such intervals in total, so there is at least one
such interval that has empty
intersection with $(a,b)_0$, as required.
\end{proof}

\section{The rank one modules $\module_I$.} 
\label{s:rankone}

Consider the quiver with vertices $C_0$
and, for each edge $i\in C_1$, a pair of arrows $x_i{\colon}i-1\to i$
and $y_i{\colon}i\to i-1$. Then let $B$ be the quotient of the
path algebra (over $\F$) of this quiver by the ideal generated by the
$2n$ relations $xy=yx$ and $x^k=y^{n-k}$, interpreting $x$ and $y$
as arrows of the form $x_i,y_i$ appropriately and starting at any vertex.
See Figure~\ref{f:JKSquiver} for an example when $n=5$.

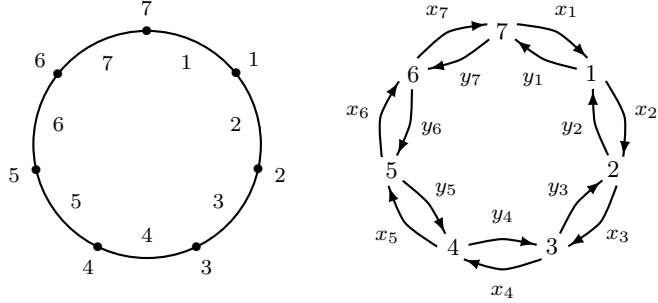
\begin{figure}
\[
\begin{tikzpicture}[scale=1,baseline=(bb.base)]
\newcommand{\seventh}{51.4} 
\newcommand{\circradius}{1.5cm} %
\newcommand{\inradius}{1.2cm} %
\newcommand{\outradius}{1.8cm} %
\path (0,0) node (bb) {}; 

\draw[black,thick] (0,0) circle(\circradius);
\foreach \j in {1,...,7}
{\draw (90-\seventh*\j:\circradius) node[black] {$\bullet$};
 \draw (90-\seventh*\j:\outradius) node[black] {$\j$};
 \draw (90+\seventh/2-\seventh*\j:\inradius) node[black] {$\j$}; }
\end{tikzpicture}
\quad\quad
\begin{tikzpicture}[scale=1,baseline=(bb.base),
quivarrow/.style={black, -latex, thick}]
\newcommand{\seventh}{51.4} 
\newcommand{\circradius}{1.5cm} %
\newcommand{\innermidradius}{1.25cm} 
\newcommand{\outermidradius}{1.65cm} 
\newcommand{\innerradius}{1.4cm} 
\newcommand{\outerradius}{1.6cm} 
\newcommand{\xradius}{1.95cm} 
\newcommand{\yradius}{0.95cm} 
\path (0,0) node (bb) {}; 


\foreach \i in {1,...,7}
{ \coordinate (b\i) at (90-\seventh*\i:\circradius);
\coordinate (outerplus\i) at (97-\seventh*\i:\outerradius);
\coordinate (outerminus\i) at (83-\seventh*\i:\outerradius);
\coordinate (outermid\i) at (90-\seventh*\i-0.5*\seventh:\outermidradius);
\coordinate (innermid\i) at (90-\seventh*\i+0.5*\seventh:\innermidradius);
\coordinate (innerplus\i) at (97-\seventh*\i:\innerradius);
\coordinate (innerminus\i) at (83-\seventh*\i:\innerradius);
\coordinate (xpos\i) at (90-\seventh*\i+0.5*\seventh:\xradius);
\coordinate (ypos\i) at (90-\seventh*\i+0.5*\seventh:\yradius);
\draw (b\i) node {\small $\i$};
}


\foreach \i/\j in {1/2,2/3,3/4,4/5,5/6,6/7,7/1}
\draw [black,thick] plot[smooth]
coordinates {(outerminus\i) (outermid\i)
(outerplus\j)}
[postaction=decorate, decoration={markings,
  mark= at position 0.98 with \arrow{latex}}];

\foreach \i/\j in {2/1,3/2,4/3,5/4,6/5,7/6,1/7}
\draw [black,thick] plot[smooth]
coordinates {(innerplus\i) (innermid\i)
(innerminus\j)}
[postaction=decorate, decoration={markings,
  mark= at position 0.97 with \arrow{latex}}];


\foreach \i in {1,...,7}
\draw (xpos\i) node {$x_{\i}$};

\foreach \i in {1,...,7}
\draw (ypos\i) node {$y_{\i}$};

\end{tikzpicture}
\]
\caption{The graph $C$ and corresponding quiver}
\label{f:JKSquiver}
\end{figure}

The completion $\widehat{B}$ of $B$ coincides with the quotient of the completed path
algebra of the graph $C$, i.e.\ the doubled quiver as above,
by the closure of the ideal generated by the relations above.
The algebras $B$ and $\widehat{B}$ were introduced
in~\cite[\S3]{jks}.

The centre $Z$ of $B$ is the polynomial ring ${\F}[t]$,
where $t=\sum_{i=1}^n x_iy_i$.
The (maximal) Cohen-Macaulay $B$-modules are precisely those which are
free as $Z$-modules. Indeed, such a module $M$ is given by a representation
$\{M_i\,:\,i\in C_0\}$ of
the quiver with each $M_i$ a free $Z$-module of the same rank,
which is the rank of $M$ (cf. \cite[\S 3]{jks}).

\begin{definition} \label{d:moduleMI}
For $I$ any $k$-subset of $C_1$, we define a rank one $B$-module
\[
  \module_I = (U_i,\ i\in C_0 \,;\, x_i,y_i,\, i\in C_1)
\]
as follows (cf. \cite[\S5]{jks}).
For each vertex $i\in C_0$, set $U_i=\F[t]$ and,
for each edge $i\in C_1$, set
\begin{itemize}
\item[] $x_i\colon U_{i-1}\to U_{i}$ to be multiplication by $1$ if $i\in I$ and by $t$ if $i\not\in I$,
\item[] $y_i\colon U_{i}\to U_{i-1}$ to be multiplication by $t$ if $i\in I$ and by $1$ if $i\not\in I$.
\end{itemize}
\end{definition}

The module $\module_I$ can be represented by a lattice diagram
$\LL_I$ in which $U_0,U_1,U_2,\ldots U_n$ are represented by columns from
left to right (with $U_0$ and $U_n$ to be identified).
The vertices in each column correspond to the natural monomial basis of $\F [t]$.
The column corresponding to $U_{i+1}$ is displaced half a step vertically
downwards (respectively, upwards) in relation to $U_i$ if $i\in I$
(respectively, $i\not \in I$), and the actions of $x_i$ and $y_i$ are
shown as diagonal arrows. Note that the $k$-subset $I$ can then be read off as
the set of labels on the arrows pointing down to the right which are exposed
to the top of the diagram. For example, the lattice picture $\LL_{\{1,4,5\}}$
in the case $k=3$, $n=8$, is shown in Figure~\ref{f:LIexample}.

\begin{remark} \label{r:projectives}
Fix $j\in C_0$, and consider the module $\module_{\newL_j}$.
The corresponding lattice diagram is bounded at the top by paths going down from $j-k$ to $j$. It is easy to see that
there is a bijection between the vertices of the diagram and a
basis of $Be_{j-k}$, where $e_{j-k}$ is the idempotent corresponding
to the vertex $j-k$. We see that
the module $\module_{\newL_j}$ is the projective indecomposable $B$-module corresponding to the vertex $j-k$.
\end{remark}

\begin{figure}
\[
\begin{tikzpicture}[scale=0.8,baseline=(bb.base),
quivarrow/.style={black, -latex, thick}]
\newcommand{\seventh}{51.4} 
\newcommand{\circradius}{1.5cm}
\newcommand{\inradius}{1.2cm}
\newcommand{\outradius}{1.8cm}
\newcommand{\dotrad}{0.1cm} 
\newcommand{\bdrydotrad}{{0.8*\dotrad}} 
\path (0,0) node (bb) {}; 


\draw (0,0) circle(\bdrydotrad) [fill=black];
\draw (0,2) circle(\bdrydotrad) [fill=black];
\draw (1,1) circle(\bdrydotrad) [fill=black];
\draw (2,0) circle(\bdrydotrad) [fill=black];
\draw (2,2) circle(\bdrydotrad) [fill=black];
\draw (3,1) circle(\bdrydotrad) [fill=black];
\draw (3,3) circle(\bdrydotrad) [fill=black];
\draw (4,0) circle(\bdrydotrad) [fill=black];
\draw (4,2) circle(\bdrydotrad) [fill=black];
\draw (5,1) circle(\bdrydotrad) [fill=black];
\draw (6,0) circle(\bdrydotrad) [fill=black];
\draw (6,2) circle(\bdrydotrad) [fill=black];
\draw (7,1) circle(\bdrydotrad) [fill=black];
\draw (7,3) circle(\bdrydotrad) [fill=black];
\draw (8,2) circle(\bdrydotrad) [fill=black];
\draw (8,4) circle(\bdrydotrad) [fill=black];


\draw [quivarrow,shorten <=5pt, shorten >=5pt] (0,2)
-- node[above]{$1$} (1,1);
\draw [quivarrow,shorten <=5pt, shorten >=5pt] (1,1) -- node[above]{$1$} (0,0);
\draw [quivarrow,shorten <=5pt, shorten >=5pt] (2,2) -- node[above]{$2$} (1,1);
\draw [quivarrow,shorten <=5pt, shorten >=5pt] (1,1) -- node[above]{$2$} (2,0);
\draw [quivarrow,shorten <=5pt, shorten >=5pt] (3,3) -- node[above]{$3$} (2,2);
\draw [quivarrow,shorten <=5pt, shorten >=5pt] (2,2) -- node[above]{$3$} (3,1);
\draw [quivarrow,shorten <=5pt, shorten >=5pt] (3,1) -- node[above]{$3$} (2,0);
\draw [quivarrow,shorten <=5pt, shorten >=5pt] (3,3) -- node[above]{$4$} (4,2);
\draw [quivarrow,shorten <=5pt, shorten >=5pt] (4,2) -- node[above]{$4$} (3,1);
\draw [quivarrow,shorten <=5pt, shorten >=5pt] (3,1) -- node[above]{$4$} (4,0);
\draw [quivarrow,shorten <=5pt, shorten >=5pt] (4,2) -- node[above]{$5$} (5,1);
\draw [quivarrow,shorten <=5pt, shorten >=5pt] (5,1) -- node[above]{$5$} (4,0);
\draw [quivarrow,shorten <=5pt, shorten >=5pt] (6,2) -- node[above]{$6$} (5,1);
\draw [quivarrow,shorten <=5pt, shorten >=5pt] (5,1) -- node[above]{$6$} (6,0);
\draw [quivarrow,shorten <=5pt, shorten >=5pt] (6,2) -- node[above]{$7$} (7,1);
\draw [quivarrow,shorten <=5pt, shorten >=5pt] (7,1) -- node[above]{$7$} (6,0);
\draw [quivarrow,shorten <=5pt, shorten >=5pt] (7,3) -- node[above]{$7$} (6,2);
\draw [quivarrow,shorten <=5pt, shorten >=5pt] (7,3) -- node[above]{$8$} (8,2);
\draw [quivarrow,shorten <=5pt, shorten >=5pt] (8,2) -- node[above]{$8$} (7,1);
\draw [quivarrow,shorten <=5pt, shorten >=5pt] (8,4) -- node[above]{$8$} (7,3);

\draw [dotted] (0,-2) -- (0,2);
\draw [dotted] (8,-2) -- (8,4);

\draw [dashed] (4,-2) -- (4,-1);

\end{tikzpicture}
\]
\caption{The module $\module_{\{1,4,5\}}$ in the case $k=3$, $n=8$}
\label{f:LIexample}
\end{figure}
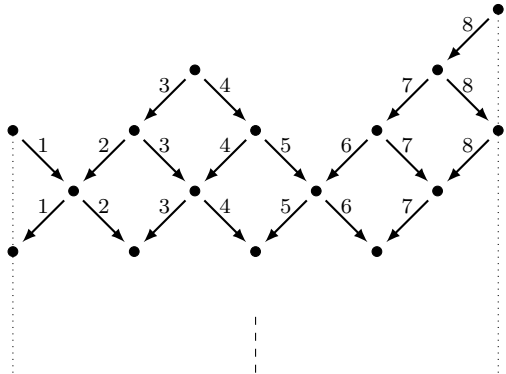

Every $B$-module has a canonical endomorphism $t$, that is, multiplication by $t\in Z$.
For $\module_J$ this corresponds to shifting $\LL_J$ one step downwards.
Since $Z$ is central, $\Hom_B(M,N)$ is
a $Z$-module for arbitrary $B$-modules $M$ and $N$.
If $M,N$ are free $Z$-modules, then so is $\Hom_B(M,N)$.

\begin{definition} \label{d:degree}
A \emph{monomial} morphism $\varphi\colon \module_I\to \module_J$ is
one determined by an embedding of $\LL_I$ in $\LL_J$.
Since the cokernel of any such $\varphi$ is clearly finite dimensional,
we may define the \emph{degree} of $\varphi$ to be the element of $\mathbb{N}C_0$
that counts the multiplicities of the simple modules in a
composition series for its cokernel.
As for the weights on arrows, we identify a
subset of $C_0$ with the sum of its elements, and
refer to a monomial morphism as being \emph{sincere}
if the support of its degree is $C_0$,
and \emph{insincere} otherwise.
\end{definition}

Note that the composition of monomial morphisms is monomial and
that degree is additive under composition.
The degree of the endomorphism $t$ is precisely $C_0$, without multiplicity.
Given $k$-subsets $I,J$ of $C_1$, let
\begin{equation} \label{e:embedJI}
  \embed_{JI}\colon \module_I \to \module_J
\end{equation}
denote the monomial morphism with minimal codimension.
This can be represented by embedding $\LL_I$ into $\LL_J$ as high as possible,
i.e.\ so that at least one vertex on the top boundary of $\LL_J$ lies in image of $\LL_I$.
Thus $\embed_{JI}$ is insincere.
Note that $g_{II}$ is the identity map on $\module_I$.
We have the following, which explains the terminology
\emph{monomial morphism}.

\begin{lemma} \label{l:homLILJ}
Let $I,J$ be $k$-subsets of $C_1$. Then
$\Hom_B(\module_I,\module_J)$ is a free rank one $\F[t]$-module
generated by $\embed_{JI}$. Furthermore, $t^m\embed_{JI}$ may be
characterised as the unique monomial morphism of its degree
$\module_I\to \module_J$. In particular, $\embed_{JI}$ is
the unique insincere monomial morphism $\module_I\to \module_J$.
\end{lemma}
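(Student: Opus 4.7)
The plan is to compute $\Hom_B(\module_I,\module_J)$ directly by writing a morphism component by component, and then identify its natural generator with $\embed_{JI}$.

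Since $t \in Z(B)$, any $B$-module morphism $\varphi\colon \module_I \to \module_J$ is $\F[t]$-linear, so each component $\varphi_i\colon \F[t] \to \F[t]$ (where $\F[t]$ is the $i$-th component of each of $\module_I$ and $\module_J$) is multiplication by some polynomial $f_i(t) \in \F[t]$. The compatibility of $\varphi$ with the action of $x_i$ (and, equivalently, of $y_i$) reduces via Definition~\ref{d:moduleMI} to the relation $\epsilon_i^I f_i = \epsilon_i^J f_{i-1}$, where $\epsilon_i^? = 1$ if $i \in ?$ and $t$ otherwise. Equivalently, in $\F(t)$, $f_i = t^{e_i} f_{i-1}$ with $e_i = [i \in I] - [i \in J] \in \{-1,0,1\}$.

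Next I will iterate this around the cycle. Setting $n_i = e_1 + \cdots + e_i$, we get $f_i = t^{n_i} f_0$. The cycle closes, i.e.\ $n_n = 0$, precisely because $|I| = |J| = k$, so $f_0$ determines the entire morphism. The condition that each $f_i$ actually lies in $\F[t]$ (and not merely in $\F(t)$) is that $f_0$ be divisible by $t^m$, where $m = -\min_{i \in C_0} n_i \geq 0$. Hence $\Hom_B(\module_I,\module_J)$ is a free rank one $\F[t]$-module, generated by the morphism $\varphi^{(0)}$ with $f_0 = t^m$.

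I then identify $\varphi^{(0)}$ with $\embed_{JI}$. Each $f_i = t^{m+n_i}$ is a monomial, so $\varphi^{(0)}$ is a monomial morphism in the sense of Definition~\ref{d:degree}, corresponding to the embedding of $\LL_I$ in $\LL_J$ that shifts column $i$ down by $m + n_i$ steps. By the choice of $m$, at least one column shift equals zero, so the embedding touches the top boundary of $\LL_J$; this characterises the highest possible embedding, which is precisely $\embed_{JI}$. Thus $\Hom_B(\module_I, \module_J) = \F[t]\cdot\embed_{JI}$.

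For the remaining claims, I observe that any monomial morphism $\varphi$ has $f_0 = t^a$ itself a monomial (since all $f_i$ are monomials differing from $f_0$ by fixed powers of $t$), hence $\varphi = t^{a-m}\embed_{JI}$; its degree is $\sum_{i \in C_0}(a+n_i)[i]$, and $a$ is recovered from any single component, so a monomial morphism is determined by its degree. Insincerity amounts to $\min_i(a+n_i) = 0$, i.e.\ $a = m$, singling out $\embed_{JI}$. I expect the only point needing a little care is the translation between the algebraic description of $\varphi^{(0)}$ via the $f_i$'s and the lattice-diagram definition of $\embed_{JI}$ as the highest embedding; the rest is direct bookkeeping, with the case $|I|=|J|$ ensuring the cycle closes cleanly.
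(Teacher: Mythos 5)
Your proposal is correct and follows essentially the same route as the paper: both arguments reduce to the observation that every morphism $\module_I\to\module_J$ is built column by column from multiplications by powers of $t$, and that the resulting embeddings of $\LL_I$ into $\LL_J$ form a single chain of downward shifts of the highest one, namely $\embed_{JI}$. The only difference is one of explicitness: the paper simply asserts that the embeddings give a basis and are "clearly" shifts of each other, whereas you derive this by the component-wise computation $f_i=t^{n_i}f_0$ with $n_n=0$ (using $|I|=|J|$) and the integrality constraint $f_0\in t^m\F[t]$ — a welcome filling-in of the details rather than a genuinely different proof.
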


\begin{proof}
For the first part, observe that there is a basis of $\Hom_B(\module_I,\module_J)$ consisting of monomial morphisms, given by taking all embeddings of
$\LL_I$ into $\LL_J$, which are clearly just downward shifts of the highest possible one.
In other words, they are the maps $t^m\embed_{JI}$, for all $m\geq0$, as required.
For the second part, note that the degree of $t^m\embed_{JI}$ is equal to to the degree of
$\embed_{JI}$ plus $mC_0$. The last part follows from this and the fact that
$g_{JI}$ is insincere, by the remarks immediately following Definition~\ref{d:degree}.
\end{proof}

For a nonempty subset $V$ of $C_0$ and $k$-subsets $I,J$ of $C_1$, we write $I\leq_V J$
if the support of the degree of $\embed_{JI}$ does not contain any element of $V$.

\begin{lemma}
Let $V$ be a subset of $C_0$. Then the relation $\leq_V$ is a partial
order on the collection of all $k$-subsets of $C_1$.
\end{lemma}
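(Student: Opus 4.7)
The plan is to verify the three axioms of a partial order --- reflexivity, antisymmetry, transitivity --- in each case by composing the relevant canonical insincere morphisms $\embed_{JI}$ and applying Lemma~\ref{l:homLILJ}, together with the observation that degree is additive under composition of monomial morphisms.

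\emph{Reflexivity} is immediate: $\embed_{II}$ is the identity map on $\module_I$, so its degree is zero and its support is trivially disjoint from $V$.

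For \emph{transitivity}, suppose $I\leq_V J$ and $J\leq_V K$, and form the composition $\embed_{KJ}\circ \embed_{JI}\colon \module_I\to \module_K$. This is a monomial morphism of degree $d:=\deg\embed_{JI}+\deg\embed_{KJ}$, whose support (contained in the union of the two supports) is disjoint from $V$. By Lemma~\ref{l:homLILJ}, it must coincide with $t^m\embed_{KI}$ for some $m\geq0$, so $\deg\embed_{KI}+mC_0=d$. Since $d$ has no coordinate supported in $V$ but $mC_0$ has value $m$ at every coordinate, the nonemptiness of $V$ forces $m=0$. Hence $\deg\embed_{KI}=d$ has support disjoint from $V$, giving $I\leq_V K$.

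For \emph{antisymmetry}, assume $I\leq_V J$ and $J\leq_V I$, and apply the same argument with $K=I$: the composition $\embed_{IJ}\circ\embed_{JI}$ is a monomial endomorphism of $\module_I$ of degree $\deg\embed_{JI}+\deg\embed_{IJ}$, which by Lemma~\ref{l:homLILJ} equals $t^m\embed_{II}$ for some $m\geq0$, so the degree equals $mC_0$. The support of this degree is disjoint from $V$, so again $m=0$ by nonemptiness of $V$, forcing $\deg\embed_{JI}=\deg\embed_{IJ}=0$. A monomial morphism of zero degree has zero cokernel, so $\embed_{JI}$ embeds $\LL_I$ onto all of $\LL_J$, meaning the two lattice diagrams (and hence their labelling subsets) coincide, so $I=J$.

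The one pointed step --- really the only place any care is required --- is the use of $V\neq\emptyset$ to kill the multiple of $C_0$ that could a priori appear in the composition; everything else is the formal consequence of the uniqueness statement in Lemma~\ref{l:homLILJ} together with additivity of degree.
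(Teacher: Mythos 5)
Your proof is correct and follows essentially the same route as the paper's: compose the canonical morphisms, use additivity of degree together with the uniqueness statement in Lemma~\ref{l:homLILJ}, and use nonemptiness of $V$ to rule out a factor of $t^m$. The only difference is presentational — the paper phrases the key step via composition series of cokernels and the ``unique insincere monomial morphism'' characterisation, while you make the degree arithmetic and the role of $V\neq\emptyset$ explicit, which is a faithful unpacking of the same argument.
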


\begin{proof}
Reflexivity is clear. If $I \leq_V J$ and $J \leq_V K$, then
$\coker \embed_{JI}$ and $\coker \embed_{KJ}$ have composition
series containing no simple $S_i$ with $i\in V$. It follows that the
composition of these two morphisms has the same property. It thus
coincides with $\embed_{JI}$, and it follows that $I\leq_V J$, so $\leq_V$
is transitive. Suppose that $I\leq_V J$ and $J \leq_V I$.
Then the composition $\embed_{IJ}\embed_{JI}{\colon}\module_I \to \module_I$
has cokernel containing no simple $S_i$ with $i\in V$, and thus must be
the identity map and we have $I=J$ as required.
\end{proof}

\section{Existence of minimal path} 
\label{s:minimalpath}

In this section, we show that there is a minimal path between any pair of
vertices in the quiver $Q(D)$ of a Postnikov diagram $D$.

\begin{proposition} \label{p:degree}
Let $I,J$ be distinct noncrossing $k$-subsets of $C_1$.
Let $a=a(I,J)$ and $b=b(I,J)$, as in Definition~\ref{d:aIJbIJ}.
Let $I-J=\{b_1,\ldots ,b_s\}$, writing the elements in clockwise
order starting at $b=b_1$.
Let $J-I=\{a_1,\ldots ,a_s\}$, writing the elements in anticlockwise
order starting at $a=a_1$.
Then the degree of $\embed_{JI}\colon \module_I\to \module_J$
is equal to $\sum_{j=1}^s (b_j,a_j)_0$.
\end{proposition}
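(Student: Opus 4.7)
The plan is to compute the multiplicity of each simple $S_v$ in $\coker(\embed_{JI})$ and match it term-by-term with the asserted sum of intervals. A $B$-module morphism $\module_I\to\module_J$ acts on each column $U_v=\mathbb{C}[t]$ by multiplication by a power $t^{d_v}$, and compatibility with the arrows $x_i,y_i$ (which are multiplication by $1$ or $t$ according to membership in $I$ and $J$) forces the discrete equation
\[
d_v - d_{v-1} \;=\; [\,v \in I-J\,] - [\,v \in J-I\,],
\]
where $[\,\cdot\,]$ denotes an Iverson bracket. By Lemma~\ref{l:homLILJ}, $\embed_{JI}$ is characterised as the unique insincere monomial morphism, which normalises the additive constant by the condition $\min_{v\in C_0} d_v = 0$. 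Since the multiplicity of $S_v$ in $\coker(\embed_{JI})$ equals $d_v$, it suffices to show $d_v = r_v$, where
\[
r_v \;:=\; \bigl|\{\,j \in \{1,\dots,s\} : v \in [b_j,a_j-1]\,\}\bigr|
\]
is the contribution of the right-hand side at vertex $v$.

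I would then check that $r_v$ satisfies the same first-order equation as $d_v$. Passing from vertex $v-1$ to vertex $v$ crosses edge $v$, and the indicator $\mathbf{1}_{[b_j,a_j-1]}$ jumps up by $1$ precisely when $v=b_j$ (the entry point) and down by $1$ precisely when $v=a_j$ (one past the endpoint $a_j-1$). Summing over $j$, and using $\{b_1,\dots,b_s\}=I-J$ and $\{a_1,\dots,a_s\}=J-I$, yields
\[
r_v - r_{v-1} \;=\; [\,v \in I-J\,] - [\,v \in J-I\,],
\]
so $d_v$ and $r_v$ differ by an additive constant on $C_0$.

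To pin that constant down I would locate a common zero. The noncrossing assumption says that $[i_1,i_2]$ and $[j_1,j_2]$ are disjoint in $C_1$, so the clockwise arc of vertices from $a_1=j_2$ up to $b_1-1=i_1-1$ is a nonempty "external" region. All of the intervals $[b_j,a_j-1]$ are nested inside $[b_1,a_1-1]$, which is disjoint from this external region, so $r_v = 0$ there. On the other hand, starting in the external region and traversing $C_0$ clockwise, the recurrence increases $d$ by one as we meet each of $b_1,\dots,b_s$ in that clockwise order, then decreases $d$ by one as we meet each of $a_s,\dots,a_1$ in that clockwise order, returning $d$ to its starting value back in the external region. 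The minimum of $d$ is therefore attained there, and by the normalisation equals $0$. Hence $d_v\equiv r_v$ on all of $C_0$, proving the proposition.

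The main subtlety will be the cyclic bookkeeping: the $b_j$ are indexed clockwise while the $a_j$ are indexed anticlockwise, so the clockwise cyclic order on $I\triangle J$ is $b_1,b_2,\dots,b_s,a_s,a_{s-1},\dots,a_1$; and one must keep track of the shift between edges and vertices (edge $v\in C_1$ joins vertices $v-1$ and $v\in C_0$), which is what puts the jumps of $\mathbf{1}_{[b_j,a_j-1]}$ at the correct places. Once this is set straight, the argument reduces to the elementary remark that two $\mathbb{Z}$-valued functions on a cycle sharing the same finite differences and the same minimum value must coincide.
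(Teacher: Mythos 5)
Your proof is correct, but it takes a genuinely different route from the paper's. The paper proceeds by induction on $s=|I-J|$: its key tool is Lemma~\ref{l:addinglayer}, which describes how the lattice diagram changes under a single swap $I\mapsto I-c+d$ (one new layer of vertices over the columns in $(c,d)_0$), and the induction peels off one pair $(b_j,a_j)$ at a time, each step contributing one interval to the degree. You instead work directly with the exponent tuple $(d_v)_{v\in C_0}$ of the monomial morphism, derive the first-order difference equation $d_v-d_{v-1}=[v\in I-J]-[v\in J-I]$ from commutativity with $x_v$ (the $y_v$ condition gives the same equation), verify that the candidate answer $r_v=\sum_j \mathbf{1}_{(b_j,a_j)_0}(v)$ satisfies the same equation, and pin down the additive constant by matching minima: $r$ vanishes on the nonempty external arc $(a,b)_0$ because the noncrossing condition forces the clockwise order $b_1,\dots,b_s,a_s,\dots,a_1$ on $I\triangle J$ and hence the nesting of the intervals, while $\min_v d_v=0$ is exactly the insincerity normalisation of $\embed_{JI}$ from Lemma~\ref{l:homLILJ}. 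Your closed-form argument is self-contained and makes explicit where the minimality of $\embed_{JI}$ enters; the paper's inductive argument is longer here but reuses Lemma~\ref{l:addinglayer}, which it needs again for Corollary~\ref{c:weightequalsdegree} and Lemma~\ref{l:arrowless}. All the delicate points you flag (the edge-versus-vertex shift, properness of the cyclic intervals so that each indicator jumps exactly once up at $b_j$ and once down at $a_j$, and the location of the common zero) check out.
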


To prove this, we first need the following technical lemma:

\begin{lemma} \label{l:addinglayer}
Let $I$ be a $k$-subset of $C_1$, and suppose that $J=I-c+d$ for $c\not=d$
in $C_1$. Then $\LL_J$ can be obtained from $\LL_I$ by adding one vertex
to the top of each of the columns corresponding to the vertices in
$(c,d)_0$, together with the required additional arrows, i.e.\
arrows pointing downwards between any new vertex $v$ and the vertices
in adjacent columns.
\end{lemma}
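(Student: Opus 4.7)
The plan is to analyse the lattice diagrams $\LL_I$ and $\LL_J$ column by column, computing the vertical offset of each column and comparing. Recall from Definition~\ref{d:moduleMI} that, across edge $i\in C_1$, the diagram $\LL_I$ shifts down by half a step if $i\in I$ and up by half a step if $i\not\in I$, reflecting whether $x_i$ acts as $1$ or as $t$ on the top generator of the adjacent column.

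First, I would fix column $0$ as a common reference point, so that the height of column $j$ is a cumulative sum of half-steps determined by the edges $1,\ldots,j$. Since $I$ and $J$ differ only at edges $c$ and $d$, with $c$ flipping from ``in'' to ``out'' (a net gain of one full step upward) and $d$ flipping from ``out'' to ``in'' (a net loss of one full step), the height of column $j$ in $\LL_J$ relative to its height in $\LL_I$ changes by $+1$ for $c\leq j<d$ and by $0$ outside this range. Hence the columns indexed by the elements of $(c,d)_0=[c,d-1]$ sit exactly one vertex higher in $\LL_J$, while all other columns are at the same heights as in $\LL_I$.

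Next, I would interpret this shift as the addition of vertices. Since each column is a semi-infinite downward copy of $\F[t]$, the minimal-codimension embedding $\embed_{JI}$ of Lemma~\ref{l:homLILJ} matches each column of $\LL_I$ with the top portion of the corresponding column of $\LL_J$. Under this embedding, columns outside $(c,d)_0$ are identified exactly, while each column with index in $(c,d)_0$ acquires precisely one extra vertex at the top of $\LL_J$ that lies outside the image.

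Finally, I would verify the arrows incident with each new top vertex $v$. These arrows connect $v$ to vertices in the two adjacent columns (either old top vertices of $\LL_I$ or neighbouring new vertices), with orientation forced by the membership of the two adjacent edges in $J$. A brief case analysis at the left boundary index $j=c$, the right boundary index $j=d-1$, and the interior indices $c<j<d-1$ shows that in every case all arrows from $v$ point downward, as the statement requires. The step where I would expect to need the most care is consistent bookkeeping of the half-step conventions at the boundary edges $c$ and $d$, where the per-edge offset flips sign; once this is handled correctly, the lemma follows immediately from the cumulative-height computation above.
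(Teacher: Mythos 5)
Your argument is correct and reaches the desired conclusion, but by a slightly different route from the paper. The paper's proof never computes column heights: it observes that the added layer of vertices covers exactly the arrows $c,c_1,\ldots,c_N$ that were exposed to the top of $\LL_I$ (where $c_1,\ldots,c_N$ are the elements of $I$ clockwise of $c$ and anticlockwise of $d$) and exposes the new arrows $c_1,\ldots,c_N,d$ instead; since the $k$-subset is read off from the labels of the exposed down-right arrows on the top boundary, the resulting diagram is $\LL_{I-c+d}=\LL_J$. That argument works with the intrinsic encoding of the subset in the top boundary and so never needs a reference column. Your cumulative-offset computation is equally valid and more mechanical, but it forces you to confront a point you only gesture at: $(c,d)_0=[c,d-1]$ is a \emph{cyclic} interval, and when it wraps past column $n\equiv 0$ your reference column lies inside it, so the statement ``the height changes by $+1$ for $c\le j<d$ and by $0$ otherwise'' is literally false as written (relative to column $0$ the change would then be $0$ inside the interval and $-1$ outside). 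What is true in all cases is that the difference of the two height profiles is the indicator function of $(c,d)_0$ up to a global additive constant; since lattice diagrams are only defined up to vertical translation (equivalently, since $\embed_{JI}$ is normalised to embed $\LL_I$ as high as possible), this constant is harmless, but it should be said explicitly. Your final remark that the arrow directions come for free once the relative heights are fixed is correct: every arrow of a lattice diagram points downward between adjacent columns, so no genuine case analysis is needed there.
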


\begin{proof}
Let $c_1,c_2,\ldots ,c_{N}$ be the elements of $I$ which are clockwise
of $c$ and anticlockwise of $d$. Let $c_{N+1}$ be the first element
of $I$ clockwise past $d$ (possibly equal to $c$).
The added vertices and arrows cover the arrows $c,c_1,c_2,\ldots ,c_N$
exposed to the top of the diagram. The new exposed arrows are
$c_1,c_2, \ldots ,c_N,d$. Thus we see that
the diagram constructed as indicated corresponds to the $k$-subset $I-c+d$
as required. This is illustrated in Figure~\ref{f:addinglayer},
which shows the top part of $\LL_I$ between the columns of arrows $c_1$ and
$c_{N+1}$ (which may or not be identified). The new arrows are shown in gray.
\end{proof}

\begin{figure}
\[
\begin{tikzpicture}[scale=0.6,baseline=(bb.base),
normalarrow/.style={black, -latex, thick},
grayarrow/.style={-latex, gray}]
\newcommand{\seventh}{51.4} 
\newcommand{\circradius}{1.5cm}
\newcommand{\inradius}{1.2cm}
\newcommand{\outradius}{1.8cm}
\newcommand{\dotrad}{0.1cm} 
\newcommand{\bdrydotrad}{{0.8*\dotrad}} 
\path (0,0) node (bb) {}; 


\draw (0,0) circle(\bdrydotrad) [fill=black];
\draw (1,-1) circle(\bdrydotrad) [fill=black];
\draw (2,0) circle(\bdrydotrad) [fill=black];
\draw (3,1) circle(\bdrydotrad) [fill=black];
\draw (4,2) circle(\bdrydotrad) [fill=black];
\draw (5,1) circle(\bdrydotrad) [fill=black];
\draw (6,2) circle(\bdrydotrad) [fill=black];
\draw (7,3) circle(\bdrydotrad) [fill=black];
\draw (8,4) circle(\bdrydotrad) [fill=black];
\draw (9,5) circle(\bdrydotrad) [fill=black];
\draw (10,4) circle(\bdrydotrad) [fill=black];
\draw (11,5) circle(\bdrydotrad) [fill=black];
\draw (12,6) circle(\bdrydotrad) [fill=black];
\draw (14,8.5) circle(\bdrydotrad) [fill=black];
\draw (15,7.5) circle(\bdrydotrad) [fill=black];
\draw (16,8.5) circle(\bdrydotrad) [fill=black];
\draw (17,9.5) circle(\bdrydotrad) [fill=black];
\draw (18,10.5) circle(\bdrydotrad) [fill=black];
\draw (19,11.5) circle(\bdrydotrad) [fill=black];
\draw (20,10.5) circle(\bdrydotrad) [fill=black];


\draw (1,1) circle(\bdrydotrad);
\draw (2,2) circle(\bdrydotrad);
\draw (3,3) circle(\bdrydotrad);
\draw (4,4) circle(\bdrydotrad);
\draw (5,3) circle(\bdrydotrad);
\draw (6,4) circle(\bdrydotrad);
\draw (7,5) circle(\bdrydotrad);
\draw (8,6) circle(\bdrydotrad);
\draw (9,7) circle(\bdrydotrad);
\draw (10,6) circle(\bdrydotrad);
\draw (11,7) circle(\bdrydotrad);
\draw (15,9.5) circle(\bdrydotrad);
\draw (16,10.5) circle(\bdrydotrad);
\draw (17,11.5) circle(\bdrydotrad);


\draw [normalarrow,shorten <=3pt, shorten >=3pt] (0,0)
-- node[pos=0.2,below,inner sep=8pt]{\small $c$} (1,-1);
\draw [normalarrow,shorten <=3pt, shorten >=3pt] (2,0)
-- (1,-1);
\draw [normalarrow,shorten <=3pt, shorten >=3pt] (3,1)
-- (2,0);
\draw [normalarrow,shorten <=3pt, shorten >=3pt] (4,2)
-- (3,1);
\draw [normalarrow,shorten <=3pt, shorten >=3pt] (4,2)
-- node[pos=0.2,below,inner sep=8pt]{\small $c_1$} (5,1);
\draw [normalarrow,shorten <=3pt, shorten >=3pt] (6,2)
-- (5,1);
\draw [normalarrow,shorten <=3pt, shorten >=3pt] (7,3)
-- (6,2);
\draw [normalarrow,shorten <=3pt, shorten >=3pt] (8,4)
-- (7,3);
\draw [normalarrow,shorten <=3pt, shorten >=3pt] (9,5)
-- (8,4);
\draw [normalarrow,shorten <=3pt, shorten >=3pt] (9,5)
-- node[pos=0.2,below,inner sep=8pt]{\small $c_2$} (10,4);
\draw [normalarrow,shorten <=3pt, shorten >=3pt] (11,5)
-- (10,4);
\draw [normalarrow,shorten <=3pt, shorten >=3pt] (12,6)
-- (11,5);
\draw [normalarrow,shorten <=3pt, shorten >=3pt] (14,8.5) -- node[pos=0.2,below,inner sep=8pt]{\small $c_N$} (15,7.5);
\draw [normalarrow,shorten <=3pt, shorten >=3pt] (16,8.5) -- (15,7.5);
\draw [normalarrow,shorten <=3pt, shorten >=3pt] (17,9.5) -- (16,8.5);
\draw [normalarrow,shorten <=3pt, shorten >=3pt] (18,10.5) -- (17,9.5);
\draw [normalarrow,shorten <=3pt, shorten >=3pt] (19,11.5) -- (18,10.5);
\draw [normalarrow,shorten <=3pt, shorten >=3pt] (19,11.5) -- node[pos=0.98,above,outer sep=7pt]{\small $c_{N+1}$} (20,10.5);


\draw [grayarrow,shorten <=3pt, shorten >=3pt] (1,1) -- (0,0);
\draw [grayarrow,shorten <=3pt, shorten >=3pt] (1,1) -- (2,0);
\draw [grayarrow,shorten <=3pt, shorten >=3pt] (2,2) -- (1,1);
\draw [grayarrow,shorten <=3pt, shorten >=3pt] (2,2) -- (3,1);
\draw [grayarrow,shorten <=3pt, shorten >=3pt] (3,3) -- (2,2);
\draw [grayarrow,shorten <=3pt, shorten >=3pt] (3,3) -- (4,2);
\draw [grayarrow,shorten <=3pt, shorten >=3pt] (4,4)
-- node[pos=0.2,below,inner sep=8pt,black]{\small $c_1$} (5,3);
\draw [grayarrow,shorten <=3pt, shorten >=3pt] (4,4) -- (3,3);
\draw [grayarrow,shorten <=3pt, shorten >=3pt] (5,3) -- (4,2);
\draw [grayarrow,shorten <=3pt, shorten >=3pt] (5,3) -- (6,2);
\draw [grayarrow,shorten <=3pt, shorten >=3pt] (6,4) -- (5,3);
\draw [grayarrow,shorten <=3pt, shorten >=3pt] (6,4) -- (7,3);
\draw [grayarrow,shorten <=3pt, shorten >=3pt] (7,5) -- (6,4);
\draw [grayarrow,shorten <=3pt, shorten >=3pt] (7,5) -- (8,4);
\draw [grayarrow,shorten <=3pt, shorten >=3pt] (8,6) -- (7,5);
\draw [grayarrow,shorten <=3pt, shorten >=3pt] (8,6) -- (9,5);
\draw [grayarrow,shorten <=3pt, shorten >=3pt] (9,7) -- (8,6);
\draw [grayarrow,shorten <=3pt, shorten >=3pt] (9,7)
-- node[pos=0.2,below,inner sep=8pt,black]{\small $c_2$} (10,6);
\draw [grayarrow,shorten <=3pt, shorten >=3pt] (10,6) -- (9,5);
\draw [grayarrow,shorten <=3pt, shorten >=3pt] (10,6) -- (11,5);
\draw [grayarrow,shorten <=3pt, shorten >=3pt] (11,7) -- (10,6);
\draw [grayarrow,shorten <=3pt, shorten >=3pt] (11,7) -- (12,6);
\draw [grayarrow,shorten <=3pt, shorten >=3pt] (15,9.5) -- (14,8.5);
\draw [grayarrow,shorten <=3pt, shorten >=3pt] (15,9.5) -- (16,8.5);
\draw [grayarrow,shorten <=3pt, shorten >=3pt] (14,10.5) -- node[pos=0.2,below,inner sep=8pt,black]{\small $c_N$} (15,9.5);
\draw [grayarrow,shorten <=3pt, shorten >=3pt] (16,10.5) -- (15,9.5);
\draw [grayarrow,shorten <=3pt, shorten >=3pt] (16,10.5) -- (17,9.5);
\draw [grayarrow,shorten <=3pt, shorten >=3pt] (17,11.5) -- (16,10.5);
\draw [grayarrow,shorten <=3pt, shorten >=3pt] (17,11.5) -- node[pos=0.2,below,inner sep=8pt,black]{\small $d$} (18,10.5);


\draw [dotted,very thick] (12.5,6.7) -- (13.25,6.7);

\end{tikzpicture}
\]
\caption{Proof of Lemma~\ref{l:addinglayer}}
\label{f:addinglayer}
\end{figure}

\begin{proof}[Proof of Proposition~\ref{p:degree}]
We proceed by induction on $s$.
The base case is $s=1$. Then $I-J=\{b_1\}$
and $J=I=\{a_1\}$ for elements $a_1,b_1$.
We apply Lemma~\ref{l:addinglayer} to the pair $I$ and $J=I-b+a=I-b_1+a_1$,
and see that $\LL_J$ can be obtained from $\LL_I$ by adding a layer of
vertices to the columns $b_1,\ldots ,a_1-1$ to the lattice picture, giving the result
in this case.

Suppose that $s>1$ and the result holds
for smaller $s$. Let $I_1=I-b+a=I-b_1+a_1$.
Then, by again applying Lemma~\ref{l:addinglayer}, $\LL_{I_1}$ can be obtained from $\LL_I$ by adding a
layer of vertices to the columns
$b_1,\ldots ,a_1-1$.
We have $I_1-J=(I-J)-b_1=\{b_2,\ldots ,b_s\}$ and
$J-I_1=(J-I)-a_1=\{a_2,\ldots ,a_s\}$.
The result now follows from applying the inductive hypothesis to the pair
$I_1,J$.
\end{proof}

\begin{corollary} \label{c:weightequalsdegree}
Let $\alpha{\colon}I\to J$ be an arrow in $Q_1(D)$.
Then the degree of $\embed_{JI}$ is equal to the weight $\weight_{\alpha}$
(see Definition~\ref{d:weight}).
\end{corollary}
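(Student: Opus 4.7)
The plan is to deduce this immediately from Proposition~\ref{p:degree}, specialized to the case $s=1$. An arrow $\alpha\colon I\to J$ in $Q_1(D)$ arises at the transverse crossing of two strands, labelled $c$ and $d$ in Definition~\ref{d:weight}, with $J = I - c + d$. In particular, $I$ and $J$ differ by exactly one element: $I-J = \{c\}$ and $J-I = \{d\}$, and the assigned weight is $w_\alpha = (c,d)_0$.

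The first step is to verify that the hypothesis of Proposition~\ref{p:degree} is satisfied, namely that $I$ and $J$ are noncrossing. This is automatic because both $I, J \in Q_0(D) = \C(D)$ label alternating regions of the same Postnikov diagram $D$, and by \cite[Cor.~1]{scott06} (invoked in the comment right after Definition~\ref{d:aIJbIJ}), any two such labels form a noncrossing pair. Thus Proposition~\ref{p:degree} applies to the pair $(I,J)$.

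The second step is to match notation. Since $|I-J|=|J-I|=1$, we are in the case $s=1$ of Proposition~\ref{p:degree}, with $b_1 = b(I,J) = c$ and $a_1 = a(I,J) = d$, using Definition~\ref{d:aIJbIJ}. Proposition~\ref{p:degree} then yields
\[
\deg(g_{JI}) \;=\; \sum_{j=1}^{1} (b_j, a_j)_0 \;=\; (c,d)_0 \;=\; w_\alpha,
\]
which is the required identity.

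There is no real obstacle: the content has already been packaged into Proposition~\ref{p:degree} (whose $s=1$ base case is, in turn, the direct consequence of Lemma~\ref{l:addinglayer} describing how $\LL_J$ is built from $\LL_I$ by adding a single layer of vertices in the columns indexed by $(c,d)_0$). The corollary is just the arrow-level reading of that statement, and the only thing to check is the notational translation between $(c,d)$ of Definition~\ref{d:weight} and $(b_1,a_1)$ of Proposition~\ref{p:degree}.
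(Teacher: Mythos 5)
Your proof is correct and is essentially the paper's argument: the paper proves the corollary by a single direct application of Lemma~\ref{l:addinglayer} to the pair $I$, $J=I-c+d$, which is precisely the $s=1$ base case of Proposition~\ref{p:degree} that you invoke. The notational translation $b_1=c$, $a_1=d$ and the noncrossing check are both handled correctly.
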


\begin{proof}
We apply Lemma~\ref{l:addinglayer} to the pair $I,J$, noting that
$J=I-b+a$, where $a,b$ are the strands crossing $\alpha$,
in such a way that the weight of $\alpha$ is $(b,a)_0$.
\end{proof}

We obtain the following corollary:

\begin{corollary} \label{c:simpletest}
Let $I,J$ be distinct noncrossing $k$-subsets of $C_1$.
For any $c,d\in C_1$, we have $I \leq_{(c,d)_0}J$
if and only if $(c,d)_0\subseteq (a(I,J),b(I,J))_0$.
\end{corollary}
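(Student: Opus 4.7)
The plan is to derive Corollary~\ref{c:simpletest} directly from Proposition~\ref{p:degree}, together with the definition of the relation $\leq_V$. By definition, $I \leq_{(c,d)_0} J$ means that the support in $C_0$ of the degree of $\embed_{JI}$ contains no element of $(c,d)_0$. Proposition~\ref{p:degree} tells us exactly what this degree is, namely
\[
  \sum_{j=1}^s (b_j, a_j)_0 \in \mathbb{N}C_0,
\]
so the support, viewed as a subset of $C_0$, is the union $\bigcup_{j=1}^s (b_j, a_j)_0$.

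The key step is to identify this union explicitly as $(b, a)_0$, which is the complement of $(a,b)_0$ in $C_0$. Since $b_1 = b$ and $a_1 = a$, the first term already contributes the whole of $(b_1, a_1)_0 = (b, a)_0$. For the remaining terms, I would note that by the definitions of $\{b_j\}$ (labelled clockwise starting at $b = b_1$) and $\{a_j\}$ (labelled anticlockwise starting at $a = a_1$), combined with the fact that $I-J$ and $J-I$ occupy disjoint cyclic intervals, the cyclic clockwise ordering on $C_1$ reads
\[
  b_1, b_2, \ldots, b_s, \ldots, a_s, \ldots, a_2, a_1.
\]
Consequently, for each $j \geq 2$, both $b_j$ lies clockwise after $b_1$ and $a_j$ lies clockwise before $a_1$, so $(b_j, a_j) \subseteq (b_1, a_1)$, and the whole union collapses to $(b, a)_0$.

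Finally, the elementary partition $C_0 = (a,b)_0 \sqcup (b,a)_0$, valid for any $a \neq b$ in $C_1$, allows us to rewrite the condition ``$(c,d)_0$ is disjoint from the support of $\deg \embed_{JI}$'' as ``$(c,d)_0 \subseteq (a,b)_0$'', which is the desired equivalence. The only point requiring care is the verification of the cyclic ordering of the $b_j$'s and $a_j$'s that forces the nesting of intervals; everything else is a direct substitution into Proposition~\ref{p:degree}, so no serious obstacle is expected.
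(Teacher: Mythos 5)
Your proposal is correct and follows essentially the same route as the paper: apply Proposition~\ref{p:degree}, identify the support of the degree of $\embed_{JI}$ as $(b,a)_0$, and conclude via the partition $C_0=(a,b)_0\sqcup(b,a)_0$. The only difference is that you spell out the nesting $(b_j,a_j)_0\subseteq(b_1,a_1)_0$ coming from the cyclic ordering of the $b_j$'s and $a_j$'s, a point the paper's proof asserts without elaboration.
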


\begin{proof}
By Proposition~\ref{p:degree}, we have $I\leq_{(c,d)_0}J$ if and only if
no element of $(c,d)_0$ lies in the support of
$\sum_{j=1}^s (b_j,a_j)_0$. This support is $(b_1,a_1)_0=(b(I,J),a(I,J))_0,$
so $I\leq_{(c,d)_0}J$ holds if and only if
$(c,d)_0\subseteq (a(I,J),b(I,J))_0$, as required.
\end{proof}

Note that Corollary~\ref{c:simpletest} means, in particular, that
$I \leq_{(a(I,J),b(I,J))_0} J$ for any pair of (distinct) vertices $I,J\in Q_0(D)$.

\begin{lemma} \label{l:arrowless}
Let $I\neq J\in Q_0(D)$ and set $a=a(I,J)$, $b=b(I,J)$.
Suppose that there is an arrow $\alpha\colon I \to I_1$ in $Q_1(D)$
whose weight $w_\alpha$ is disjoint from $(a,b)_0$.
Then $I_1 \leq_{(a,b)_0} J$.
\end{lemma}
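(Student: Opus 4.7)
The approach is to factor $\embed_{JI}$ through $\embed_{I_1I}$ by constructing a monomial morphism $\phi\colon\module_{I_1}\to\module_J$ that extends $\embed_{JI}$ to the extra vertices of $\LL_{I_1}$, and then to identify $\phi$ with $\embed_{JI_1}$ using Lemma~\ref{l:homLILJ}. Once such a factorisation is in place, the conclusion will drop out immediately from a comparison of degrees.

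To set this up, write $I_1=I-c+d$ with $\weight_\alpha=(c,d)_0$, as in Definition~\ref{d:weight}. Lemma~\ref{l:addinglayer} describes $\LL_{I_1}$ as obtained from $\LL_I$ by adding one extra vertex at the top of each column indexed by an element of $\weight_\alpha$. To extend the embedding $\embed_{JI}\colon\LL_I\hookrightarrow\LL_J$ across these new vertices, I will need at least one vertex of $\LL_J$ sitting above $\embed_{JI}(\LL_I)$ in every column $i\in\weight_\alpha$; equivalently, $\deg(\embed_{JI})(i)\geq 1$ for each such $i$.

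The key verification is therefore that the support of $\deg(\embed_{JI})$ contains $\weight_\alpha$. For this I plan to use Proposition~\ref{p:degree}: every interval $(b_j,a_j)_0$ appearing in the sum $\deg(\embed_{JI})=\sum_j(b_j,a_j)_0$ is contained in $(b_1,a_1)_0=(b,a)_0$, and the first summand does appear, so the support is precisely $(b,a)_0$, the complement of $(a,b)_0$ in $C_0$. The hypothesis $\weight_\alpha\cap(a,b)_0=\emptyset$ then gives $\weight_\alpha\subseteq(b,a)_0$, permitting the construction of $\phi$. By construction $\phi\circ\embed_{I_1I}=\embed_{JI}$, and hence $\deg(\phi)=\deg(\embed_{JI})-\weight_\alpha$ in $\mathbb{N}C_0$ (using Corollary~\ref{c:weightequalsdegree} to identify $\deg(\embed_{I_1I})$ with $\weight_\alpha$).

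To finish, observe that $\deg(\phi)$ is supported in $(b,a)_0$ and so is disjoint from the nonempty set $(a,b)_0$, which means $\phi$ is insincere. The uniqueness clause of Lemma~\ref{l:homLILJ} then identifies $\phi$ with $\embed_{JI_1}$, so $\deg(\embed_{JI_1})=\deg(\embed_{JI})-\weight_\alpha$ has support disjoint from $(a,b)_0$; this is exactly $I_1\leq_{(a,b)_0}J$. The one step requiring care is the support computation for $\deg(\embed_{JI})$, but it follows readily from Proposition~\ref{p:degree} once one notices that $(b_1,a_1)_0$ is the largest of the intervals in the sum.
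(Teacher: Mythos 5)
Your proof is correct and follows essentially the same route as the paper's: both rest on Lemma~\ref{l:addinglayer} and Proposition~\ref{p:degree} to see that the layer added in passing from $\LL_I$ to $\LL_{I_1}$ sits inside the columns $(b,a)_0$ already filled by $\embed_{JI}$, so that $\embed_{JI}$ factors through $\module_{I_1}$ with the remaining cokernel still supported in $(b,a)_0$. Your version is somewhat more explicit than the paper's (which asserts directly that the new layer is ``part of the first layer'' from Proposition~\ref{p:degree}), in that you construct the extension $\phi$ and identify it with $\embed_{JI_1}$ via the insincerity clause of Lemma~\ref{l:homLILJ}, but this is a difference of presentation rather than of method.
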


\begin{proof}
If $I_1=I-c+d$, then, by assumption, the weight $w_\alpha=(c,d)_0$ is a subset of $(b,a)_0$.
Applying Lemma~\ref{l:addinglayer} to the pair $I,I_1$,
we see that $\LL_{I_1}$ is obtained from $\LL_{I}$ by adding vertices
at the top of the columns corresponding to entries in the set
$(c,d)_0$.

This additional layer is part of the first layer added to $\module_I$ in
the proof of Proposition~\ref{p:degree}.
Additional layers are added to this to eventually reach $\module_J$.
Since $I \leq_{(a,b)_0} J$, all of these layers involve only columns
corresponding to elements of $(b,a)_0$, and we see that
$I_1 \leq_{(a,b)_0} J$, as required.
\end{proof}

\begin{definition} \label{d:sincere}
We call a path $p$ \emph{sincere} if the
support of its weight is equal to $C_0$, and \emph{insincere} otherwise.
\end{definition}

\begin{proposition} \label{p:path}
Let $I\neq J\in Q_0(D)$ and set $a=a(I,J)$, $b=b(I,J)$.
Then there is an insincere path from $I$ to $J$
in $Q(D)$.
\end{proposition}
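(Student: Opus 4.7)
The plan is to prove the stronger claim that there exists a path from $I$ to $J$ in $Q(D)$, every arrow $\alpha$ of which has weight $w_\alpha$ disjoint from $(a(I,J), b(I,J))_0$. Since this interval is nonempty (as remarked after Definition~\ref{d:aIJbIJ}), summing weights along such a path shows that its total weight misses $(a(I,J), b(I,J))_0$, so the path is automatically insincere.

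I would argue by induction on the length of $\coker \embed_{JI}$, or equivalently on the sum of multiplicities in $\deg \embed_{JI}\in \mathbb{N}C_0$. The base case of length zero forces $I = J$, which is excluded by hypothesis, so one may assume the inductive hypothesis holds for all pairs of strictly smaller length. Set $a = a(I,J)$ and $b = b(I,J)$. By Proposition~\ref{p:legalarrow} there is an arrow $\alpha\colon I \to I_1$ in $Q(D)$ with $w_\alpha \cap (a,b)_0 = \emptyset$, and Lemma~\ref{l:arrowless} then yields $I_1 \leq_{(a,b)_0} J$. If $I_1 = J$, the single-arrow path $\alpha$ is already what we want; otherwise, Corollary~\ref{c:simpletest} applied to $(I_1, J)$ with $(c,d)_0 = (a,b)_0$ gives $(a,b)_0 \subseteq (a(I_1,J), b(I_1,J))_0$, and passing to complementary cyclic intervals in $C_0$ yields $(b(I_1,J), a(I_1,J))_0 \subseteq (b,a)_0$.

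The key technical point is to verify that $\coker \embed_{JI_1}$ has strictly smaller length than $\coker \embed_{JI}$, so that the inductive hypothesis applies. Consider the composition $\embed_{JI_1}\circ\embed_{I_1 I}\colon \module_I \to \module_J$, which is monomial of degree $\deg\embed_{JI_1} + w_\alpha$ by additivity of degree together with Corollary~\ref{c:weightequalsdegree} (giving $\deg\embed_{I_1I} = w_\alpha$). By Proposition~\ref{p:degree} applied to $(I_1, J)$, the support of $\deg \embed_{JI_1}$ equals $(b(I_1,J), a(I_1,J))_0 \subseteq (b,a)_0$; together with $w_\alpha \subseteq (b,a)_0$, this shows the support of the degree of the composition lies in $(b,a)_0 \ne C_0$. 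Hence the composition is insincere, and the uniqueness clause of Lemma~\ref{l:homLILJ} forces it to equal $\embed_{JI}$, yielding
\[
  \deg \embed_{JI_1} = \deg\embed_{JI} - w_\alpha.
\]
Since $w_\alpha$ is a nonempty subset of $C_0$, the length strictly decreases. Applying the inductive hypothesis to $(I_1, J)$ produces a path $p\colon I_1 \to J$ in which every arrow has weight disjoint from $(a(I_1,J), b(I_1,J))_0 \supseteq (a,b)_0$; concatenating $\alpha$ with $p$ yields the required path. The main obstacle in this plan is precisely the strict-decrease step, which rests on combining the uniqueness of insincere monomial morphisms from Lemma~\ref{l:homLILJ} with the interval containment derived from Lemma~\ref{l:arrowless} and Corollary~\ref{c:simpletest}.
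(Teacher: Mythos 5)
Your proposal is correct and follows essentially the same route as the paper: Proposition~\ref{p:legalarrow} supplies the first arrow, Lemma~\ref{l:arrowless} and Corollary~\ref{c:simpletest} show the avoided interval only grows, and termination comes from the strictly decreasing length of $\coker\embed_{JI_i}$. The only difference is presentational — you run a clean induction and justify the strict decrease explicitly via degree additivity and the uniqueness clause of Lemma~\ref{l:homLILJ}, where the paper iterates and defers that point to the proof of Lemma~\ref{l:arrowless}.
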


\begin{proof}
By Corollary~\ref{c:simpletest}, we have $I\leq_{(a,b)_0} J$.
By Proposition~\ref{p:legalarrow}, there is a vertex $I_1\in Q_0(D)$
and an arrow $\alpha{\colon}I\to I_1$ such that $\weight_{\alpha}$ does not
contain any element of $(a,b)_0$.
By Lemma~\ref{l:arrowless} we have $I_1\leq_{(a,b)_0} J$.
Let $a_1=a(I_1,J)$ and $b_1=b(I_1,J)$. Then, by Corollary~\ref{c:simpletest},
we have $(a,b)_0\subseteq (a_1,b_1)_0$.
From the proof of Lemma~\ref{l:arrowless} we also see that
$\coker \embed_{JI_1}$ has strictly smaller dimension than
$\coker \embed_{JI}$.

We can now repeat this argument for $I_1$, since $I_1\leq_{(a_1,b_1)_0}J$
again by Corollary~\ref{c:simpletest}. We get
$I_2\leq_{(a_2,b_2)_0} J$, where $a_2=a(I_2,J)$ and $b_2=b(I_2,J)$.
Continuing in this way, we obtain a path
$$I\to I_1\to I_2\to \cdots $$
which satisfies
$$(a,b)_0\subseteq (a_1,b_1)_0\subseteq (a_2,b_2)_0\subseteq \cdots ,$$
and thus that the weight of each arrow in the path avoids $(a,b)_0$.
The path must eventually reach $J$ because of the decreasing dimension of
$\coker \embed_{JI_i}$ as $i$ increases, and we are done.
\end{proof}

\section{Description of paths} 
\label{s:paths}
Before we can prove the main result, we need some more information about $A_D$. By Proposition~\ref{p:path}, there is an insincere
path between any two vertices $I,J$ of $Q(D)$.
In this section we will show that there is a unique
element of $A_D$ which can be written as such a path,
and furthermore that any path from $I$ to $J$ is
equal in $A_D$ to a power of $u$ multiplied by this
path, adapting arguments from~\cite[\S5]{bocklandt12}. We shall use the fact that
$Q(D)$ is a dimer model in a disk
(see Remark~\ref{r:plabicdual}).

We recall the following from~\cite[\S5]{bocklandt12}.
Let $p$ be a path in $Q(D)$.
A subpath $p$ of maximal length with respect to being contained in the boundary of a face
in $Q_2^+$ (respectively, $Q_2^-$) is called a \emph{positive arc} (respectively,
\emph{negative arc}) of $p$.
Then $p$ can be written as a concatenation of its positive arcs or a concatenation
of its negative arcs.
The path $p$ is defined to be \emph{positively reducible}
(respectively, \emph{negatively reducible}) if (at least) one of its positive
(respectively, negative) arcs contains all of the arrows
but one in lying in the boundary of a face in $Q_2^+$ (respectively, in $Q_2^-$).

If $p$ is any path in $Q(D)$, we denote by
$p^{-1}$ the path in the underlying unoriented graph of $Q(D)$ obtained by reading $p$ in reverse.
We say that a cycle in the underlying
unoriented graph \emph{does not self-intersect}
if it does not visit the same vertex twice
(except the start and end points).
Note that this includes the case of a cycle
which goes along an arrow in $Q(D)$
and then returns along the same arrow.

Suppose that $p$ and $q$ are distinct paths in $Q(D)$ with the same start and end points, such that $q^{-1}p$ (regarded as path in the
underlying unoriented graph) is a clockwise cycle which does not self-intersect.
Then we say that the pair $p,q$
is \emph{reducible} if either $p$ is negatively reducible or $q$ is positively reducible. We make a similar definition if $q^{-1}p$ is an anticlockwise cycle.

\begin{proposition} \label{p:bocklandtreducibility}
Let $p,q$ be distinct paths in $Q(D)$,
with the same start and end points, 
such that $q^{-1}p$ does not self-intersect. Then the pair $p,q$ is reducible.
\end{proposition}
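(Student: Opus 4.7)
The plan is to adapt the argument of \cite[\S5]{bocklandt12} to the disk-with-boundary setting. Since $Q(D)$ is a dimer model in a disk (Remark~\ref{r:plabicdual}), the non-self-intersecting closed walk $q^{-1}p$ in the underlying unoriented graph bounds a topological disk $R\subseteq |Q(D)|$ tiled by finitely many faces of $Q(D)$.

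The first step is to determine how the type of each face of $Q(D)$ incident to $\bdry R$ from the inside is controlled by the orientation of $q^{-1}p$. Suppose $q^{-1}p$ is traversed clockwise (the anticlockwise case is symmetric), so that $R$ lies on the right. For each arrow $\alpha$ of $p$, which is traversed forward by $q^{-1}p$, the face of $Q(D)$ adjacent to $\alpha$ on the $R$-side lies in $Q_2^-$. For each arrow $\alpha$ of $q$, which is traversed backward, the corresponding face lies in $Q_2^+$. This holds uniformly whether $\alpha$ is an internal or a boundary arrow of $Q(D)$, because in the boundary case the unique face containing $\alpha$ must lie on the $R$-side (as the opposite side lies outside $|Q(D)|$).

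The second step is to show, by induction on the number of faces of $Q(D)$ inside $R$, that $R$ contains a face $F$ whose boundary $\bdry F$ meets $\bdry R$ in a single connected sub-arc of length at least $|\bdry F|-1$. The base case, in which $R$ consists of a single face, is immediate. For the inductive step, the key topological input is that, since $R$ and each face are disks, a face $F_0\subseteq R$ meeting $\bdry R$ must do so along a single connected sub-arc (otherwise $R\setminus F_0$ would be disconnected). One can then peel off such an $F_0$ along this sub-arc to obtain a strictly smaller disk region bounded by a non-self-intersecting closed walk, and invoke induction to produce the desired extremal $F$.

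Having secured such an extremal $F$, I observe that its $|\bdry F|-1$ arrows on $\bdry R$ cannot straddle the junction between $p$ and $q$ (at $v_s$ or $v_t$), for then $\bdry F$ would contain arrows both from $p$ and from $q$, forcing $F$ simultaneously into $Q_2^-$ and $Q_2^+$ by the first step. Hence these arrows lie entirely in $p$ or entirely in $q$: in the first case $F\in Q_2^-$ and the arrows constitute a negative arc of $p$ containing all but one arrow of $\bdry F$, so $p$ is negatively reducible; in the second case $F\in Q_2^+$ and $q$ is positively reducible. Either way, the pair $(p,q)$ is reducible. The main obstacle I anticipate is the second step, namely the precise existence and clean peeling of an extremal face: while the topological intuition is clear, one must verify carefully that the peeled region remains bounded by a non-self-intersecting walk (in particular, ruling out accidental coincidences of vertices between different portions of $\bdry R$), exploiting the rigidity of the dimer structure of $Q(D)$.
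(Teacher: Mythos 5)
Your first step is correct, and it is a genuinely useful observation: if $q^{-1}p$ is clockwise with $R=R_{pq}$ on its right, then every face of $R$ sharing an arrow with $p$ lies in $Q_2^-$ and every face sharing an arrow with $q$ lies in $Q_2^+$, also for boundary arrows. The gap is in your second step, which is where the entire content of the proposition sits. The peeling induction does not close: if $F_0\in Q_2^-$ meets $p$ along an arc $a$, then $\bdry F_0$ traverses $a$ forwards, so replacing $a$ by the complementary arc of $\bdry F_0$ inserts a stretch of \emph{backward}-oriented arrows into the middle of the forward path, and the peeled region is no longer of the form $R_{p'q'}$; moreover, an extremal face of the peeled region may have its long arc on the newly exposed edges of $\bdry F_0$, which are interior to $R$, so it need not be extremal for $R$.

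More decisively, the statement of your second step is false for general dimer models in a disk, so no argument using only Definition~\ref{d:dimermodel} and planar topology can prove it. Take vertices $I,X,J,Y,Z$, boundary arrows $I\to X$, $X\to J$, $I\to Y$, $Y\to J$, internal arrows $J\to Z$, $Z\to I$, and exactly two quadrilateral faces: $N\in Q_2^-$ with boundary cycle $I\to X\to J\to Z\to I$ and $P\in Q_2^+$ with boundary cycle $I\to Y\to J\to Z\to I$. All axioms of Definition~\ref{d:dimermodel} hold and $|Q|$ is a disk. With $p\colon I\to X\to J$ and $q\colon I\to Y\to J$, the cycle $q^{-1}p$ does not self-intersect, yet each face has only two of its four arrows on $\bdry R_{pq}$: there is no extremal face, $p$ is not negatively reducible and $q$ is not positively reducible. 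Of course this $Q$ is not $Q(D)$ for a Postnikov diagram: reconstructing the strands as in Remark~\ref{r:plabicdual}, the two strands crossing at the arrows $J\to Z$ and $Z\to I$ are both oriented from the first crossing to the second, violating axiom (b2) of Definition~\ref{d:asd}. So any correct proof must invoke the global axioms (b1) and (b2), which your argument never uses. This is exactly what the paper's proof does: it chases strands entering $R_{pq}$ to produce a backwards arrow on $p$ or $q$, and then shows that a minimal backwards arrow $\beta$, together with a subpath of $p$ (or $q$), bounds a face --- which is precisely the extremal face you are after.
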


\begin{proof}
Let $D_0=R_{pq}$ be the interior of the region enclosed by the cycle $q^{-1}p$, which is a disk
by the assumption on $p$ and $q$.
We first show that either $p$ or $q$ has a \emph{backwards arrow} in the interior of
$R_{pq}$, that is, an arrow starting at some vertex on the path which ends at an
earlier vertex of the path
(e.g.\ the thick curved arrows in Figure~\ref{f:reducibility1}).

Consider the strand $S_1$ which enters $D_0$
through the last arrow $a_0$ of $q$. Then $S_1$
leaves $D_0$ through an arrow $a_1$ of $p$ or $q$.
We assume that $a_1$ lies in $p$, as illustrated in the left hand diagram in Figure~\ref{f:reducibility1};
the argument when $a_1$ lies in $q$ is similar and is illustrated in the right hand diagram
in Figure~\ref{f:reducibility1}.
Since $S_1$ does not intersect itself (Definition~\ref{d:asd}(b1)), it divides $D_0$
into two disks (and also $a_1\neq a_0$). 
Let $D_1$ be the disk on the right of $S_1$ and let $S_2$ be the strand 
that crosses $S_1$ immediately before $a_1$.
Note that $S_2\not=S_1$, again by Definition~\ref{d:asd}(b1),
and that $S_2$ will be leaving $D_1$ at this crossing, by the alternating crossing property (Definition~\ref{d:asd}(a3)).

By Definition~\ref{d:asd}(b2), $S_2$ cannot enter
$D_1$ through the part of $S_1$ before its crossing point
with $S_2$, so it must enter at an arrow
$a_2$ of $p$ after or equal to $a_1$.
The strand $S_2$ divides $D_1$ into two disks and
we take $D_2$ to be the disk on the right of $S_2$. 

In the case $a_2=a_1$, we claim that
no strand can cross $S_2$ between its
two crossing points with $S_1$.
It then follows that the arrow corresponding to the crossing not at $a_1$
is a backwards arrow, with head/tail equal to the tail/head of $a_1$,
and we are done in this case.
To prove the claim, consider the first strand 
crossing $S_2$ after $a_1$. 
If it were not $S_1$, then it would be entering $D_2$.
But it can not leave $D_2$ through $S_2$, by Definition~\ref{d:asd}(b2),
and it can not leave through $S_1$, as the only two available crossing points are already
taken by $S_2$. This contradiction proves the claim.

In the case $a_2\neq a_1$, let $S_3$ be the strand that crosses $S_2$ immediately after $a_2$.
If $S_3=S_1$, then the arrow corresponding to the crossing of $S_1$ and $S_2$
has tail at the head of $a_2$ and head at the tail of $a_1$, so is the required backwards arrow for 
$p$.
If $S_3\neq S_1$, then, as in the previous case, $S_3$ cannot leave $D_2$
by crossing $S_2$ or $S_1$, so it must leave through an arrow
$a_3$ of $p$ lying strictly after $a_1$ and before or equal to $a_2$.
If $a_3=a_2$, then we get a backwards arrow as in the case $a_2=a_1$.
Otherwise $a_3$ lies strictly between $a_2$ and $a_1$.

We may continue in this way to define successive strands $S_i$ entering or leaving disks $D_{i-1}$
at arrows $a_i$ until, by the finiteness of $p$, we necessarily reach a point when $S_i=S_{i-2}$
or $a_i=a_{i-1}$
and we obtain a backwards arrow as above.

Thus $p$ or $q$ has a backwards arrow.
For the second part of the proof, we show
that, if $p$ has a backwards arrow $\beta$, then $p$ is negatively reducible.
A similar argument shows that if $q$ has a backwards arrow, then $q$ is positively reducible.
As there are only finitely many backwards arrows for $p$, we may further
assume that $\beta$ is `minimal' in the sense that there is no other backwards arrow 
in the disk bounded by $\beta$ and the part $p'$ of $p$ between the endpoints of $\beta$. 
Let $q'=p_{\beta}^-$ (as in Definition~\ref{d:dimeralgebra}) and observe that our goal is achieved by showing that the paths $p'$ and $q'$ coincide.

So suppose, for contradiction, that $p'$ and $q'$ do not coincide.
If $p'$ and $q'$ intersect, we replace
them with subpaths which start and
end at the same vertices but don't intersect.
As in the first part of the proof, let $S_1$ be the strand entering 
the region $R_{p'q'}$ at the last arrow $a_0$ of $q'$. 
If $S_1$ were to leave $R_{p'q'}$ at an arrow of $q'$,
then, since $q'$ is a part of $p_{\beta}^-$, it must 
re-enter $R_{p'q'}$ at the following arrow of $q'$
and, after doing this possibly several times (see Figure~\ref{f:reducibility2}),
it would have to cross itself (or be a loop), contradicting Definition~\ref{d:asd}(b1).
On the other hand, if $S_1$ were to leave $R_{p'q'}$ at an arrow of $p'$,
then we can argue as in the first part to show that $p'$ has a backwards arrow in $R_{p'q'}$,
contradicting the minimality of $\beta$ and completing the proof.
\end{proof}

\begin{figure}
\[
\begin{tikzpicture}[scale=1.35,baseline=(bb.base),
  strand/.style={black,dashed,thick},
  quivarrow/.style={black, -latex}]
 \newcommand{\strarrow}{\arrow{angle 60}}
\path (0,0) node (bb) {}; 

\foreach \n/\x/\y in {1/0/0, 2/1.2/1, 3/2/2, 4/2.2/3, 5/1.75/4, 6/1/4.75, 7/0/5}
{ \draw (\x,\y) node (p\n) {\tiny$\bullet$}; }
\foreach \t/\h/\a in {1/2/0, 2/3/0, 3/4/0, 4/5/0, 5/6/0, 6/7/0}
{\draw [quivarrow]  (p\t) edge [bend left=\a] (p\h); }

\foreach \n/\x/\y in {1/0/0, 2/-1/0.5, 3/-2/1.5, 4/-2/3, 5/-1/4.5, 6/0/5}
{ \draw (\x,\y) node (q\n) {\tiny$\bullet$}; }
\foreach \t/\h/\a in {1/2/0, 2/3/0, 3/4/0, 4/5/0, 5/6/0}
{\draw [quivarrow]  (q\t) edge [bend left=\a] (q\h); }
 
\draw (0.6,5.1) node {\small $a_0$};
\draw (-1.7,0.9) node {\small $a_1$};
\draw (-1.75,3.85) node {\small $a_2$};
\draw (-2.25,2.25) node {\small $a_3$};

\draw (0.0,3.4) node {\small $S_1$};
\draw (-1.0,2.0) node {\small $S_2$};
\draw (-1.6,2.65) node {\small $S_3$};

\foreach \n/\t/\h in {1/6/7, 2/1/2, 3/4/5, 4/2/3}
\coordinate (s\n) at ($0.5*(p\t) + 0.5*(p\h)$);
 
\draw [strand] plot[smooth] 
  coordinates {(s1) (0.4,4.7) (0,4) (-0.5,3) (-0.25,1.75) (-1.3,1.2) (-1.5,1)}
 [postaction=decorate, decoration={markings, 
  mark= at position 0.2 with \strarrow, 
  mark= at position 0.55 with \strarrow,
  mark= at position 0.85 with \strarrow}];
  
\draw [strand] plot[smooth] 
  coordinates {(-1.5,3.75) (-1.3,3.5) (-1.3,2.8) (-0.4,1.7) (-0.3,1.45)}
 [postaction=decorate, decoration={markings, 
  mark= at position 0.25 with \strarrow, 
  mark= at position 0.65 with \strarrow}];

\draw [strand] plot[smooth] 
  coordinates {(-1.1,2.8) (-1.4,2.5) (-2,2.25)}
 [postaction=decorate, decoration={markings, 
  mark= at position 0.65 with \strarrow}];

\draw [quivarrow, very thick]  (q5) edge [bend left=21] (q3); 

\draw (-1.5,0.3) node {$p$};
\draw (1.5,0.3) node {$q$};
\end{tikzpicture}
\qquad
\begin{tikzpicture}[scale=1.35,baseline=(bb.base),
  strand/.style={black,dashed, thick},
  quivarrow/.style={black, -latex}]
 \newcommand{\strarrow}{\arrow{angle 60}}
\path (0,0) node (bb) {}; 

\foreach \n/\x/\y in {1/0/0, 2/1.2/1, 3/2/2, 4/2.2/3, 5/1.75/4, 6/1/4.75, 7/0/5}
{ \draw (\x,\y) node (p\n) {\tiny$\bullet$}; }
\foreach \t/\h/\a in {1/2/0, 2/3/0, 3/4/0, 4/5/0, 5/6/0, 6/7/0}
{\draw [quivarrow]  (p\t) edge [bend left=\a] (p\h); }

\foreach \n/\x/\y in {1/0/0, 2/-1/0.5, 3/-2/1.5, 4/-2/3, 5/-1/4.5, 6/0/5}
{ \draw (\x,\y) node (q\n) {\tiny$\bullet$}; }
\foreach \t/\h/\a in {1/2/0, 2/3/0, 3/4/0, 4/5/0, 5/6/0}
{\draw [quivarrow]  (q\t) edge [bend left=\a] (q\h); }

\draw (0.6,5.1) node {\small $a_0$};
\draw (0.8,0.3) node {\small $a_1$};
\draw (2.3,3.5) node {\small $a_2$};
\draw (1.85,1.4) node {\small $a_3$};

\draw (-0.75,3.0) node {\small $S_1$};
\draw (0.5,2.52) node {\small $S_2$};
\draw (1.6,2.2) node {\small $S_3$};

\foreach \n/\t/\h in {1/6/7, 2/1/2, 3/4/5, 4/2/3}
\coordinate (s\n) at ($0.5*(p\t) + 0.5*(p\h)$);
 
\draw [strand] plot[smooth] 
  coordinates {(s1) (0.4,4.7) (0,4) (-0.5,3) (0.25,1.75) (0.4,0.8) (s2)}
 [postaction=decorate, decoration={markings, 
  mark= at position 0.2 with \strarrow, 
  mark= at position 0.41 with \strarrow,
  mark= at position 0.67 with \strarrow,
  mark= at position 0.85 with \strarrow}];
  
\draw [strand] plot[smooth] 
  coordinates {(2,3.4) (1.8,3.3) (1.1,2.8) (0.3,1.7) (0.0,1.5)}
 [postaction=decorate, decoration={markings, 
  mark= at position 0.25 with \strarrow, 
  mark= at position 0.65 with \strarrow}];

\draw [strand] plot[smooth] 
  coordinates {(0.95,2.95) (1.2,2.5) (1.4,1.7) (1.6,1.5)}
 [postaction=decorate, decoration={markings, 
  mark= at position 0.6 with \strarrow}];

\draw [quivarrow, very thick]  (p5) edge [bend right=27] (p2); 

\draw (-1.5,0.3) node {$p$};
\draw (1.5,0.3) node {$q$};

\end{tikzpicture}
\]
\caption{Proof of Proposition~\ref{p:bocklandtreducibility}, first part}
\label{f:reducibility1}
\end{figure}
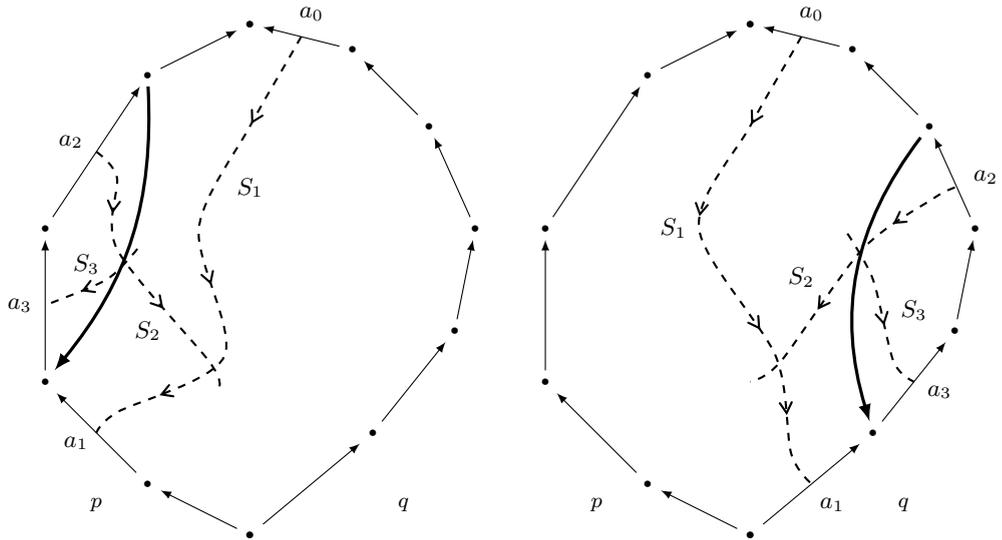

\begin{figure}
\[
\begin{tikzpicture}[scale=0.25,baseline=(bb.base),
  strand/.style={black,dashed, thick},
  quivarrow/.style={black, -latex}]
 \newcommand{\strarrow}{\arrow{angle 60}}
\path (0,0) node (bb) {}; 

\foreach \n/\x/\y in {0/0/0, 1/4/2, 2/7/4, 3/10/8, 4/11/11, 5/10/15, 6/8/18, 7/4/22, 8/0/24}
{ \draw (\x,\y) node (p\n) {\tiny$\bullet$}; }

\foreach \n/\x/\y in {0/0/0, 1/-4/1, 2/-7/5, 4/-8/14, 5/-6/19, 6/-3/22, 7/0/24}
{ \draw (\x,\y) node (q\n) {\tiny$\bullet$}; }

\foreach \t/\h/\a in {0/1/2, 1/2/0, 2/3/0, 3/4/0, 4/5/0, 5/6/0, 6/7/0, 7/8/0}
{\draw [quivarrow] (p\t) edge [bend left=\a] (p\h); }

\foreach \t/\h/\a in {0/1/2, 1/2/0, 2/4/0, 4/5/0, 5/6/0, 6/7/0}
{\draw [quivarrow]  (q\t) edge [bend left=\a] (q\h); }

\foreach \t/\h in {0/1, 1/2, 2/3, 3/4, 4/5, 5/6, 6/7, 7/8}
\coordinate (pm\t) at ($0.5*(p\t) + 0.5*(p\h)$);

\draw [strand] plot[smooth] 
  coordinates {(pm7) (0.5,20.5) (-1,16) (-1,12) (1,7) 
  (pm1) (8,2.5) (9.2,3.7) (pm2) (6.5,8) (6,9.5) (6.5,11)
  (pm4) (12,14) (12.3,15.5) (11.5,16.5) (pm5) (2,15.5) (-3,15.5)}
 [postaction=decorate, decoration={markings, 
 mark= at position 0.09 with \strarrow,
 mark= at position 0.24 with \strarrow, 
 mark= at position 0.35 with \strarrow,
 mark= at position 0.45 with \strarrow,
 mark= at position 0.58 with \strarrow,  
 mark= at position 0.73 with \strarrow,
 mark= at position 0.9 with \strarrow}];  

\draw (-10,7) node {$p'$};
\draw (12,7) node {$q'$};
\draw (3,23.7) node {\small $a_0$};
\draw (1.2,10) node {\small $S_1$};

\end{tikzpicture}
\]
\caption{Proof of Proposition~\ref{p:bocklandtreducibility}, second part}
\label{f:reducibility2}
\end{figure}
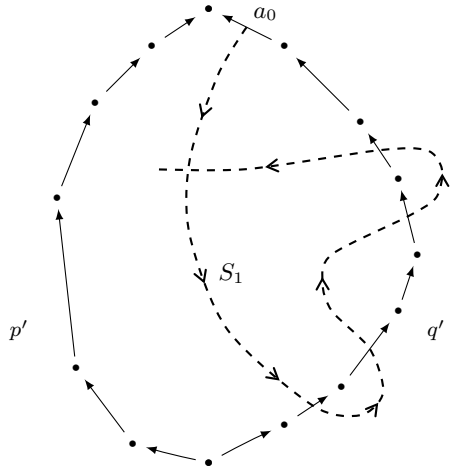

Suppose that $I$ and $J$ are vertices in
$Q_0(D)$, that $p,q$ are paths from $I$ to $J$
in $Q(D)$ and that $q^{-1}p$ does not self-intersect (except at its endpoints). We will denote by $\overline{R_{pq}}$ the region between $p$ and $q$, including $p$ and $q$. If $p=q=\gamma$ for some arrow $\gamma$ then $\overline{R_{pq}}=\gamma$;
otherwise it is a disk.
We shall call a non-trivial path $s$
in $\overline{R_{pq}}$ which
intersects $q^{-1}p$ only at its end points
a \emph{chord} in $\overline{R_{pq}}$.

If, furthermore, the end points of $s$ both lie on $p$ and $s$ is oriented in the same
direction as (respectively, in the opposite
direction to) $p$, we shall call $s$ a forwards
(respectively, backwards) $p$-chord.
In particular, we allow an arrow of $p$ to be a forwards $p$-chord.
We make a similar definition for $q$.

We also define the \emph{area} of a region
bounded by a cycle in the underlying unoriented graph of $Q(D)$ which does not self-intersect to be the number of faces in $Q_2(D)$ contained in it.

\begin{lemma} \label{l:nointersection}
Let $I,J$ be vertices in $Q_0(D)$ and
$p,q$ be paths from $I$ to $J$ in
$Q(D)$ such that $q^{-1}p$ does not
self-intersect.
Then there is a path 
$\rr$ in $\overline{R_{pq}}$ such that, in $A_D$, $p=u^{N_p}\rr$ and
$q=u^{N_q}\rr$ for some nonnegative integers $N_p$ and $N_q$.
\end{lemma}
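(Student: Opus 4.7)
The plan is to induct on the area of $\overline{R_{pq}}$. For the base case ($\text{area}=0$), observe that since $Q(D)$ is a dimer model in a disk (Remark~\ref{r:plabicdual}), any non-self-intersecting cycle in $Q(D)$ bounding zero area must be trivial, so $p=q$; take $\rr=p$ with $N_p=N_q=0$.

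In the inductive step, Proposition~\ref{p:bocklandtreducibility} tells us the pair $(p,q)$ is reducible. Passing, if necessary, to the symmetric case where $p$ is negatively reducible (handled identically by swapping the roles of $Q_2^+$ and $Q_2^-$), assume that $q$ is positively reducible: there is an internal arrow $\alpha$ and a face $F^+\in Q_2^+$ with $\bdry F^+=\alpha\,p^+_\alpha$, and a factorisation $q=q_1\,p^+_\alpha\,q_2$. Write $q'=q_1\,p^-_\alpha\,q_2$, where $\bdry F^-=\alpha\,p^-_\alpha$ for the opposite face $F^-\in Q_2^-$; the dimer relation $p^+_\alpha=p^-_\alpha$ from~\eqref{e:definingrelations} yields $q=q'$ in $A_D$. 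Moreover, the proof of Proposition~\ref{p:bocklandtreducibility} shows that $\alpha$ is a backwards arrow of $q$ lying in the interior of $R_{pq}$, so both $F^+$ and $F^-$ are contained in $\overline{R_{pq}}$; hence the replacement sweeps $\{F^+,\alpha,F^-\}$ out of the region and pushes the $q$-boundary strictly inward, giving $\overline{R_{pq'}}\subseteq\overline{R_{pq}}$ with strictly smaller area.

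Provided the new cycle $q'^{-1}p$ is still non-self-intersecting, the inductive hypothesis applied to $(p,q')$ produces $\rr\in\overline{R_{pq'}}\subseteq\overline{R_{pq}}$ and nonnegative integers $N_p$ and $N_q'$ with $p=u^{N_p}\rr$ and $q'=u^{N_q'}\rr$ in $A_D$. Combined with $q=q'$, this yields the required expression, with $N_q=N_q'$.

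The main obstacle I anticipate is the case where the swap creates a self-intersection, for example when an interior vertex of $p^-_\alpha$ coincides with a vertex on $p$ or on $q_1\cup q_2$. Following the spirit of~\cite[\S5]{bocklandt12}, the remedy is to split $q'^{-1}p$ at each self-intersection into a finite collection of simple sub-cycles, each contained in $\overline{R_{pq}}$ and each of strictly smaller area; applying the inductive hypothesis to each sub-cycle produces a common sub-path on that piece, and these sub-paths assemble into the single desired $\rr$ once the powers of $u$ accumulated on each piece are carefully tracked.
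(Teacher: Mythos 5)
Your overall strategy is exactly the paper's: induct on the area of $\overline{R_{pq}}$, invoke Proposition~\ref{p:bocklandtreducibility} to find a reducible configuration given by a backwards arrow $\alpha$ interior to $R_{pq}$, apply the relation $p^+_{\alpha}=p^-_{\alpha}$ to replace the offending subpath, and observe that the new path stays inside $\overline{R_{pq}}$ because no arrows lie inside the two faces adjacent to $\alpha$. The only substantive criticism is that your closing paragraph, which you present as a remedy for an anticipated obstacle, is where essentially the whole of the paper's proof actually lives: the replacement path $p^-_{\alpha}$ generically does meet $p$, $q_1$ and $q_2$ again, and the paper's decomposition of the new piece into forwards and backwards chords (its cases (a) and (b)) is precisely the worked-out version of your "split into simple sub-cycles and reassemble".

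Two points in that sketch need genuine verification rather than assertion. First, the induction requires each sub-cycle obtained by splitting to bound \emph{strictly} fewer faces than $\overline{R_{pq}}$; this is true, but not merely because each sub-cycle lies in $\overline{R_{pq}}$ --- the reason is that the arrows of $p^+_{\alpha}$ no longer occur in $q'$, so no sub-cycle can have $p^+_{\alpha}$ on its boundary, and hence none can enclose the face $F^+$. Second, among the sub-cycles are degenerate ones consisting of a closed sub-path of $q'$ alone (when $p^-_{\alpha}$ revisits a vertex of $q_1\cup q_2$, or meets $p$ in the "wrong" order --- the paper's backwards chords); for these the inductive hypothesis must be applied with one of the two paths trivial, yielding a power of $u$ times an idempotent, which is the paper's opening remark in its proof. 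With these two points supplied, the reassembly of the pieces $r_i$ into a single $\rr$ and the bookkeeping of powers of $u$ is routine, so the gap is fillable rather than fatal; but as written the inductive step is sketched rather than proved.
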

\begin{proof}
Note that we include the case where $p=q=\gamma$ for some arrow $\gamma$.
We first observe that if the result holds in a case where $q=e_I$ is trivial, we must have $r=e_I$ and so $p=u^{N_p}e_I$; similarly if $p$ is trivial. To prove the lemma, we argue by induction on the area of $\overline{R_{pq}}$. If this is zero, then $p=q=\gamma$ for some arrow $\gamma$, and the result is trivial.
We suppose that $\overline{R_{pq}}$ has non-zero area, and assume that the result holds in the case where it has smaller area.

Without loss of generality, we
may assume $q^{-1}p$ is a clockwise cycle.
By Proposition~\ref{p:bocklandtreducibility},
either $p$ is negatively reducible or
$q$ is positively reducible. We suppose
that $p$ is negatively reducible (a
similar argument holds in the case where
$q$ is positively reducible).
Then $p$ contains a subpath of the form $p_{\alpha}^-$ for some backwards arrow $\alpha:L\rightarrow K$ for $p$ in
$\overline{R_{pq}}$.
We can apply the relation
$p_{\alpha}^-=p_{\alpha}^+$ in $A_D$
to $p$ to produce a new path $p'$ from $I$ to $J$ in $\overline{R_{pq}}$. Note that we have $p=p'$ in $A_D$.

Since $p_{\alpha}^-\alpha$ and $p_{\alpha}^+\alpha$ are faces of $Q(D)$, and $Q(D)$ is a dimer model, there cannot be any arrows inside the disks they bound. It follows that $p_{\alpha}^+$ (and hence also $p'$) is contained in $\overline{R_{pq}}$.
Note that $p_{\alpha}^+$ can be written uniquely as
a composition of chords. We will apply
the inductive hypothesis to the parts of
$\overline{R_{pq}}$ between each chord and
$q^{-1}p$, which we can do since they each have
area smaller than that of $\overline{R_{pq}}$.
Several types of chord 
can appear in the composition and
we need to deal with each type in a slightly different way,
which we now detail.

Let $p[K,I]$ be the subpath of $p$ from
$I$ to $K$ and $p[J,L]$ the subpath of $p$
from $L$ to $J$,
so that $p'=p[J,L]p_{\alpha}^+p[K,I]$.
It is helpful to distinguish
two cases for the path
$p_{\alpha}^+$ from $K$ to $L$:

\begin{enumerate}
\item[(a)] The path $p_{\alpha}^+$ is
of the form $p_5p_4p_3p_2p_1$,
where $p_1$ and $p_5$ are compositions of backwards $p$-chords,
$p_2$ is either an idempotent
(if $I$ lies on $p_{\alpha}^+$)
or a chord from a vertex
on $p[K,I]$ to a vertex on $q$, $p_3$ is a composition of forwards $q$-chords and
$p_4$ is either an idempotent 
(if $J$ lies on $p_{\alpha}^+$)
or a chord from a vertex on $q$ to a vertex on $p[J,L]$.
\item[(b)] The path $p_{\alpha}^+$ is
of the form $p_3p_2p_1$, where
$p_1$ and $p_3$ are compositions of backwards
$p$-chords and $p_2$ is a forwards $p$-chord.
\end{enumerate}
We also allow $p_1,p_3$ and $p_5$ in (a) and
$p_1$ and $p_3$ in (b) to be idempotents, considered as empty compositions.

See Figure~\ref{f:pairs} for examples.
We reiterate that it is possible for forwards
$q$-chords in $p_3$ to be arrows in
$q$, as in case (a) in
Figure~\ref{f:pairs}.

In case (a), each backwards $p$-chord
in $p_1$ and $p_5$ corresponds to a loop in $p'$ whose area is less than that of $\overline{R_{pq}}$: by the inductive hypothesis and the remark at the start of the proof, each such loop is equal to a power of $u$ times an idempotent.
We see that $p'$ is the product of
a power of $u$ and $s_2p_4p_3p_2s_1$, where $s_1$ is an initial
subpath of $p[K,I]$ and $s_2$ is a final subpath of $p[J,L]$. But each forwards $q$-chord
in $p_3$, together with the subpath of $q$ with the same start and end vertices, form
a pair of paths satisfying the assumptions
in the lemma. The same applies to $p_2s_1$ together with an initial subpath of $q$, and
to $s_2p_4$ together with a final subpath
of $q$. Applying the inductive hypothesis
to all of these pairs gives the result
for $p',q$, and hence for $p,q$.

Similarly, in case (b), each backwards $p$-chord in $p_1$ and $p_3$ corresponds to a loop in $p'$ which, by induction, is equal to a power of
$u$ times an idempotent. We see that $p'$ is
the product of a power of $u$ and $s_2p_2s_1$,
where $s_1$ is an initial subpath of $p[K,I]$
and $s_2$ is a final subpath of $p[J,L]$. 
We may then apply the inductive hypothesis
to the pair $s_2p_2s_1,q$ to get the
result for $p,q$.
\end{proof}

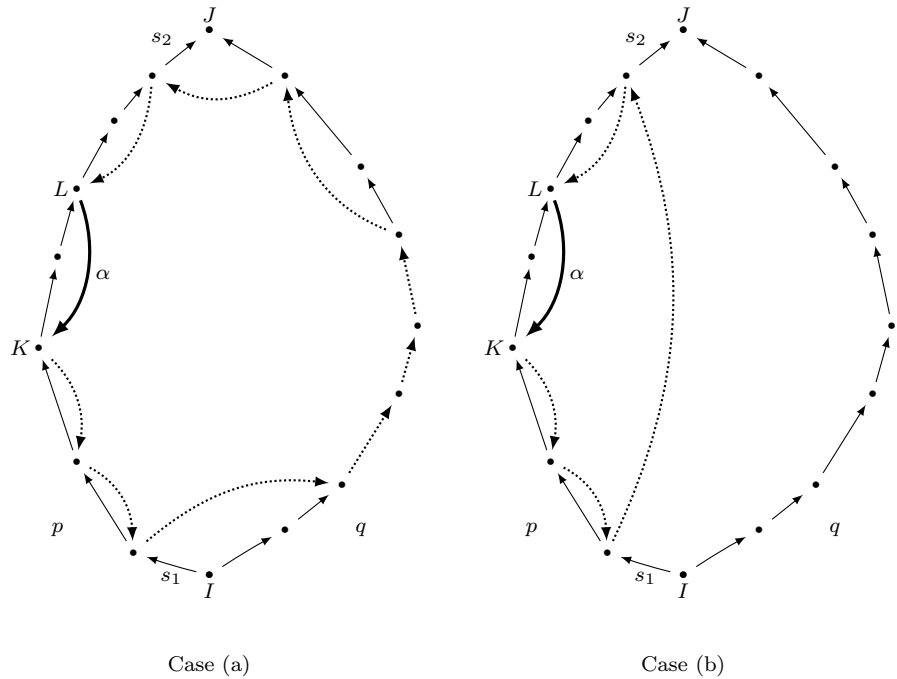
\begin{figure}
\[
\begin{tikzpicture}[xscale=0.25,yscale=0.3,
  quivarrow/.style={black, -latex},
  chord/.style={black,densely dotted, thick,-latex} ]

\foreach \n/\x/\y in {0/0/0, 1/4/2, 2/7/4, 3/10/8, 4/11/11, 5/10/15, 6/8/18, 7/4/22, 8/0/24}
{ \draw (\x,\y) node (p\n) {\tiny$\bullet$}; }

\foreach \n/\x/\y in {0/0/0, 1/-4/1, 2/-7/5, 3/-9/10, 4/-8/14, 5/-7/17, 6/-5/20, 7/-3/22, 8/0/24}
{ \draw (\x,\y) node (q\n) {\tiny$\bullet$}; }

\foreach \t/\h/\a in {0/1/2, 1/2/0, 5/6/0, 6/7/0, 7/8/0}
{\draw [quivarrow] (p\t) edge [bend left=\a] (p\h); }

\foreach \t/\h/\a in {2/3/0, 3/4/0, 4/5/0}
{\draw [chord] (p\t) edge [bend left=\a] (p\h); }

\foreach \t/\h/\a in {0/1/2, 1/2/0, 2/3/0, 3/4/0, 4/5/0,
5/6/0, 6/7/0, 7/8/0}
{\draw [quivarrow]  (q\t) edge [bend left=\a] (q\h); }

\foreach \t/\h in {0/1, 1/2, 2/3, 3/4, 4/5, 5/6, 6/7, 7/8}
\coordinate (pm\t) at ($0.5*(p\t) + 0.5*(p\h)$);

\foreach \t/\h in {0/1, 1/2, 2/3, 3/4, 4/5, 5/6, 6/7, 7/8}
\coordinate (qm\t) at ($0.5*(q\t) + 0.5*(q\h)$);

\draw (p0) node [below] {$I$};
\draw (p8) node [above] {$J$};
\draw (q3) node [left] {$K$};
\draw (q5) node [left] {$L$};

\draw (qm0) node [below] {$s_1$};
\draw (qm7) node [above left] {$s_2$};
\draw (-8,2) node {$p$};
\draw (8,2) node {$q$};
\draw (p0)++(0,-4) node {Case (a)};

\draw [quivarrow, very thick]  (q5) edge [bend left=38] node [right] {$\alpha$} (q3); 

\draw [chord]  (q3) edge [bend left=32] (q2); 
\draw [chord]  (q2) edge [bend left=32] (q1); 
\draw [chord]  (q1) edge [bend left=20] (p2); 
\draw [chord]  (p5) edge [bend left=32] (p7); 
\draw [chord]  (p7) edge [bend left=25] (q7); 
\draw [chord]  (q7) edge [bend left=32] (q5); 


\begin{scope}[shift={(25,0)}]

\foreach \n/\x/\y in {0/0/0, 1/4/2, 2/7/4, 3/10/8, 4/11/11, 5/10/15, 6/8/18, 7/4/22, 8/0/24}
{ \draw (\x,\y) node (p\n) {\tiny$\bullet$}; }

\foreach \n/\x/\y in {0/0/0, 1/-4/1, 2/-7/5, 3/-9/10, 4/-8/14, 5/-7/17, 6/-5/20, 7/-3/22, 8/0/24}
{ \draw (\x,\y) node (q\n) {\tiny$\bullet$}; }

\foreach \t/\h/\a in {0/1/2, 1/2/0, 5/6/0, 6/7/0, 7/8/0}
{\draw [quivarrow] (p\t) edge [bend left=\a] (p\h); }

\foreach \t/\h/\a in {2/3/0, 3/4/0, 4/5/0}
{\draw [quivarrow] (p\t) edge [bend left=\a] (p\h); }

\foreach \t/\h/\a in {0/1/2, 1/2/0, 2/3/0, 3/4/0, 4/5/0,
5/6/0, 6/7/0, 7/8/0}
{\draw [quivarrow]  (q\t) edge [bend left=\a] (q\h); }

\foreach \t/\h in {0/1, 1/2, 2/3, 3/4, 4/5, 5/6, 6/7, 7/8}
\coordinate (pm\t) at ($0.5*(p\t) + 0.5*(p\h)$);

\foreach \t/\h in {0/1, 1/2, 2/3, 3/4, 4/5, 5/6, 6/7, 7/8}
\coordinate (qm\t) at ($0.5*(q\t) + 0.5*(q\h)$);

\draw (p0) node [below] {$I$};
\draw (p8) node [above] {$J$};
\draw (q3) node [left] {$K$};
\draw (q5) node [left] {$L$};

\draw (qm0) node [below] {$s_1$};
\draw (qm7) node [above left] {$s_2$};
\draw (-8,2) node {$p$};
\draw (8,2) node {$q$};
\draw (p0)++(0,-4) node {Case (b)};

\draw [quivarrow, very thick]  (q5) edge [bend left=38] node [right] {$\alpha$} (q3); 

\draw [chord]  (q3) edge [bend left=32] (q2); 
\draw [chord]  (q2) edge [bend left=32] (q1); 
\draw [chord]  (q1) edge [bend right=27] (q7); 
\draw [chord]  (q7) edge [bend left=32] (q5); 
\end{scope}
\end{tikzpicture}
\]
\caption{Examples for the proof of Lemma~\ref{l:nointersection}. 
Dotted arrows in $\overline{R_{pq}}$ represent 
chords
and $p_{\alpha}^+$ is the composition of all of them}
\label{f:pairs}
\end{figure}

\begin{proposition} \label{p:transverse}
Let $p,q$ be arbitrary paths from $I$ to $J$ in $Q(D)$.
Then there is a path $\rr$ in $Q(D)$ such that, in
$A_D$,
$p=u^{N_p}\rr$ and $q=u^{N_q}\rr$ for some nonnegative integers $N_p$ and $N_q$.
\end{proposition}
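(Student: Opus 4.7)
The plan is to induct on $|p|+|q|$ to reduce the statement to Lemma~\ref{l:nointersection}. The base case $p=q=e_I$ is handled by taking $\rr=e_I$.

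For the inductive step, I would first consider the cycle $q^{-1}p$ in the underlying unoriented graph of $Q(D)$. If it does not self-intersect, Lemma~\ref{l:nointersection} applies directly. Otherwise, some vertex $V$ is visited at two distinct positions in the cyclic vertex sequence of $q^{-1}p$, and I would split into cases based on where these positions lie.

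If both positions lie within $p$ and are not the identified endpoints---so that a non-trivial loop $\ell$ at $V$ strictly shorter than $p$ can be extracted by writing $p=p_2\ell p_1$---then I would apply the inductive hypothesis to $(\ell, e_V)$, obtaining a path $r_\ell$ with $\ell=u^{N}r_\ell$ and $e_V=u^{M}r_\ell$ in $A_D$. The $\mathbb{N}C_0$-grading on $A_D$ (Remark~\ref{r:ADgraded}) applied to the second equation, together with the fact that $u$ has weight $C_0$ and any non-trivial path has non-zero weight (since each arrow's weight is non-empty), forces $r_\ell=e_V$ and $M=0$. Hence $\ell=u^{N}e_V$ in $A_D$, and centrality of $u$ (Definition~\ref{d:centralelement}) gives $p=u^{N}(p_2p_1)$; since $|p_2p_1|<|p|$, induction on $(p_2p_1, q)$ completes this case. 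The symmetric case with two positions in $q$ is handled analogously. If instead one position is internal to $p$ and the other internal to $q$, I would factor $p=p_2p_1$ and $q=q_2q_1$ at $V$, apply induction to $(p_1,q_1)$ and $(p_2,q_2)$ to obtain $\rr_1, \rr_2$ and integers $a_i, b_i$ with $p_i=u^{a_i}\rr_i$, $q_i=u^{b_i}\rr_i$, and then use centrality of $u$ to form $\rr=\rr_2\rr_1$, giving $p=u^{a_1+a_2}\rr$ and $q=u^{b_1+b_2}\rr$.

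The remaining configuration is when $p$ and $q$ are both simple non-trivial loops at $V=I=J$ sharing no internal vertex, so that $q^{-1}p$ is a figure-eight at $V$ and Lemma~\ref{l:nointersection} does not apply directly. Here I would apply the inductive hypothesis to $(p, e_V)$ and $(q, e_V)$ separately---each pair has strictly smaller total length since $|p|,|q|>0$---to obtain, via the grading argument above, $p=u^{N_p}e_V$ and $q=u^{N_q}e_V$, and then take $\rr=e_V$. The main obstacle I anticipate is making the case analysis exhaustive and ensuring in each branch that the inductive pairs have strictly smaller total length; in particular, one must carefully distinguish genuine self-intersections in the cyclic vertex sequence from the merely coincident endpoints of a loop, since applying case (A) with the positions $(0,|p|)$ would fail to shorten $p$.
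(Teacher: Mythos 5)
Your proof is correct and follows essentially the same route as the paper's: induct on $|p|+|q|$, locate a repeated vertex of the cycle $q^{-1}p$, either split off a sub-loop of one path or factor both paths at the common vertex, and reduce everything to Lemma~\ref{l:nointersection}. The paper streamlines the case analysis you worry about at the end by always choosing the \emph{first} repeated vertex (minimal $m$ with $I_m=I_l$ for some $l<m$), which guarantees the extracted sub-cycle does not self-intersect, so that Lemma~\ref{l:nointersection} applies to it directly and your separate grading argument and figure-eight case become unnecessary.
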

\begin{proof}
We will prove the result by induction on the 
length of $q^{-1}p$. If the length of 
$q^{-1}p$ is zero, then $p=q=e_I$ and the result
holds with $r=e_I$. We assume that the length
of $q^{-1}p$ is positive and that the result
holds when it has shorter length.

We write $p$ as:
$$I_0\rightarrow I_1\rightarrow \cdots \rightarrow I_i$$
and $q$ as:
$$I_j\rightarrow I_{j-1}\rightarrow \cdots \rightarrow I_i,$$
where $j\geq i$.
Thus, $I_0,\ldots ,I_j$ are the vertices of $Q(D)$
visited by $q^{-1}p$, in order, and $I_j=I_0$.
Let $m$ be minimal such that
$I_m=I_l$ for some $l<m$. Suppose first
that $m\leq i$.
Then, by Lemma~\ref{l:nointersection}, the part of $p$
between $I_l$ and $I_m$ is equal
to a power of $u$ multiplied by $e_{I_l}$.
The result follows in this case by
applying the inductive hypothesis to the
pair $p,q$ with the part of $p$
between $I_l$ and $I_m$ removed.
A similar argument applies in the case where
$l\geq i$.

The remaining possibility is that $l<i$ and $m>i$.
But then we can write $p=p_2p_1$ and
$q=q_2q_1$ as compositions of paths,
such that
$(q_2)^{-1}p_2$ is the part of $q^{-1}p$
between $I_l$ and $I_m$ and meets itself
only at its starting and ending points.
Note that the case $p_2=q_2=\gamma$ for some arrow $\gamma$ may occur here.
The result for $(p,q)$ then follows by applying
Lemma~\ref{l:nointersection} to the pair $(p_2,q_2)$ and the induction hypothesis to the pair $(p_1,q_1)$.
\end{proof}

\begin{corollary} \label{c:describepaths}
Let $I,J$ be vertices in $Q(D)$. Then there is a unique
element $p_{JI}$ of $A_D$ which can be written as an
insincere path in $Q(D)$ from $I$ to $J$. 
Furthermore, the elements
$$\{u^Np_{JI}\,:\,N\geq 0\}$$
form a basis of $e_JA_De_I$.
\end{corollary}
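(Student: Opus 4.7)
The plan is to combine Proposition~\ref{p:path} (existence of an insincere path) with Proposition~\ref{p:transverse} (comparison of arbitrary paths) and the $\mathbb{N}C_0$-grading on $A_D$ from Remark~\ref{r:ADgraded}, under which every arrow has nonzero weight and $u$ has the sincere weight $C_0$ (by Corollary~\ref{c:cycleweight}, applied to any face around a vertex). The crucial observation is that any identity $p = u^N q$ in $A_D$ is automatically homogeneous, so $\weight(p) = NC_0 + \weight(q)$; consequently, if $p$ is insincere then $N=0$. This single grading trick will be used repeatedly.

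First I would establish existence and uniqueness of $p_{JI}$. Existence is immediate from Proposition~\ref{p:path}. For uniqueness, suppose $p$ and $q$ are two insincere paths from $I$ to $J$. By Proposition~\ref{p:transverse} applied to $p$ and $q$, there is a path $r$ and nonnegative integers $N_p, N_q$ with $p = u^{N_p} r$ and $q = u^{N_q} r$ in $A_D$. The grading argument above forces $N_p = N_q = 0$, and hence $p = r = q$ in $A_D$. Next, to show that $\{u^N p_{JI} : N \geq 0\}$ spans $e_J A_D e_I$, it suffices to show that every path $p$ from $I$ to $J$ equals $u^N p_{JI}$ for some $N\ge 0$. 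Applying Proposition~\ref{p:transverse} to $p$ and $p_{JI}$ gives a path $r$ and integers $N_p, N_{JI} \geq 0$ with $p = u^{N_p} r$ and $p_{JI} = u^{N_{JI}} r$ in $A_D$. Since $p_{JI}$ is insincere, the grading argument forces $N_{JI} = 0$, so $r = p_{JI}$ in $A_D$, and therefore $p = u^{N_p} p_{JI}$.

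For linear independence of $\{u^N p_{JI}\}_{N\ge 0}$, note that $u^N p_{JI}$ is homogeneous of weight $NC_0 + \weight(p_{JI})$, and these weights are pairwise distinct as $N$ varies, because $\weight(p_{JI})$ has support strictly smaller than $C_0$ while $NC_0$ is a multiple of the sincere weight. Since $A_D$ is defined from $Q(D)$ by commutation relations $p^+_\alpha = p^-_\alpha$, Remark~\ref{r:commutation}(a) guarantees that every path in $Q(D)$ gives a nonzero element of $A_D$; in particular each $u^N p_{JI}$ is nonzero. Nonzero homogeneous elements in distinct degrees are linearly independent, completing the proof. The main conceptual obstacle is already discharged by Proposition~\ref{p:transverse}; what remains is to recognise that the $\mathbb{N}C_0$-grading promotes that proposition into a unique normal-form statement for paths modulo powers of $u$, and to invoke the elementary nonvanishing property (b) of Remark~\ref{r:commutation} to convert distinct weights into linear independence.
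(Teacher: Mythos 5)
Your proposal is correct and follows essentially the same route as the paper: existence from Proposition~\ref{p:path}, uniqueness and the normal form $p=u^N p_{JI}$ from Proposition~\ref{p:transverse} combined with the observation that insincerity of a path forces the exponent of $u$ to vanish, and nonvanishing plus the $\mathbb{N}C_0$-grading (Remark~\ref{r:commutation}(a) and Remark~\ref{r:ADgraded}) for linear independence. The only difference is that you spell out the grading argument that the paper leaves implicit.
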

\begin{proof}
By Proposition~\ref{p:path}, there is an insincere
path $p$ in $Q(D)$ from $I$ to $J$ (taking $p=e_I$
if $I=J$). Suppose that $q$ is another such path and
that $p=q$ in $A_D$. Since $p,q$ are insincere
and the support of $u$ is $C_0$, it follows from
Proposition~\ref{p:transverse} that $p=q$ in $A_D$.
We denote the common element of $A_D$ arising from
an insincere path in $Q(D)$ from $I$ to $J$ by
$p_{JI}$.

If $p$ is any path in $Q(D)$ from $I$ to $J$, then by
Proposition~\ref{p:transverse} we must have $p=u^Np_{JI}$ in $A_D$ for some nonnegative integer $N$, since $p_{JI}$ is insincere.
Furthermore, the elements $u^Np_{JI}$ are
non-zero in $A_D$ by Remark~\ref{r:commutation}(a) and independent (using Remark~\ref{r:ADgraded})
since they have distinct weights.
The result follows.
\end{proof}

\section{Isomorphism of algebras} 
\label{s:isomorphism}

Recall that $B$ denotes the (polynomial case) algebra introduced
in~\cite{jks}, as defined in Section~\ref{s:rankone}, and that we have associated a $B$-module $\module_I$ to each $k$-subset $I$ of $C_1$
(see Definition~\ref{d:moduleMI}). To any Postnikov diagram $D$, we may associate a $B$-module
\begin{equation} \label{e:TD}
  T_D=\bigoplus_{I\in Q_0(D)} \module_I,
\end{equation}
where $Q_0(D)=\C(D)$ is the set of labels on the alternating regions of
$D$ and thus the set of vertices of the dimer model $Q(D)$.

\begin{remark} \label{l:uniquemorphism}
Since the degree of a product of
monomial morphisms is the sum of their
degrees, the degree map induces an $\mathbb{N}C_0$-grading on
$\End_B(T_D)$; see Remark~\ref{l:homLILJ}.
\end{remark}

Our goal in this section is to show that the dimer algebra $A_D=A_{Q(D)}$, as in
Definition~\ref{d:dimeralgebra},
is isomorphic to $\End_B(T_D)$ as an $\mathbb{N}C_0$-graded algebra,
and, as a corollary, that the idempotent subalgebra
of $A_D$ corresponding to the boundary vertices of $Q(D)$ is isomorphic to
$B^{\opp}$.
We start by defining a homomorphism from $A_D$ to $\End_B(T_D)$.

\begin{lemma} \label{l:defembed}
There is a homomorphism $\embed{\colon}A_D\to \End_B(T_D)$ of
$\mathbb{N}C_0$-graded algebras
determined (uniquely) by the following properties:
\begin{enumerate}[(a)]
\item If $I\in Q_0(D)$, then $g(e_I)=\id_{\module_I}$.
\item If $\alpha{\colon}I\to J$ is an arrow in $Q_1(D)$, then $\embed(\alpha)=\embed_{JI}$.
\end{enumerate}
\end{lemma}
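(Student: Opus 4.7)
The plan is to first define an algebra homomorphism $\tilde{\embed}\colon \F Q(D)\to \End_B(T_D)$ from the whole path algebra by specifying its values on the generators --- $\tilde{\embed}(e_I)=\id_{\module_I}$ for each $I\in Q_0(D)$, and $\tilde{\embed}(\alpha)=\embed_{JI}$ for each arrow $\alpha\colon I\to J$ --- and then extending multiplicatively. Such a map exists and is unique by the universal property of the path algebra. It then suffices to check that $\tilde{\embed}$ respects the defining relations of $A_D$, namely that $\tilde{\embed}(p^+_\alpha)=\tilde{\embed}(p^-_\alpha)$ for every internal arrow $\alpha$; the induced map $\embed\colon A_D\to \End_B(T_D)$ will then automatically satisfy (a) and (b), and these two properties visibly determine it uniquely, since the idempotents $e_I$ and arrows $\alpha$ generate $A_D$ as an algebra.

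To verify the relations, I will use three facts already established. First, compositions of monomial morphisms are monomial and degrees add under composition (from Definition~\ref{d:degree} and the comments after it), so each $\tilde{\embed}(p^\pm_\alpha)$ is a monomial morphism $\module_{h\alpha}\to \module_{t\alpha}$. Second, by Corollary~\ref{c:weightequalsdegree} the degree of $\embed_{JI}$ equals the weight $\weight_\alpha$ of the arrow $\alpha\colon I\to J$, so the degree of $\tilde{\embed}(p^\pm_\alpha)$ is the sum of the weights along $p^\pm_\alpha$. Third, Corollary~\ref{c:cycleweight} applied to the face $F^\pm\in Q_2^{\pm}(D)$ with $\bdry F^\pm = \alpha p^\pm_\alpha$ says this sum equals $C_0-\weight_\alpha$, independent of sign. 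Hence $\tilde{\embed}(p^+_\alpha)$ and $\tilde{\embed}(p^-_\alpha)$ are two monomial morphisms $\module_{h\alpha}\to \module_{t\alpha}$ of the same degree, and the uniqueness clause of Lemma~\ref{l:homLILJ} forces them to coincide.

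Finally, the $\mathbb{N}C_0$-grading on the resulting map $\embed$ is built in: by Corollary~\ref{c:weightequalsdegree}, every arrow of $Q(D)$ is sent to a homogeneous element of matching degree, and idempotents go to idempotents of degree zero, so $\embed$ carries the weight-grading on $A_D$ (Remark~\ref{r:ADgraded}) to the degree-grading on $\End_B(T_D)$ (Remark~\ref{l:uniquemorphism}). I do not expect any serious obstacle: the entire argument is a clean assembly of the weight-versus-degree dictionary from Section~\ref{s:weights} with the rigidity of monomial morphisms of fixed degree given by Lemma~\ref{l:homLILJ}. The only content is the equality $\weight(p^+_\alpha)=\weight(p^-_\alpha)=C_0-\weight_\alpha$, which is the dimer-model shadow of the corresponding identity between monomial morphisms.
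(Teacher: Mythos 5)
Your proposal is correct and follows essentially the same route as the paper: define the map on the path algebra, observe that $g(p_\alpha^{\pm})$ is a monomial morphism whose degree is $C_0-\weight_\alpha$ by Corollaries~\ref{c:weightequalsdegree} and~\ref{c:cycleweight}, and invoke the rigidity in Lemma~\ref{l:homLILJ} to conclude the two sides of each relation agree, with the grading statement again coming from Corollary~\ref{c:weightequalsdegree}. The only (immaterial) difference is that the paper identifies each of $g(p_\alpha^{\pm})$ as the unique \emph{insincere} monomial morphism $\embed_{IJ}$, whereas you match them via the ``unique monomial morphism of its degree'' clause of the same lemma.
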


\begin{proof}
As $A_D$ is a quotient of the path algebra of $Q(D)$, such a homomorphism
is certainly uniquely determined by (a) and (b) and what we must
check is that the morphisms $\embed(\alpha)$,
for all $\alpha\in Q_1(D)$, satisfy the defining relations of $A_D$,
namely \eqref{e:definingrelations}.
So, suppose that $\alpha\colon I\to J$ is an internal arrow. Then
$\alpha$ lies in the boundary of a face $F^+\in Q_2^+$. This boundary is a cycle
$$I\to J\to I_1 \to \cdots \to I_m \to I.$$
Then $g(p_\alpha^+)=\embed_{I I_{m}}\cdots\embed_{I_1J}$,
which is insincere by
Corollaries~\ref{c:weightequalsdegree} and~\ref{c:cycleweight}.
Since this morphism is monomial, we must have $g(p_\alpha^+)=\embed_{IJ}$, by
Lemma~\ref{l:homLILJ}.
The same argument applies to $g(p_\alpha^-)$ and so it is equal
to $\embed (p_\alpha^+)$, as required.
The fact that $\embed$ is a homomorphism
of $\mathbb{N}C_0$-graded algebras follows from Corollary~\ref{c:weightequalsdegree}.
\end{proof}

We can now prove our main result.

\begin{theorem} \label{t:isomorphism}
Let $D$ be an arbitrary $(k,n)$-Postnikov diagram.
Let $A_D$ be the associated dimer algebra and $T_D$ the associated $B$-module,
as in \eqref{e:TD}.
Then the map $\embed{\colon}A_D\to \End_B(T_D)$, as in Lemma~\ref{l:defembed},
is an isomorphism of graded algebras.
\end{theorem}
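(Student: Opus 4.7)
The strategy is to use the two deep results from the preceding sections as the twin engines of surjectivity and injectivity, and then assemble them compatibly with the natural $\F[t]$-module structure. Specifically, Proposition~\ref{p:path} guarantees that insincere paths exist between any two vertices (giving surjectivity), while Corollary~\ref{c:describepaths} says any path equals such an insincere path times a power of the central element $u$ (setting up a clean basis on the $A_D$ side). On the other side, Lemma~\ref{l:homLILJ} gives an analogous basis $\{t^m \embed_{JI} : m\geq 0\}$ of $\Hom_B(\module_I,\module_J)$, and the map $\embed$ should identify these bases.

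First I would verify the key local fact that $\embed(u) = t\cdot \id_{T_D}$. For each internal vertex $I$, the element $u_I$ is the product of arrows along a face cycle starting at $I$, so $\embed(u_I)$ is a composition of monomial morphisms, hence monomial. By Corollary~\ref{c:cycleweight} its degree is $C_0$, and by Lemma~\ref{l:homLILJ} the unique monomial endomorphism $\module_I\to\module_I$ of degree $C_0$ is $t\cdot\id_{\module_I}$. Summing over $I$ gives $\embed(u)=t\cdot\id_{T_D}$.

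Next, for any pair of vertices $I,J\in Q_0(D)$, I would show that $\embed(p_{JI}) = \embed_{JI}$, where $p_{JI}$ is the canonical insincere element of $e_JA_De_I$ from Corollary~\ref{c:describepaths}. Indeed, $p_{JI}$ is represented by some insincere path $p$ produced by Proposition~\ref{p:path}; applying $\embed$ to $p$ gives a composition of monomial morphisms, which is itself monomial. By Corollary~\ref{c:weightequalsdegree}, the degree of $\embed(p)$ equals the weight of $p$, which is a proper subset of $C_0$, so $\embed(p)$ is insincere. Lemma~\ref{l:homLILJ} then forces $\embed(p)=\embed_{JI}$.

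Combining these two observations with Corollary~\ref{c:describepaths}, the set $\{u^N p_{JI} : N\geq 0\}$ is a basis of $e_JA_De_I$ and is sent by $\embed$ to $\{t^N \embed_{JI} : N\geq 0\}$, which is a basis of $\Hom_B(\module_I,\module_J)$ by Lemma~\ref{l:homLILJ}. Hence $\embed$ induces a bijection on each graded piece $e_JA_De_I$, and summing over all pairs $I,J$ yields that $\embed$ is an isomorphism of algebras. The $\mathbb{N}C_0$-graded structure is preserved by construction, via Lemma~\ref{l:defembed} and Corollary~\ref{c:weightequalsdegree}. There is no real obstacle left at this stage: all the genuinely hard work was done in Sections~\ref{s:legalarrow}--\ref{s:paths}, and the proof here is simply the assembly of those results through the dictionary between weights of paths and degrees of monomial morphisms.
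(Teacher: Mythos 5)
Your proposal is correct and follows essentially the same route as the paper: both arguments show $\embed(u^N p_{JI})=t^N\embed_{JI}$ by combining the gradedness of $\embed$ with the uniqueness statement in Lemma~\ref{l:homLILJ}, and then conclude by matching the basis $\{u^Np_{JI}\}$ of $e_JA_De_I$ from Corollary~\ref{c:describepaths} with the basis $\{t^N\embed_{JI}\}$ of $\Hom_B(\module_I,\module_J)$. Your explicit verification that $\embed(u)=t\cdot\id_{T_D}$ is a harmless elaboration of what the paper handles implicitly via degrees.
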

\begin{proof}
Fix $I,J\in Q_0(D)$ and consider the minimal codimension map
$\embed_{JI}{\colon}\module_I\to \module_J$ defined in~\eqref{e:embedJI}.
Let $p_{JI}$ be the element of $e_JA_De_I$ from
Corollary~\ref{c:describepaths}.
Its image under $g$ is a monomial
morphism, since the composition of
monomial morphisms is monomial.
As $g$ is a morphism of graded algebras
(Lemma~\ref{l:defembed}), the degree
of $g(u^mp_{JI})$ coincides with the
degree of $t^mg_{JI}$.
Hence, by Lemma~\ref{l:homLILJ}, we have:
\begin{equation} \label{e:surjectivity}
g(u^Np_{JI})=t^Ng_{JI}
\end{equation}
for all $N\geq 0$.

By Corollary~\ref{c:describepaths}, the 
set
$\{ u^Np_{JI} : N\geq 0 \}$ is a basis of $e_JA_De_I$.
On the other hand, by Lemma~\ref{l:homLILJ}, the set
$\{ t^Ng_{JI} : N\geq 0 \}$ is a basis for
\[
 \Hom_B(\module_I,\module_J) = g(e_J) \End_B(T_D) g(e_I).
\]
Hence, by~\eqref{e:surjectivity},
$\embed$ maps a basis of $A_D$ to a basis of $\End_B(T_D)$, so $\embed$ is an isomorphism.
\end{proof}

Let $e$ be the sum, in $A_D$, of the idempotents $e_I$
corresponding to the boundary vertices $I$ of the Postnikov diagram $D$.
We call the algebra $eA_De$ the \emph{boundary algebra} of $D$.

\begin{corollary} \label{c:obtainJKS}
The boundary algebra $eA_De$ is isomorphic to $B^{\opp}$,
i.e.\ the opposite of the algebra $B$ in Section~\ref{s:rankone}.
In particular, it is independent of the choice of Postnikov diagram $D$, up to isomorphism.
\end{corollary}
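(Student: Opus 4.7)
The plan is to deduce this corollary almost immediately from Theorem~\ref{t:isomorphism}, combined with the identification of the boundary summand of $T_D$ with the regular module $B$. First, I would apply Theorem~\ref{t:isomorphism} to transport the idempotent subalgebra: the isomorphism $g\colon A_D \to \End_B(T_D)$ sends the idempotent $e_I \in A_D$, for each vertex $I$ of $Q(D)$, to the identity morphism on the summand $\module_I$ of $T_D$. Hence $g(e)$ is the projection onto the boundary summand $T_D^{\bdry} := \bigoplus_{j\in C_0} \module_{\newL_j}$, and
\[
  eA_D e \;\cong\; g(e)\End_B(T_D)g(e) \;\cong\; \End_B\bigl(T_D^{\bdry}\bigr).
\]

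Next, I would identify $T_D^{\bdry}$ with $B$ as a left $B$-module. By Remark~\ref{r:boundarylabels}, the labels of the boundary alternating regions are exactly $\newL_1,\ldots,\newL_n$, where $\newL_j = [j-k+1,j]$, regardless of the particular choice of Postnikov diagram~$D$. By Remark~\ref{r:projectives}, each $\module_{\newL_j}$ is the indecomposable projective $Be_{j-k}$. As $j$ runs over $C_0$, the residues $j-k$ also run over $C_0$, so
\[
  T_D^{\bdry} \;=\; \bigoplus_{j\in C_0} \module_{\newL_j} \;\cong\; \bigoplus_{j\in C_0} Be_{j-k} \;=\; B
\]
as left $B$-modules. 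Consequently,
\[
  eA_D e \;\cong\; \End_B\bigl(T_D^{\bdry}\bigr) \;\cong\; \End_B(B) \;\cong\; B^{\opp},
\]
using the standard identification of the endomorphism algebra of a free rank one module with the opposite algebra. Since the resulting algebra $B^{\opp}$ depends only on $k$ and $n$, the independence assertion follows at once.

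There is essentially no obstacle here beyond bookkeeping: the substantive content is already packaged in Theorem~\ref{t:isomorphism} and Remarks~\ref{r:boundarylabels} and~\ref{r:projectives}. The only point requiring a moment of care is tracking the indexing shift $\newL_j \leftrightarrow e_{j-k}$ to confirm that each indecomposable projective of $B$ appears exactly once in $T_D^{\bdry}$; once this is done, the chain of isomorphisms is formal. A direct proof of the independence statement, not routed through $\End_B(T_D)$ but via invariance under twisting, untwisting, and geometric exchange moves, is deferred to Section~\ref{s:exchange}.
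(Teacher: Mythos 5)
Your proposal is correct and follows essentially the same route as the paper's own proof: apply Theorem~\ref{t:isomorphism} to identify $eA_De$ with $\End_B(P)$ for $P$ the boundary summand, then use Remarks~\ref{r:boundarylabels} and~\ref{r:projectives} to see that $P$ is the direct sum of all indecomposable projectives, so $\End_B(P)\cong\End_B(B)\cong B^{\opp}$. The extra care you take with the index shift $\newL_j\leftrightarrow e_{j-k}$ is a harmless elaboration of what the paper leaves implicit.
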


\begin{proof}
Using the isomorphism in Theorem~\ref{t:isomorphism}, we see that
\[
  eA_De\cong \embed(e)\End_B(T_D)\embed(e) = \End_B(P),
\]
where $P$ is the direct sum of the modules $\module_I$ corresponding to
the labels $I$ of the boundary regions of $D$.
These are exactly the projective $B$-modules (see Remarks~\ref{r:boundarylabels} and~\ref{r:projectives}),
so $eAe$ is isomorphic to $B^{\opp}$.
\end{proof}

\begin{remark}
\label{r:idempotents}
If $D,D'$ are any two $(k,n)$-Postnikov diagrams, 
then, by Corollary~\ref{c:obtainJKS}, 
we have isomorphisms $eA_De\cong B^{\opp}$ and $B^{\opp}\cong eA_{D'}e$. 
It follows from the proof that the isomorphism obtained by composing
these two isomorphisms takes $e_I\in eA_De$ to $e_I\in eA_{D'}e$,
for every boundary label $I$.
\end{remark}

\begin{remark}
Using the map from $\mathbb{N}C_0$ to
$\mathbb{N}$ mapping an element of
$\mathbb{N}C_0$ to the sum of its coefficients, we get an $\mathbb{N}$-grading on $A_D$ and hence on $eA_De$.
The algebra $B$ (and hence also $B^{\opp}$)
has a natural grading in which $x$ has degree $n-k$ and $y$ has degree $k$.

Furthermore, since $g(p_{\newL_{j+1}\newL_j})=g_{\newL_{j+1}\newL_j}$,
the isomorphism in
Corollary~\ref{c:obtainJKS} takes
$p_{\newL_{j+1}\newL_j}$ to $y_{j+1-k}$,
regarded as an element of $B^{\opp}$.
If $p_{\newL_{j+1}\newL_j}$ is an arrow,
it has weight $\newL_j$ (since
strand $j+1$ starts and
strand $j+1-k$ ends on this arrow).
If it is a path, then there is an arrow
from $\newL_j$ to $\newL_{j+1}$ of weight
$C_0\setminus \newL_j$ and it again
follows that the weight of $p_{\newL_{j+1}\newL_j}$
is $\newL_j$. Hence in the $\mathbb{N}$-grading, $p_{\newL_{j+1}\newL_j}$ has
degree $|\newL_j|=k$, which is the same
as the degree of $y_{j+1-k}$ in $B^{\opp}$.
A similar argument shows that
$p_{\newL_j\newL_{j+1}}$ maps to $x_{j+1-k}$,
both elements of degree $n-k$.
We see that the isomorphism in Corollary~\ref{c:obtainJKS} preserves the
$\mathbb{N}$-grading.
\end{remark}

\begin{remark} \label{r:obtainMI}
Consider a Postnikov diagram $D$ with a face labelled $I$.
Then we can regard $e_I A_D e$ as
a right $eA_De$ module and hence, by
Corollary~\ref{c:obtainJKS}, as a left $B$-module. The isomorphism $g$ in Theorem~\ref{t:isomorphism} induces an isomorphism
between $e_I A_D e$ and $g(e_I)\End_B(T_D)g(e)$ as left $B$-modules.
But
$$g(e_I) \End_B(T_D) g(e) = \id_{\module_I} \End_B(T_D) \id_B = \Hom_B(B,\module_I),$$
which is isomorphic to $\module_I$ as a left
$B$-module.
We thus obtain that $e_I A_D e$
is isomorphic to $\module_I$ as a left $B$-module.
It follows that $T_D$ is isomorphic to
$A_De$ as a left $B$-module.
\end{remark}

\begin{remark}
We can also obtain that the algebra $A_D$ is a cancellation algebra.
To see this, we must show that if
$\alpha p=\alpha q$ (or $p\alpha=q\alpha$) in $A_D$ for paths $p,q$ in
$Q(D)$ and an arrow $\alpha$ in $Q_1(D)$, then $p=q$.
If $\alpha p=\alpha q$ then let $F$ be a face whose boundary contains $\alpha$.
Let $r$ be a path be such that
$\bdry F=r\alpha$.
We get $r\alpha p=r\alpha q$, so $up=uq$.
Hence it is enough to show that $up=uq$ (or $pu=qu$) implies $p=q$. We focus on the case $up=uq$; the other case is similar.

Note that we can assume that $p,q$ start at the same vertex, say $I$, and end at the same vertex, say $J$. By Corollary~\ref{c:describepaths}, there are nonnegative integers $N_p,N_q$
such that $p=u^{N_p}p_{JI}$ and $q=u^{N_q}p_{JI}$.
Since $up=uq$, we have $u^{N_p+1}p_{JI}=u^{N_q+1}p_{JI}$.
The elements on each side of this equation
are non-zero in $A_D$ by Remark~\ref{r:commutation}(a). 
Comparing their weights,
we see that $N_p=N_q$ and hence $p=q=u^N p_{JI}$ as required.
\end{remark}

\section{Completion} 
\label{s:completion} 

The definition of the Jacobian algebra associated to a quiver with potential involves
taking a quotient of the completed path algebra (completed with respect to the arrow ideal) by the closure of the ideal generated by
the relations determined by the potential; see~\cite{dwz08}. If the potential lies in the
path algebra, this coincides with the completion of the quotient of the path algebra by
the relations. Thus is is natural to consider the completion $\widehat{A}$ of the total
algebra with respect to the arrow ideal. We also want to relate our results to those in~\cite{jks}. So, in this section, we obtain analogues of
Theorem~\ref{t:isomorphism} and Corollary~\ref{c:obtainJKS} for the completed total algebra.

\begin{lemma} \label{l:idealscontained}
Let $m_A,m_B$ denote the arrow ideals of $A_D$
and $B$, respectively.
Then there is a nonnegative integer $N_A$ such that
$m_A^{N_A}\subseteq (u) \subseteq m_A$
and a nonnegative integer $N_B$ such that
$m_B^{N_B}\subseteq (t)\subseteq m_B$.
\end{lemma}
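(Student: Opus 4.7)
The easy inclusions $(u) \subseteq m_A$ and $(t) \subseteq m_B$ are immediate: $t = \sum_i x_i y_i$ is a sum of length-$2$ cycles, while $u = \sum_I u_I$ is a sum of products of arrows around the face boundaries of $Q(D)$, each of length at least $2$ by Definition~\ref{d:dimermodel}(a). For the nontrivial inclusions $m_A^{N_A} \subseteq (u)$ and $m_B^{N_B} \subseteq (t)$, the plan is to show that the quotients $A_D/(u)$ and $B/(t)$ are finite-dimensional over $\F$. In any quotient of a path algebra by relations lying in the arrow ideal, the vertex idempotents remain orthogonal primitive and the image of the arrow ideal is contained in the Jacobson radical; this radical is nilpotent in a finite-dimensional algebra, so some power of $m_A$ (respectively $m_B$) will then lie in $(u)$ (respectively $(t)$).

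For $B/(t)$, I plan to argue directly from the defining relations. Setting $t = 0$ forces $xy = yx = 0$, and combining this with $x^k = y^{n-k}$ gives $x^{k+1} = x \cdot y^{n-k} = 0$ and similarly $y^{n-k+1} = 0$. Consequently, at each starting vertex $i$ the only surviving path classes are $e_i$ together with $x, x^2, \ldots, x^k$ and $y, y^2, \ldots, y^{n-k}$, so $B/(t)$ is finite-dimensional.

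For $A_D/(u)$, the plan is to transfer the question via the isomorphism $g \colon A_D \to \End_B(T_D)$ of Theorem~\ref{t:isomorphism}. The key point is that $g(u) = t$: each loop $u_I$ has weight $C_0$ by Corollary~\ref{c:cycleweight}, so $g(u_I)$ is a monomial endomorphism of $\module_I$ of degree $C_0$, and by Lemma~\ref{l:homLILJ} the unique such endomorphism is multiplication by $t$. Thus $g$ descends to an isomorphism $A_D/(u) \cong \End_B(T_D)/(t)$. By Lemma~\ref{l:homLILJ} again, $\End_B(T_D)$ is a free $\F[t]$-module of rank $|Q_0(D)|^2$, so $\End_B(T_D)/(t)$ is finite-dimensional, and the argument of the first paragraph concludes.

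The main obstacle, such as it is, is simply ensuring that the image of the arrow ideal really does lie in the Jacobson radical of each finite-dimensional quotient. This holds because the relations generating $(u)$ and $(t)$ sit inside the arrow ideal, so the quotient of $A_D/(u)$ (respectively $B/(t)$) by its arrow ideal remains the semisimple algebra $\F^{Q_0(D)}$ (respectively $\F^{C_0}$); hence the arrow ideal coincides with the radical and is nilpotent.
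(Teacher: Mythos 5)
Your strategy is genuinely different from the paper's. The paper bounds the length of any insincere path directly from Corollary~\ref{c:describepaths} (a path's length is at most the total size of its weight, and all insincere paths from $I$ to $J$ share the weight of $p_{JI}$), so every sufficiently long path is sincere and hence equals $u^{s}p_{JI}$ with $s\geq 1$; for $B$ it simply manipulates the relations to show $x^{k+1},y^{n-k+1}\in(t)$. Your idea of passing through $g\colon A_D\to\End_B(T_D)$, using $g(u)=t$ and the freeness of $\End_B(T_D)$ over $\F[t]$ to conclude that $A_D/(u)$ is finite-dimensional, is legitimate (Theorem~\ref{t:isomorphism} is established before this lemma, so there is no circularity), and your computation in $B/(t)$ coincides with the paper's. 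The problem is the concluding step.

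The principle ``the quotient by the arrow ideal is semisimple, hence the arrow ideal coincides with the radical and is nilpotent'' is false. Semisimplicity of $\Lambda/\bar m$ only yields $\operatorname{rad}\Lambda\subseteq\bar m$, not the reverse inclusion, and finite-dimensionality of $\Lambda$ does not force $\bar m$ to be nilpotent: $\F[x]/(x^3-x^2)$ is a finite-dimensional quotient of a path algebra by a relation in the square of the arrow ideal, its quotient by the arrow ideal is $\F$, yet $\bar x^2$ is a nonzero idempotent, so $(\bar x)$ is not nilpotent (a loop-free version: $a\colon 1\to 2$, $b\colon 2\to 1$ with relation $(ba)^2=ba$). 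What you are implicitly assuming is exactly the admissibility $m_A^{N}\subseteq (u)$ being proved. The gap is repairable within your framework. For $B$, your explicit computation already shows every path of length at least $\max(k+1,n-k+1)$ vanishes in $B/(t)$, which is nilpotence of $\bar m_B$ directly. For $A_D$, invoke the $\mathbb{N}C_0$-grading of Remark~\ref{r:ADgraded}: $u$ is homogeneous of degree $C_0$, so $A_D/(u)$ is graded; every arrow has nonzero weight, so $\bar m_A^{N}$ is concentrated in total degrees $\geq N$; a finite-dimensional positively graded algebra vanishes in all sufficiently large degrees, so $\bar m_A^{N}=0$ for $N$ large. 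Equivalently, on the endomorphism side, a product of $N$ generators $\embed_{JI}$ is a monomial morphism of total degree at least $N$, and any monomial morphism $\module_I\to\module_J$ of total degree exceeding that of $\embed_{JI}$ is $t^{m}\embed_{JI}$ with $m\geq 1$, hence lies in $(t)$.
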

\begin{proof}
By Corollary~\ref{c:describepaths}, any
insincere path $p$ from $I$ to $J$ in $Q(D)$ must be equal (in $A_D$) to $p_{JI}$.
Hence, in particular, its weight must be equal to
the weight of $p_{JI}$.
The length of $p$ is less than or equal to the
sum of the entries in its weight, so it is
bounded. Allowing $I$ and $J$ to vary, we see
that there is a positive integer $N_A$ such that any insincere path in $Q(D)$ has length
at most $N_A-1$.

Hence any path $p$ of length at least $N_A$ must be sincere and so, by Corollary~\ref{c:describepaths},
we have $p=u^s p_{JI}$ in $A_D$ for some positive integer $s$. This proves the first part, since clearly also $(u) \subseteq
m_A$.

For the second part, note that, in $B$,
$x^{k+1}=y^{n-k}x$ and $y^{n-k+1}=x^ky$, so any path in the quiver of $B$ with at least
$N_B:=\max(k+1,n-k+1)$ steps is equal in $B$ to an element of $(t)$.
\end{proof}

Let $\widehat{A}$ and $\widehat{B}$ be the completions
of $A$ and $B$ with respect to $(u)$ and $(t)$ respectively. By Lemma~\ref{l:idealscontained}, these
completions are isomorphic to the completions with
respect to $m_A$ and $m_B$, respectively.
Similarly, we denote the completion
of a $B$-module $M$ with respect to $(t)$
by $\widehat{M}$; this completion is isomorphic to the completion with respect to $m_B$.

Recall that, as we observed in the introduction,
$\widehat{T}_D$ is a cluster-tilting object in the 
category of Cohen-Macaulay $\widehat{B}$-modules, by~\cite[Rk.~5.5]{jks} and~\cite[Cor.~1]{scott06} 

\begin{theorem} \label{t:completedisomorphism}
Let $D$ be a Postnikov diagram.
Then the isomorphism $g$ in Theorem~\ref{t:isomorphism} induces an isomorphism $\widehat{\embed} \colon \widehat{A} \to \End_{\widehat{B}}(\widehat{T}_D)$.
Let $e$ be the sum in $\widehat{A}_D$ of the idempotents $e_I$ for $I$ a boundary vertex in $D$.
Then $e\widehat{A}_De\cong \widehat{B}^{\opp}$.
\end{theorem}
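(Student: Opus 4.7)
The plan is to transport the uncompleted isomorphism $g\colon A_D\to\End_B(T_D)$ of Theorem~\ref{t:isomorphism} through the two completions, and then deduce the boundary-algebra statement by a formal argument.

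First I would record the key compatibility $g(u)=t$, noted in the introduction and verifiable directly: each summand $u_I$ of $u$ is a minimal loop at $I$ of weight $C_0$ by Corollary~\ref{c:cycleweight}, and so by Corollary~\ref{c:weightequalsdegree} its image is the unique monomial endomorphism of $\M_I$ of degree $C_0$, which is precisely multiplication by $t$. Combined with Lemma~\ref{l:idealscontained}, which says that the $(u)$-adic and $m_A$-adic topologies on $A_D$ coincide (and on the other side that the $(t)$-adic and $m_B$-adic topologies on $B$, and hence on $\End_B(T_D)$, coincide), the isomorphism $g$ is automatically a homeomorphism for these topologies. It therefore extends uniquely to an isomorphism from $\widehat{A}$ onto the $(t)$-adic completion of $\End_B(T_D)$.

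Next I would identify this completion with $\End_{\widehat{B}}(\widehat{T}_D)$. Since each $\M_I$ is free of rank one over the central subring $Z=\F[t]\subset B$, the module $T_D$ is a finitely generated free $Z$-module, so $\End_B(T_D)$ is a finitely generated $Z$-submodule of $\End_Z(T_D)$. The standard flat base change argument then gives
\begin{equation*}
\End_B(T_D)\otimes_Z\widehat{Z}\;\xrightarrow{\ \sim\ }\;\End_{\widehat{B}}(T_D\otimes_Z\widehat{Z})\;=\;\End_{\widehat{B}}(\widehat{T}_D),
\end{equation*}
where the left-hand side is the $(t)$-adic completion of $\End_B(T_D)$ because that module is finitely generated over $Z$. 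Composing with the extension of $g$ produces the desired isomorphism $\widehat{g}\colon\widehat{A}\to\End_{\widehat{B}}(\widehat{T}_D)$.

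For the second statement, the same finite-generation input is the key: via $g$, the algebra $A_D$ is module-finite over the central subring $\F[u]\cong Z$, so $\widehat{A}=A_D\otimes_{\F[u]}\F[[u]]$. Since tensor product commutes with multiplication by the idempotent $e$ on both sides,
\begin{equation*}
e\widehat{A}\,e\;=\;(eA_De)\otimes_{\F[u]}\F[[u]]\;\cong\;B^{\opp}\otimes_{\F[t]}\F[[t]]\;=\;\widehat{B}^{\opp},
\end{equation*}
where the middle isomorphism is Corollary~\ref{c:obtainJKS} transported along $u\leftrightarrow t$. The main technical point is the commutation $\widehat{\End_B(M)}\cong\End_{\widehat{B}}(\widehat{M})$ for $M=T_D$: it is a standard consequence of $\widehat{Z}$ being flat over $Z$ together with $T_D$ being finitely presented over $B$, but one has to keep track of the $B$- and $Z$-structures simultaneously. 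Once that is in place, the remaining manipulations are purely formal.
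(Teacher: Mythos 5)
Your proposal is correct and follows essentially the same route as the paper: both identify $\widehat{A}$ with $A\otimes_{\F[u]}\F[[u]]$ using finite generation over the central polynomial ring, use flat base change to commute $\End$ with completion (the paper's equation for finitely presented modules over the Noetherian ring $B$), and then deduce the boundary statement formally from the uncompleted isomorphism. The only cosmetic difference is that for the last step the paper reruns the argument of Corollary~\ref{c:obtainJKS} with the completed projectives, whereas you tensor the identity $eA_De\cong B^{\opp}$ with $\F[[t]]$; both are immediate given the first part.
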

\begin{proof}
By Lemma~\ref{l:homLILJ} and Theorem~\ref{t:isomorphism}, $A$ is finitely generated as a ${\F}[u]$-module and by~\cite[Cor. 3.4]{jks}, $B$ is finitely generated as a ${\F}[t]$-module.
Since polynomial rings are Noetherian,
the natural maps induce morphisms
of $\F[[u]]$ and $\F[[t]]$-modules as follows:
\begin{equation}
\label{e:completiontensor}
\begin{aligned}
\widehat{A} &\cong A \otimes_{\F[u]} \F[[u]]; \\
\widehat{B} &\cong B \otimes_{\F[t]} \F[[t]]; \\
\widehat{\module}_I &\cong \module_I \otimes_{\F[t]} \F[[t]].
\end{aligned}
\end{equation}
Since $u$ is central
in $A$ the completion of $A$ as a $\mathbb{C}[u]$-module with respect to the ideal $(u)$ in
$\mathbb{C}[u]$ coincides with the completion
of $A$ with respect to the ideal $(u)$ in $A$,
and the first isomorphism above is an isomorphism
of algebras. Similarly for $B$.

Since $\F[[t]]$ is a flat $\F[t]$-module we have, using a base change argument analogous to~\cite[Thm.\  7.11]{matsumura86}, that for any finitely presented $B$-modules $M,N$,
\begin{equation}
\label{e:changeofbasis}
\Hom_B(M,N)\otimes_{\F[t]} \F[[t]] =
\Hom_{B\otimes_{\F[t]} \F[[t]]} (M\otimes_{\F[t]} \F[[t]], N\otimes_{\F[t]} \F[[t]]).
\end{equation}
As $B$ is Noetherian, any finitely generated
$B$-module is finitely presented.
By~\eqref{e:completiontensor}
and~\eqref{e:changeofbasis}, we obtain $\widehat{\embed}$ as the composition:
\begin{align*}
\begin{split}
\widehat{A} &\cong A\otimes_{\F[u]} \F[[u]] \\
&\cong \End_B(T)\otimes_{\F[t]} \F[[t]] \\
&\cong \End_{\widehat{B}}(\widehat{T}),
\end{split}
\end{align*}
where the second isomorphism is induced
by $\embed$. The proof of the second part then goes through in the same way as for Corollary~\ref{c:obtainJKS}.
\end{proof}

\section{Geometric exchange} 
\label{s:exchange}

By Corollary~\ref{c:obtainJKS},
the algebra $eA_De$ does not depend on the choice of $(k,n)$-Postnikov diagram $D$. In this section, we give
an alternative proof of this fact by showing
directly that $eA_De$ is invariant under the
untwisting and twisting moves and
the boundary untwisting and twisting moves
(see Figure~\ref{f:twisting})
and the geometric exchange move (see Figure~\ref{f:geomexquiver}).

The following lemma shows that $A_D$ is invariant under equivalence of Postnikov diagrams.

\begin{lemma} \label{l:twistinginvariance}
The algebra $A_D$ is invariant (up to 
isomorphism) under the untwisting and 
twisting moves and the boundary untwisting
and twisting moves (see Figure~\ref{f:twisting}).
\end{lemma}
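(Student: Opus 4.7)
The plan is to handle each of the four moves as a local change in the dimer model $Q(D)$, constructing in each case an explicit isomorphism between the ``before'' and ``after'' dimer algebras. Since twisting is the inverse of untwisting, it suffices to consider the two untwisting moves (the reflected versions follow by the same argument). In each case, the key observation is that the two arrows removed by the move can, via the dimer relation coming from the ``lens'' face that disappears, be expressed as paths in the remaining arrows, so that the before-algebra is obtained from the after-algebra by adjoining these two arrows subject to exactly those defining equations.

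For the interior untwisting move, the lens is an oriented bigon bounded by two internal arrows $\alpha_Q,\alpha_P$ between two alternating vertices $L_1,L_2$. Locally, $Q(D)$ has three faces: the lens face $F_L$ in (say) $Q_2^+$ with $\bdry F_L=\alpha_Q\alpha_P$, together with the two ``wedge'' faces $F_\ell,F_r\in Q_2^-$ adjacent to the lens, whose boundaries take the form $\bdry F_\ell=\alpha_Q\ell'$ and $\bdry F_r=\alpha_P r'$, where $\ell'$ and $r'$ are paths in the complement of the local picture. The dimer relations at $\alpha_Q$ and $\alpha_P$ then read $\alpha_P=\ell'$ and $\alpha_Q=r'$. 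After the move, these three faces merge into a single face $F_M\in Q_2^-$ with $\bdry F_M=r'\ell'$, and $\alpha_Q,\alpha_P$ are removed. The required isomorphism $\phi\colon A_D\to A_{D'}$ will be defined by $\alpha_Q\mapsto r'$, $\alpha_P\mapsto\ell'$, and as the identity on all remaining arrows; its inverse is the identity on common arrows. The verification reduces to showing that, for each internal arrow $\beta$ appearing in $\ell'$ or $r'$, the substitution $\alpha_Q\mapsto r'$, $\alpha_P\mapsto\ell'$ transforms the $F_\ell$- or $F_r$-relation for $\beta$ in $A_D$ into its $F_M$-relation in $A_{D'}$, and vice versa; relations at arrows entirely outside the local picture are left untouched.

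For the boundary untwisting move, one of the two crossings is a boundary vertex, so $\alpha_Q$ becomes a boundary arrow (and contributes no relation), while only one wedge $F_r$ exists locally. The internal arrow $\alpha_P$ then yields the sole local relation $\alpha_Q=r'$. After the move, $\alpha_Q,\alpha_P$ are replaced by a single new boundary arrow $\alpha_M$ at the new boundary vertex, and $F_L,F_r$ merge into a face $F_M\in Q_2^-$ with $\bdry F_M=\alpha_M r'$. The corresponding isomorphism is defined by $\alpha_P\mapsto\alpha_M$, $\alpha_Q\mapsto r'$, and the identity elsewhere, with the analogous verification.

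The main obstacle is local-combinatorial rather than algebraic: one must correctly identify, at each of the two crossings, which pair of faces contains the two arrows and determine their $Q_2^\pm$-types, so that the ``before'' substitutions yield exactly the ``after'' relations coming from $F_M$. By Corollary~\ref{c:cycleweight}, the weights satisfy $w_{\alpha_Q}=w(r')$ and $w_{\alpha_P}=w(\ell')$, so $\phi$ automatically preserves the $\mathbb{N}C_0$-grading. A minor case to treat carefully is the degenerate possibility that the two wedge regions coincide globally (so $F_\ell=F_r$ as a single face in $Q(D)$, whose boundary contains both $\alpha_Q$ and $\alpha_P$); here the same substitution strategy applies, but the combined cycle that results in $Q(D')$ must be tracked with care.
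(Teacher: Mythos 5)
Your proposal is correct and is essentially the paper's own argument: reduce to the two untwisting moves, read off from the lens face the relations $\alpha_P=\ell'$ and $\alpha_Q=r'$, use them as substitutions defining the isomorphism, and verify that the relations at arrows of $\ell'$ and $r'$ turn into exactly the relations of the merged face (the boundary case being the same, with the surviving arrow becoming a relation-free boundary arrow). One small correction: the degenerate case $F_\ell=F_r$ that you flag cannot occur in a Postnikov diagram --- it would force one of $L_1,L_2$ to be enclosed by $\overline{F_L}\cup\overline{F_\ell}$ and hence bounded only by the two lens strands, contradicting (b1)/(b2) --- which is just as well, since if it did occur the local relations would take the form $\alpha_P=s\,\alpha_P\,t$ and the substitution would no longer eliminate $\alpha_P$ and $\alpha_Q$.
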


\begin{proof}
It is enough to consider the untwisting moves in Figure~\ref{f:twisting}. Invariance
under the twisting moves follows, and
the arguments for the moves obtained by reflecting those in Figure~\ref{f:twisting}
in a horizontal line of symmetry are similar.
Suppose first that $D'$ is obtained from $D$ 
by applying a untwisting move, as in the
top diagram in Figure~\ref{f:twisting}.
Then, locally, the quivers $Q(D)$ and $Q(D')$
are as shown in
the top diagram of Figure~\ref{f:twistquiver} 
(with part of $D$ and $Q(D)$ shown
on the left).
In general we will denote the path in $Q(D')$ corresponding to a path
$\pi$ in $Q(D)$ by $\pi'$; this is well defined for any path not passing
along the arrows $\alpha$ or $\beta$.

Let $F_1$ be the face in $Q_2(D)$ on the left hand side of
$\alpha$, $F_2$ the face with boundary $\beta\alpha$ and $F_3$ the
face on the right hand side of $\beta$. Then $\bdry F_1=q\alpha$
for some path $q$ and $\bdry F_2 =p\beta$ for some path $\beta$.
The corresponding part of $Q(D')$ has only one face, $F'$, with
$\bdry F'=q'p'$. The other faces of $Q(D)$ and $Q(D')$
are in a natural one-to-one correspondence.

Recall that each arrow of $Q(D)$ determines a defining relation
$$
p^+_{\alpha}=p^-_{\alpha},
$$
of $A_D$ (see equation~\eqref{e:definingrelations}).
The defining relation corresponding to the arrow
$\alpha$ (respectively, $\beta$) is $q=\beta$ (respectively, $p=\alpha$).
If $\gamma$ is an arrow in the path $p$, so that $p=p_2\gamma p_1$ for
some paths $p_1$ and $p_2$, then $p^-_{\gamma}$ is the path $p_1\beta p_2$.
Similarly, if $\delta$ is an arrow in the path $q$,
so that $q=q_2\delta q_1$ for some paths $q_1,q_2$,
then $q^-_{\delta}$ is the path $q_1\alpha q_2$.

Making the substitutions $\alpha=p$ and $\beta=q$,
we obtain $p^-_{\gamma}=p_1qp_2$ and $q^-_{\delta}=q_1pq_2$ in $A_D$.
Since we have, in $\F Q(D')$, that
$(p^-_{\gamma'})'=p'_1q'p'_2$ and $(q^-_{\delta'})=q'_1p'q'_2$,
it follows that $A_D$ is isomorphic to $A_{D'}$, since the
defining relations for $A_{D'}$ and $A_D$ correspond precisely
away from the local area affected by the untwisting move.

For the boundary \emph{untwisting move} case (the lower pair of diagrams in
Figure~\ref{f:twisting}), the corresponding change in the quiver is displayed
in the lower pair of diagrams in Figure~\ref{f:twistquiver}.
The defining relation of $A_D$ corresponding to the arrow $\beta$ is
$p=\alpha$. If $\gamma$ is an arrow in $p$, so that $p=p_2\gamma p_1$, then
$p^-_{\gamma}=p_1\beta p_2$, corresponding to $(p^-_{\gamma'})'=p'_1\beta'p'_2$
in $\F Q(D')$. Noting that $\beta'$ is a boundary arrow in $Q(D')$ (so has
no corresponding defining relation), we see that the substitution
$p=\alpha$ gives an isomorphism between $A_D$ and $A_{D'}$ in this case.
\end{proof}

Note that, by Lemma~\ref{l:twistinginvariance}, we see that $A_D$ is isomorphic
to $\End_B(T_D)$ for any Postnikov diagram $D$; see Theorem~\ref{t:isomorphism}. Since the isomorphism in Lemma~\ref{l:twistinginvariance} sends the arrow ideal
to the arrow ideal, it follows that
$\widehat{A}_D\cong \End_{\widehat{B}}(\widehat{T}_D)$ also.

\begin{figure}
\[
\begin{tikzpicture}[scale=0.6
,baseline=(bb.base),
  strand/.style={black,dashed,thick},
  path/.style={black,dotted}
   quivarrow/.style={black, -latex, thick},
  doublearrow/.style={black, latex-latex, very thick}]
\newcommand{\strarrow}{\arrow{angle 60}}
\newcommand{\quiverarrow}{\arrow{black, -latex, thick}}
\newcommand{\dotrad}{0.1cm} 
\newcommand{\bdrydotrad}{{0.8*\dotrad}} 
\path (0,0) node (bb) {}; 


\draw [strand] plot[smooth]
coordinates {(-3.14,1.0) (-2.36,0.71) (-1.57,0) (-0.79,-0.71) (0,-1) (0.79,-0.71) (1.57,0) (2.36, 0.71) (3.14,1)}
[ postaction=decorate, decoration={markings,
  mark= at position 0.12 with \strarrow, mark= at position 0.5 with \strarrow, mark= at position 0.88 with \strarrow}];


\draw [strand] plot[smooth]
coordinates {(3.14,-1.0) (2.36,-0.71) (1.57,0) (0.79,0.71) (0,1) (-0.79,0.71)
(-1.57,0) (-2.36, -0.71) (-3.14,-1)}
[ postaction=decorate, decoration={markings,
  mark= at position 0.15 with \strarrow, mark= at position 0.5 with \strarrow,
mark= at position 0.91 with \strarrow}];


\draw (-3.14,0) node {$F_1$};
\draw (0,0) node {$F_2$};
\draw (3.14,0) node {$F_3$};


\draw (0,-2.5) node {$I$};
\draw (0,2.5) node {$J$};


\draw [black,thick] plot[smooth]
coordinates {(0.25,-2.25) (1.57,0) (0.25,2.25)}
[postaction=decorate, decoration={markings,
  mark= at position 0.995 with \arrow{latex}}];

\draw [black,thick] plot[smooth]
coordinates {(-0.25,2.25) (-1.57,0) (-0.25,-2.25)}
[postaction=decorate, decoration={markings,
  mark= at position 0.995 with \arrow{latex}}];


\draw (-1.1,1.7) node {$\alpha$};
\draw (1.15,1.7) node {$\beta$};


\draw [dash pattern=on 3pt off 1pt on \the\pgflinewidth off 1pt] plot[smooth]
coordinates {(0.35,2.5) (4,1.7) (4.5,0) (4,-1.7) (0.35, -2.5)}
[postaction=decorate, decoration={markings,
mark= at position 0.5 with \strarrow}];

\draw [dash pattern=on 3pt off 1pt on \the\pgflinewidth off 1pt] plot[smooth]
coordinates {(-0.35,-2.5) (-4,-1.7) (-4.5,0) (-4,1.7) (-0.35, 2.5)}
[postaction=decorate, decoration={markings,
mark= at position 0.5 with \strarrow}];


\draw (4.3,1.9) node {$p$};
\draw (-4.3,1.9) node {$q$};


\begin{scope}[shift={(12.8,0)}]

\draw [strand] plot
coordinates {(-3.14,1.0) (3.14,1.0)}
[ postaction=decorate, decoration={markings,
  mark= at position 0.12 with \strarrow, mark= at position 0.5 with \strarrow, mark= at position 0.88 with \strarrow}];


\draw [strand] plot
coordinates {(3.14,-1.0) (-3.14,-1.0)}
[ postaction=decorate, decoration={markings,
  mark= at position 0.15 with \strarrow, mark= at position 0.5 with \strarrow, mark= at position 0.91 with \strarrow}];


\draw (0,0) node {$F'$};


\draw (0,-2.5) node {$I$};
\draw (0,2.5) node {$J$};


\draw [dash pattern=on 3pt off 1pt on \the\pgflinewidth off 1pt] plot[smooth]
coordinates {(0.35,2.5) (4,1.7) (4.5,0) (4,-1.7) (0.35, -2.5)}
[postaction=decorate, decoration={markings,
mark= at position 0.5 with \strarrow}];

\draw [dash pattern=on 3pt off 1pt on \the\pgflinewidth off 1pt] plot[smooth]
coordinates {(-0.35,-2.5) (-4,-1.7) (-4.5,0) (-4,1.7) (-0.35, 2.5)}
[postaction=decorate, decoration={markings,
mark= at position 0.5 with \strarrow}];


\draw (4.3,2) node {$p'$};
\draw (-4.3,1.9) node {$q'$};
\end{scope}


\draw [doublearrow] (6,0) -- (7,0);


\begin{scope}[shift={(-1,-7)}]


\draw (-1.57,0) circle(\bdrydotrad) [fill=gray];


\draw [strand] plot[smooth]
coordinates {(-1.57,0) (-0.79,0.71) (0,1) (0.79,0.71) (1.57,0) (2.36, -0.71) (3.14,-1)}
[ postaction=decorate, decoration={markings,
  mark= at position 0.33 with \strarrow, mark= at position 0.83 with \strarrow}];


\draw [strand] plot[smooth]
coordinates {(3.14,1) (2.36,0.71) (1.57,0) (0.79,-0.71) (0,-1) (-0.79,-0.71) (-1.57,0)}
[ postaction=decorate, decoration={markings,
  mark= at position 0.2 with \strarrow, mark= at position 0.7 with \strarrow}];


\draw (0,-2.5) node {$I$};
\draw (0,2.5) node {$J$};


\draw [black,thick] plot[smooth]
coordinates {(0.25,2.25) (1.57,0) (0.25,-2.25)}
[postaction=decorate, decoration={markings,
  mark= at position 0.995 with \arrow{latex}}];

\draw [black,thick] plot[smooth]
coordinates {(-0.25,-2.25) (-1.57,0) (-0.25,2.25)}
[postaction=decorate, decoration={markings,
  mark= at position 0.995 with \arrow{latex}}];


\draw (-1.1,1.7) node {$\alpha$};
\draw (1.15,1.7) node {$\beta$};


\draw [dash pattern=on 3pt off 1pt on \the\pgflinewidth off 1pt] plot[smooth]
coordinates {(0.35,-2.5) (4,-1.7) (4.5,0) (4,1.7)
(0.35, 2.5)}[ postaction=decorate, decoration={markings,
mark= at position 0.49 with \strarrow}];


\draw (4.3,1.9) node {$p$};

\begin{scope}[shift={(12.5,0)}]


\draw [strand] plot[smooth]
coordinates {(-1.57,0) (-0.79,-0.71) (0,-1) (1.57,-1) (3.14,-1)}
[ postaction=decorate, decoration={markings,
  mark= at position 0.54 with \strarrow}];


\draw [strand] plot[smooth]
coordinates {(3.14,1) (1.57,1) (0,1) (-0.79,0.71) (-1.57,0)}
[ postaction=decorate, decoration={markings,
  mark= at position 0.5 with \strarrow}];


\draw (-1.57,0) circle(\bdrydotrad) [fill=gray];


\draw (0,-2.5) node {$I$};
\draw (0,2.5) node {$J$};


\draw [black,thick] plot[smooth]
coordinates {(-0.25,2.25) (-1.57,0) (-0.25,-2.25)}
[postaction=decorate, decoration={markings,
  mark= at position 0.995 with \arrow{latex}}];


\draw (-1.1,1.7) node {$\beta'$};


\draw [dash pattern=on 3pt off 1pt on \the\pgflinewidth off 1pt] plot[smooth]
coordinates {(0.35,-2.5) (4,-1.7) (4.5,0) (4,1.7)
(0.35, 2.5)}[ postaction=decorate, decoration={markings,
mark= at position 0.49 with \strarrow}];


\draw (4.3,2) node {$p'$};

\end{scope}


\draw [doublearrow] (7,0) -- (8,0);

\end{scope}

\end{tikzpicture}
\]
\caption{The effect of an untwisting move
or boundary untwisting move on the quiver of a Postnikov diagram}
\label{f:twistquiver}
\end{figure}
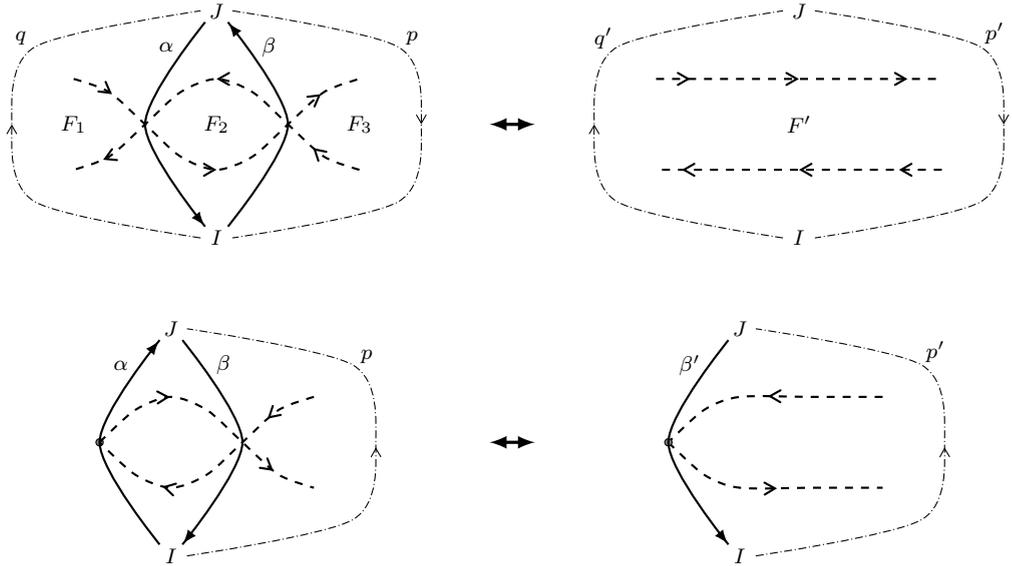

There is a notion of \emph{geometric exchange} on a Postnikov diagram, i.e.\ applying the local rule shown in
Figure~\ref{f:geomexquiver}~\cite[\S14]{postnikov}
(see also~\cite[\S3]{scott06}); we also illustrate the change in the quiver.
The effect of this transformation on the plabic graph (respectively, quiver) is sometimes referred to as \emph{urban renewal}. Urban renewal is discussed in~\cite{ciucu98,KPW00}, whose authors refer to unpublished work of G. Kuperberg. See also the discussion in~\cite[1.7]{GK}, where it is referred to as a \emph{spider move}. The effect on the quiver is \emph{Seiberg duality}~\cite[\S6]{fhkvw}; see also Remark~\ref{r:mutation} below.
Note that the $k$-subsets labelling the vertices remain unchanged under a geometric exchange except for the central region, which gets a new
label.

\begin{proposition} \label{p:geomexisomorphism}
Let $D$ and $D'$ be Postnikov diagrams and suppose that
$D'$ is obtained from $D$ by applying the geometric exchange move
at an internal vertex labelled by a $k$-subset $I$,
as in Figure~\ref{f:geomexquiver}. Let $I'$ be the label of the
corresponding vertex in $D'$. Let
$$e^I=\sum_{J\in D,\ J\not=I}e_J,
\quad\quad e^{I'}=\sum_{J\in D',\ J\not=I'}e_J.$$
Then $e^IA_De^I\cong e^{I'}A_{D'}e^{I'}$.
\end{proposition}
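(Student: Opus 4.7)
The plan is to follow the substitution-based strategy of Lemma~\ref{l:twistinginvariance}, adapted to the geometric exchange move. The move is local: it replaces the central four-sided alternating region's label $I$ by a new $k$-subset $I'$, reverses the four arrows of $Q(D)$ incident to $I$, and modifies the set of ``diagonal'' arrows between the four neighbouring vertices $J_1,\ldots,J_4$ according to the rule of Figure~\ref{f:geomexquiver}. All other vertices, arrows, and faces of $Q(D)$ remain unchanged, so there are canonical identifications of the vertex sets $Q_0(D)\setminus\{I\}$ and $Q_0(D')\setminus\{I'\}$, as well as of most arrows and face relations outside the exchange region.

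First I would list the (up to four) faces of $Q(D)$ whose boundaries contain $I$, together with those of $Q(D')$ containing $I'$, and record their defining relations. Each such face of $Q(D)$ has a boundary of the shape $\beta\alpha q$, where $\alpha,\beta$ are consecutive arrows at $I$ and $q$ is a path between neighbouring vertices avoiding $I$; writing down the relations $p_\alpha^+ = p_\alpha^-$ (and analogously for $\beta$) yields enough identities to show that every length-two composite through $I$ is, inside $e^I A_D e^I$, expressible in terms of paths avoiding $I$. Symmetrically, the local relations in $A_{D'}$ let one express the length-two composites through $I'$ in terms of paths in $Q(D')$ avoiding $I'$.

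I would next define a homomorphism $\phi\colon e^I A_D e^I \to e^{I'} A_{D'} e^{I'}$ on generators by sending $e_J \mapsto e_J$ for each common vertex $J$, sending each arrow of $Q(D)$ with both endpoints distinct from $I$ either to the corresponding arrow of $Q(D')$ (when it persists) or to the explicit path in $Q(D')$ provided by the local picture of the exchange, and sending each length-two composite through $I$ (viewed as an element of $e^I A_D e^I$) to the corresponding path in $Q(D')$ avoiding $I'$ --- which may be either a new arrow of $Q(D')$ introduced by the move, or a longer path forced by the dimer relations of $A_{D'}$. After checking that $\phi$ respects every defining relation of $A_D$ --- most of which are unaffected by the move, with only the relations at arrows near $I$ requiring actual verification --- the construction yields a well-defined algebra homomorphism, and by the symmetry of the exchange (applied in reverse) a two-sided inverse.

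The main obstacle will be the case analysis of local configurations at the exchange site. The quiver neighbourhood of $I$ may contain between zero and four diagonal arrows between consecutive vertices $J_i,J_{i+1}$, depending on how $D$ behaves outside the immediate local picture, and the exchange either inserts or deletes each such diagonal according to the standard Seiberg-duality rule. A careful case-by-case check will be needed to verify that the substitutions defining $\phi$ preserve every relation of $A_D$ --- both those internal to the local picture and those coming from neighbouring faces whose boundaries traverse a diagonal altered by the move.
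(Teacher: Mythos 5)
Your overall strategy---compare local presentations of the two idempotent subalgebras built from the four faces surrounding $I$, respectively $I'$---is the one the paper uses, and the map you eventually write down (sending a length-two composite through $I$ to the new arrow of $Q(D')$) is the correct one. But your first step contains a concrete error: you claim the face relations show that every length-two composite through $I$ is, \emph{inside} $e^IA_De^I$, expressible in terms of paths avoiding $I$. This is false, and the failure of this claim is precisely why the exchange is not the identity on quivers. The defining relation attached to the arrow $\alpha\colon I\to J_1$ equates the two paths $\beta p$ and $\delta s$ \emph{into} $I$; no relation of $A_D$ rewrites a composite such as $\alpha\beta$ (a path \emph{through} $I$) as a path avoiding $I$. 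These four composites $\alpha\beta,\gamma\beta,\gamma\delta,\alpha\delta$ are genuinely new, non-eliminable generators of $e^IA_De^I$---they are exactly the new arrows $\varepsilon_p,\varepsilon_q,\varepsilon_r,\varepsilon_s$ that the exchange creates in $Q(D')$. The symmetric statement you make for $D'$ is the true one: each $\varepsilon_p$ is an \emph{internal} arrow of $Q(D')$, so its own defining relation gives $\beta^*\alpha^*=p$, etc., and the composites through $I'$ really are expressible by paths avoiding $I'$. The argument is therefore intrinsically asymmetric, and an attempt to run your "symmetric" elimination on the $D$ side would fail.

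There is a second gap: to define a homomorphism out of $e^IA_De^I$ "on generators, checking relations" you need a presentation of $e^IA_De^I$ itself, not of $A_D$; verifying the relations of $A_D$ only produces a map out of $A_D$, and $e^{I'}A_{D'}e^{I'}$ receives no natural map from all of $A_D$. What is actually needed (and what the paper proves) is that $e^I\F Q(D)e^I$ is the path algebra of the quiver obtained by deleting $I$ and adjoining the four arrows $\varepsilon_\bullet$, and that $e^I\I_De^I$ is generated by the old relations away from $I$ together with the four relations at $\alpha,\beta,\gamma,\delta$ pre- and post-multiplied by single arrows at $I$ (e.g.\ $\beta p=\delta s$ yields $\varepsilon_pp=\varepsilon_ss$ and $\varepsilon_qp=\varepsilon_rs$). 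Matching this presentation against the analogous one for $e^{I'}A_{D'}e^{I'}$, after eliminating the composites through $I'$ via $\beta^*\alpha^*=p$ and its companions, completes the proof. Once set up this way, the case analysis you anticipate over the number of "diagonal" arrows between the neighbours $J_i$ is unnecessary, since everything is phrased in terms of the completing paths $p,q,r,s$ of the four faces, whatever their lengths.
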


\begin{figure}
\[
\begin{tikzpicture}[scale=0.6,baseline=(bb.base),
  strand/.style={black,dashed,thick},
   quivarrow/.style={black, -latex, thick},
  doublearrow/.style={black, latex-latex, very thick}]
\newcommand{\strarrow}{\arrow{angle 60}}
\newcommand{\dotrad}{0.1cm} 
\newcommand{\bdrydotrad}{{0.8*\dotrad}} 
\path (0,0) node (bb) {}; 


\draw (0,0) node {$I$};


\draw [quivarrow,shorten <=8pt,shorten >=5pt]
(0,0)--(0,4);

\draw [quivarrow,shorten <=8pt,shorten >=5pt]
(0,0)--(0,-4);

\draw [quivarrow,shorten <=5pt,shorten >=8pt]
(4,0)--(0,0);

\draw [quivarrow,shorten <=5pt,shorten >=8pt]
(-4,0)--(0,0);


\draw [strand] plot coordinates {(4,-2) (-2,4)}[postaction=decorate,decoration={markings,
mark= at position 0.17 with \strarrow,
mark= at position 0.5 with \strarrow,
mark= at position 0.83 with \strarrow}];

\draw [strand] plot coordinates {(-4,2) (2,-4)}[postaction=decorate,decoration={markings,
mark= at position 0.17 with \strarrow,
mark= at position 0.5 with \strarrow,
mark= at position 0.83 with \strarrow}];

\draw [strand] plot coordinates {(-4,-2) (2,4)}[postaction=decorate,decoration={markings,
mark= at position 0.17 with \strarrow,
mark= at position 0.5 with \strarrow,
mark= at position 0.83 with \strarrow}];

\draw [strand] plot coordinates {(4,2) (-2,-4)}[postaction=decorate,decoration={markings,
mark= at position 0.17 with \strarrow,
mark= at position 0.5 with \strarrow,
mark= at position 0.83 with \strarrow}];


\draw [dash pattern=on 3pt off 1pt on \the\pgflinewidth
off 1pt] plot[smooth]
coordinates {(0.2,4) (2,5) (4.5,4.5) (5,2) (4,0.2)}[ postaction=decorate, decoration={markings,
mark= at position 0.5 with \strarrow}];

\draw [dash pattern=on 3pt off 1pt on \the\pgflinewidth
off 1pt] plot[smooth]
coordinates {(-0.2,4) (-2,5) (-4.5,4.5) (-5,2) (-4,0.2)}[ postaction=decorate, decoration={markings,
mark= at position 0.5 with \strarrow}];

\draw [dash pattern=on 3pt off 1pt on \the\pgflinewidth
off 1pt] plot[smooth]
coordinates {(0.2,-4) (2,-5) (4.5,-4.5) (5,-2) (4,-0.2)}[ postaction=decorate, decoration={markings,
mark= at position 0.5 with \strarrow}];

\draw [dash pattern=on 3pt off 1pt on \the\pgflinewidth
off 1pt] plot[smooth]
coordinates {(-0.2,-4) (-2,-5) (-4.5,-4.5) (-5,-2) (-4,-0.2)}[ postaction=decorate, decoration={markings,
mark= at position 0.5 with \strarrow}];


\draw (4.9,4.7) node {$p$};
\draw (4.9,-4.7) node {$q$};
\draw (-4.9,-4.7) node {$r$};
\draw (-4.9,4.7) node {$s$};


\draw (0.25,3) node {$\alpha$};
\draw (3,0.25) node {$\beta$};
\draw (0.25,-3) node {$\gamma$};
\draw (-3,0.25) node {$\delta$};


\begin{scope}[shift={(12,0)}]

\draw (0,0) node {$I'$};


\draw [quivarrow,shorten <=5pt,shorten >=8pt]
(0,4)--(0,0);

\draw [quivarrow,shorten <=5pt,shorten >=8pt]
(0,-4)--(0,0);

\draw [quivarrow,shorten <=8pt,shorten >=5pt]
(0,0)--(4,0);

\draw [quivarrow,shorten <=8pt,shorten >=5pt]
(0,0)--(-4,0);

\draw [quivarrow,shorten <=5pt,shorten >=5pt]
(4,0)--(0,-4);

\draw [quivarrow,shorten <=5pt,shorten >=5pt]
(-4,0)--(0,-4);

\draw [quivarrow,shorten <=5pt,shorten >=5pt]
(4,0)--(0,4);

\draw [quivarrow,shorten <=5pt,shorten >=5pt]
(-4,0)--(0,4);


\draw [strand] plot[smooth] coordinates {(4,-2) (2,-2) (0,-2) (-2,0) (-2,2) (-2,4)}[postaction=decorate,decoration={markings,
mark= at position 0.1 with \strarrow,
mark= at position 0.27 with \strarrow,
mark= at position 0.5 with \strarrow,
mark= at position 0.74 with \strarrow,
mark= at position 0.92 with \strarrow
}];

\draw [strand] plot[smooth] coordinates {(-4,2) (-2,2) (0,2) (2,0) (2,-2) (2,-4)}[postaction=decorate,decoration={markings,
mark= at position 0.1 with \strarrow,
mark= at position 0.27 with \strarrow,
mark= at position 0.5 with \strarrow,
mark= at position 0.74 with \strarrow,
mark= at position 0.92 with \strarrow}];

\draw [strand] plot[smooth] coordinates {(-4,-2) (-2,-2) (0,-2) (2,0) (2,2) (2,4)}[postaction=decorate,decoration={markings,
mark= at position 0.1 with \strarrow,
mark= at position 0.27 with \strarrow,
mark= at position 0.5 with \strarrow,
mark= at position 0.74 with \strarrow,
mark= at position 0.92 with \strarrow}];

\draw [strand] plot[smooth] coordinates {(4,2) (2,2) (0,2) (-2,0) (-2,-2) (-2,-4)}[postaction=decorate,decoration={markings,
mark= at position 0.1 with \strarrow,
mark= at position 0.27 with \strarrow,
mark= at position 0.5 with \strarrow,
mark= at position 0.74 with \strarrow,
mark= at position 0.92 with \strarrow}];


\draw [dash pattern=on 3pt off 1pt on \the\pgflinewidth
off 1pt] plot[smooth]
coordinates {(0.2,4) (2,5) (4.5,4.5) (5,2) (4,0.2)}[ postaction=decorate, decoration={markings,
mark= at position 0.5 with \strarrow}];

\draw [dash pattern=on 3pt off 1pt on \the\pgflinewidth
off 1pt] plot[smooth]
coordinates {(-0.2,4) (-2,5) (-4.5,4.5) (-5,2) (-4,0.2)}[ postaction=decorate, decoration={markings,
mark= at position 0.5 with \strarrow}];

\draw [dash pattern=on 3pt off 1pt on \the\pgflinewidth
off 1pt] plot[smooth]
coordinates {(0.2,-4) (2,-5) (4.5,-4.5) (5,-2) (4,-0.2)}[ postaction=decorate, decoration={markings,
mark= at position 0.5 with \strarrow}];

\draw [dash pattern=on 3pt off 1pt on \the\pgflinewidth
off 1pt] plot[smooth]
coordinates {(-0.2,-4) (-2,-5) (-4.5,-4.5) (-5,-2) (-4,-0.2)}[ postaction=decorate, decoration={markings,
mark= at position 0.5 with \strarrow}];


\draw (4.9,4.7) node {$p$};
\draw (4.9,-4.7) node {$q$};
\draw (-4.9,-4.7) node {$r$};
\draw (-4.9,4.7) node {$s$};


\draw (0.35,2.7) node {$\alpha^*$};
\draw (2.7,0.25) node {$\beta^*$};
\draw (0.35,-2.7) node {$\gamma^*$};
\draw (-2.7,0.25) node {$\delta^*$};

\draw (3.08,1.3) node {$\varepsilon_p$};
\draw (-3.13,1.3) node {$\varepsilon_s$};
\draw (3.18,-1.3) node {$\varepsilon_q$};
\draw (-3.13,-1.3) node {$\varepsilon_r$};

\end{scope}


\draw [doublearrow] (5.5,0) -- (6.5,0);

\end{tikzpicture}
\]
\caption{A geometric exchange and the corresponding change in the quiver}
\label{f:geomexquiver}
\end{figure}
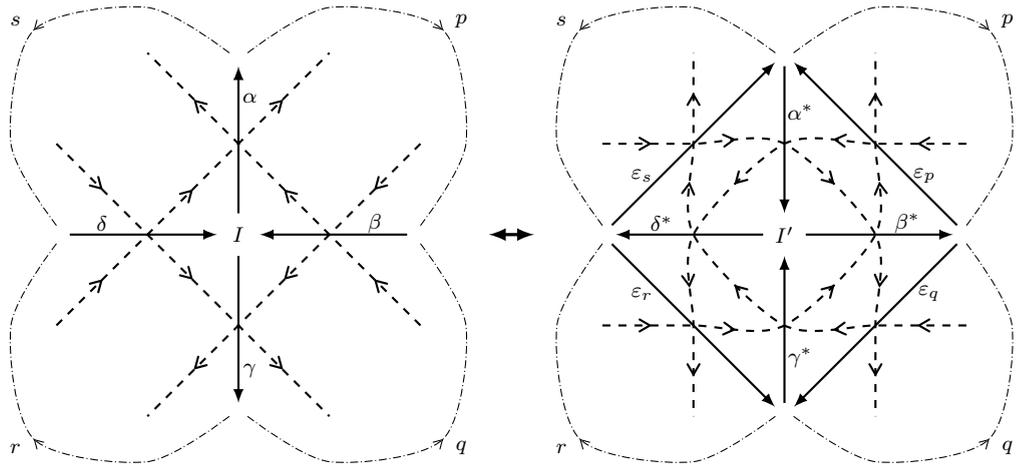

\begin{proof}
We use the labelling of arrows as in Figure~\ref{f:geomexquiver}.
Since the vertex $I$ is internal, the arrows $\alpha,\beta,\gamma,\delta$ are
not boundary arrows, so there is a face $F$ whose boundary is $p\alpha\beta$
for some path $p$. In a similar way, we let $q$ be the completion of $\gamma\beta$,
$r$ the completion of $\gamma\delta$ and $s$ the completion of $\alpha\delta$ to face
boundaries. We compute $e^IA_De^I$ as a quiver with relations.

Let $\I_D$ be the ideal of relations defining $A_D$.
Then we have
$$e^IA_De^I=e^I\left( \frac{\F Q(D)}{\I_D} \right)e^I=\frac{e^I\F Q(D)e^I}{e^I\I_D e^I}.$$
It is easy to check that $e^I\F Q(D)e^I$ is isomorphic to $\F \Gamma$, where
$\Gamma$ is obtained from $Q(D)$ by removing the vertex $I$ and all incident
arrows and adding new arrows $\varepsilon_p$, $\varepsilon_q$, $\varepsilon_r$ and $\varepsilon_s$,
corresponding to the paths $\alpha\beta$, $\gamma\beta$, $\gamma\delta$ and $\alpha\delta$ respectively. Note that
$\varepsilon_p$ goes between the same vertices that $p$ does, only in the opposite
direction; similarly for $\varepsilon_q$, $\varepsilon_r$ and $\varepsilon_s$. We shall
see that these arrows correspond to the arrows in $Q(D')$ with the same names.

The new relations generating $e^I\I_De^I$ can be taken to be the old relations
between vertices other than $I$ together with new relations coming from the old
relations between $I$ and itself or other vertices, obtained by premultiplying
or postmultiplying the old relations by an arrow.
The relations we have to consider are $\beta p=\delta s$, $p\alpha =q\gamma$,
$\beta q=\delta r$ and $s\alpha =r\gamma $, corresponding to the arrows
$\alpha$, $\beta$, $\gamma$ and $\delta$, respectively.

The relation $\beta p=\delta s$ gives $\alpha\beta p=\alpha\delta s$ and
$\gamma\beta p=\gamma\delta s$, i.e.\ $\varepsilon_p p=\varepsilon_s s$ and $\varepsilon_q p=\varepsilon_r s$.
The other relations give
$p\varepsilon_p=q\varepsilon_q$ and $p\varepsilon_s=q\varepsilon_r$,
$\varepsilon_q q=\varepsilon_r r$ and $\varepsilon_p q=\varepsilon_s r$,
and $s\varepsilon_s=r \varepsilon_r$ and $s\varepsilon_p=r\varepsilon_q$
respectively.

We next do a similar computation for $e^{I'}A_{D'}e^{I'}$. Let $p,q,r,s$
be the paths in $Q(D')$ corresponding to the paths with the same names in $Q(D)$.
Let $\I_{D'}$ be the ideal of relations defining $A_{D'}$.
Then $e^{I'}\F Q(D') e^{I'}$ is isomorphic to $\F \Gamma'$,
where $\Gamma'$ is obtained from $Q(D')$ by removing the vertex $I'$
and adding extra arrows
$\zeta_p$, $\zeta_q$, $\zeta_r$ and $\zeta_s$ corresponding to the paths
$\beta^*\alpha^*$, $\beta^*\gamma^*$, $\delta^*\gamma^*$ and $\delta^*\alpha^*$
respectively. We must consider the relations
$\varepsilon_s \delta^*=\varepsilon_p \beta^*$,
$\alpha^*\varepsilon_p=\gamma^* \varepsilon_q$,
$\varepsilon_q \beta^*=\varepsilon_r \delta^*$ and
$\gamma^* \varepsilon_r=\alpha^*\varepsilon_s$,
corresponding to the arrows
$\alpha^*$, $\beta^*$, $\gamma^*$ and $\delta^*$, respectively.

The relation $\varepsilon_s \delta^*=\varepsilon_p \beta^*$ gives
$\varepsilon_s \delta^* \alpha^*=\varepsilon_p\beta^*\alpha^*$ and
$\varepsilon_s \delta^* \gamma^*=\varepsilon_p \beta^* \gamma^*$, i.e.\
$\varepsilon_s \zeta_s = \varepsilon_p \zeta_p$ and $\varepsilon_s \zeta_r = \varepsilon_p \zeta_q$.
The other relations give
$\zeta_p \varepsilon_p=\zeta_q \varepsilon_q$ and $\zeta_s \varepsilon_p=\zeta_r \varepsilon_q$,
$\varepsilon_q\zeta_q=\varepsilon_r\zeta_r$ and $\varepsilon_q\zeta_p=\varepsilon_r\zeta_s$,
$\zeta_r\varepsilon_r=\zeta_s \varepsilon_s$ and $\zeta_q\varepsilon_r=\zeta_p \varepsilon_s$,
respectively.

Thus $e^{I'}A_{D'}e^{I'}$ is isomorphic to the quotient of $\F \Gamma'$ by the
ideal generated by the above relations and the old relations in $A_{D'}$
between vertices not equal to $I'$.

This means that in $e^{I'}A_{D'}e^{I'}$, we also have the relations
$\beta^*\alpha^*=p$, $\beta^*\gamma^*=q$,
$\delta^*\gamma^*=r$ and $\delta^*\alpha^*=s$ coming from the arrows
$\varepsilon_p$, $\varepsilon_q$, $\varepsilon_r$ and $\varepsilon_s$ respectively. Thus, we have
$\zeta_p=p$, $\zeta_q=q$, $\zeta_r=r$ and $\zeta_s=s$ in $e^{I'}A_{D'}e^{I'}$, so we
can remove the arrows $\zeta_p$, $\zeta_q$, $\zeta_r$ and $\zeta_s$ from
the quiver $\Gamma'$ and replace them with $p,q,r,s$ in the above relations.
These relations become:
$\varepsilon_s s = \varepsilon_p p$, $\varepsilon_s r = \varepsilon_p q$,
$p \varepsilon_p=q \varepsilon_q$, $s \varepsilon_p=r \varepsilon_q$,
$\varepsilon_q q=\varepsilon_r r$, $\varepsilon_q p=\varepsilon_r s$,
$r\varepsilon_r=s \varepsilon_s$ and $q\varepsilon_r=p \varepsilon_s$,
corresponding to the relations defining $e^I A_D e^I$ computed above. Since
the other defining relations in $e^I A_D e_I$ and $e^{I'} A_{D'} e^{I'}$ are the
same, we see that $e^IA_{D}e^I$ is isomorphic to $e^{I'}A_{D'}e^{I'}$ as required.
\end{proof}

We remark that an alternative proof of Proposition~\ref{p:geomexisomorphism}
can be given by using Theorem~\ref{t:isomorphism} and noting that
$e^IA_De^I\cong \End_{\B}(T_D/\widehat{\module}_I)$.

\begin{remark} \label{r:mutation}
The effect on $Q(D)$ of applying the geometric exchange is to carry out the first two
steps of Fomin-Zelevinsky quiver mutation~\cite{FZ02} at the vertex $I$, i.e.
\begin{enumerate}[(a)]
\item
For all paths of length two (with multiplicity) $J\to I\to K$, add an arrow $J\to K$.
\item
Reverse all arrows incident with $I$.
\end{enumerate}
The third step would usually be to cancel all two-cycles appearing after the first two
steps. Instead, we carry out a slightly modified version of the third step, corresponding to applying Lemma~\ref{l:twistinginvariance}, i.e.
\begin{enumerate}
\item[(c)]
Cancel all two-cycles consisting of non-boundary arrows.
\item[(d)]
For all two-cycles consisting of a boundary arrow and a non-boundary arrow,
remove the boundary arrow and convert the non-boundary arrow into a boundary arrow.
\end{enumerate}
The abstract rewriting system describing the individual moves in (c) and (d) is
clearly terminating (as the number of arrows decreases with every step) and
it is also easy to check that it is locally confluent. Hence, by the Diamond Lemma,
it is confluent, and thus convergent. In other words, it does not matter in which
order the individual steps in (c) and (d) are carried out; the resulting quiver
will be independent of the order.
\end{remark}

By Corollary~\ref{c:obtainJKS}, the boundary
algebra $eA_De$ is not dependent on the choice of
$D$ up to isomorphism. Lemmas~
\ref{l:twistinginvariance} and~
\ref{p:geomexisomorphism} give an alternative proof
of this fact.

\begin{corollary} \label{c:Dindependent}
Let $D,D'$ be any two
$(k,n)$-Postnikov diagrams.
Then the corresponding boundary algebras $eA_De$
and $e'A_{D'}e'$ are isomorphic.
\end{corollary}

\begin{proof}
By Lemmas~\ref{l:twistinginvariance} and~\ref{p:geomexisomorphism},
this holds whenever $D'$ can be obtained
from $D$ by a geometric exchange: if the exchange takes place at a vertex
$I$, replacing it with $I'$, then we have
$e'A_{D'}e'=e'e^{I'}A_{D'}e^{I'}e'$ is isomorphic to $ee^IA_De^Ie$.
In the general case, by~\cite[\S14]{postnikov} (see also~\cite{scott06}),
there is a sequence of geometric exchanges and untwisting or twisting moves or
boundary untwisting or twisting moves  taking $D$ to $D'$, and the result follows.
\end{proof}

\section{Surfaces} 
\label{s:surfaces}

In this section, we generalize the context we are working in to surfaces with boundary. We note that dimer models (bipartite field theories) on surfaces with boundary have also been considered in independent work of S. Franco~\cite{francopre12},
and Postnikov diagrams on surfaces are also considered
in~\cite{GK,marshscott}.

Let $(X,M)$ be a marked oriented Riemann surface with nonempty 
boundary, where $M$ is the set of marked points. We may assume 
that each boundary component is a circle. We also 
suppose that each marked point lies on a boundary component 
and that each boundary component has at least one marked 
point. This is sometimes referred to as the `unpunctured 
case'.
We assume that $(X,M)$ is not a disk with
$1$ or $2$ marked points. Label the boundary components $\mathcal{C}_1,\mathcal{C}_2,\ldots ,\mathcal{C}_b$;
suppose there are $r_i$ marked points on boundary component $\mathcal{C}_i$, for each $i$.
We label the marked points around a boundary component $\mathcal{C}_i$ anticlockwise around
the component as $p_{i1},p_{i2}, \ldots ,p_{ir_i}$.

\begin{definition}
We define a \emph{weak Postnikov diagram} $D$ on $(X,M)$ to be a diagram consisting
of directed curves embedded in $(X,M)$, one starting at each marked point and ending on the
same boundary component on which it starts and exactly one strand ending and one strand
starting at each marked point. The diagram must satisfying the local axioms (a1)--(a3) in Definition~\ref{d:asd} and is considered up to isotopy. It need not necessarily be of reduced type.
We say that $D$ is a \emph{Postnikov diagram} if, in addition, the global axioms
(b1) and (b2) also hold. We say that a (weak) Postnikov diagram $D$ has degree
$k$ if the strand starting at $p_{ij}$ ends at $p_{i,j+k}$, where the second subscript is
interpreted modulo $r_i$.
\end{definition}

Thus, the $(k,n)$-Postnikov diagrams of Definition~\ref{d:asd} are Postnikov diagrams of degree $k$ on a disk with $n$ marked points on its boundary, considered up to the untwisting and twisting moves (see Figure~\ref{f:twisting}).

If $D$ is a weak Postnikov diagram on a marked surface $(X,M)$,
we define the corresponding dimer model $Q(D)$ as in the disk case,
following Definition~\ref{d:quiver} and Remark~\ref{r:plabicdual}.
Let $A_D=A_{Q(D)}$ denote the corresponding dimer algebra, defined as in the disk case.
Let $e$ be the sum of the idempotents in $A_D$ corresponding to the boundary vertices of $Q(D)$.
Then, as in the disk case, we may define the \emph{boundary algebra}
of $D$ to be $B_D=eA_De$. 
Note that it is not clear whether it is independent of the choice of $D$.

\begin{remark}\label{r:bdryidemp}
Note that, as in the disk case, the boundary vertices of $Q(D)$ 
correspond to the alternating regions of $D$ that have as one (unoriented) edge 
an interval on the boundary between two marked points.
Thus the idempotents of the boundary algebra are naturally labelled by
the \emph{boundary intervals} of the marked surface, 
and not the marked (boundary) points.
To draw the quiver $Q(D)$ on the surface, it is natural to locate
each boundary vertex on the corresponding boundary interval.
\end{remark}

We recall a construction of Scott~\cite[\S3]{scott06}. Given a triangulation
$\mathcal{T}$ of the disk with $n$ marked points on its boundary, each triangle is
replaced with a local configuration of strands, as in
Figure~\ref{f:scotttriangle}, to produce a global configuration $D(\mathcal{T})$ of
strands.

\begin{figure}
\[
\begin{tikzpicture}[scale=0.9,baseline=(bb.base),
  strand/.style={black,dashed,thick},
   quivarrow/.style={black, -latex, thick},
  doublearrow/.style={black, latex-latex, very thick},
  anarrow/.style={black,-latex,thick}]
\newcommand{\strarrow}{\arrow{angle 60}}
\newcommand{\dotrad}{0.1cm} 
\newcommand{\bdrydotrad}{{0.8*\dotrad}} 
\path (0,0) node (bb) {}; 


\draw [black] (0,0) -- (3,0) -- (1.5,2.60) -- (0,0);

\draw [anarrow] (4,1.25) -- (4.5,1.25);

\begin{scope}[shift={(5.5,0)}]


\draw [black] (0,0) -- (3,0) -- (1.5,2.6) -- (0,0);


\draw [strand] plot
coordinates{(1.75,2.17) (0.5,0)}
[ postaction=decorate, decoration={markings,
mark= at position 0.5 with \strarrow}];

\draw [strand] plot
coordinates{(2.5,0) (1.25,2.17)}
[ postaction=decorate, decoration={markings,
mark= at position 0.52 with \strarrow}];

\draw [strand] plot
coordinates{(0.25,0.43) (2.75,0.43)}
[ postaction=decorate, decoration={markings,
mark= at position 0.52 with \strarrow}];

\end{scope}
\end{tikzpicture}
\]
\caption{Scott's construction}
\label{f:scotttriangle}
\end{figure}
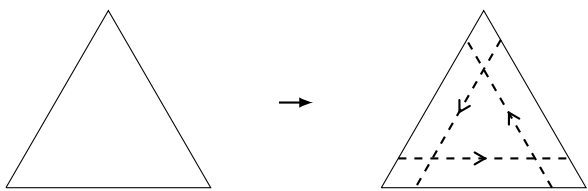

We modify the conventions of Scott slightly, since we are following
Postnikov~\cite{postnikov}. We apply the above rule for an internal
triangle of the triangulation, i.e.\ one all of whose edges are internal to
$X$. For triangles with boundary edges, we apply the same rule except that the
intersection of a strand with an edge of the triangle which is part of the
boundary of $X$ is slid along to the adjacent corner of the triangle.
The upper diagram in Figure~\ref{f:modifiedtriangle} illustrates this in
the case where the horizontal edge in the figure is a boundary edge (and the others are internal); the middle diagram illustrates this in the case where the upper two edges are boundary edges,
while the case where
all three edges of the triangle are boundary
edges is shown in the lower diagram.
In each case the boundary edges are indicated by dotted lines, and internal edges by full lines.

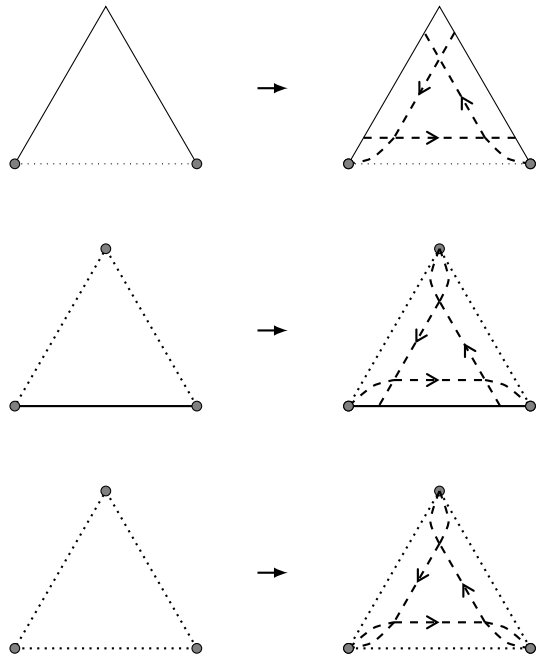
\begin{figure}
\[
\begin{tikzpicture}[scale=0.8,baseline=(bb.base),
  strand/.style={black,dashed,thick},
   quivarrow/.style={black, -latex, thick},
  doublearrow/.style={black, latex-latex, very thick},
  anarrow/.style={black,-latex,thick}]
\newcommand{\strarrow}{\arrow{angle 60}}
\newcommand{\dotrad}{0.1cm} 
\newcommand{\bdrydotrad}{{0.8*\dotrad}} 
\path (0,0) node (bb) {}; 



\draw [black,dotted] (0,0) -- (3,0);
\draw [black] (3,0) -- (1.5,2.598) -- (0,0);


\draw (0,0) circle(\bdrydotrad) [fill=gray];
\draw (3,0) circle(\bdrydotrad) [fill=gray];

\draw [anarrow] (4,1.249) -- (4.5,1.249);

\begin{scope}[shift={(5.5,0)}]


\draw [black,dotted] (0,0) -- (3,0);
\draw [black] (3,0) -- (1.5,2.598) -- (0,0);


\draw (0,0) circle(\bdrydotrad) [fill=gray];
\draw (3,0) circle(\bdrydotrad) [fill=gray];


\draw [strand] plot
coordinates{(0.25,0.43) (2.75,0.43)}
[ postaction=decorate, decoration={markings,
mark= at position 0.5 with \strarrow}];

\draw [strand] plot
coordinates{(1.75,2.17) (0.75,0.43)}
[ postaction=decorate, decoration={markings,
mark= at position 0.6 with \strarrow}];

\draw [strand] plot[smooth]
coordinates{(0.75,0.43) (0.4,0.1) (0,0)};

\draw [strand] plot
coordinates{(2.25,0.43) (1.25,2.17)}
[ postaction=decorate, decoration={markings,
mark= at position 0.4 with \strarrow}];

\draw [strand] plot[smooth]
coordinates{(3,0) (2.6,0.1) (2.25,0.433)};

\draw (0,0) circle(\bdrydotrad) [fill=gray];
\draw (3,0) circle(\bdrydotrad) [fill=gray];

\end{scope}
\begin{scope}[shift={(0,-4)}]



\draw [black,thick] (0,0) -- (3,0);
\draw [black,thick,dotted] (3,0) -- (1.5,2.598) -- (0,0);


\draw (0,0) circle(\bdrydotrad) [fill=gray];
\draw (3,0) circle(\bdrydotrad) [fill=gray];
\draw (1.5,2.598) circle(\bdrydotrad) [fill=gray];

\draw [anarrow] (4,1.249) -- (4.5,1.249);

\begin{scope}[shift={(5.5,0)}]


\draw [black,thick] (0,0) -- (3,0);
\draw [black,thick,dotted] (3,0) -- (1.5,2.598) -- (0,0);


\draw (0,0) circle(\bdrydotrad) [fill=gray];
\draw (3,0) circle(\bdrydotrad) [fill=gray];
\draw (1.5,2.598) circle(\bdrydotrad) [fill=gray];


\draw [strand] plot
coordinates{(1.5,1.73) (0.5,0)}
[ postaction=decorate, decoration={markings,
mark= at position 0.4with \strarrow}];

\draw [strand] plot[smooth]
coordinates{(1.5,2.6) (1.65,2.1) (1.5,1.73)};

\draw [strand] plot
coordinates{(2.5,0) (1.5,1.73)}
[ postaction=decorate, decoration={markings,
mark= at position 0.6 with \strarrow}];

\draw [strand] plot[smooth]
coordinates{(1.5,1.73) (1.35,2.1) (1.5,2.6)};

\draw [strand] plot
coordinates{(0.75,0.43) (2.25,0.43)}
[ postaction=decorate, decoration={markings,
mark= at position 0.5 with \strarrow}];

\draw [strand] plot[smooth]
coordinates{(0,0) (0.4,0.33) (0.75,0.43)};

\draw [strand] plot[smooth]
coordinates{(2.25,0.43) (2.6,0.333) (3,0)};

\draw (0,0) circle(\bdrydotrad) [fill=gray];
\draw (3,0) circle(\bdrydotrad) [fill=gray];

\end{scope}
\end{scope}

\begin{scope}[shift={(0,-8)}]



\draw [black,thick,dotted] (0,0) -- (3,0);
\draw [black,thick,dotted] (3,0) -- (1.5,2.598) -- (0,0);


\draw (0,0) circle(\bdrydotrad) [fill=gray];
\draw (3,0) circle(\bdrydotrad) [fill=gray];
\draw (1.5,2.598) circle(\bdrydotrad) [fill=gray];

\draw [anarrow] (4,1.249) -- (4.5,1.249);

\begin{scope}[shift={(5.5,0)}]


\draw [black,thick,dotted] (0,0) -- (3,0);
\draw [black,thick,dotted] (3,0) -- (1.5,2.598) -- (0,0);


\draw (0,0) circle(\bdrydotrad) [fill=gray];
\draw (3,0) circle(\bdrydotrad) [fill=gray];
\draw (1.5,2.598) circle(\bdrydotrad) [fill=gray];


\draw [strand] plot
coordinates{(1.5,1.73) (0.75,0.43)}
[ postaction=decorate, decoration={markings,
mark= at position 0.5 with \strarrow}];

\draw [strand] plot[smooth]
coordinates{(1.5,2.6) (1.65,2.1) (1.5,1.73)};

\draw [strand] plot
coordinates{(2.25,0.43) (1.5,1.73)}
[ postaction=decorate, decoration={markings,
mark= at position 0.5 with \strarrow}];

\draw [strand] plot[smooth]
coordinates{(1.5,1.73) (1.35,2.1) (1.5,2.6)};

\draw [strand] plot
coordinates{(0.75,0.43) (2.25,0.43)}
[ postaction=decorate, decoration={markings,
mark= at position 0.5 with \strarrow}];

\draw [strand] plot[smooth]
coordinates{(0,0) (0.4,0.33) (0.75,0.43)};

\draw [strand] plot[smooth]
coordinates{(2.25,0.43) (2.6,0.333) (3,0)};

\draw [strand] plot[smooth]
coordinates{(0.75,0.43) (0.4,0.1) (0,0)};

\draw [strand] plot[smooth]
coordinates{(3,0) (2.6,0.1) (2.25,0.433)};

\draw (0,0) circle(\bdrydotrad) [fill=gray];
\draw (3,0) circle(\bdrydotrad) [fill=gray];

\end{scope}
\end{scope}

\end{tikzpicture}
\]
\caption{Modified version of Scott's construction. The dotted lines indicate
boundary edges}
\label{f:modifiedtriangle}
\end{figure}

By~\cite[Cor.~2]{scott06}, the map $\mathcal{T}\mapsto D(\mathcal{T})$ gives a bijection between
triangulations of the disk with $n$ marked points on its boundary and Postnikov diagrams of degree $2$ on the disk (with the same marked points).
The vertices of $Q(D(\mathcal{T}))$ correspond to the edges in $\mathcal{T}$ (including boundary edges),
and applying the geometric exchange at a vertex $I$ corresponds to applying a quadrilateral
flip at the corresponding edge.

We can generalize the map $\mathcal{T}\mapsto D(\mathcal{T})$ to a map from triangulations of
$(X,M)$ to weak Postnikov diagrams on $(X,M)$ (from the construction,
it is clear that axioms (a1)-(a3) in
Definition~\ref{d:asd} all hold for $D(\mathcal{T})$). We have:

\begin{lemma} \label{l:triangulationdiagram}
Let $(X,M)$ be a marked surface with all marked points on the boundary, and
let $\mathcal{T}$ be a triangulation of $(X,M)$. Then $D(\mathcal{T})$ is a weak
Postnikov diagram of degree $2$.
\end{lemma}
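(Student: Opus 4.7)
The plan is to verify the three requirements for $D(\mathcal{T})$ to be a weak Postnikov diagram of degree $2$: the local axioms (a1)--(a3) of Definition~\ref{d:asd}; that each strand starts and ends at marked points on the same boundary component; and that the strand starting at $p_{ij}$ ends at $p_{i,j+2}$. The key point throughout is that Scott's construction, both in its original form and in the boundary-modified form of Figure~\ref{f:modifiedtriangle}, is purely local to each triangle, so much of the analysis transfers directly from the disk setting.

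First I would dispose of the local axioms by direct inspection of Figures~\ref{f:scotttriangle} and~\ref{f:modifiedtriangle}. Within each triangle, three strand segments appear and meet pairwise in distinct transverse points, giving (a1) and (a2) once finiteness of $\mathcal{T}$ is used. For (a3), within each triangle the two crossings on any single strand segment have opposite signs by inspection; this alternation then propagates globally because, at each internal edge, the local rule dictates a unique strand segment passing through in each direction, so the strand segments glue smoothly without a strand ever re-entering a triangle it has exited.

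Next I would trace the strands to establish both the endpoint conditions and the degree. Each strand segment connects two edges of its triangle, with an endpoint slid to a corner whenever the adjacent edge is a boundary edge; by finiteness of $\mathcal{T}$, following these segments produces a curve which must terminate, and termination happens precisely at a marked point via the sliding rule. To pin down the endpoint, I would follow the strand starting at $p_{ij}$ through the sequence of triangles it visits, using the explicit local transition rules in Figures~\ref{f:scotttriangle} and~\ref{f:modifiedtriangle}. Because these local rules agree with those in Scott's original disk construction, the combinatorial argument Scott gives to show the strand ends two marked points further along the boundary applies essentially verbatim, and in particular forces the endpoint to be $p_{i,j+2}$ on the same boundary component $\mathcal{C}_i$.

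The main obstacle is the boundary case analysis: when the triangles visited by the strand themselves have two or three boundary edges (as in the middle and lower configurations of Figure~\ref{f:modifiedtriangle}), one must verify that the modified sliding rule is consistent with the strand passing through rather than prematurely terminating at an intermediate corner, and that the strand indeed reaches the triangle containing the boundary edge $p_{i,j+1}p_{i,j+2}$, where it slides to $p_{i,j+2}$. Once the various boundary configurations are unpacked, however, the local strand rule is completely determined, matches Scott's disk analysis, and yields the desired conclusion.
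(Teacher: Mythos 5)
Your proposal is correct and follows essentially the same route as the paper: the local axioms are dismissed as immediate from the local construction, and the degree-$2$ claim is checked by tracing the strand starting at $p_{ij}$ through the triangles it visits. The paper makes explicit the one point you leave implicit: that strand traverses precisely the fan of triangles around the intermediate marked point $p_{i,j+1}$, crossing each arc incident with $p_{i,j+1}$ once near that vertex; since this fan is a disk whatever the topology of the ambient surface (and the path is unaffected by which of the outer edges of the fan are boundary edges), the disk analysis localizes and the endpoint is forced to be $p_{i,j+2}$.
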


\begin{proof}
Axioms (a1) to (a3) in Definition~\ref{d:asd} follow from the local construction of $D(\mathcal{T})$ (and the way
in which two triangles are fitted together). So we just need to check that
the weak Postnikov diagram constructed is of degree $2$.
Let $p_{ij}$ be a marked point on boundary component $\mathcal{C}_i$ of $(X,M)$.
Anticlockwise of $p_{ij}$ are the marked points $p_{i,j+1}$ and $p_{i,j+2}$
(allowing the possibility that one or both of them coincides with $p_{ij}$).
Figure~\ref{f:nearboundary} shows the two triangles adjacent to the boundary
arcs between $p_{ij}$ and $p_{i,j+1}$ and between $p_{i,j+1}$ and $p_{i,j+2}$,
together with all the arcs incident with $p_{i,j+1}$. Note that the two triangles may coincide.
The dotted line at the
base of the figure indicates the boundary of the surface (but note that some
of the edges at the top of the figure may also be on the boundary).
The figure also shows the strand which starts at $p_{ij}$; we see that it ends
at $p_{i,j+2}$ as required; note that its path is not affected by whether the
edges at the top of the figure are boundary edges or not.
\end{proof}

\begin{figure}
\[
\begin{tikzpicture}[scale=0.8,baseline=(bb.base),
  strand/.style={black,dashed,thick},
   quivarrow/.style={black, -latex, thick},
  doublearrow/.style={black, latex-latex, very thick},
  anarrow/.style={black,-latex,thick}]
\newcommand{\strarrow}{\arrow{angle 60}}
\newcommand{\dotrad}{0.1cm} 
\newcommand{\bdrydotrad}{{0.8*\dotrad}} 
\path (0,0) node (bb) {}; 


\draw [black,dotted] (-4,0) -- (4,0);


\draw [black] (0:4) -- (51.429:4);
\draw [black] (51.429:4) -- (77.143:4);
\draw [black] (77.143:4) -- (102.857:4);
\draw [black] (102.857:4) -- (128.571:4);
\draw [black] (128.571:4) -- (180:4);


\draw [black] (0,0) -- (51.429:4);
\draw [black] (0,0) -- (77.143:4);
\draw [black] (0,0) -- (102.857:4);
\draw [black] (0,0) -- (128.571:4);


\draw [strand] plot[smooth]
coordinates{(4,0) (3.6,0.3) (51.429:3.5) (77.143:3.5) (102.857:3.5)(128.571:3.5) (-3.6,0.3) (-4,0)}
[ postaction=decorate, decoration={markings,
mark= at position 0.16 with \strarrow,
mark= at position 0.37 with \strarrow,
mark= at position 0.5 with \strarrow,
mark= at position 0.63 with \strarrow,
mark= at position 0.84 with \strarrow}];


\draw (0,0) circle(\bdrydotrad) [fill=gray];
\draw (4,0) circle(\bdrydotrad) [fill=gray];
\draw (-4,0) circle(\bdrydotrad) [fill=gray];


\draw (4,-0.4) node {$p_{ij}$};
\draw (0,-0.4) node {$p_{i,j+1}$};
\draw (-4,-0.4) node {$p_{i,j+2}$};

\end{tikzpicture}
\]
\caption{Path of a strand in $D(\mathcal{T})$ for $\mathcal{T}$ a triangulation}
\label{f:nearboundary}
\end{figure}
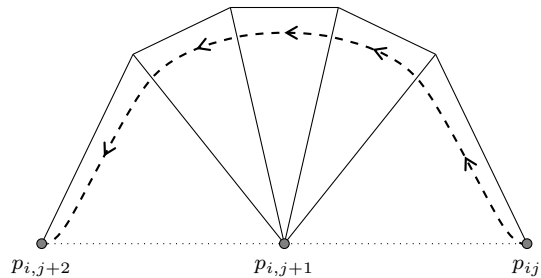

We may thus associate a dimer model $Q(D(\mathcal{T}))$ to a triangulation $\mathcal{T}$ as in
Lemma~\ref{l:triangulationdiagram}. As in Remark~\ref{r:potential}, we have an associated
potential. Deleting the boundary arrows of the quiver,
and all terms in the potential containing them, 
we obtain the quiver associated to $\mathcal{T}$ in~\cite{fst08} with potential as in~\cite{lf09}.

As an example, we consider the annulus.
Fix positive integers $n,m$.
Let $\Lambda_{n,m}=\F Q_{n,m}/\I$ be the algebra defined as follows.
The quiver $Q_{n,m}$ is embedded into an annulus.
The vertices are $1,2,\ldots ,n$ clockwise on the outer boundary and
$\overline{1},\overline{2},\ldots ,\overline{m}$ clockwise on the inner boundary.
There are arrows $x_i{\colon}i-1\to i$ and $y_i{\colon}i\to i-1$ on the outer boundary
(end points taken mod $n$) and arrows
$\overline{x}_i{\colon}\overline{i-1}\to \overline{i}$ and
$\overline{y}_i{\colon}\overline{i}\to \overline{i-1}$ on the inner boundary
(end points taken mod $m$), as well as arrows $r{\colon} 1\to \overline{1}$
from the outer to inner boundary and $s{\colon}\overline{m}\to n$ from the inner
to outer boundary. See Figure~\ref{f:annulus}.

The relations are given by the following, where we omit the subscripts for
$x$ and $y$ where they are determined by the starting vertex.
Firstly, we have the relations:
$$
xy=yx, \quad \quad \overline{x}\,\overline{y}=\overline{y}\,\overline{x},
$$
where the first relation (respectively, the second relation) starts at an
arbitrary vertex on the outer boundary (respectively, the inner boundary).
In addition, we have:
\begin{align}
y^2 &= x^{n-1-i} s\overline{x}^{m+1}rx^{i} \label{e:rel1} \\
\overline{y}^2 &= \overline{x}^{m-1-i} r x^{n+1} s \overline{x}^{i} \label{e:rel2} \\
r &= \overline{x}^m r x^n \\
s &= x^n s \overline{x}^m \\
y x s &= s \overline{x}\, \overline{y} \\
\overline{x}\, \overline{y} r &= r y x
\end{align}

There is an instance of relation~\eqref{e:rel1}
for each vertex on the outer boundary;
the exponent $i\geq 0$ is the minimum power of $x$
such that $x^i$, when starting at that vertex, ends at vertex $1$, 
the starting vertex of $r$. 
Similarly, there is an instance of relation~\eqref{e:rel2}) 
for each vertex on the inner boundary; 
the exponent $i\geq 0$ is the minimum power of $\overline{x}$ 
such that $\overline{x}^i$, when starting at that vertex, 
ends at vertex $\overline{m}$, the starting vertex of $s$.

\begin{remark}\label{r:Lambda-symmetry}
The algebra $\Lambda_{n,m}$ is 
more symmetric than its presentation suggests. 
If we define $\Lambda'_{n,m}$ analogously to $\Lambda_{n,m}$, but with
$r$ replaced by an arrow $R$ from $2$ to $\overline{1}$
and $s$ replaced by an arrow $S$ from $\overline{m}$ to
$1$, then the map $\Lambda_{n,m} \to\Lambda'_{n,m}$
taking $r$ to $\overline{x}^m R x$ and
$s$ to $x^{n-1}S$ is an isomorphism;
it has an inverse of a similar form. 
A corresponding isomorphism can be
constructed that moves the endpoints of
$r$ and $s$ on the inner boundary.
Thus the algebra $\Lambda_{n,m}$ may be presented by any quiver
`similar' to Figure~\ref{f:annulus}, 
i.e. with the same \emph{labelled} boundary,
but with the arrows $r,s$ joining any pair of adjacent vertices
on one boundary to any pair of adjacent vertices
on the other boundary, 
and subject to essentially the same relations.

The existence of such similar presentations
implies that $\Lambda_{n,m}$ has automorphisms that cycle the 
vertex idempotents on either boundary.
In particular, it has non-trivial automorphisms that fix all the
vertex idempotents, corresponding naturally to non-trivial elements of
the (boundary fixing) mapping class group of the annulus.

In fact, the algebra $\Lambda_{n,m}$ also has other presentations
in which the arrows $r$ and $s$ go between non-adjacent boundary vertices
and the relations are adjusted a little. 
However, these will not be so relevant here.
\end{remark}

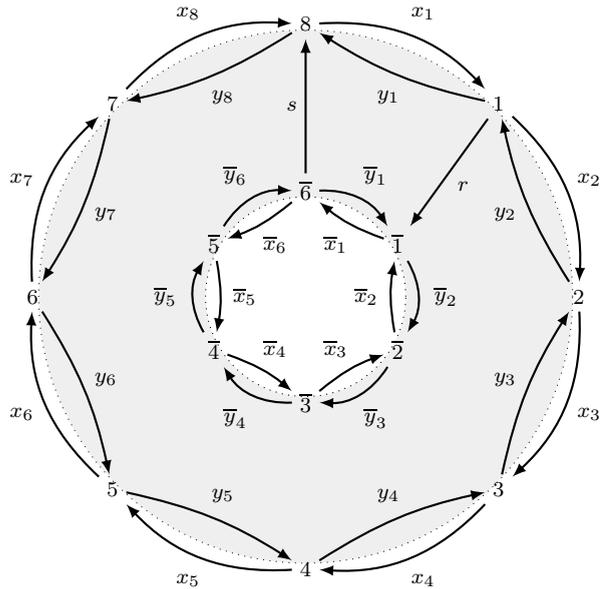
\begin{figure} 
\[
\begin{tikzpicture}[scale=1,
quivarrow/.style={black, -latex, thick}]

\newcommand{\outcr}{3.6}
\newcommand{\outxeps}{0.45}
\newcommand{\outyeps}{0.75}
\newcommand{\incr}{1.4}
\newcommand{\inxeps}{0.6}
\newcommand{\inyeps}{0.45}
\newcommand{\epscr}{0.08}


\draw[black,dotted] (0,0) circle(\outcr-\epscr);
\draw[fill=gray!40,draw=none,opacity=0.3] (0,0) circle (\outcr-\epscr);
\draw[fill=white,draw=black,dotted] (0,0) circle (\incr-\epscr);


\foreach \i in {1,...,8}
{\draw (90-45*\i:\outcr) node (outb\i) {$\i$};
 \coordinate (outxpos\i) at (112.5-45*\i:\outcr+\outxeps);
 \coordinate (outypos\i) at (112.5-45*\i:\outcr-\outyeps);}

\foreach \i in {1,...,6}
{\draw (90-60*\i:\incr) node (inb\i) {$\overline{\i}$};
 \coordinate (inxpos\i) at (120-60*\i:\incr-\inxeps);
 \coordinate (inypos\i) at (120-60*\i:\incr+\inyeps);}


\foreach \i/\j in {1/2,2/3,3/4,4/5,5/6,6/7,7/8,8/1}
{\draw [quivarrow]  (outb\i) edge [bend left=25] (outb\j);
 \draw [quivarrow]  (outb\j) edge [bend left=10] (outb\i);}


\foreach \i/\j in {1/2,2/3,3/4,4/5,5/6,6/1}
{\draw [quivarrow]  (inb\i) edge [bend left=30] (inb\j);
 \draw [quivarrow]  (inb\j) edge [bend left=10] (inb\i);}


\foreach \i in {1,...,8}
{\draw (outxpos\i) node {$x_{\i}$};
 \draw (outypos\i) node {$y_{\i}$};}

\foreach \i in {1,...,6}
{\draw (inxpos\i) node {$\overline{x}_{\i}$};
 \draw (inypos\i) node {$\overline{y}_{\i}$};}


\draw [quivarrow] (inb6) edge node [auto] {$s$} (outb8);
\draw [quivarrow] (outb1) edge node [auto] {$r$} (inb1);

\end{tikzpicture}
\]
\caption{The boundary algebra of an annulus}
\label{f:annulus}
\end{figure}

\begin{proposition}
\label{p:annulus}
Let $\mathcal{T}$ be any triangulation of the annulus,
with $n>0$ marked points on the outer boundary component 
and $m>0$ marked points on the inner boundary component,
as above, and let $D(\mathcal{T})$ be the corresponding weak Postnikov diagram
of degree $2$, as in Lemma~\ref{l:triangulationdiagram}.
Then the boundary algebra $B_{D(\mathcal{T})}$ is isomorphic to $\Lambda_{n,m}$.
\end{proposition}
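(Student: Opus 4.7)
The proof proceeds in two stages: first, I would establish that $B_{D(\mathcal{T})}$ is independent of the triangulation $\mathcal{T}$, and then I would compute it explicitly for one carefully chosen triangulation.

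For the first stage, any two triangulations of an annulus with the same boundary edges are connected by a finite sequence of flips of internal arcs, and under Scott's construction a flip at an internal arc becomes a geometric exchange of $D(\mathcal{T})$ at the internal vertex of $Q(D(\mathcal{T}))$ corresponding to that arc. The proofs of Lemma~\ref{l:twistinginvariance} and Proposition~\ref{p:geomexisomorphism} are entirely local — they only manipulate arrows around a single vertex $I$ and the faces adjacent to it — so they go through verbatim when $I$ is an internal vertex of the dimer model on the annulus. Applying them along a sequence of flips connecting $\mathcal{T}$ to a fixed baseline $\mathcal{T}_0$ gives $B_{D(\mathcal{T})} \cong B_{D(\mathcal{T}_0)}$, reducing the problem to a single computation.

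For the second stage, I would choose $\mathcal{T}_0$ to contain two bridging arcs, one from the outer marked point $p_{1,1}$ to $\bar{p}_{1,1}$ and one from $\bar{p}_{1,m}$ to $p_{1,n}$ (these produce the generators $r$ and $s$), completed to a triangulation by fan arcs from these two anchor points. Drawing $D(\mathcal{T}_0)$ via the modified Scott rule in Figure~\ref{f:modifiedtriangle} and reading off $Q(D(\mathcal{T}_0))$, one identifies its $n+m$ boundary vertices with the vertex set of $\Lambda_{n,m}$: the arrows along the outer boundary yield $x_i$ and $y_i$, those along the inner boundary yield $\bar{x}_i$ and $\bar{y}_i$, and the arrows at the ends of the two bridging arcs produce $r$ and $s$. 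Then one defines $\varphi\colon \Lambda_{n,m} \to B_{D(\mathcal{T}_0)}$ on generators in this way and checks the defining relations. The commutations $xy=yx$ and $\bar{x}\bar{y}=\bar{y}\bar{x}$ follow immediately from the face relations in Definition~\ref{d:dimeralgebra}, being the two expressions for the central element $u$ (as in Definition~\ref{d:centralelement}) at a boundary vertex. The wrap-around relations \eqref{e:rel1}–\eqref{e:rel2} and the four relations involving $r$ and $s$ express further alternative path presentations of $u$, boundary arrows times $u$, or $u$ times boundary arrows.

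The main obstacle will be the verification of the long wrap-around relations, since each asserts the equality in $A_{D(\mathcal{T}_0)}$ of two paths that follow very different routes through the dimer model: the left-hand side is a short path along one boundary, while the right-hand side circumnavigates via $r$ and $s$. Establishing these equalities requires proving the annulus analog of Corollary~\ref{c:describepaths} — namely that between any two boundary vertices there is a unique insincere path class in $A_{D(\mathcal{T}_0)}$, and every path from $I$ to $J$ equals this class multiplied by a power of $u$. The arguments of Section~\ref{s:paths} (via Lemma~\ref{l:nointersection} and Proposition~\ref{p:bocklandtreducibility}) use only that $Q(D)$ is a dimer model embedded in a surface, together with the global axioms (b1)–(b2) of Definition~\ref{d:asd}, so they should generalize to the annulus with minimal modification (the relevant cycles still bound disks in the annulus). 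Once this surface analog is in hand, each relation of $\Lambda_{n,m}$ is checked by matching the weights (in the natural $\mathbb{N}C_0$-grading extended to the annulus, where $C_0$ is now the union of the two circles of boundary vertices) of the two sides, and the exponents $i$ and $n-1-i$ in \eqref{e:rel1} appear as exactly the combinatorial offsets needed to reach the bridging arcs from the source vertex. Surjectivity and injectivity of $\varphi$ then follow by exactly the argument of Corollary~\ref{c:obtainJKS}: $\varphi$ maps the basis $\{u^N p_{JI}\}$ of $e_J \Lambda_{n,m} e_I$ (obtained from the relations of $\Lambda_{n,m}$ by a normal-form argument) bijectively onto the corresponding basis of $e_J B_{D(\mathcal{T}_0)} e_I$.
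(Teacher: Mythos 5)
The paper gives no proof of this proposition (it explicitly says ``but we omit the proof''), so there is nothing to compare against directly; I can only assess your argument on its own terms. Your two-stage plan (reduce to one triangulation by flips, then compute) is a sensible skeleton, but the step you yourself identify as the main obstacle is exactly where the argument breaks down. You claim the machinery of Section~\ref{s:paths} generalizes ``with minimal modification'' because ``the relevant cycles still bound disks in the annulus.'' They do not. The proof of Lemma~\ref{l:nointersection} begins by taking the region $R_{pq}$ enclosed by $q^{-1}p$ and asserting it is a disk, and then inducts on the number of faces it contains; for the wrap-around relations \eqref{e:rel1} and \eqref{e:rel2}, the two sides are paths whose concatenation is a non-contractible cycle encircling the inner boundary component, so it bounds no disk and the induction never starts. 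Moreover, Proposition~\ref{p:bocklandtreducibility} repeatedly invokes the global axioms (b1) and (b2), whereas Lemma~\ref{l:triangulationdiagram} only establishes that $D(\mathcal{T})$ is a \emph{weak} Postnikov diagram, i.e.\ satisfies the local axioms; you would need to either verify (b1)--(b2) for $D(\mathcal{T}_0)$ or replace these arguments entirely (e.g.\ by lifting to the universal cover of the annulus, or by an explicit rewriting/normal-form computation in $A_{D(\mathcal{T}_0)}$). Without one of these, the uniqueness of ``minimal'' path classes and hence the injectivity and the verification of the relations are unproved.

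Two further points need attention. First, the $\mathbb{N}C_0$-grading does not transfer by simply declaring $C_0$ to be the union of the two boundary circles: the weight $\weight_\alpha=(c,d)_0$ of an arrow is an interval between the sources of the two strands crossing it, and on the annulus those sources may lie on different boundary components, so the interval (and with it all the degree bookkeeping you rely on for distinguishing $u^N p_{JI}$) is not defined without a new construction. Second, your reduction step assumes that a flip of an internal arc of $\mathcal{T}$ translates into a geometric exchange plus (boundary) untwisting moves on $D(\mathcal{T})$, and that Lemma~\ref{l:twistinginvariance} and Proposition~\ref{p:geomexisomorphism} hold in the annulus; the algebraic parts of those proofs are indeed local, but the flip--exchange correspondence is Scott's disk result and the paper itself only \emph{conjectures} the surface analogues, so this needs to be checked rather than cited. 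None of these issues is obviously fatal to the strategy, but as written the proposal does not close the central verification.
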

\begin{proof}
Let $\mathcal{T}$ be a triangulation of the annulus
and set $A=A_{D(\mathcal{T})}$. Then $\mathcal{T}$
must include an edge linking the two boundary components;
we choose such an edge, and set $e_0$ to be the
corresponding idempotent in $A$. Since the complement of
this edge in the annulus is the interior of a disk, we
can apply Corollary~\ref{c:obtainJKS} (with some 
identifications) to get a description of $(e+e_0)A(e+e_0)$ as 
a quiver with relations, where $e$ is the sum of the boundary 
idempotents in $A$. An explicit description of the boundary 
algebra $eAe$, of the above form, can then be obtained
directly from this.
\end{proof}

By Remark~\ref{r:Lambda-symmetry}, the isomorphism in Proposition~\ref{p:annulus}
can (and indeed should ) be chosen 
to be compatible with an \emph{a priori}
identification of the vertex idempotents of~$\Lambda_{n,m}$ with
the boundary vertices of $Q(D(\mathcal{T}))$, i.e. with the boundary intervals of~$(X,M)$
(cf. Remark~\ref{r:bdryidemp}).
Thus, if we consider a second triangulation, then the two isomorphisms
will yield an isomorphism of boundary algebras that 
preserves their (labelled) idempotents.
In other words, the boundary algebra of a dimer model coming from a 
triangulated annulus does not depend on the triangulation
(up to such an isomorphism).

Recall that, by Lemma~\ref{l:twistinginvariance} 
and Proposition~\ref{p:geomexisomorphism},
or by Corollary~\ref{c:obtainJKS} and Remark~\ref{r:idempotents},
the boundary algebra of a Postnikov diagram of degree $k$ on a disk 
also does not depend on the choice of diagram. 
Furthermore, it might be expected that Lemma~\ref{l:twistinginvariance} 
and Proposition~\ref{p:geomexisomorphism} hold for the surface case. 
Thus it seems reasonable to make the following conjecture:

\begin{conjecture}
Let $(X,M)$ be a marked surface with nonempty boundary with marked points on
its boundary only which is not a monogon or digon. 
Suppose that there is at least one marked point on each boundary component. 
Then the boundary algebra of a weak Postnikov diagram on $(X,M)$ does 
not depend on the choice of diagram,
up to isomorphism preserving the idempotents.
\end{conjecture}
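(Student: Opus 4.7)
The plan is to generalize the two-step strategy that gives the disk case in Section~\ref{s:exchange}. First, establish that the boundary algebra $eA_De$ is invariant under a collection of elementary local moves on weak Postnikov diagrams: the untwisting/twisting moves and their boundary versions (Figure~\ref{f:twisting}), together with the geometric exchange move (Figure~\ref{f:geomexquiver}). Second, show that the set of weak Postnikov diagrams on $(X,M)$ within a fixed combinatorial class is connected under these moves. Invariance plus connectivity then gives the conjecture.

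For the first step, I would argue that the proofs of Lemma~\ref{l:twistinginvariance} and Proposition~\ref{p:geomexisomorphism} carry over essentially verbatim, because they are entirely local. Each argument works within a small disk neighborhood of the move, manipulates only those defining relations of $A_D$ coming from faces of $Q(D)$ meeting that neighborhood, and leaves everything else untouched. The one subtlety is that on a non-disk surface, two faces of $Q(D)$ that appear distinct locally could in principle coincide globally, or an arrow could belong to the boundary of a single face traversed twice; this is avoided by choosing a neighborhood small enough that the local configuration embeds in $(X,M)$ and no face wraps through it more than once. With that checked, the arrow/relation bookkeeping applies unchanged, and the isomorphisms produced in the disk case assemble into isomorphisms of the full dimer algebras, restricting to the boundary algebras because the boundary idempotent $e$ is supported away from the move.

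The main obstacle is the second step, a surface analogue of Postnikov's theorem~\cite[\S14]{postnikov} asserting that any two weak Postnikov diagrams with matching boundary/strand data are connected by a sequence of local moves. No such connectivity result is currently available for arbitrary $(X,M)$, and it appears to be a serious combinatorial question; in particular, the obvious invariants (strand permutation data, homotopy classes of strands) may not suffice to cut out a single connected component. A promising sub-strategy is to distinguish the subclass of diagrams of degree $2$ arising from triangulations via Lemma~\ref{l:triangulationdiagram}: these form a connected family under geometric exchange, because the flip graph of triangulations of $(X,M)$ is connected and each flip corresponds precisely to a geometric exchange in the associated dimer model. Hence $B_{D(\mathcal T)}$ would be independent of $\mathcal T$. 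The remaining task is to show that any weak Postnikov diagram can be reduced by local moves to one of the form $D(\mathcal T)$; I would attempt this by induction on a complexity measure such as the number of non-triangular faces of $Q(D)$ or the total number of strand crossings modulo reduction.

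In the annulus case, the explicit computation $B_{D(\mathcal T)} \cong \Lambda_{n,m}$ already gives the conjecture for the triangulation-derived subclass, and the simple topology (low-rank $\pi_1$) should make the reduction-to-triangulation argument tractable, yielding an unconditional proof of the conjecture there. In general, if full connectivity fails, the conjecture should be refined to the statement that the boundary algebra is locally constant on the move-graph of weak Postnikov diagrams, which is what the invariance step establishes unconditionally.
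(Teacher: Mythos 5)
The statement you are addressing is stated in the paper as a \emph{conjecture}; the paper offers no proof of it, only the motivation that Lemma~\ref{l:twistinginvariance} and Proposition~\ref{p:geomexisomorphism} ``might be expected'' to extend to surfaces, together with the (unproved, explicitly computed) annulus statement. Your proposal reproduces essentially that same motivation and, to your credit, you identify the central gap yourself: there is no known analogue of Postnikov's connectivity theorem for weak Postnikov diagrams on a general marked surface, and without it the strategy ``invariance under local moves plus connectivity of the move graph'' cannot close. So what you have written is a plausible programme, not a proof, and your own final paragraph concedes as much by retreating to the weaker statement that the boundary algebra is locally constant on the move graph.

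Two further points deserve care even within the programme. First, your claim that the invariance arguments ``carry over essentially verbatim'' because they are local is not settled by ``choosing a neighborhood small enough'': the potential degeneracy is that two faces which look distinct in the local picture (e.g.\ $F_1$ and $F_3$ in the untwisting move, or the four faces $p\alpha\beta$, $q\gamma\beta$, $r\gamma\delta$, $s\alpha\delta$ in the geometric exchange) may coincide globally, or a single face may meet the local configuration along two separate portions of its boundary. This is a property of the dimer model on the surface, not of the neighborhood you choose, and the relation bookkeeping in the proofs of Lemma~\ref{l:twistinginvariance} and Proposition~\ref{p:geomexisomorphism} genuinely changes in that situation; it must be ruled out or handled separately. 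Second, the reduction-to-triangulation sub-strategy can at best treat weak Postnikov diagrams of degree $2$, whereas the conjecture (for fixed degree $k$) is not restricted to $k=2$; and even for $k=2$ the claim that an arbitrary weak Postnikov diagram can be reduced by local moves to some $D(\mathcal T)$ is itself a form of the missing connectivity statement, so it cannot be assumed. The conjecture therefore remains open, and your proposal, while a reasonable account of why it is believable, does not prove it.
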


\affiliationone{K.~Baur\\
Institut f\"{u}r Mathematik und Wissenschaftliches Rechnen,\\ 
Universit\"{a}t Graz, NAWI Graz,\\
Heinrichstrasse 36,\\
A-8010 Graz, Austria
\email{baurk@uni-graz.at}}
\affiliationtwo{A.~King\\
Mathematical Sciences,\\
University of Bath,\\
Claverton Down,\\
Bath BA2 7AY, U.K.
\email{A.D.King@bath.ac.uk}}
\affiliationthree{R.J.~Marsh\\
School of Mathematics,\\
University of Leeds,\\
Leeds LS2 9JT, U.K.
\email{R.J.Marsh@leeds.ac.uk}}

\begin{thebibliography}{99}

\newcommand{\au}[1]{{\bibname #1},}
\newcommand{\ti}[1]{{\rm #1},}
\newcommand{\jo}[1]{{\em #1}}
\newcommand{\vo}[1]{{\rm #1}}
\newcommand{\yr}[1]{(#1)}
\newcommand{\pp}[2]{#1.}

\bibitem{bocklandt}
\au{R. Bocklandt}
\ti{Calabi-Yau algebras and weighted quiver polyhedra}
\jo{Math. Z.} \vo{273} \yr{2013} \pp{311--329}

\bibitem{bocklandt12}
\au{R. Bocklandt}
\ti{Consistency conditions for dimer models}
\jo{Glasg. Math. J.} \vo{54} \yr{2012} \pp{429--447}

\bibitem{broomhead12}
\au{N. Broomhead}
\ti{Dimer models and Calabi-Yau algebras}
\jo{Mem. Amer. Math. Soc.} \vo{215, no. 1011} \yr{2012}.

\bibitem{BIRS11}
\au{A.B. Buan, O. Iyama, I. Reiten, D. Smith}
\ti{Mutation of cluster-tilting objects and potentials}
\jo{Amer. J. Math.} \vo{133} \yr{2011} \pp{835--887}

\bibitem{ciucu98}
\au{M. Ciucu}
\ti{A complementation theorem for perfect matchings of graphs having a cellular completion}
\jo{J. Combin. Theory Ser. A} \vo{81} \yr{1998} \pp{34--68}

\bibitem{dkk}
\au{V.I. Danilov, A.V. Karzanov, G.A. Koshevoy}
\ti{On maximal weakly separated set-systems}
\jo{J. Algebraic Combin.} \vo{32} \yr{2010} \pp{497--531}

\bibitem{davison11}
\au{B. Davison}
\ti{Consistency conditions for brane tilings}
\jo{J. Algebra} \vo{338} \yr{2011} \pp{1--23}

\bibitem{demonetluo}
\au{L. Demonet, X. Luo}
\ti{Ice quivers with potentials associated with triangulations and Cohen-Macaulay modules over orders}
\jo{Trans. Amer. Math. Soc.} \vo{368} \yr{2016} \pp{4257--4293}

\bibitem{dwz08}
\au{H. Derksen, J. Weyman, A. Zelevinsky}
\ti{Quivers with potentials and their representations. I. Mutations.}
\jo{Selecta Math. (N.S.)} \vo{14} \yr{2008} \pp{59--119}

\bibitem{fst08}
\au{S. Fomin, M. Shapiro, D. Thurston}
\ti{Cluster algebras and triangulated surfaces. I. Cluster complexes.}
\jo{Acta Math.} \vo{201} \yr{2008} \pp{83--146}

\bibitem{francopre12}
\au{S. Franco}
\ti{Bipartite Field Theories: from D-Brane Probes to Scattering Amplitudes}
\jo{JHEP} \vo{11} \yr{2012} \pp{141}

\bibitem{fhkvw}
\au{S. Franco, A. Hanany, K.D. Kennaway, D. Vegh, B. Wecht}
\ti{Brane Dimers and Quiver Gauge Theories}
\jo{JHEP} \vo{01} \yr{2006} \pp{096}

\bibitem{FZ02}
\au{S. Fomin, A. Zelevinsky}
\ti{Cluster algebras. I. Foundations}
\jo{J. Amer. Math. Soc.} \vo{15} \yr{2002} \pp{497--529}

\bibitem{gls08}
\au{C. Geiss, B. Leclerc, J. Schr\"{o}er}
\ti{Partial flag varieties and preprojective algebras}
\jo{Ann. Inst. Fourier} \vo{58} \yr{2008} \pp{825--876}

\bibitem{ginz}
\au{V. Ginzburg}
\ti{Calabi-Yau algebras}
arXiv:math/0612139v3 [math.AG] Jan 2007.

\bibitem{GK}
\au{A.B. Goncharov, R. Kenyon}
\ti{Dimers and cluster integrable systems}
\jo{Ann. Sci. \'{Ec.} Norm. Sup. (4)} \vo{46} \yr{2013} \pp{747--813}

\bibitem{gulotta08}
\au{D.R. Gulotta}
\ti{Properly ordered dimers, R-charges, and an efficient inverse algorithm}
\jo{JHEP} \vo{10} \yr{2008} \pp{014}

\bibitem{hv}
\au{A. Hanany, D. Vegh}
\ti{Quivers, tilings, branes and rhombi}
\jo{JHEP} \vo{10} \yr{2007} \pp{029}

\bibitem{jks}
\au{B. Jensen, A. King, X. Su}
\ti{A categorification of Grassmannian Cluster Algebras}
arXiv:1309.7301v2 [math.RT] Dec 2014.
\newblock To appear in Proc. Lond. Math. Soc.

\bibitem{KPW00}
\au{R.W. Kenyon, J.G. Propp, D.B. Wilson}
\ti{Trees and matchings}
\jo{Electron. J. Combin.} \vo{7} \yr{2000} \pp{R25}

\bibitem{lf09}
\au{D. Labardini-Fragoso}
\ti{Quivers with potentials associated to triangulated surfaces}
\jo{Proc. Lond. Math. Soc. (3)} \vo{98} \yr{2009} \pp{797--839}

\bibitem{lz98}
\au{B. Leclerc, A. Zelevinsky}
\ti{Quasicommuting families of quantum Pl\"{u}cker coordinates}
in: Kirillov's seminar on representation theory, 
\jo{Amer. Math. Soc. Transl. Ser. 2} \vo{181} Amer. Math. Soc., Providence, RI, 
\yr{1998} \pp{85--108}

\bibitem{marshscott}
\au{R.J. Marsh, J.S. Scott}
\ti{Twists of Pl\"{u}cker coordinates as dimer partition functions}
\jo{Comm. Math. Phys.} \vo{341} \yr{2016} \pp{821--884} 

\bibitem{matsumura86}
\au{H. Matsumura}
\ti{Commutative ring theory}
\newblock Translated from the Japanese by M. Reid.
\newblock Cambridge Studies in Advanced Mathematics, 8. 
\newblock Cambridge University Press, Cambridge (1986)

\bibitem{ops}
\au{S. Oh, A. Postnikov, D.E. Speyer}
\ti{Weak Separation and Plabic Graphs}
\jo{Proc. Lond. Math. Soc. (3)} \vo{110} \yr{2015} \pp{721--754} 

\bibitem{postnikov}
\au{A. Postnikov}
\ti{Total positivity, Grassmannians, and networks}
arXiv:math/0609764v1 [math.CO] Sep 2006.

\bibitem{scott06}
\au{J.S. Scott}
\ti{Grassmannians and cluster algebras}
\jo{Proc. London Math. Soc. (3)} 
\vo{92} 
\yr{2006}
\pp{345--380}

\end{thebibliography}
\end{document}